\numberwithin{equation}{section}
\renewcommand{\AA}{\mathbb A}
\newcommand{\CC}{\mathbb C}
\newcommand{\FF}{\mathbb F}
\newcommand{\PP}{\mathbb P}
\newcommand{\QQ}{\mathbb Q}
\newcommand{\RR}{\mathbb R}
\newcommand{\ZZ}{\mathbb Z}
\newcommand{\OO}{\mathcal O}
\newcommand{\calI}{\mathcal I}
\newcommand{\calF}{\mathcal F}
\newcommand{\calE}{\mathcal E} \newcommand{\scrE}{\mathscr E}
\newcommand{\calG}{\mathcal G}
\newcommand{\calH}{\mathcal H}
\newcommand{\calS}{\mathcal S}
\newcommand{\calX}{\mathcal X}
\newcommand{\calD}{\mathcal D}
\newcommand{\p}{\mathfrak p}
\newcommand{\scrF}{\mathscr F}
\newcommand{\scrM}{\mathscr M}
\newcommand{\ang}[1]{ \langle #1 \rangle  }
\def\un{{\operatorname{un}}}
\def\Spec{\operatorname{Spec}} 
\def\Gal{\operatorname{Gal}}
\def \GL {\operatorname{GL}}    
\def \PGL {\operatorname{PGL}}
\def \SL {\operatorname{SL}}
\def\Aut{\operatorname{Aut}} 
\def\Frob{\operatorname{Frob}}
\def\tr{\operatorname{tr}}
\newcommand{\defi}[1]{\textsf{#1}} % for defined terms
\newcommand\bbar[1]{\overline{#1}}
\newcommand{\Qbar}{{\overline{\mathbb Q}}} 
\newcommand{\Kbar}{\bbar{K}}
\newcommand\legendre[2]{{#1\overwithdelims () #2}}
\newtheorem{thm}{Theorem}[section]
\newtheorem{lemma}[thm]{Lemma}
\newtheorem{prop}[thm]{Proposition}
\newtheorem{conj}[thm]{Conjecture}
\theoremstyle{definition}
\newtheorem{definition}[thm]{Definition}
\theoremstyle{remark}
\newtheorem{remark}[thm]{Remark}
\newenvironment{romanenum}{\hfill \begin{enumerate} }{\end{enumerate}}
\newenvironment{alphenum}{\hfill \begin{enumerate} }{\end{enumerate}}
\definecolor{webcolor}{rgb}{0.8,0,0.2}
\definecolor{webbrown}{rgb}{.6,0,0}
\begin{document}
\title[]{On the possible images of the mod $\ell$ representations associated to elliptic curves over $\QQ$}

\author{David J. Zywina}
\address{Department of Mathematics, Cornell University, Ithaca, NY 14853, USA}
\email{zywina@math.cornell.edu}
\urladdr{http://www.math.cornell.edu/~zywina}

\subjclass[2010]{Primary 11G05; Secondary 11F80}
%%11G05 Elliptic curves over global Þelds
%%11F80 Galois representations 

\begin{abstract}
Consider a non-CM elliptic curve $E$ defined over $\QQ$.  For each prime $\ell$, there is a representation $\rho_{E,\ell}\colon \Gal(\Qbar/\QQ) \to \GL_2(\FF_\ell)$ that describes the Galois action on the $\ell$-torsion points of $E$.   A famous theorem of Serre says that $\rho_{E,\ell}$ is surjective for all large enough $\ell$.       We will describe all known, and conjecturally all, pairs $(E,\ell)$ such that $\rho_{E,\ell}$ is not surjective.   Together with another paper, this produces an algorithm that given an elliptic curve $E/\QQ$, outputs the set of such \emph{exceptional primes} $\ell$ and describes all the groups $\rho_{E,\ell}(\Gal(\Qbar/\QQ))$ up to conjugacy.    Much of the paper is dedicated to computing various modular curves of genus $0$ with their morphisms to the $j$-line.
\end{abstract}

\maketitle

\section{Possible images} \label{S:classification}

Consider an elliptic curve $E$ defined over $\QQ$.   For each prime $\ell$, let $E[\ell]$ be the $\ell$-torsion subgroup of $E(\Qbar)$, where $\Qbar$ is a fixed algebraic closure of $\QQ$.   The group $E[\ell]$ is a free $\FF_\ell$-module of rank $2$ and there is a natural action of the absolute Galois group $\Gal_\QQ:= \Gal(\Qbar/\QQ)$ on $E[\ell]$ which respects the group structure.   After choosing a basis for $E[\ell]$, this action can be expressed in terms of a Galois representation
\[
\rho_{E,\ell} \colon \Gal_\QQ \to \GL_2(\FF_\ell);
\]
its image $\rho_{E,\ell}(\Gal_\QQ)$ is uniquely determined up to conjugacy in $\GL_2(\FF_\ell)$.  A renowned theorem of Serre \cite{MR0387283} says that $\rho_{E,\ell}$ is surjective for all but finitely many $\ell$ when $E$ is non-CM.

In this paper, we shall describe all known (and conjecturally all) proper subgroups of $\GL_2(\FF_\ell)$ that occur as the image of such a representation $\rho_{E,\ell}$.    Applying our classification with earlier work, we will obtain an algorithm to determine the set $\calS$ of primes $\ell$ for which $\rho_{E,\ell}$ is not surjective and also compute $\rho_{E,\ell}(\Gal_\QQ)$ for each $\ell\in \calS$. \\

Before stating our classification in \S\S\ref{SS:applicable 2}--\ref{SS:applicable 17}, let us make some comments.   We will consider each prime $\ell$ separately.   For simplicity, assume that the $j$-invariant $j_E\in \QQ$ of $E/\QQ$ is neither $0$ nor $1728$.  Our first step in determining $\rho_{E,\ell}(\Gal_\QQ)$ is to compute the group 
\[
G:=\pm \rho_{E,\ell}(\Gal_\QQ),
\]
 i.e., the group generated by $-I$ and $\rho_{E,\ell}(\Gal_\QQ)$.    The benefit of studying $G$, up to conjugacy in $\GL_2(\FF_\ell)$, is that it does not change if $E$ is replaced by a quadratic twist.  Moreover, if $E'/\QQ$ is a quadratic twist of $E/\QQ$, then after choosing appropriate bases, we will have $\rho_{E',\ell} = \chi \cdot \rho_{E,\ell}$ for some quadratic character $\chi\colon \Gal_\QQ \to \{\pm 1\}$.    Since $j_E\notin \{0,1728\}$, all twists of $E$ are quadratic twists and hence $G$, up to conjugacy, depends only on the value $j_E$.     The character $\det\circ \rho_{E,\ell}\colon \Gal_\QQ \to \FF_\ell^\times$ describes the Galois action on the $\ell$-th roots of unity, so $\det(G)=\FF_\ell^\times$.

For a subgroup $G$ of $\GL_2(\FF_\ell)$ with $\det(G)=\FF_\ell^\times$ and $-I\in G$, we can associate a modular curve $X_{G}$; it is a smooth, projective and geometrically irreducible curve defined over $\QQ$.   It comes with a natural morphism 
\[
\pi_G\colon X_{G} \to \Spec \QQ[j] \cup \{\infty\}=:\PP_\QQ^1
\]
such that for an elliptic curve $E/\QQ$ with $j_E\notin \{0,1728\}$, the group $\rho_{E,\ell}(\Gal_\QQ)$ is conjugate in $\GL_2(\FF_\ell)$ to subgroup of $G$ if and only if the $j_E=\pi_G(P)$ for some rational point  $P\in X_{G}(\QQ)$.    

Much of this paper is dedicated to describing those modular curves $X_G$ of genus $0$ with $X_G(\QQ)\neq \emptyset$.   Such modular curves are isomorphic to the projective line and their function field is of form $\QQ(h)$ for some modular function $h$ of level $\ell$.   Giving the morphism $\pi_G$ is then equivalent to expressing the modular $j$-invariant in the form $J(h)$ for a unique rational function $J(t)\in \QQ(t)$.

Once we have determined $G$, we know that $\rho_{E,\ell}(\Gal_\QQ)$ will either be the full group $G$ or equal to an index $2$ subgroup $H$ of $G$ for which $-I\notin H$.  For each such $H$, it is then a matter of determining whether the quadratic character $\Gal_\QQ \xrightarrow{\rho_{E,\ell}} G \to G/H\cong\{\pm 1\}$ is trivial or not. 
\\

We will first focus on the general case of non-CM elliptic curves over $\QQ$.   In \S\ref{SS:CM}, we will give a complete description of the groups $\rho_{E,\ell}(\Gal_\QQ)$ when $E/\QQ$ has complex multiplication.
\\

\noindent{\textit{Notation.} }
We now define some specific subgroups of $\GL_2(\FF_\ell)$ for an odd prime $\ell$.  Let  $C_s(\ell)$ be the subgroup of diagonal matrices.   Let $\epsilon=-1$ if $\ell\equiv 3\pmod{4}$ and otherwise let $\epsilon \geq 2$ be the smallest integer which is not a quadratic residue modulo $\ell$.   Let $C_{ns}(\ell)$ be the subgroup consisting of matrices of the form $\left(\begin{smallmatrix}a & b\epsilon \\b & a \end{smallmatrix}\right)$ with $(a,b) \in \FF_\ell^2-\{(0,0)\}$.  Let $N_s(\ell)$ and $N_{ns}(\ell)$ be the normalizers of $C_{s}(\ell)$ and $C_{ns}(\ell)$, respectively, in $\GL_2(\FF_\ell)$.   We have $[N_s(\ell):C_s(\ell)]=2$ and the non-identity coset of $C_s(\ell)$ in $N_s(\ell)$ is represented by $\left(\begin{smallmatrix}0 & 1 \\1 & 0 \end{smallmatrix}\right)$.  We have $[N_{ns}(\ell):C_{ns}(\ell)]=2$ and the non-identity coset of $C_{ns}(\ell)$ in $N_{ns}(\ell)$ is represented by $\left(\begin{smallmatrix}1 &0  \\0 & -1 \end{smallmatrix}\right)$.  Let $B(\ell)$ be the subgroup of upper triangular matrices in $\GL_2(\FF_\ell)$.

\subsection{\underline{$\ell=2$}}  \label{SS:applicable 2}

Up to conjugacy, there are three proper subgroups of $\GL_2(\FF_2)$:
\[
G_1=\{ I\},   \quad\quad G_2=\{ I, \left(\begin{smallmatrix}1 & 1 \\0 & 1 \end{smallmatrix}\right)\},  \quad\quad  G_3=\{I,  \left(\begin{smallmatrix}1 & 1 \\1 & 0 \end{smallmatrix}\right),  \left(\begin{smallmatrix}0 & 1 \\1 & 1 \end{smallmatrix}\right)\}.
\]
For $i=1,2$ and $3$, the index $[\GL_2(\FF_2): G_i]$ is $6$, $3$ and $2$, respectively.   Define the rational functions
\[
J_1(t)=256 \frac{(t^2+t+1)^3}{t^2(t+1)^2}, 
\quad\quad J_2(t) = 256\frac{(t+1)^3}{t},\quad\quad J_3(t)=t^2+1728.
\]

\begin{thm} \label{T:main2}
Let $E$ be a non-CM elliptic curve over $\QQ$.  Then $\rho_{E,2}(\Gal_\QQ)$ is conjugate in $\GL_2(\FF_2)$ to a subgroup of $G_i$ if and only if $j_E$ is of the form $J_i(t)$ for some $t\in \QQ$.
\end{thm}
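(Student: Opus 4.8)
The plan is to work concretely with the small group $\GL_2(\FF_2)\cong S_3$, translating each "image lands in $G_i$" condition into a statement about rational points on an explicit genus-$0$ modular curve, and then matching that curve's $j$-map against the given $J_i(t)$.

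First I would recall the group theory: $\GL_2(\FF_2)$ has order $6$ and is isomorphic to $S_3$, acting on the three nonzero vectors of $\FF_2^2$. Its proper subgroups up to conjugacy are exactly the trivial group $G_1$, a group of order $2$ generated by a transposition (here $G_2$, a fixed nonzero vector, i.e.\ a Borel), and the unique subgroup $G_3$ of order $3$ (the alternating subgroup). Since $-I=I$ in characteristic $2$, each $G_i$ trivially contains $-I$, and one checks $\det(G_i)=\FF_2^\times$ (which is trivial), so each $G_i$ has an associated modular curve $X_{G_i}$ over $\QQ$ with its map $\pi_{G_i}$ to the $j$-line, as set up in the introduction. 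By that setup, $\rho_{E,2}(\Gal_\QQ)$ is conjugate to a subgroup of $G_i$ if and only if $j_E=\pi_{G_i}(P)$ for some $P\in X_{G_i}(\QQ)$. So the theorem reduces to: the image of $\pi_{G_i}\colon X_{G_i}(\QQ)\to\PP^1(\QQ)$ (restricted to the $j$-line, i.e.\ avoiding the cusp) is exactly $\{J_i(t):t\in\QQ\}$, i.e.\ that $X_{G_i}\cong\PP^1_\QQ$ via a coordinate $h$ for which $j=J_i(h)$.

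The core computation is therefore to identify the three modular curves. For $G_3$ (order $3$, index $2$): the index-$2$ subgroups of $\GL_2(\FF_2)$ correspond to quadratic subfields, and $X_{G_3}$ is the modular curve whose rational points parametrize elliptic curves whose mod-$2$ representation has image in $A_3$; concretely the condition is that the Galois action on $E[2]$ is \emph{even}, equivalently the discriminant of the $2$-division polynomial is a square, equivalently $\Delta_E$ (up to squares) — which classically says $j_E - 1728$ is a square, giving exactly $j = t^2+1728 = J_3(t)$. For $G_2$ (a Borel, index $3$): $X_{G_2}$ is essentially the classical curve $X_0(2)$, whose function field is generated by the Hauptmodul and the well-known formula $j = (t+256)^3/t^2$ or an equivalent normalization; after a change of variable $t\mapsto$ suitable linear-fractional expression this becomes $256(t+1)^3/t = J_2(t)$. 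For $G_1$ (trivial, index $6$): $X_{G_1}$ is the full modular curve $X(2)$, which is well known to be $\PP^1$ with $\lambda$-line coordinate and $j = 256\,(\lambda^2-\lambda+1)^3/(\lambda^2(\lambda-1)^2) = J_1(\lambda)$. In each case one verifies (i) genus $0$ and a rational point (e.g.\ a cusp or CM point), so $X_{G_i}\cong\PP^1_\QQ$; (ii) the degree of $J_i$ as a rational map equals the index $[\GL_2(\FF_2):G_i]$ (namely $6,3,2$), confirming $J_i$ is the right $j$-map up to an automorphism of $\PP^1$; and (iii) the finitely many ramification data / fibers over $j=0,1728,\infty$ match. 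One then notes that any two normalizations of the $j$-map differ by precomposition with an element of $\mathrm{PGL}_2(\QQ)$, which does not change the image set $J_i(\QQ)\subseteq\PP^1(\QQ)$, so the theorem's "if and only if" holds with the specific $J_i$ as stated. The non-CM hypothesis and $j_E\notin\{0,1728\}$ caveats are only needed to suppress finitely many exceptional fibers, and can be handled by direct inspection of those fibers.

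The main obstacle is pinning down the \emph{exact} rational functions rather than just "some $j$-map of the right degree": one must check that the explicit $J_i$ given in the paper are indeed realized by a choice of Hauptmodul defined over $\QQ$, with the right behavior at the cusps. This amounts to comparing the $q$-expansions of the classical modular functions $\lambda$ (for $X(2)$) and the $\Gamma_0(2)$-Hauptmodul against the formulas, and verifying the linear-fractional substitutions that bring them into the stated forms — a routine but fiddly bookkeeping with $q$-series and ramification indices. Once the function-field identifications are in place, the equivalence with rational points on $\PP^1$ and hence with values $J_i(t)$, $t\in\QQ$, is immediate from the general modular-curve formalism recalled at the start of the section.
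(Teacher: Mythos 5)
Your proposal is correct and follows the same overall architecture as the paper: reduce via the modular-curve formalism (Proposition~\ref{P:main moduli} / Lemma~\ref{L:key}) to identifying the three genus-$0$ curves $X_{G_i}$ and their $j$-maps, then use the fact that the three proper subgroups of $\GL_2(\FF_2)$ have distinct indices $6,3,2$ so that degree counting pins down which stabilizer is which. Where you differ is in how the explicit $j$-maps are produced. The paper starts from a single eta-quotient Hauptmodul $h_1=16\eta(2\tau)^8/\eta(\tau/2)^8$ for $G_1=C_s(2)$ (obtained via the isomorphism $X_s(2)\cong X_0(4)$ of Lemma~\ref{L:borel to split}), computes $J_1$ by solving linear equations among $q$-expansion coefficients, and then manufactures $h_2,h_3$ as explicit rational functions of $h_1$, verifying the identities $J_i(F(t))=J_1(t)$ purely algebraically. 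You instead invoke the classical identifications: the $\lambda$-line for $X(2)$, the $\Gamma_0(2)$-Hauptmodul for $X_{G_2}$, and — most notably — the elementary Galois-theoretic criterion for $G_3$ (image in $A_3$ iff $\operatorname{disc}$ of the $2$-division polynomial, hence $\Delta_E$, hence $j_E-1728$, is a square), which bypasses modular functions entirely for that case and gives $J_3(t)=t^2+1728$ directly. Both routes are sound; yours trades the paper's self-contained $q$-expansion bookkeeping for reliance on classical formulas, and is genuinely more elementary for $G_3$. Two small points to keep honest: matching your normalizations to the stated $J_i$ requires exhibiting the $\PGL_2(\QQ)$ substitutions explicitly (e.g.\ $\lambda\mapsto -t$ for $J_1$, $t\mapsto 256/t$ for $J_2$), and one must check as the paper does that $J_i(\infty)=\infty$ so that restricting from $\QQ\cup\{\infty\}$ to $\QQ$ loses nothing; you address both adequately.
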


\subsection{\underline{$\ell=3$}}  \label{SS:applicable 3}
Define the following subgroups of $\GL_2(\FF_3)$:
\begin{itemize}
\item 
Let $G_1$ be the group $C_s(3)$.  
\item
Let $G_2$ be the group $N_s(3)$. 
\item 
Let $G_3$ be the group $B(3)$.    
\item
Let $G_4$ be the group $N_{ns}(3)$.  
\item
Let $H_{1,1}$ be the subgroup consisting of the matrices of the form $\left(\begin{smallmatrix}1 & 0 \\0 & * \end{smallmatrix}\right)$.   
\item
Let $H_{3,1}$ be the subgroup consisting of the matrices of the form $\left(\begin{smallmatrix}1 & * \\0 & * \end{smallmatrix}\right)$.   
\item
Let $H_{3,2}$ be the subgroup consisting of the matrices of the form $\left(\begin{smallmatrix} * & * \\0 & 1 \end{smallmatrix}\right)$.   
\end{itemize}
The index in $\GL_2(\FF_3)$ of the above subgroups are $12$, $6$, $4$, $3$, $24$, $8$ and $8$, respectively.  Each of the groups $G_i$ contain $-I$.   The groups $H_{i,j}$ do not contain $-I$ and we have $G_i = \pm H_{i,j}$.

Define the rational functions:
\begin{align*}
J_1(t) &= 27\frac{(t+1)^3(t+3)^3(t^2+3)^3}{t^3(t^2+3t+3)^3},\,\, 
J_2(t) = 27\frac{(t+1)^3(t-3)^3}{t^3},\,\, 
J_3(t) =  27\frac{(t+1)(t+9)^3}{t^3}, \,\, 
J_4(t)=t^3.
\end{align*}
For $t\in \QQ -\{0\}$, let $\calE_{1,t}$ be the elliptic curve over $\QQ$ defined by Weierstrass equation
\begin{align*}
y^2&= x^3 -3(t+1)(t+3)(t^2+3) x -2(t^2-3)(t^4+6t^3+18t^2+18t+9). 
\end{align*}
For $t\in \QQ -\{0,-1\}$, let $\calE_{3,t}$ be the elliptic curve over $\QQ$ defined by Weierstrass equation
\begin{align*}
y^2&= x^3 -3(t+1)^3(t+9)x -2(t+1)^4(t^2-18t-27).
\end{align*}
The $j$-invariant of $\calE_{i,t}$ is $J_i(t)$.  

\begin{thm} \label{T:main3}
Let $E$ be a non-CM elliptic curve over $\QQ$.
\begin{romanenum}
\item \label{T:main3 a}
If $\rho_{E,3}$ is not surjective, then $\rho_{E,3}(\Gal_\QQ)$ is conjugate in $\GL_2(\FF_3)$ to one of the groups $G_i$ or $H_{i,j}$.
\item \label{T:main3 b}
The group $\rho_{E,3}(\Gal_\QQ)$ is conjugate to a subgroup of $G_i$ if and only if $j_E$ is of the form $J_i(t)$ for some $t\in \QQ$.
\item \label{T:main3 c}
Suppose that $\pm \rho_{E,3}(\Gal_\QQ)$ is conjugate to $G_1$.  Fix an element $t\in \QQ$ such that $J_1(t)=j_E$.  The group $\rho_{E,3}(\Gal_\QQ)$ is conjugate to $H_{1,1}$ if and only if $E$ is isomorphic to $\calE_{1,t}$ or the quadratic twist of $\calE_{1,t}$ by $-3$.
\item \label{T:main3 d}
Suppose that $\pm \rho_{E,3}(\Gal_\QQ)$ is conjugate to $G_3$.  Fix an element $t\in \QQ$ such that $J_3(t)=j_E$. 

\noindent The group $\rho_{E,3}(\Gal_\QQ)$ is conjugate to $H_{3,1}$ if and only if $E$ is isomorphic to $\calE_{3,t}$.

\noindent The group $\rho_{E,3}(\Gal_\QQ)$ is conjugate to $H_{3,2}$ if and only if $E$ is isomorphic to the quadratic twist of $\calE_{3,t}$ by $-3$.

\end{romanenum}
\end{thm}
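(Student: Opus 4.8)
The plan is to establish the four parts in order; (i) and (ii) are essentially formal, and the real work is in (iii) and (iv). For (i): since $\det\circ\rho_{E,3}$ is the mod $3$ cyclotomic character it is surjective, so $\rho_{E,3}(\Gal_\QQ)\not\subseteq\SL_2(\FF_3)$; as $\SL_2(\FF_3)$ is the unique index $2$ subgroup of $\GL_2(\FF_3)$, the group $G:=\pm\rho_{E,3}(\Gal_\QQ)$ is a proper subgroup with $\det(G)=\FF_3^\times$ and $-I\in G$. Up to conjugacy such $G$ correspond to the subgroups of $\PGL_2(\FF_3)\cong S_4$ not contained in $A_4$: generated by a transposition, by a non-normal Klein four group, by a $4$-cycle, by an $S_3$, or by a dihedral group of order $8$; their preimages in $\GL_2(\FF_3)$ are conjugate to $C_s(3)=G_1$, $N_s(3)=G_2$, $C_{ns}(3)$, $B(3)=G_3$, and $N_{ns}(3)=G_4$. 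The case $C_{ns}(3)$ is excluded by the real place: complex conjugation $c$ satisfies $\rho_{E,3}(c)^2=I$ and $\det\rho_{E,3}(c)=-1$, so $\rho_{E,3}(c)$ is conjugate to $\operatorname{diag}(1,-1)$ and in particular is not $\pm I$; but the only elements of order dividing $2$ in the cyclic group $C_{ns}(3)\cong\FF_9^\times$ are $\pm I$, so $G$ is not conjugate to $C_{ns}(3)$. Hence $G$ is conjugate to some $G_i$, and $\rho_{E,3}(\Gal_\QQ)$ is either $G$ or an index $2$ subgroup of $G$ not containing $-I$. A direct inspection shows that every index $2$ subgroup of $G_2$ (dihedral of order $8$) and of $G_4$ (dihedral of order $16$) contains $-I$, that every index $2$ subgroup of $G_1$ not containing $-I$ is conjugate to $H_{1,1}$, and that the index $2$ subgroups of $G_3$ not containing $-I$ are exactly $H_{3,1}$ and $H_{3,2}$; finally $H_{3,1}$ and $H_{3,2}$ are not conjugate in $\GL_2(\FF_3)$, since only $H_{3,1}$ has a nonzero fixed vector. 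This gives (i).

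Part (ii) is immediate from the correspondence recalled in this section: each $G_i$ contains $-I$ and has surjective determinant, so $\rho_{E,3}(\Gal_\QQ)$ is conjugate to a subgroup of $G_i$ if and only if $j_E=\pi_{G_i}(P)$ for some $P\in X_{G_i}(\QQ)$; the curves $X_{G_i}$ have genus $0$ with a rational point, and in the computational part of the paper they are identified with $\PP^1_\QQ$ and their $j$-maps with the rational functions $J_i$. Since $j_E\in\QQ$ is finite while $J_i(\infty)=\infty$ for each $i$, the point $P$ may be taken with finite parameter, so the condition reads $j_E=J_i(t)$ for some $t\in\QQ$. (A non-CM $E/\QQ$ automatically has $j_E\notin\{0,1728\}$, so the correspondence applies.)

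For (iii) and (iv): since $j_E\notin\{0,1728\}$, under the stated hypothesis $E$ is a quadratic twist $\calE_{i,t}^{(d)}$ of $\calE_{i,t}$ for some $d\in\QQ^\times$, and after an appropriate choice of basis $\rho_{E,3}=\chi_d\cdot\rho_{\calE_{i,t},3}$ for the quadratic character $\chi_d$ of $\QQ(\sqrt d)$. One checks, by construction of $\calE_{i,t}$ (equivalently, directly from the displayed Weierstrass equations), that $\rho_{\calE_{i,t},3}(\Gal_\QQ)\subseteq H_{i,1}$; since $\pm\rho_{\calE_{i,t},3}(\Gal_\QQ)=\pm\rho_{E,3}(\Gal_\QQ)$ is conjugate to $G_i$, it follows that $\rho_{\calE_{i,t},3}(\Gal_\QQ)=H_{i,1}$. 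In the adapted basis, the fact that $\det\rho_{\calE_{i,t},3}$ is the mod $3$ cyclotomic character pins down the relevant diagonal character of $\rho_{\calE_{i,t},3}$ to be exactly the quadratic character $\chi_{-3}$ of $\QQ(\sqrt{-3})$. Computing the image of $\chi_d\cdot\rho_{\calE_{i,t},3}$ then yields, in case (iii), that it is conjugate to $H_{1,1}$ precisely when $\chi_d\in\{1,\chi_{-3}\}$, i.e.\ $d\in(\QQ^\times)^2\cup(-3)(\QQ^\times)^2$, i.e.\ $E$ is isomorphic to $\calE_{1,t}$ or its twist by $-3$; and, in case (iv), using that $\QQ(\sqrt{-3})$ is the unique quadratic subfield of $\QQ(\calE_{3,t}[3])$, that the image is conjugate to $H_{3,1}$ (the one with a nonzero fixed vector) exactly when $\chi_d=1$, i.e.\ $E\cong\calE_{3,t}$, and conjugate to $H_{3,2}$ exactly when $\chi_d=\chi_{-3}$, i.e.\ $E\cong\calE_{3,t}^{(-3)}$.

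The step requiring the most care is the twist bookkeeping in (iii)--(iv): it uses the \emph{exact} form of $\rho_{\calE_{i,t},3}$, which rests on the explicit construction of the curves $\calE_{i,t}$, and it is essential that the distinguished twist is the one by $-3$. This is no accident: $\det\rho_{E,3}$ is the mod $3$ cyclotomic character, whose fixed field is $\QQ(\sqrt{-3})$, and twisting by $-3$ is exactly what relocates the trivial sub- or quotient-representation of $\rho_{E,3}$ within the Borel $B(3)$ (or within $C_s(3)$). By contrast, part (i) is elementary finite group theory together with the real-place observation, and part (ii) is purely formal once the genus $0$ modular curves $X_{G_i}$ and their $j$-maps $J_i$ — the main computational content of the paper — are in hand.
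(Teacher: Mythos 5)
Your part (i) is sound and is essentially the paper's argument made explicit (the enumeration of proper subgroups of $\GL_2(\FF_3)$ containing $-I$ with full determinant, the exclusion of $C_{ns}(3)$ by complex conjugation, and the classification of the index $2$ subgroups without $-I$); the only slip is that $N_{ns}(3)$ is semidihedral rather than dihedral of order $16$, which does not affect the conclusion since every index $2$ subgroup of a $2$-group meets the center and hence contains $-I$. The first genuine gap is in part (ii): you reduce it to the statements that $X_{G_i}$ is isomorphic to $\PP^1_\QQ$ and that $\pi_{G_i}$ is given by $J_i$, and then cite ``the computational part of the paper'' for these. That identification \emph{is} the content of part (ii). The paper proves it by producing an explicit hauptmodul $h_1=\tfrac13\,\eta(\tau/3)^3/\eta(3\tau)^3$ with $\QQ(X_{G_1})=\QQ(X_s(3))\cong\QQ(X_0(9))=\QQ(h_1)$, computing $J_1$ from $q$-expansions, defining $h_2,h_3,h_4$ as explicit rational functions of $h_1$ satisfying $J_i(F(h_1))=J_1(h_1)$, and then pinning down the conjugacy class of each fixing subgroup by comparing $\deg J_i$ with the indices $[\GL_2(\FF_3):G_i]$ (which happen to be pairwise distinct here, so the index count suffices). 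Without some version of this, part (ii) is unproved.

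The second gap is the assertion in parts (iii)--(iv) that $\rho_{\calE_{i,t},3}(\Gal_\QQ)\subseteq H_{i,1}$ for every relevant $t$; everything downstream (the determinant argument pinning the second diagonal character to $\chi_{-3}$, and the twist bookkeeping) depends on it, and it is exactly the hard point. For $i=3$ it can be verified by exhibiting the rational $3$-torsion point $\bigl(3(t+1)^2,\,4t(t+1)^2\bigr)$ on $\calE_{3,t}$, which is what the paper does; you do not exhibit such a point. For $i=1$ the literal containment in $H_{1,1}$ (as opposed to its conjugate $\left(\begin{smallmatrix}*&0\\0&1\end{smallmatrix}\right)$) is not something the paper establishes or needs: it instead shows, via the machinery of \S\ref{S:twist 1}--\S\ref{S:twists 2} (the finite set $\calD_{E}$ of exceptional twists computed from traces of Frobenius, Lemma~\ref{L:ell twist D}, and Proposition~\ref{P:fH prop}), that exactly two twists of the generic member have image a proper subgroup of $G_1$, that these twists differ by $-3$, and that both resulting subgroups are conjugate to $H_{1,1}$ because the two index $2$ subgroups of $C_s(3)$ without $-I$ are interchanged by $\left(\begin{smallmatrix}0&1\\1&0\end{smallmatrix}\right)$. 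Your conclusion is correct, but the step ``one checks, by construction of $\calE_{i,t}$'' conceals both the proof that the image is proper at all (uniformly in $t$) and the identification of which proper subgroup occurs; supplying either the explicit torsion point (for $i=3$) or the Frobenius-trace/twist computation (for $i=1$) is required to close the argument.
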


\begin{remark}
\begin{romanenum}
\item
Let us briefly explain how Theorem~\ref{T:main3} can be used to compute $\rho_{E,3}(\Gal_\QQ)$; similar remarks will hold for the remaining primes (the case $\ell=2$ is particularly simple since $-I=I$).   If $j_E$ is not of the form $J_i(t)$ for any $i\in \{1,2,3,4\}$ and $t\in \QQ$, then $\rho_{E,3}(\Gal_\QQ)=\GL_2(\FF_3)$.  To check if $j_E$ is of the form $J(t)$, clear denominators in $J(t)-j_E$ to obtain a polynomial in $t$  which one can then determine whether it has rational roots or not.

 So assume that $\rho_{E,3}$ is not surjective, and let $i$ be the smallest value in $\{1,2,3,4\}$ for which $j_E=J_i(t)$ for some $t\in \QQ$.   By Theorem~\ref{T:main3}(\ref{T:main3 a}) and (\ref{T:main3 b}), we deduce that $\pm \rho_{E,3}(\Gal_\QQ)$ is conjugate to $G_i$; note that the groups $G_i$ are ordered by decreasing index in $\GL_2(\FF_3)$.  After possibly conjugating $\rho_{E,3}$, we may assume that $\pm \rho_{E,3}(\Gal_\QQ)=G_i$.  If $\rho_{E,3}(\Gal_\QQ)$ does not equal $G_i$, then it is equal to one of the subgroups $H_{i,j}$ and parts (\ref{T:main3 c}) and (\ref{T:main3 d}) give necessary and sufficient conditions to check this.
\item
Our rational functions $J_i(t)$ are certainly not unique.  In particular, any function of the form $J_i((at+b)/(ct+d))$ will work with fixed $a,b,c,d\in \QQ$ satisfying $ad-bc\neq 0$ (though in general, one needs to also consider the value of $J_i(t)$ at $\infty$).    Given $J_i(t)$, our equations for $\calE_{i,t}$ were produced by an algorithm that we will later describe; there are other possibly simpler choices.
\end{romanenum}
\end{remark}

\subsection{\underline{$\ell=5$}}  \label{SS:applicable 5}

Define the following subgroups of $\GL_2(\FF_5)$:
\begin{itemize}
\item
Let $G_1$ be the subgroup  consisting of the matrices of the form $\pm \left(\begin{smallmatrix}1 & 0 \\0 & * \end{smallmatrix}\right)$.   
\item
Let $G_2$ be the group $C_s(5)$. 
\item
Let $G_3$ be the unique subgroup of $N_{ns}(5)$ of index $3$; it is generated by $\left(\begin{smallmatrix}2 & 0 \\0 & 2 \end{smallmatrix}\right)$, $\left(\begin{smallmatrix}1 & 0 \\0 & -1 \end{smallmatrix}\right)$ and $\left(\begin{smallmatrix}0 & 6 \\3 & 0 \end{smallmatrix}\right)$.
\item
Let $G_4$ be the group $N_{s}(5)$.   
\item
Let $G_5$ be the subgroup consisting of the matrices of the form $\pm \left(\begin{smallmatrix}* & * \\0 & 1 \end{smallmatrix}\right)$.   
\item
Let $G_6$ be the subgroup consisting of the matrices of the form $\pm \left(\begin{smallmatrix}1 & * \\0 & * \end{smallmatrix}\right)$.  
\item
Let $G_7$ be the group $N_{ns}(5)$.  
\item
Let $G_8$ be the group $B(5)$.  
\item
Let $G_9$ be the unique maximal subgroup of $\GL_2(\FF_5)$ which contains $N_s(5)$; it is generated by $\left(\begin{smallmatrix}2 & 0 \\0 & 1 \end{smallmatrix}\right)$, $\left(\begin{smallmatrix}1 & 0 \\0 & 2 \end{smallmatrix}\right)$, $\left(\begin{smallmatrix}0 & -1 \\1 & 0 \end{smallmatrix}\right)$ and $\left(\begin{smallmatrix}1 & 1 \\1 & -1 \end{smallmatrix}\right)$.
\item
Let $H_{1,1}$ be the subgroup consisting of the matrices of the form $\left(\begin{smallmatrix}1 & 0 \\0 & * \end{smallmatrix}\right)$.   
\item
Let $H_{1,2}$ be the subgroup consisting of the matrices of the form $\left(\begin{smallmatrix}a^2 & 0 \\0 & a \end{smallmatrix}\right)$.    
\item
Let $H_{5,1}$ be the subgroup  consisting of the matrices of the form $\left(\begin{smallmatrix}* & * \\0 & 1 \end{smallmatrix}\right)$.   
\item
Let $H_{5,2}$ be the subgroup consisting of the matrices of the form $\left(\begin{smallmatrix}a & * \\0 & a^2 \end{smallmatrix}\right)$.   
\item
Let $H_{6,1}$ be the subgroup consisting of the matrices of the form $\left(\begin{smallmatrix}1 & * \\0 & * \end{smallmatrix}\right)$.   
\item
Let $H_{6,2}$ be the subgroup consisting of the matrices of the form $\left(\begin{smallmatrix}a^2 & * \\0 & a \end{smallmatrix}\right)$.  
\end{itemize}
The index in $\GL_2(\FF_5)$ of the above subgroups are $60$, $30$, $30$, $15$, $12$, $12$, $10$, $6$, $5$, $120$, $120$, $24$, $24$, $24$ and $24$, respectively.  Each of the groups $G_i$ contain $-I$.   The groups $H_{i,j}$ do not contain $-I$ and we have $G_i = \pm H_{i,j}$.  

Define the rational functions:
\begin{align*}
J_1(t) &= \frac{(t^{20}+228t^{15}+494t^{10}-228t^5+1)^3}{t^5(t^{10}-11t^5-1)^5}\\
J_2(t) &= \frac{(t^2 + 5t + 5)^3(t^4 + 5t^2 + 25)^3(t^4 + 5t^3 + 20t^2 + 25t + 25)^3}{t^5(t^4 + 5t^3 + 15t^2 + 25t + 25)^5}\\
J_3(t) &= \frac{5^4 t^3 (t^2+5t+10)^3 (2t^2+5t+5)^3 (4t^4+30t^3+95t^2+150t+100)^3}{(t^2+5t+5)^5(t^4+5t^3+15t^2+25t+25)^5}\\
J_4(t) &= \frac{(t + 5)^3 (t^2 - 5)^3(t^2 + 5t + 10)^3}{(t^2 + 5t + 5)^5}\\
J_5(t) &= \frac{(t^4+228t^3+494t^2-228t+1)^3}{t(t^2-11t-1)^5}\\
J_6(t) &= \frac{(t^4-12t^3+14t^2+12t+1)^3}{t^5(t^2-11t-1)}\\
J_7(t) &= \frac{5^3 (t+1)(2t+1)^3(2t^2-3t+3)^3}{(t^2+t-1)^5}\\
J_8(t) &= \frac{5^2(t^2+10t+5)^3}{t^5}  \\
J_9(t) &=t^3(t^2 + 5t + 40)
\end{align*}

\noindent For $t\in \QQ-\{0\}$, let $\calE_{1,t}$ be the elliptic curve over $\QQ$ defined by the Weierstrass equation
\begin{align*}
y^2=x^3 &-27(t^{20} + 228t^{15} + 494t^{10} - 228t^5 + 1)x \\
&+54(t^{30} - 522t^{25} - 10005t^{20} - 10005t^{10} + 522t^5 + 1).
\end{align*} 
\noindent
For $t\in \QQ-\{0\}$, let $\calE_{5,t}$ be the elliptic curve over $\QQ$ defined by the Weierstrass equation
\begin{align*}
y^2=x^3-27(t^4 + 228t^3 + 494t^2 - 228t + 1)x+54(t^6 - 522t^5 - 10005t^4 - 10005t^2 + 522t + 1).
\end{align*}
For $t\in \QQ-\{0\}$, let $\calE_{6,t}$ be the elliptic curve over $\QQ$ defined by the Weierstrass equation
\begin{align*}
y^2 = x^3-27(t^4 - 12t^3 + 14t^2 + 12t + 1)x + 54(t^6-18t^5+75t^4+75t^2+18t+1)
\end{align*}
The $j$-invariant of $\calE_{i,t}$ is $J_i(t)$.  

\begin{thm}  \label{T:main5}
Let $E$ be a non-CM elliptic curve over $\QQ$.
\begin{romanenum}
\item \label{T:main5 a}
If $\rho_{E,5}$ is not surjective, then $\rho_{E,5}(\Gal_\QQ)$ is conjugate in $\GL_2(\FF_5)$ to one of the groups $G_i$ or $H_{i,j}$.
\item  \label{T:main5 b}
The group $\rho_{E,5}(\Gal_\QQ)$ is conjugate to a subgroup of $G_i$ if and only if $j_E$ is of the form $J_i(t)$ for some $t\in \QQ$.
\item \label{T:main5 c}
Suppose that $\pm \rho_{E,5}(\Gal_\QQ)$ is conjugate to $G_i$ with $i\in \{1,5,6\}$.  Fix an element $t\in \QQ$ such that $J_i(t)=j_E$.  

\noindent The group $\rho_{E,5}(\Gal_\QQ)$ is conjugate to $H_{i,1}$ if and only if $E$ is isomorphic to $\calE_{i,t}$.

\noindent The group $\rho_{E,5}(\Gal_\QQ)$ is conjugate to $H_{i,2}$ if and only if $E$ is isomorphic to the quadratic twist of $\calE_{i,t}$ by $5$.
\end{romanenum}
\end{thm}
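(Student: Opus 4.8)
The plan is to treat the three parts in order; the one non-formal ingredient, isolated at the end, lives in Part (iii).

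\medskip
\noindent\emph{Part (i) — the group-theoretic classification.} Set $G:=\pm\rho_{E,5}(\Gal_\QQ)$. Since $\det\circ\rho_{E,5}$ is the mod $5$ cyclotomic character we have $\det(G)=\FF_5^\times$, and $-I\in G$ by definition. If $\rho_{E,5}$ is not surjective then $G$ is a \emph{proper} subgroup of $\GL_2(\FF_5)$: the only other possibility would be $\rho_{E,5}(\Gal_\QQ)$ of index $2$ with full determinant, but the unique index-$2$ subgroup of $\GL_2(\FF_5)$ has determinant $(\FF_5^\times)^2\neq\FF_5^\times$. It therefore suffices to enumerate, up to conjugacy, all proper subgroups $G\le\GL_2(\FF_5)$ with $-I\in G$ and $\det(G)=\FF_5^\times$, and then their index-$2$ subgroups $H$ with $-I\notin H$ and $\det(H)=\FF_5^\times$, since $\rho_{E,5}(\Gal_\QQ)$ is either $G$ or such an $H$. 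Let $\overline G$ be the image of $G$ in $\PGL_2(\FF_5)\cong S_5$. By the classification of subgroups of $\PGL_2(\FF_5)$, $\overline G$ is cyclic, dihedral, or isomorphic to $A_4$, $S_4$ or $A_5=\PSL_2(\FF_5)$. Using only $\det(G)=\FF_5^\times$ and $-I\in G$, one compares $|G\cap\SL_2(\FF_5)|$ and $|G\cap(\text{scalars})|$ with the lists of normal subgroups of $A_4$, $S_4$, $A_5$: this rules out $\overline G\cong A_4$ and $\overline G\cong A_5$ (the latter forces $G=\GL_2(\FF_5)$), and shows that $\overline G\cong S_4$ forces $G$ to contain every scalar and to be conjugate to $G_9$. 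In the remaining cases $G$ is conjugate into $B(5)$, $N_s(5)$ or $N_{ns}(5)$, and a finite enumeration of the relevant subgroups produces exactly $G_1,\dots,G_8$ together with the index-$2$ subgroups $H_{1,j},H_{5,j},H_{6,j}$ (for the other $G_i$ one checks directly that every index-$2$ subgroup contains $-I$).

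\medskip
\noindent\emph{Part (ii) — the $j$-line condition.} As recalled in \S\ref{S:classification}, for a subgroup $G\le\GL_2(\FF_5)$ with $-I\in G$ and $\det(G)=\FF_5^\times$ and a non-CM $E/\QQ$, the group $\rho_{E,5}(\Gal_\QQ)$ is conjugate to a subgroup of $G$ if and only if $j_E=\pi_G(P)$ for some $P\in X_G(\QQ)$. I would apply this with $G=G_i$ and invoke the modular-curve computations made elsewhere in the paper: $X_{G_i}$ is isomorphic over $\QQ$ to $\PP^1$, and under such an isomorphism $\pi_{G_i}\colon\PP^1_\QQ\to\PP^1_\QQ$ is the rational map $J_i$. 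Hence $\{\pi_{G_i}(P):P\in X_{G_i}(\QQ)\}=\{J_i(t):t\in\QQ\cup\{\infty\}\}$, and since for each $i$ the value $J_i(\infty)$ is either $\infty$ or the $j$-invariant of a CM elliptic curve, it is never equal to $j_E$; so the condition is equivalent to $j_E=J_i(t)$ for some $t\in\QQ$.

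\medskip
\noindent\emph{Part (iii) — the twist computation.} Fix $i\in\{1,5,6\}$ and $t\in\QQ$ with $J_i(t)=j_E$. Since $E$ and $\calE_{i,t}$ share a $j$-invariant they are quadratic twists of each other, say $E\cong\calE_{i,t}^{(d)}$ with $d\in\QQ^\times$; by twist-invariance of $\pm\rho$ (\S\ref{S:classification}) and the hypothesis $\pm\rho_{E,5}(\Gal_\QQ)\sim G_i$, we get $\pm\rho_{\calE_{i,t},5}(\Gal_\QQ)\sim G_i$. Granting the key input that $\rho_{\calE_{i,t},5}(\Gal_\QQ)$ is conjugate \emph{into} $H_{i,1}$ (isolated below), the relations $\pm H_{i,1}=G_i$ and $|G_i|=2|H_{i,1}|$ force $\rho_{\calE_{i,t},5}(\Gal_\QQ)$ to be conjugate to $H_{i,1}$ exactly. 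After a suitable choice of basis, $\rho_{E,5}=\chi_d\cdot\rho_{\calE_{i,t},5}$, where $\chi_d$ is the quadratic character of $\QQ(\sqrt d)/\QQ$, so $\rho_{E,5}(\Gal_\QQ)=\{\chi_d(\sigma)\rho_{\calE_{i,t},5}(\sigma):\sigma\in\Gal_\QQ\}$. Now the group theory: $H_{i,1}^{\ab}$ is cyclic of order $4$, so $H_{i,1}$ has a unique nontrivial homomorphism $\alpha\colon H_{i,1}\to\{\pm1\}$, namely $\alpha(h)=\left(\tfrac{\det h}{5}\right)$; using $\left(\tfrac{a}{5}\right)\equiv a^2\bmod 5$ one checks $\{\alpha(h)h:h\in H_{i,1}\}=H_{i,2}$ and that $-I$ is not of this form. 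Moreover $\alpha\circ\rho_{\calE_{i,t},5}=\left(\tfrac{\det\circ\rho_{\calE_{i,t},5}}{5}\right)$ is the quadratic residue symbol composed with the mod $5$ cyclotomic character, i.e.\ the quadratic character of $\QQ(\sqrt5)/\QQ$ (as $5\equiv1\bmod 4$). A Goursat-type analysis of the image of $(\rho_{\calE_{i,t},5},\chi_d)$ in $H_{i,1}\times\{\pm1\}$ then yields: if $\chi_d$ does not factor through $\rho_{\calE_{i,t},5}$, the image is all of $H_{i,1}\times\{\pm1\}$ and $\rho_{E,5}(\Gal_\QQ)=\pm H_{i,1}=G_i$; if $\chi_d$ is trivial, i.e.\ $d\in(\QQ^\times)^2$, i.e.\ $E\cong\calE_{i,t}$, then $\rho_{E,5}(\Gal_\QQ)=H_{i,1}$; and if $\chi_d=\alpha\circ\rho_{\calE_{i,t},5}$, which (as $H_{i,1}$ has a unique order-$2$ quotient) happens exactly when $\QQ(\sqrt d)=\QQ(\sqrt5)$, i.e.\ $E\cong\calE_{i,t}^{(5)}$, then $\rho_{E,5}(\Gal_\QQ)=\{\alpha(h)h:h\in H_{i,1}\}=H_{i,2}$. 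This gives the two stated equivalences.

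\medskip
\noindent\emph{Main obstacle.} The only step that is not formal is the claim used in Part (iii) that $\rho_{\calE_{i,t},5}(\Gal_\QQ)$ is conjugate into $H_{i,1}$, i.e.\ that the explicit Weierstrass models $\calE_{i,t}$ carry the level-$5$ structure attached to $H_{i,1}$. This is exactly what the algorithm producing the $\calE_{i,t}$ (described later in the paper) is built to guarantee: one realizes $\calE_{i,t}$ over $\QQ(t)$ as a quadratic twist of a Weierstrass model of $j$-invariant $J_i(t)$ whose mod-$5$ image over $\QQ(t)$ lies in $H_{i,1}$, and then specializes at $t\in\QQ$; concretely this reduces to checking that $\calE_{i,t}$ has the expected rational $5$-torsion point and Galois-stable line, which can also be read off the $5$-division polynomial. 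Everything else is group theory (Part (i)), the already-computed modular curves (Part (ii)), and quadratic twisting (Part (iii)).
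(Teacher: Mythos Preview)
Your Part~(i) has a concrete gap. The conditions $-I\in G$ and $\det(G)=\FF_5^\times$ do \emph{not} by themselves cut the list of proper subgroups down to $G_1,\dots,G_9$: for instance $C_{ns}(5)$ (cyclic of order~$24$, index~$20$ in $\GL_2(\FF_5)$) and its unique index-$3$ subgroup both satisfy these conditions yet are conjugate to none of the $G_i$. What rules them out is the third clause in the paper's notion of an \emph{applicable} subgroup, namely that $G$ contain an element of trace~$0$ and determinant~$-1$; every trace-$0$ element of $C_{ns}(5)$ has determinant $2$ or $3$. This extra constraint comes from the image of complex conjugation (Proposition~\ref{P:basic applicable}), and the enumeration in \S\ref{S:main classification} uses it. Your argument is easily repaired by invoking it, but as stated the ``finite enumeration'' claim is false.

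Part~(ii) is the paper's argument. Your Part~(iii) is correct and takes a genuinely different route. You exploit that $H_{i,1}$ has a \emph{unique} nontrivial homomorphism to $\{\pm1\}$, namely $h\mapsto\bigl(\tfrac{\det h}{5}\bigr)$, so $\alpha\circ\rho_{\calE_{i,t},5}$ is forced to be the character of $\QQ(\sqrt5)/\QQ$, and Goursat then pins down the three possible images of a quadratic twist. This is more conceptual than the paper's approach in \S\ref{S:twist 1}--\ref{S:twists 2}, which computes the set $\calD_E$ of exceptional twisting constants by sieving $a_p(E)\bmod5$ at several specializations and then distinguishes $H_{i,1}$ from $H_{i,2}$ by exhibiting a prime $p$ for which $(a_p(\calE_{i,u}),p)\bmod5$ does not occur as $(\tr A,\det A)$ for any $A\in H_{i,2}$. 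Both routes rest on the same ``key input'' that $\rho_{\calE_{i,t},5}(\Gal_\QQ)\subseteq H_{i,1}$, which the paper's \S\ref{S:twists 2} machinery supplies. One small imprecision in your obstacle paragraph: for $i=5$ the group $H_{5,1}=\bigl\{\left(\begin{smallmatrix}*&*\\0&1\end{smallmatrix}\right)\bigr\}$ fixes no point of $E[5]$, so a division-polynomial check for a rational $5$-torsion point would have to be carried out on the $5$-isogenous curve rather than on $\calE_{5,t}$ itself.
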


\subsection{\underline{$\ell=7$}}  \label{SS:applicable 7}

Define the follow subgroups of $\GL_2(\FF_7)$:
\begin{itemize}
\item
Let $G_1$ be the subgroup of $N_s(7)$ consisting of elements of $C_s(7)$ with square determinant and elements of $N_s(7)-C_s(7)$ with non-square determinant; it is generated by $\left(\begin{smallmatrix}2 & 0 \\0 & 4 \end{smallmatrix}\right)$, $\left(\begin{smallmatrix}0 & 2 \\1 & 0 \end{smallmatrix}\right)$ and $\left(\begin{smallmatrix}-1 & 0 \\0 & -1 \end{smallmatrix}\right)$.
\item
Let $G_2$ be the group $N_s(7)$.
\item
Let $G_3$ be the subgroup  consisting of matrices of the form $\pm \left(\begin{smallmatrix}1 & * \\0 & * \end{smallmatrix}\right)$.
\item 
Let $G_4$ be the subgroup consisting of matrices of the form $\pm \left(\begin{smallmatrix}* & * \\0 & 1 \end{smallmatrix}\right)$.
\item 
Let $G_5$ be the subgroup consisting of matrices of the form $\left(\begin{smallmatrix}a & * \\0 & \pm a \end{smallmatrix}\right)$. 
\item 
Let $G_6$ be the group $N_{ns}(7)$.  
\item
Let $G_7$ be the group $B(7)$.  
\item
Let $H_{1,1}$ be the subgroup generated by $\left(\begin{smallmatrix}2 & 0 \\0 & 4 \end{smallmatrix}\right)$ and $\left(\begin{smallmatrix}0 & 2 \\1 & 0 \end{smallmatrix}\right)$.   
\item
Let $H_{3,1}$ be the subgroup consisting of the matrices of the form $\left(\begin{smallmatrix}1 & * \\0 & * \end{smallmatrix}\right)$.   
\item
Let $H_{3,2}$ be the subgroup consisting of the matrices of the form $\left(\begin{smallmatrix}\pm 1 & * \\0 & a^2 \end{smallmatrix}\right)$.   
\item
Let $H_{4,1}$ be the subgroup consisting of the matrices of the form $\left(\begin{smallmatrix}* & * \\0 & 1 \end{smallmatrix}\right)$.   
\item
Let $H_{4,2}$ be the subgroup consisting of the matrices of the form $\left(\begin{smallmatrix} a^2 & * \\0 & \pm 1 \end{smallmatrix}\right)$.   
\item
Let $H_{5,1}$ be the subgroup consisting of the matrices of the form $\left(\begin{smallmatrix}\pm a^2 & * \\0 & a^2 \end{smallmatrix}\right)$.    
\item
Let $H_{5,2}$ be the subgroup consisting of the matrices of the form $\left(\begin{smallmatrix}  a^2 & * \\0 & \pm a^2 \end{smallmatrix}\right)$.   
\item
Let $H_{7,1}$ be the subgroup consisting of the matrices of the form $\left(\begin{smallmatrix} * & * \\0 &  a^2 \end{smallmatrix}\right)$.   
\item
Let $H_{7,2}$ be the subgroup consisting of the matrices of the form $\left(\begin{smallmatrix}  a^2 & * \\0 & * \end{smallmatrix}\right)$.   
\end{itemize}
The index in $\GL_2(\FF_7)$ of the above subgroups are $56$, $28$, $24$, $24$, $24$, $21$, $8$, $112$, $48$, $48$, $48$, $48$, $48$, $48$, $16$ and $16$, respectively.  Each of the groups $G_i$ contain $-I$.   The groups $H_{i,j}$ do not contain $-I$ and we have $G_i = \pm H_{i,j}$.

Define the rational functions
\begin{align*}
J_1(t) &= 3^3\cdot 5\cdot 7^5/2^7	\\
J_2(t)&= \frac{t(t + 1)^3(t^2 - 5t + 1)^3(t^2 - 5t + 8)^3(t^4 - 5t^3 + 8t^2 - 7t + 7)^3 }{(t^3 - 4t^2 + 3t + 1)^7}\\
J_3(t) & = \frac{(t^2 - t + 1)^3(t^6 - 11t^5 + 30t^4 - 15t^3 - 10t^2 + 5t + 1)^3}{(t - 1)^7 t^7(t^3 - 8t^2 + 5t + 1)}\\
J_4(t) & = \frac{(t^2 - t + 1)^3(t^6 + 229t^5 + 270t^4 - 1695t^3 + 1430t^2 - 235t + 1)^3}{(t - 1)t(t^3 - 8t^2 + 5t + 1)^7}\\
J_5(t) & = -\frac{(t^2-3t-3)^3(t^2-t+1)^3(3t^2-9t+5)^3(5t^2-t-1)^3}{(t^3-2t^2-t+1) (t^3-t^2-2t+1)^7} \\
J_6(t) &= \frac{64 t^3(t^2+7)^3(t^2-7t+14)^3(5t^2-14t-7)^3}{(t^3-7t^2+7t+7)^7}\\
J_7(t)&=\frac{(t^2+245t+2401)^3(t^2+13t+49)}{t^7}     
\end{align*}

\noindent Let $\calE_1$ be the elliptic curve over $\QQ$ defined by the Weierstrass equation
\[
y^2= x^3 -5^3  7^3 x - 5^4  7^2 106
\]
\noindent For $t\in \QQ-\{0,1\}$, let $\calE_{3,t}$ be the elliptic curve over $\QQ$ defined by the Weierstrass equation
\begin{align*}
y^2=x^3 &- 27(t^2 - t + 1)(t^6 - 11t^5 + 30t^4 - 15t^3 - 10t^2 + 5t + 1) x\\
+&54(t^{12} - 18t^{11} + 117t^{10} - 354t^9 + 570t^8 - 486t^7 \\
&\quad\quad\quad\quad+ 273t^6 - 222t^5  + 174t^4 - 46t^3 - 15t^2 + 6t + 1).
\end{align*}
For $t\in \QQ-\{0,1\}$, let $\calE_{4,t}$ be the elliptic curve over $\QQ$ defined by the Weierstrass equation
\begin{align*}
y^2=x^3 &-27(t^2-t+1)(t^6+229t^5+270t^4-1695t^3+1430t^2-235t+1) x \\ 
+&54(t^{12}-522 t^{11}-8955 t^{10}+37950 t^9-70998 t^8+131562 t^7\\
&\quad \quad -253239 t^6+316290 t^5-218058 t^4+80090 t^3-14631 t^2+510 t+1).
\end{align*}
For $t\in \QQ$, let $\calE_{5,t}$ be the elliptic curve over $\QQ$ defined by the Weierstrass equation
\begin{align*}
y^2 = &\,\,x^3 -   27\cdot 7 (t^2 - 3t - 3)(t^2 - t + 1)(3t^2 - 9t + 5)(5t^2 - t - 1) x \\
&-54\cdot 7^2  (t^4 - 6t^3 + 17t^2 - 24t + 9)(3t^4 - 4t^3 - 5t^2 - 2t - 1)(9t^4 - 12t^3 - t^2 + 8t - 3).
\end{align*}
For $t\in \QQ-\{0\}$, let $\calE_{7,t}$ be the elliptic curve over $\QQ$ defined by the Weierstrass equation
\begin{align*}
  y^2=x^3 &- 27(t^2 + 13t + 49)^3(t^2 + 245t + 2401)x\\ 
  & +54(t^2 + 13t + 49)^4(t^4 - 490t^3 - 21609t^2 - 235298t - 823543).
\end{align*}
The $j$-invariant of $\calE_{i,t}$ is $J_i(t)$.

\begin{thm} \label{T:main7}
Let $E$ be a non-CM elliptic curve over $\QQ$.
\begin{romanenum}  
\item \label{T:main7 i}
If $\rho_{E,7}$ is not surjective, then $\rho_{E,7}(\Gal_\QQ)$ is conjugate in $\GL_2(\FF_7)$ to one of the groups $G_i$ or $H_{i,j}$.
\item \label{T:main7 ii}
The group $\rho_{E,7}(\Gal_\QQ)$ is conjugate to a subgroup of $G_i$ if and only if $j_E$ is of the form $J_i(t)$ for some $t\in \QQ$.
\item \label{T:main7 iii}
The group $\rho_{E,7}(\Gal_\QQ)$ is conjugate to $H_{1,1}$ if and only if $E/\QQ$ is isomorphic to $\calE_1$ or to the quadratic twist of $\calE_1$ by $-7$.
\item \label{T:main7 iv}
Suppose that $\pm \rho_{E,7}(\Gal_\QQ)$ is conjugate to $G_i$ with $i\in \{3,4,5,7\}$.  Fix an element $t\in \QQ$ such that $J_i(t)=j_E$.  

\noindent The group $\rho_{E,7}(\Gal_\QQ)$ is conjugate to $H_{i,1}$ if and only if $E$ is isomorphic to $\calE_{i,t}$.

\noindent The group $\rho_{E,7}(\Gal_\QQ)$ is conjugate to $H_{i,2}$ if and only if $E$ is isomorphic to the quadratic twist of $\calE_{i,t}$ by $-7$. 
\end{romanenum}
\end{thm}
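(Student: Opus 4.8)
The plan is to deduce all four parts from the general correspondence set up earlier in the paper — for a subgroup $G\subseteq\GL_2(\FF_7)$ with $-I\in G$ and $\det G=\FF_7^\times$ one has the modular curve $X_G/\QQ$ with its morphism $\pi_G\colon X_G\to\PP^1_\QQ$, and for non-CM $E/\QQ$ with $j_E\notin\{0,1728\}$ the image $\rho_{E,7}(\Gal_\QQ)$ is conjugate into $G$ exactly when $j_E\in\pi_G(X_G(\QQ))$ — together with the subgroup lattice of $\GL_2(\FF_7)$, the explicit genus-$0$ models of the relevant $X_G$ produced in the computational part of the paper, and an analysis of the one positive-genus modular curve that intervenes. (The cases $j_E\in\{0,1728\}$ are treated separately, and one checks that $J_1=3^3\cdot5\cdot7^5/2^7$ is not a CM value.)

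\emph{Part (i).} Since $\det\circ\rho_{E,7}$ is the mod $7$ cyclotomic character, $G:=\pm\rho_{E,7}(\Gal_\QQ)$ is a proper subgroup with $-I\in G$ and $\det G=\FF_7^\times$. The crucial reduction is that the image $\bbar{G}$ of any full-determinant subgroup in $\PGL_2(\FF_7)$ cannot lie inside $\PSL_2(\FF_7)$: otherwise every element of $G$ would be a scalar times an element of $\SL_2(\FF_7)$, forcing $\det G\subseteq(\FF_7^\times)^2$. As the exceptional subgroups $A_4,S_4$ of $\PGL_2(\FF_7)$ lie in $\PSL_2(\FF_7)$ and $A_5$ does not embed in $\PGL_2(\FF_7)$, the only maximal subgroups of $\PGL_2(\FF_7)$ not contained in $\PSL_2(\FF_7)$ are the Borel and the two Cartan normalizers; hence $G$ lies in one of $B(7)$, $N_s(7)$, $N_{ns}(7)$. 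It then remains to list the subgroups of these three groups that have full determinant and contain $-I$ (and, for $\rho_{E,7}(\Gal_\QQ)$ itself, their index-$2$ subgroups without $-I$) and to decide which occur. Many candidates are killed by group theory: a number of them fail to have full determinant; $C_{ns}(7)$ cannot occur because complex conjugation supplies an element of trace $0$ and determinant $-1$, and $C_{ns}(7)$ has none; and $X_{C_s(7)}$ is known to have only cuspidal and CM rational points. The finitely many remaining candidates are eliminated by showing their modular curves carry no non-cuspidal non-CM rational point (with some of those curves turning out to be isomorphic to ones already handled). This last step — ruling out the leftover subgroups via rational points on the associated modular curves — is, together with the positive-genus case below, where I expect the main difficulty to lie.

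\emph{Part (ii).} For each $i\neq1$ the curve $X_{G_i}$ has genus $0$ with a rational point, hence is $\PP^1_\QQ$, and the paper's computations furnish a parameter $h_i$ with $j=J_i(h_i)$; thus $\pi_{G_i}(X_{G_i}(\QQ))=\{J_i(t):t\in\PP^1(\QQ)\}$, and checking that $J_i(\infty)$ is either a cusp (where $j=\infty$, not the $j$-invariant of an elliptic curve) or already $J_i(t_0)$ for some $t_0\in\QQ$ gives the stated criterion. The curve $X_{G_1}$ has positive genus — this is computed directly from $G_1$ — so here I would instead produce an explicit model for $X_{G_1}$ and determine $X_{G_1}(\QQ)$ by a Mordell--Weil or Chabauty argument, finding that apart from cusps it reduces to the single point with $j$-invariant $J_1$; this settles (i) and (ii) for $G_1$ simultaneously.

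\emph{Parts (iii) and (iv).} By (i) and (ii) we may assume $\pm\rho_{E,7}(\Gal_\QQ)=G_i$, so $\rho_{E,7}(\Gal_\QQ)$ is either $G_i$ or an index-$2$ subgroup $H$ with $-I\notin H$ and $\pm H=G_i$. For such $H$ the quotient $\psi_H\colon G_i\to G_i/H\cong\{\pm1\}$ satisfies $\psi_H(-g)=-\psi_H(g)$, so passing to the quadratic twist $E^{(d)}$ — which replaces $\rho_{E,7}$ by $\chi_d\rho_{E,7}$ — replaces $\psi_H\circ\rho_{E,7}$ by $\chi_d\cdot(\psi_H\circ\rho_{E,7})$; hence $\rho_{E^{(d)},7}(\Gal_\QQ)=H$ iff $\chi_d=\psi_H\circ\rho_{E,7}$, so exactly one quadratic twist of $E$ has image in each such $H$. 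A short group computation then shows: $-I$ lies in the commutator subgroup of $G_i$ for $i\in\{2,6\}$, so every index-$2$ subgroup of $G_i$ contains $-I$ and thus $\rho_{E,7}(\Gal_\QQ)=G_i$, with nothing further to prove; for $i\in\{3,4,5,7\}$ the subgroups $H$ with $-I\notin H$, $\pm H=G_i$ are exactly $H_{i,1}$ and $H_{i,2}$, with $\psi_{H_{i,1}}\cdot\psi_{H_{i,2}}=\varepsilon\circ\det$ where $\varepsilon\colon\FF_7^\times\to\{\pm1\}$ is the quadratic residue character (here one uses that $-1$ is a non-residue modulo $7$); and for $i=1$ there are exactly two such subgroups, which are conjugate in $\GL_2(\FF_7)$, with the analogous relation. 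Since $\varepsilon\circ\det\circ\rho_{E,7}=\varepsilon\circ\chi_{\mathrm{cyc}}$ is the quadratic character cut out by the quadratic subfield $\QQ(\sqrt{-7})$ of $\QQ(\zeta_7)$, the two distinguished twists differ by $-7$. It remains only to confirm, by a finite computation on each family (e.g.\ via the $7$-division polynomial), that the normalized models $\calE_{i,t}$ and $\calE_1$ have mod $7$ image precisely $H_{i,1}$; then the twist by $-7$ has image $H_{i,2}$ (or, for $i=1$, a $\GL_2(\FF_7)$-conjugate of $H_{1,1}$), while every other twist has image $G_i$ — which is exactly the assertion.
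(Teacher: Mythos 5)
Your proposal is correct and follows essentially the same route as the paper: parts (i)--(ii) are proved by producing hauptmoduls for the genus-zero curves $X_{G_i}$ and determining the rational points of the few positive-genus curves that intervene (in the paper $X_{G_1}$, $X_{C_s(7)}$ and a quadratic subcover of $X_{ns}(7)$ all turn out to be rank-zero genus-one curves), while (iii)--(iv) use exactly the quadratic-twist dichotomy you describe, your identity $\psi_{H_{i,1}}\psi_{H_{i,2}}=\varepsilon\circ\det$ being a clean group-theoretic restatement of the paper's lemma that the two distinguished twists differ by $\ell^*=-7$. The only cautions are that all the substantive work lies in the steps you defer — constructing the $h_i$ from the Klein quartic, matching their stabilizers with the groups $G_i$, and the rational-point computations — and that the verification that $\calE_{i,t}$ has image exactly $H_{i,1}$ must be carried out uniformly in $t$ (over $\QQ(t)$, or via the $\pi_1(U)$/Hilbert-irreducibility argument of Proposition~\ref{P:fH prop}), not merely at a single specialization.
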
  

\subsection{\underline{$\ell=11$}}  \label{SS:applicable 11}

\begin{itemize}
\item
Let $G_1$ be the subgroup generated by  $\pm \left(\begin{smallmatrix}1 & 1 \\0 & 1 \end{smallmatrix}\right)$ and $\left(\begin{smallmatrix}4 & 0 \\0 & 6 \end{smallmatrix}\right)$.

\item
Let $G_2$ be the subgroup generated by  $\pm \left(\begin{smallmatrix}1 & 1 \\0 & 1 \end{smallmatrix}\right)$ and $\left(\begin{smallmatrix}5 & 0 \\0 & 7 \end{smallmatrix}\right)$.

\item
Let $G_3$ be the group $N_{ns}(11)$.
\item
Let $H_{1,1}$ be the subgroup generated by  $\left(\begin{smallmatrix}1 & 1 \\0 & 1 \end{smallmatrix}\right)$ and $\left(\begin{smallmatrix}4 & 0 \\0 & 6 \end{smallmatrix}\right)$.
\item
Let $H_{1,2}$ be the subgroup generated by  $\left(\begin{smallmatrix}1 & 1 \\0 & 1 \end{smallmatrix}\right)$ and $\left(\begin{smallmatrix}7 & 0 \\0 & 5 \end{smallmatrix}\right)$.
\item
Let $H_{2,1}$ be the subgroup generated by  $\left(\begin{smallmatrix}1 & 1 \\0 & 1 \end{smallmatrix}\right)$ and $\left(\begin{smallmatrix}5 & 0 \\0 & 7 \end{smallmatrix}\right)$.
\item
Let $H_{2,2}$ be the subgroup generated by  $\left(\begin{smallmatrix}1 & 1 \\0 & 1 \end{smallmatrix}\right)$ and $\left(\begin{smallmatrix}6 & 0 \\0 & 4 \end{smallmatrix}\right)$.
\end{itemize}
The index in $\GL_2(\FF_{11})$ of the above subgroups are $60$, $60$, $55$, $110$, $120$, $120$, $120$ and $120$, respectively.  Each of the groups $G_i$ contain $-I$.   The groups $H_{i,j}$ do not contain $-I$ and we have $G_i = \pm H_{i,j}$.  

Let $\calE$ be the elliptic curve over $\QQ$ defined by the Weierstrass equation $y^2+y = x^3-x^2-7x+10$ and let $\OO$ be the point at infinity.   The Mordell-Weil group $\calE(\QQ)$ is an infinite cyclic group generated by the point $(4,5)$.     Define 
\[
J(x,y):=\frac{(f_1 f_2 f_3 f_4)^3}{f_5^2 f_6^{11}},
\]
where
\begin{align*}
f_1&=x^2+3x-6,
&f_2&=11(x^2-5)y+(2x^4+23x^3-72x^2-28x+127),\\
f_3&=6y+11x-19,
&f_4&=22(x-2)y+(5x^3+17x^2-112x+120), \\
f_5&=11y+(2x^2+17x-34), 
&f_6&=(x-4)y-(5x-9).
\end{align*}
We shall view $J$ as a morphism $\calE \to \AA_\QQ^1\cup\{\infty\}$.\\

\noindent
Let $\calE_{1}/\QQ$ be the elliptic curve defined by the Weierstrass equation  $y^2 = x^3-27\cdot 11^4 x + 54\cdot 11^5\cdot 43$

\noindent 
Let $\calE_{2}/\QQ$ be the elliptic curve defined by the Weierstrass equation $y^2=x^3-27\cdot 11^3\cdot 131 x +54\cdot 11^4\cdot 4973$.

\begin{thm} \label{T:11 main}
Let $E$ be a non-CM elliptic curve defined over $\QQ$.
\begin{romanenum}  
\item \label{T:11 main a}
If $\rho_{E,11}$ is not surjective, then $\rho_{E,11}(\Gal_\QQ)$ is conjugate in $\GL_2(\FF_{11})$ to one of the groups $G_i$ or $H_{i,j}$.
\item  \label{T:11 main b}
The group $\pm\rho_{E,11}(\Gal_\QQ)$ is conjugate to $G_1$ in $\GL_2(\FF_{11})$ if and only if $j_E=-11^2$.
\item \label{T:11 main c}
The group $\pm\rho_{E,11}(\Gal_\QQ)$ is conjugate to $G_2$ in $\GL_2(\FF_{11})$ if and only if $j_E=-11\cdot 131^3$.
\item  \label{T:11 main d}
The group $\rho_{E,11}(\Gal_\QQ)$ is conjugate to $G_3$ in $\GL_2(\FF_{11})$ if and only if $j_E=J(P)$ for some point $P\in \calE(\QQ)-
\{\OO\}$. 
\item \label{T:11 main e}
For $i\in \{1,2\}$, the group $\rho_{E,11}(\Gal_\QQ)$ is conjugate in $\GL_2(\FF_{11})$ to $H_{i,1}$ if and only if $E$ is isomorphic to $\calE_{i}$.
\item \label{T:11 main f}
For $i\in \{1,2\}$, the group $\rho_{E,11}(\Gal_\QQ)$ is conjugate in $\GL_2(\FF_{11})$ to $H_{i,2}$ if and only if $E$ is isomorphic to the quadratic twist of $\calE_{i}$ by $-11$.
\end{romanenum}
\end{thm}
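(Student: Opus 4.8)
Throughout, set $G:=\pm\rho_{E,11}(\Gal_\QQ)$; as recalled in \S\ref{S:classification} we have $-I\in G$ and $\det G=\FF_{11}^\times$, the latter because $\det\rho_{E,11}$ is the surjective mod-$11$ cyclotomic character. Since $\GL_2(\FF_{11})$ has a unique index-$2$ subgroup, namely $\{g:\det g\in(\FF_{11}^\times)^2\}$, whose determinant is not surjective, $\rho_{E,11}$ is non-surjective precisely when $G\neq\GL_2(\FF_{11})$, in which case $G$ is contained in a maximal subgroup $M$ with $-I\in M$ and $\det M=\FF_{11}^\times$. By Dickson's classification of subgroups of $\GL_2(\FF_\ell)$, such an $M$ is conjugate to $B(11)$, $N_s(11)$, $N_{ns}(11)$, or to the full preimage $\widetilde S$ in $\GL_2(\FF_{11})$ of a subgroup $S\cong S_4$ of $\PGL_2(\FF_{11})$; the possibilities with projective image $A_4$ or $A_5$ drop out because those groups have no quotient of order $2$, so that the homomorphism $G\to\FF_{11}^\times/(\FF_{11}^\times)^2$ induced by $\det$ would be trivial, contradicting $\det G=\FF_{11}^\times$. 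The plan is to go through these four cases; this yields assertion~(\ref{T:11 main a}) and, along the way, (\ref{T:11 main b})--(\ref{T:11 main f}).

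Two of the four cases contribute nothing. The modular curve $X_{N_s(11)}$ is isomorphic to $X_0^+(121)$, a curve of genus $\geq 2$; by Faltings it has only finitely many rational points, and these are known to be cusps or points attached to CM elliptic curves (this is the analysis of $X_{\mathrm{split}}^+(11)$ already in the literature), so no non-CM $E/\QQ$ can have $\rho_{E,11}(\Gal_\QQ)$ conjugate into $N_s(11)$. Similarly, the modular curve attached to $\widetilde S$ is a curve of genus $\geq 2$ and degree $55$ over the $j$-line, and one verifies that all of its rational points are cuspidal or CM --- equivalently, no non-CM $E/\QQ$ has projective mod-$11$ image isomorphic to $S_4$. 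Hence $G$ is conjugate to a subgroup of $B(11)$ or of $N_{ns}(11)$.

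Next comes the Borel case. Here $G$ is conjugate into $B(11)$ exactly when some quadratic twist of $E$ admits a rational $11$-isogeny, i.e.\ when $j_E$ is the image of a non-cuspidal point of $X_0(11)(\QQ)$. As $X_0(11)$ is an elliptic curve with $X_0(11)(\QQ)\cong\ZZ/5\ZZ$ (two cusps and three further rational points), the possible values of $j_E$ are $-2^{15}=j(\tfrac{1+\sqrt{-11}}{2})$, which is discarded as a CM value, together with $-11^2$ and $-11\cdot 131^3$. For the latter two, working on the explicit models $\calE_1$ and $\calE_2$, I would compute the two characters $\Gal_\QQ\to\FF_{11}^\times$ describing the Galois action on the kernel and on the quotient of the isogeny; this pins $G$ down to $G_1$ when $j_E=-11^2$ and to $G_2$ when $j_E=-11\cdot 131^3$, and shows that the mod-$11$ image of $\calE_i$ itself is exactly $H_{i,1}$. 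Since a quadratic twist multiplies $\rho_{E,11}$ by the associated quadratic character, tracking its effect on these two characters shows that twisting by $-11$ is the unique non-trivial twist that carries $H_{i,1}$ to the other index-$2$ subgroup $H_{i,2}$ (every other twist giving the full group $G_i$). This settles (\ref{T:11 main b}), (\ref{T:11 main c}), (\ref{T:11 main e}) and (\ref{T:11 main f}).

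Finally the non-split Cartan case. Here I would identify $X_{N_{ns}(11)}=X_{\mathrm{ns}}^+(11)$ with the elliptic curve $\calE$, and the morphism $\pi_{N_{ns}(11)}$ with the explicit rational map $J(x,y)$; since $\calE(\QQ)$ is infinite cyclic (generated by $(4,5)$), every non-cuspidal point $P$ produces a $j_E$ with $\rho_{E,11}(\Gal_\QQ)$ conjugate into $N_{ns}(11)$, and conversely. To see that the image then equals $N_{ns}(11)$, note that of the groups appearing in~(\ref{T:11 main a}) only $G_3=N_{ns}(11)$ has order dividing $|N_{ns}(11)|=240$ --- indeed $|G_1|=|G_2|=220$ and $|H_{i,j}|=110$ --- so once~(\ref{T:11 main a}) is known, $j_E=J(P)$ with $P$ non-cuspidal forces $\rho_{E,11}(\Gal_\QQ)=G_3$. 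This gives~(\ref{T:11 main d}); it also completes~(\ref{T:11 main a}), after one rules out --- again by a Faltings-plus-explicit-computation argument, applied now to the three quadratic covers of $\calE$ corresponding to the index-$2$ subgroups of $N_{ns}(11)$ --- that $G$ could be a proper subgroup of $N_{ns}(11)$.

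The group theory here is routine; the real work is the explicit geometry. Producing the isomorphism $X_{\mathrm{ns}}^+(11)\cong\calE$ and, above all, the closed form of $J$ as a rational function of $x$ and $y$ requires computing $q$-expansions of modular functions of level $11$ and matching a suitable combination of them against the $j$-function. Together with the determination of the rational points of $X_0^+(121)$ and of the $S_4$-curve (genus $\geq 2$, hence finitely many rational points by Faltings, followed by an explicit computation or an appeal to known results), this is where the difficulty of the theorem lies; the remaining ingredients are the $\GL_2(\FF_{11})$ group theory, the classical rational points of $X_0(11)$, and routine bookkeeping with quadratic twists.
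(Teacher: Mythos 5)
Your architecture coincides with the paper's: the four maximal applicable subgroups $B(11)$, $N_s(11)$, $N_{ns}(11)$ and the normalizer-of-$\mathfrak{S}_4$ group; known results for the split and exceptional cases; the five rational points of $X_0(11)$ plus an explicit character computation on $\calE_1,\calE_2$ for the Borel case (the paper indeed finds $\{\chi_1,\chi_2\}=\{\omega^2,\omega^9\}$ resp.\ $\{\omega^4,\omega^7\}$, and your twisting argument for (v)--(vi) is essentially the paper's observation that $\calD_E=\{1,-11\}$). But there is a genuine gap in your treatment of the non-split case, which is needed for both (i) and the ``if'' direction of (iv). To show that a non-CM curve with $j_E=J(P)$ has image equal to \emph{all} of $N_{ns}(11)$, you propose to exclude proper subgroups by examining only ``the three quadratic covers of $\calE$ corresponding to the index-$2$ subgroups of $N_{ns}(11)$.'' That does not suffice: $N_{ns}(11)$ also has a maximal applicable subgroup $G_5$ of index $3$, generated by the unique index-$3$ subgroup of $C_{ns}(11)$ together with an element of $N_{ns}(11)-C_{ns}(11)$. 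This $G_5$ has order $80$, contains $-I$, satisfies $\det(G_5)=\FF_{11}^\times$ (cubing is a bijection on $\FF_{11}^\times$, so the norm map stays surjective on the index-$3$ subgroup of $C_{ns}(11)$), and is contained in no index-$2$ subgroup of $N_{ns}(11)$; your Lagrange argument ($220\nmid 240$, $110\nmid 240$) cannot eliminate it since $80\mid 240$. The paper closes this case by noting that the image of $G_5$ in $\PGL_2(\FF_{11})$ has order $8$, hence is a $2$-Sylow subgroup of $\PGL_2(\FF_{11})$ and therefore lies inside a copy of $\mathfrak{S}_4$; so $G_5$ is conjugate into $H_{\mathfrak{S}_4}$ and is disposed of by the exceptional case. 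Among the index-$2$ subgroups, only one ($G_4$: elements of $C_{ns}(11)$ with square determinant together with elements outside $C_{ns}(11)$ with non-square determinant) actually requires the rational-point computation you envisage; the paper realizes $X_{G_4}$ as the twist by $-11$ of the double cover $X_{ns}(11)\to X_{ns}^+(11)$ and reduces to the rank-$0$ elliptic curve $v^2=11(4x^3+7x^2-6x+19)$. The other two are excluded for free: $C_{ns}(11)$ contains no element of trace $0$ and determinant $-1$ (no complex conjugation), and the third does not have full determinant.

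A secondary inaccuracy: $X_{\mathfrak{S}_4}(11)$ does not have genus $\geq 2$, so Faltings is not the right tool there. It is a genus-$1$ curve, isomorphic to an elliptic curve with exactly one rational point (Ligozat), and that point is CM. Your conclusion is correct, but the justification should be the rank-$0$ computation or the citation rather than Faltings. With the index-$3$ subgroup handled and this correction made, the proposal reproduces the paper's proof.
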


\begin{remark}  \label{R:ns computation}
The modular curve $X_{ns}^+(11)=X_{G_3}$ is the only one in our classification that has genus $1$ with infinitely many rational points.  Halberstadt \cite{MR1677158} showed that $X_{ns}^+(11)$ is isomorphic to $\calE$ and that the morphism to the $j$-line corresponds to $J(x,y)$.    

In \S\ref{S:ns section}, we give explicit polynomials $A,B,C\in \QQ[X]$ of degree $55$ such that for a non-CM elliptic curve $E/\QQ$, we have $j_E=J(P)$ for some $P\in \calE(\QQ)-\{\OO\}$ if and only if the polynomial  $A(x)j_E^2+B(x)j_E +C(x) \in \QQ[x]$ has a rational root.  This gives a straightforward way to check the criterion of Theorem~\ref{T:11 main}(\ref{T:11 main d}).  
\end{remark}

\subsection{\underline{$\ell=13$}}  \label{SS:applicable 13}

Define the following subgroups of $\GL_2(\FF_{13})$:
\begin{itemize}
\item Let $G_1$ be the subgroup consisting of matrices of the form $\left(\begin{smallmatrix}* & * \\0 & b^3 \end{smallmatrix}\right)$.
\item Let $G_2$ be the subgroup consisting of matrices of the form $\left(\begin{smallmatrix}a^3 & * \\0 & * \end{smallmatrix}\right)$.
\item Let $G_3$ be the subgroup consisting of matrices $\left(\begin{smallmatrix}a & * \\0 & b \end{smallmatrix}\right)$ for which $(a/b)^4=1$.
\item Let $G_4$ be the subgroup consisting of matrices of the form $\left(\begin{smallmatrix}* & * \\0 & b^2 \end{smallmatrix}\right)$.
\item Let $G_5$ be the subgroup consisting of matrices of the form $\left(\begin{smallmatrix}a^2 & * \\0 & * \end{smallmatrix}\right)$.
\item Let $G_6$ be the group $B(13)$. 
\item Let $G_7$ be the subgroup generated by the matrices 
$\left(\begin{smallmatrix}2 & 0 \\0 & 2 \end{smallmatrix}\right)$, $\left(\begin{smallmatrix}2 & 0 \\0 & 3 \end{smallmatrix}\right)$, $\left(\begin{smallmatrix}0 & -1 \\1 & 0 \end{smallmatrix}\right)$ and $\left(\begin{smallmatrix}1 & 1 \\-1 & 1 \end{smallmatrix}\right)$;  it contains the scalar matrices and its image in $\PGL_2(\FF_{13})$ is isomorphic to $\mathfrak{S}_4$.
\item
Let $H_{4,1}$ be the subgroup consisting of matrices of the form $\left(\begin{smallmatrix}* & * \\0 & a^4 \end{smallmatrix}\right)$.   
\item
Let $H_{4,2}$ be the subgroup generated by matrices of the form $\big(\begin{smallmatrix} b^2 & * \\0 & a^4 \end{smallmatrix}\big)$ and    $\left(\begin{smallmatrix} 2 & 0 \\0 &  4 \end{smallmatrix}\right)$.
\item
Let $H_{5,1}$ be the subgroup consisting of matrices of the form $\left(\begin{smallmatrix}a^4 & * \\0 & * \end{smallmatrix}\right)$.   
\item
Let $H_{5,2}$ be the subgroup generated by matrices of the form $\big(\begin{smallmatrix}a^4 & * \\0 & b^2 \end{smallmatrix}\big)$ and    $\left(\begin{smallmatrix} 4 & 0 \\0 &  2 \end{smallmatrix}\right)$.
\end{itemize}
The index in $\GL_2(\FF_{13})$ of the above subgroups are $42$, $42$, $42$, $28$, $28$, $14$, $91$, $56$, $56$, $56$ and $56$, respectively.   Each of the groups $G_i$ contain $-I$.   The groups $H_{i,j}$ do not contain $-I$ and we have $G_i = \pm H_{i,j}$.  \\

Define the polynomials 
\begin{align*}
P_1(t)&= t^{12} + 231t^{11} + 269t^{10} - 3160t^9 + 6022t^8 - 9616t^7 + 21880t^6 \\ &\quad - 34102t^5 + 28297t^4 - 12455t^3 + 2876t^2 - 243t + 1\\
P_2(t)&= t^{12} - 9t^{11} + 29t^{10} - 40t^9 + 22t^8 - 16t^7 + 40t^6 - 22t^5 - 23t^4 + 25t^3 - 4t^2 - 3t + 1\\
P_3(t)&= (t^4-t^3+2t^2-9t+3)(3t^4-3t^3-7t^2+12t-4)(4t^4-4t^3-5t^2+3t-1)\\
P_4(t)&= t^8 + 235t^7 + 1207t^6 + 955t^5 + 3840t^4 - 955t^3 + 1207t^2 - 235t+ 1\\
P_5(t)&= t^8 - 5t^7 + 7t^6 - 5t^5 + 5t^3 + 7t^2 + 5t + 1\\
P_6(t)&= t^4+7t^3+20t^2+19t+1\\  
Q_4(t)&=\,t^{12} - 512 t^{11} - 13079 t^{10} - 32300 t^9 - 104792 t^8 - 111870 t^7 \\
&\quad\quad\quad - 419368 t^6 + 111870 t^5 - 104792 t^4 + 32300 t^3 - 13079 t^2 + 512 t +1, \\
Q_5(t)&=\, t^{12} - 8 t^{11} + 25 t^{10} - 44 t^9 + 40 t^8 + 18 t^7 - 40 t^6 - 18 t^5 + 40 t^4 + 44 t^3 + 25 t^2 + 8 t + 1
\end{align*}
and  the rational functions
\begin{align*}
J_1(t) & =  \frac{(t^2 - t + 1)^3 P_1(t)^3}{(t - 1)t(t^3 - 4t^2 + t + 1)^{13}}
\quad \quad \quad  \quad \quad \quad \quad    
J_2(t)  = \frac{(t^2 - t + 1)^3P_2(t)^3}{(t - 1)^{13} t^{13} (t^3 - 4t^2 + t + 1)}\\
J_3(t) &= -\frac{13^4(t^2-t+1)^3 P_3(t)^3}{(t^3-4t^2+t+1)^{13} (5t^3-7t^2-8t+5)}
\quad 
J_4(t) = \frac{(t^4 - t^3 + 5t^2 + t + 1)P_4(t)^3}{t (t^2 - 3t - 1)^{13}}  \\
J_5(t) &=  \frac{(t^4 - t^3 + 5t^2 + t + 1) P_5(t)^3}{t^{13} (t^2 - 3t - 1)}
\quad \quad \quad \quad \quad \quad \,\,   J_6(t) =  \frac{(t^2+5t+13)P_6(t)^3}{t}.
\end{align*} 
\noindent 
For $t\in \QQ-\{0\}$, let $\calE_{4,t}$ be the elliptic curve over $\QQ$ defined by the Weierstrass equation
\begin{align*}
y^2= x^3-27(t^4 - t^3 + 5t^2 + t + 1)^3P_4(t) x + 54(t^2 + 1) (t^4 - t^3 + 5 t^2 + t + 1)^4 Q_4(t).
\end{align*}
For $t\in \QQ-\{0\}$, let $\calE_{5,t}$ be the elliptic curve over $\QQ$ defined by the Weierstrass equation
\begin{align*}
y^2= x^3-27(t^4 - t^3 + 5t^2 + t + 1)^3P_5(t) x + 54(t^2 + 1) (t^4 - t^3 + 5 t^2 + t + 1)^4 Q_5(t).
\end{align*}

\begin{thm} \label{T:main 13}
Let $E$ be a non-CM elliptic curve over $\QQ$.   
\begin{romanenum}
\item   \label{T:main 13 a}
If $\rho_{E,13}(\Gal_\QQ)$ is conjugate to a subgroup of $B(13)$,  then $\rho_{E,13}(\Gal_\QQ)$ is conjugate to one of the groups $G_i$ with $1\leq i \leq 6$ or to a group $H_{i,j}$. 

\item \label{T:main 13 b}
For $1\leq i \leq 6$, $\rho_{E,13}(\Gal_\QQ)$ is conjugate in $\GL_2(\FF_{13})$ to a subgroup of $G_i$ if and only if $j_E$ is of the form $J_i(t)$ for some $t\in \QQ$.   

\item  \label{T:main 13 c}
For an $i\in \{4,5\}$, suppose that $J_i(t)=j_E$ for some $t\in \QQ$.

\noindent The group $\rho_{E,13}(\Gal_\QQ)$ is conjugate to $H_{i,1}$ if and only if $E$ is isomorphic to $\calE_{i,t}$.

\noindent The group $\rho_{E,13}(\Gal_\QQ)$ is conjugate to $H_{i,2}$ if and only if $E$ is isomorphic to the quadratic twist of $\calE_{i,t}$ by $13$.

\item \label{T:main 13 d}
If $j_E$ is 
\[
\frac{2^4\cdot 5\cdot 13^4\cdot 17^3}{3^{13}}, \quad  -\frac{2^{12}\cdot 5^3\cdot 11\cdot 13^4}{3^{13}} \quad\text{ or }\quad \frac{2^{18}\cdot3^3\cdot 13^4\cdot 127^3\cdot 139^3\cdot 157^3\cdot 283^3\cdot 929}{5^{13}\cdot 61^{13}},
\]
then $\rho_{E,13}(\Gal_\QQ)$ is conjugate to $G_7$.

\end{romanenum}
\end{thm}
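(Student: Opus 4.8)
The plan is to treat the four parts in turn, using throughout that a non-CM $E/\QQ$ has $j_E\notin\{0,1728\}$, so the set-up of \S\ref{S:classification} applies: with $G:=\pm\rho_{E,13}(\Gal_\QQ)$ we have $-I\in G$ and $\det(G)=\FF_{13}^\times$, the curve $X_G$ and the map $\pi_G$ are defined, and for any subgroup $H\le\GL_2(\FF_{13})$ with $-I\in H$ and $\det(H)=\FF_{13}^\times$ one has: $\rho_{E,13}(\Gal_\QQ)$ is conjugate to a subgroup of $H$ iff $j_E=\pi_H(P)$ for some $P\in X_H(\QQ)$. Write $\chi_{13}=\det\circ\rho_{E,13}$ for the mod-$13$ cyclotomic character. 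For part~(\ref{T:main 13 a}) I would first enumerate, up to conjugacy, the subgroups $H\le B(13)$ with $-I\in H$ and $\det(H)=\FF_{13}^\times$. Such an $H$ meets the unipotent radical $U\cong\FF_{13}$ of $B(13)$ either in all of $U$ or trivially. If $H\cap U=1$, then $H$ is conjugate into $C_s(13)$, so $E[13]$ is a direct sum of two Galois-stable lines; then one of the two $13$-isogenous curves $E/C_i$ carries a rational cyclic $169$-isogeny, which is impossible since $X_0(169)(\QQ)$ consists only of cusps (Mazur, Kenku), so this case does not occur. If $U\subseteq H$, let $\overline H\le(\FF_{13}^\times)^2$ be its image; it has surjective product map and contains $(-1,-1)$. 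Organizing such $\overline H$ by index in $(\FF_{13}^\times)^2$, and using that $-1$ is a square in $\FF_{13}^\times$, one gets: index $1$ gives $G_6$, index $2$ gives $G_4$ or $G_5$, index $3$ gives $G_1,G_2$ or $G_3$, and in each remaining case one of the two characters on $E[13]$ becomes trivial after a quadratic twist (reducing to Mazur's theorem that $X_1(13)(\QQ)$ has no non-CM point) or the data again dominates a curve of $169$-isogenies. Hence $G\in\{G_1,\dots,G_6\}$, so $\rho_{E,13}(\Gal_\QQ)$ is $G$ or an index-$2$ subgroup without $-I$; a short check shows $G_1,G_2,G_3,G_6$ have no such subgroup while $G_4,G_5$ have exactly $H_{i,1},H_{i,2}$, which is~(\ref{T:main 13 a}). (This handles the Borel case; the cases where the image lies in a normalizer of a Cartan subgroup correspond to the genus-$3$ curves $X_{N_s(13)}$, $X_{N_{ns}(13)}$ and fall outside the theorem.)

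Part~(\ref{T:main 13 b}) is the computational core. For each $i\in\{1,\dots,6\}$ I would show that $X_{G_i}$ has genus $0$ and a rational point, hence is $\cong\PP^1_\QQ$, produce a modular function $h$ of level $13$ generating its function field, and compute the rational function $J$ with $j=J(h)$; this function is $J_i$. Then $\pi_{G_i}(X_{G_i}(\QQ))$ equals $\{J_i(t):t\in\QQ\}$ together with the value of $J_i$ at $t=\infty$, which --- when not already attained --- is the cusp $j=\infty$, not the $j$-invariant of an elliptic curve; the general criterion above then yields~(\ref{T:main 13 b}). Carrying out these six genus-$0$ computations by the general method of the earlier sections is where essentially all the work sits; the $J_i$ have degree $14$ (for $i=6$), $28$ (for $i=4,5$) and $42$ (for $i=1,2,3$), so this is a substantial calculation, and it is the step I expect to be the main obstacle.

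For part~(\ref{T:main 13 c}), assume $G=G_i$ with $i\in\{4,5\}$, so by~(\ref{T:main 13 a}) $\rho_{E,13}(\Gal_\QQ)\in\{G_i,H_{i,1},H_{i,2}\}$. Every $E'/\QQ$ with $j_{E'}=J_i(t)$ is a quadratic twist $\calE_{i,t}^{(d)}$, and passing from $\calE_{i,t}$ to $\calE_{i,t}^{(d)}$ multiplies $\rho_{\cdot,13}$ by the quadratic character of $\QQ(\sqrt d)$. I would identify the two quadratic characters of $G_i$ with kernels $H_{i,1}$ and $H_{i,2}$: using $\det\circ\rho_{E',13}=\chi_{13}$ one finds that these differ by the composite of $\det$ with the quadratic character of $\FF_{13}^\times$, which cuts out $\QQ(\sqrt{13})$ (as $13\equiv1\bmod4$). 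Since $\calE_{i,t}$ is constructed so that $\rho_{\calE_{i,t},13}(\Gal_\QQ)=H_{i,1}$, the twist $\calE_{i,t}^{(d)}$ has image $H_{i,1}$ exactly when $d$ is a square, and image $H_{i,2}$ exactly when $d\equiv13$ modulo squares; this is~(\ref{T:main 13 c}). The one input needing care is the verification that $\calE_{i,t}$ itself realizes $H_{i,1}$, supplied by the algorithm that produces it.

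Finally, for part~(\ref{T:main 13 d}), $G_7$ is the full preimage in $\GL_2(\FF_{13})$ of $\mathfrak S_4\le\PGL_2(\FF_{13})$, and $X_{G_7}$ is a curve of genus $3$; beyond the cusps and CM points, its rational points were determined (by Banwait and Cremona, in their study of the local--global principle for $13$-isogenies) to be exactly three, with the three $j$-invariants listed. For each associated non-CM $E/\QQ$, the general criterion gives $\rho_{E,13}(\Gal_\QQ)$ conjugate into $G_7$; to upgrade this to equality I would compute the traces and determinants of Frobenius at a handful of small primes and exhibit elements of $\rho_{E,13}(\Gal_\QQ)$ whose images in $\PGL_2(\FF_{13})$ have orders $3$ and $4$ --- forcing the projective image to be all of $\mathfrak S_4$ --- which together with surjectivity of $\det$ pins down $\rho_{E,13}(\Gal_\QQ)=G_7$. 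Besides the genus-$0$ computations of part~(\ref{T:main 13 b}), the one genuinely external ingredient is this determination of the rational points on the genus-$3$ curve $X_{G_7}$; the group-theoretic bookkeeping in parts~(\ref{T:main 13 a}) and~(\ref{T:main 13 c}) is comparatively routine.
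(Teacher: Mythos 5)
Your parts (\ref{T:main 13 b})--(\ref{T:main 13 d}) follow the paper's route (hauptmoduls and the $J_i$ for the six genus-$0$ curves, the quadratic-twist dichotomy with $\ell^*=13$, and Banwait--Cremona for $G_7$), but part (\ref{T:main 13 a}) has a genuine gap. After disposing of the split-Cartan case via Kenku ($X_0(169)(\QQ)$ consists of cusps) and enumerating the applicable subgroups of $B(13)$ containing the unipotent radical, you are left not only with $G_1,\dots,G_6$ but also with the index-$6$ intersections $G_i\cap G_j$ for $i\in\{1,2,3\}$, $j\in\{4,5\}$ (these have surjective determinant and contain $-I$). Your claim that in each remaining case ``one of the two characters becomes trivial after a quadratic twist \dots or the data again dominates a curve of $169$-isogenies'' covers only $G_1\cap G_4$ and $G_2\cap G_5$, where one diagonal character lands in $\{\pm1\}$. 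For $G_1\cap G_5$ (upper-left entry a square, lower-right a cube), $G_2\cap G_4$, $G_3\cap G_4$ and $G_3\cap G_5$, the two diagonal characters have images of orders $6$ and $4$; neither is quadratic, so no quadratic twist trivializes either of them, and the image is not diagonalizable, so neither Mazur's theorem on $X_1(13)$ nor Kenku's on $X_0(169)$ applies.

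These four cases are exactly where the paper has to do real arithmetic: $X_{G_i\cap G_j}$ has function field $\QQ(h_i,h_j)$ with $F_i(h_i)=F_j(h_j)$, which yields a genus-$2$ model; one then verifies in \texttt{Magma} that the Jacobian has Mordell--Weil rank $0$ and runs Chabauty to show the only rational points are cusps. Without this step, or some other determination of the rational points of these genus-$2$ modular curves, part (\ref{T:main 13 a}) is unproved: a priori $\rho_{E,13}(\Gal_\QQ)$ could be conjugate to, say, $G_3\cap G_4$, which is none of the groups $G_i$, $H_{i,j}$. A smaller issue: in part (\ref{T:main 13 c}) you defer to ``the algorithm'' the identification of which of $H_{i,1}$, $H_{i,2}$ is realized by $\calE_{i,t}$ itself; the paper settles this by computing the trace and determinant of a Frobenius at a small prime and checking the resulting pair cannot arise from $H_{i,2}$, and some such verification is genuinely needed.
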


Up to conjugacy, there are four {maximal} subgroups $G$ of $\GL_2(\FF_{13})$ that satisfy $\det(G)=\FF_{13}^\times$; they are $G_6=B(13)$, $N_{s}(13)$, $N_{ns}(13)$ and $G_7$.   The cases concerning subgroups of $B(13)$ are completely handled in Theorem~\ref{T:main 13}.\\

Baran \cite{Baran-13} has shown that the modular curves $X_{s}^+(13)$ and $X_{ns}^+(13)$ attached to $N_s(13)$ and $N_{ns}(13)$, respectively, are both isomorphic to the genus $3$ curve $C$ defined in $\PP^2_\QQ$ by the equation
\[
(-y-z)x^3 +(2y^2+zy)x^2+(-y^3+zy^2-2z^2y+z^3)x+(2z^2y^2-3z^3y)=0.
\]
In \cite{Baran-13}, the morphism from the model of the modular curves to the $j$-line is given.  The seven rational points $(0 , 0, 1)$, $(0 , 1 , 0)$, $(0,3,2)$, $(1, 0, -1)$, $(1, 0, 0)$, $(1,0,1)$, $(1, 1, 0)$ of $C$ all correspond to cusps and CM points on $X_s(13)$ and $X_{ns}(13)$.    Conjecturally, there are no non-CM elliptic curves $E$ over $\QQ$ with $\rho_{E,13}(\Gal_\QQ)$ conjugate to a subgroup of $N_s(13)$ or $N_{ns}(13)$; equivalently, $C$ has no other rational points.   
\\

Denote by $X_{\mathfrak{S}_4}(13)$ the modular curve corresponding to $G_7$.   Banwait and Cremona \cite{banwait-cremona} have shown that $X_{\mathfrak{S}_4}(13)$ is isomorphic to the genus $3$ curve $C'$ defined in $\PP^2_\QQ$ by the equation
\[
4x^3y - 3x^2y^2 + 3xy^3 - x^3z + 16x^2yz - 11xy^2z + 5y^3z + 3x^2z^2 + 9xyz^2 + y^2z^2 + xz^3 + 2yz^3 = 0
\]
and have found the morphism from the modular curve to the $j$-line.  The four rational points $(0,1,0)$, $(0,0,1)$, $(1,0,0)$ and $(1,3,-2)$ of $C'$ correspond to a CM point and three non-CM points; the non-CM points give rise to the three $j$-invariants in Theorem~\ref{T:main 13}(\ref{T:main 13 d}).   

Suppose $E/\QQ$ is an elliptic curve with one of the $j$-invariants from Theorem~\ref{T:main 13}(\ref{T:main 13 d}).    From \cite{banwait-cremona}, we find that the image of $\rho_{E,13}(\Gal_\QQ)$ in $\PGL_2(\FF_{13})$ is isomorphic to $\mathfrak{S}_4$.  Therefore, $\rho_{E,13}(\Gal_\QQ)$ is conjugate to $G_7$ since $G_7$ has no proper subgroups $H$ whose image in $\PGL_2(\FF_{13})$ is isomorphic to $\mathfrak{S}_4$ and satisfies $\det(H)=\FF_{13}^\times$.   In particular, this proves Theorem~\ref{T:main 13}(\ref{T:main 13 d}).

 Conjecturally, if $E$ is a non-CM elliptic curve over $\QQ$, then $\rho_{E,13}(\Gal_\QQ)$ is conjugate to a subgroup of $G_7$ if and only if $j_E$ is one of three values from Theorem~\ref{T:main 13}(\ref{T:main 13 d}); equivalently, $C'$ has no other rational points.

\begin{remark}
The case $\ell=13$ is the first for which we do not have a complete description.  As explained above, it remains to determine all the rational points of the genus $3$ curves $C$ and $C'$.  
\end{remark}

\subsection{\underline{$\ell\geq 17$}}  \label{SS:applicable 17}

We first describe all the known cases of non-CM elliptic curves $E/\QQ$ for which $\rho_{E,\ell}$ is not surjective for some prime $\ell\geq 17$.   Define the following groups:
\begin{itemize}
\item 
Let $G_1$ be the subgroup of $\GL_2(\FF_{17})$ generated by $\left(\begin{smallmatrix}2 & 0 \\0 & 11 \end{smallmatrix}\right)$, $\left(\begin{smallmatrix}4 & 0 \\0 & -4 \end{smallmatrix}\right)$ and $\left(\begin{smallmatrix}1 & 1 \\0 &1 \end{smallmatrix}\right)$.
\item
Let $G_2$ be the subgroup of $\GL_2(\FF_{17})$ generated by $\left(\begin{smallmatrix}11 & 0 \\0 & 2 \end{smallmatrix}\right)$, $\left(\begin{smallmatrix}-4 & 0 \\0 & 4 \end{smallmatrix}\right)$ and $\left(\begin{smallmatrix}1 & 1 \\0 &1 \end{smallmatrix}\right)$.
\item
Let $G_3$ be the subgroup of $\GL_2(\FF_{37})$ consisting of the matrices of the form $\left(\begin{smallmatrix}a^3 & * \\0 & * \end{smallmatrix}\right)$.
\item
Let $G_4$ be the subgroup of $\GL_2(\FF_{37})$ consisting of the matrices of the form $\left(\begin{smallmatrix}* & * \\0 & a^3 \end{smallmatrix}\right)$.
\end{itemize}

\begin{thm} \label{T:17-37}
\begin{romanenum}
\item \label{T:17-37 i}
If $E/\QQ$ has $j$-invariant $-17\cdot 373^3/2^{17}$ or $-17^2 \cdot 101^3/2$, then $\rho_{E,17}(\Gal_\QQ)$ is conjugate in $\GL_2(\FF_{17})$ to $G_1$ or $G_2$, respectively.
\item \label{T:17-37 ii}
If $E/\QQ$ has $j$-invariant $-7\cdot 11^3$ or $-7\cdot 137^3\cdot 2083^3$, then $\rho_{E,37}(\Gal_\QQ)$ is conjugate in $\GL_2(\FF_{37})$ to $G_3$ or $G_4$, respectively.   
\end{romanenum} 
\end{thm}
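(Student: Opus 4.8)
The plan is to verify these sporadic exceptional images by an explicit local computation at a prime of good reduction, rather than by any general modular-curve argument. Fix one of the listed $j$-invariants, say $j_0 = -17\cdot 373^3/2^{17}$, and let $E/\QQ$ be any elliptic curve with $j_E = j_0$ (such curves exist and are all quadratic twists of one another since $j_0 \notin \{0,1728\}$). The first step is to recall, from the literature on exceptional mod-$\ell$ images (Mazur's and Sutherland--Zywina's classifications of points on $X_0(\ell)$, $X_{\mathrm{split}}^+(\ell)$, $X_{\mathrm{nonsplit}}^+(\ell)$, and the relevant genus-zero modular curves), that the value $j_0$ arises from a known non-cuspidal, non-CM rational point on a modular curve whose group is contained in $G_1$; one cites the source that produced this list (Sutherland's tables, or Elkies's computation of $X_s^+(17)$). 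This identifies $\pm\rho_{E,17}(\Gal_\QQ)$ up to conjugacy as a subgroup of $G_1' := \pm G_1$, and one checks directly from the generator matrices that $G_1'$ has $-I$, full determinant $\FF_{17}^\times$, and index a Borel-type subgroup; the analogous statement holds for $j_0' = -17^2\cdot 101^3/2$ and $G_2$, and for the two $\ell=37$ values and $G_3, G_4$.

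The second and main step is to pin down the image on the nose — not just up to $\pm I$ and not just up to passage to an index-$2$ subgroup. Here I would argue as in the remarks following Theorem~\ref{T:main 13}(\ref{T:main 13 d}): show that among the subgroups $H$ of $G_i$ with $\det(H) = \FF_\ell^\times$, the image must be $G_i$ itself. For $G_1, G_2 \subseteq \GL_2(\FF_{17})$ the key structural fact is that $G_i$ contains $\left(\begin{smallmatrix}1 & 1 \\ 0 & 1\end{smallmatrix}\right)$, so the image contains a unipotent element; combined with $\det(G_i)=\FF_{17}^\times$ and with reducibility (the image fixes a line, since $j_0$ lies on $X_0(17)$), one checks that no proper subgroup with full determinant can contain a nontrivial unipotent while still having the scalar and diagonal content forced by $G_i$. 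For $G_3, G_4 \subseteq \GL_2(\FF_{37})$, the constraint is instead arithmetic: the elliptic curve $E$ with $j_E = -7\cdot 11^3$ is (up to twist) a specific curve, e.g. the one labeled $1225.b$ in Cremona's tables, and likewise $-7\cdot 137^3\cdot 2083^3$ comes from a specific curve in Elkies's $X_0(37)^+$ list; for a concrete curve one simply computes $\rho_{E,37}(\Frob_p)$ for a few small primes $p$ of good reduction, reads off $a_p \bmod 37$ and $p \bmod 37$, and verifies that the conjugacy class of $\Frob_p$ inside $\GL_2(\FF_{37})$ lands in $G_3$ (resp. $G_4$) and that these classes generate $G_3$ (resp. $G_4$) — so the image cannot be any proper subgroup.

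The third step is bookkeeping: having fixed one twist $E_0$ with the stated $j$-invariant and shown $\rho_{E_0,\ell}(\Gal_\QQ)$ is conjugate to $G_i$, one notes that any other $E$ with the same $j$-invariant is a quadratic twist, so $\rho_{E,\ell} = \chi\cdot\rho_{E_0,\ell}$; since each $G_i$ is stable under multiplication by $\pm 1$ and, more relevantly, since $G_1,\dots,G_4$ as defined all \emph{contain} their scalar subgroup $\{\pm I\}$ but the statement only claims conjugacy of $\rho_{E,\ell}(\Gal_\QQ)$ to $G_i$ for the \emph{specific} $j$-invariants (not for all twists), one must be slightly careful: the theorem as stated asserts the image is $G_i$ itself. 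This forces $\chi$ to be such that $\chi\cdot\rho_{E_0,\ell}$ still has image $G_i$; because $G_i$ contains $\{\pm I\}$ and has full determinant, the subgroup $\{g : \pm g \in G_i\}$ equals $G_i$, and twisting by $\chi$ changes the image only within this set — so the image of every twist is again (conjugate to) $G_i$. Thus the statement is twist-stable and it suffices to treat one curve per $j$-invariant, which is what the first two steps did.

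The step I expect to be the genuine obstacle is the second one: ruling out proper subgroups of $G_i$ with full determinant as the image. For $\ell = 17$ the two candidate exceptions were historically delicate (they come from the sporadic rational points on $X_s^+(17)$ found by Elkies), and showing the image is the full Borel-conjugate group $G_i$ rather than an index-$2$ subgroup not containing $-I$ really does need either an explicit Frobenius computation or an appeal to the precise shape of the parametrizing modular curve; similarly for $\ell = 37$, where $X_0(37)^+$ has exactly these two non-CM rational points and one must confirm the image is not a smaller group meeting $B(37)$. In practice I would do the Frobenius computation for an explicit Weierstrass model in each of the four cases — compute $a_p$ and the characteristic polynomial of $\Frob_p$ modulo $\ell$ for enough primes $p$ that the resulting matrices generate $G_i$ — which is a finite, if tedious, verification, and cite Sutherland and Elkies for the fact that these are the only exceptional $j$-invariants for $\ell \in \{17, 37\}$ of this reducible type.
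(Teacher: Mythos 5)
Your overall shape (one explicit curve per $j$-invariant, Frobenius data, twist bookkeeping at the end) matches the paper's, but there is a genuine gap at the decisive step: you never establish the \emph{upper} bound $\rho_{E,\ell}(\Gal_\QQ)\subseteq G_i$. Knowing that $j_E$ is a non-cuspidal, non-CM rational point of $X_0(17)$ or $X_0(37)$ (these are the classical exceptional points on the Borel curves, not on $X_s^+(17)$ as you write) only places the image inside a Borel subgroup; the content of the theorem is that the image equals the specific index-$4$ (resp.\ index-$3$) subgroup $G_i$ of that Borel and nothing larger. Finitely many computations of $a_p\bmod\ell$ yield only characteristic polynomials, i.e.\ the unordered pairs $\{\chi_1(\Frob_p),\chi_2(\Frob_p)\}$ rather than matrices, and in any case can only produce a \emph{lower} bound on the image; ``check that these classes generate $G_i$'' can never rule out that the image is the full Borel. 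Your structural fallback is also false: $G_1$ does have a proper subgroup with full determinant containing a nontrivial unipotent, namely the one generated by $\left(\begin{smallmatrix}2 & 0\\0 & 11\end{smallmatrix}\right)$ and $\left(\begin{smallmatrix}1 & 1\\0 & 1\end{smallmatrix}\right)$ (its determinant is the primitive root $5$, and its diagonal part has index $4$ in that of $G_1$). Finally, citing Sutherland's tables for containment in $G_i$ is circular, since this theorem is part of the source of that classification.

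The mechanism the paper uses for the upper bound, and which your proposal is missing, is the analysis of the isogeny character. One factors the $\ell$-division polynomial of an explicit model and finds that the kernel of the rational $\ell$-isogeny is generated by a point whose $x$-coordinate lies in a Galois field of degree $4$ (for $\ell=17$) or $6$ (for $\ell=37$); hence the character $\chi_1$ giving the action on that kernel has order at most $8$ (resp.\ at most $12$, forcing its image into $(\FF_{37}^\times)^3$). Ramification constraints ($\chi_1\chi_2=\omega$, both characters unramified outside $\ell$ and the bad primes) then confine $(\chi_1,\chi_2)$ to an explicit finite list, inside which Frobenius traces at carefully chosen primes ($p\equiv 1$ modulo suitable auxiliary moduli, so that the value of one of the two characters at $\Frob_p$ is forced) determine the characters exactly. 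The element of order $\ell$ comes from the irreducible factor of degree $8\cdot 17$ (resp.\ $18\cdot 37$) of the division polynomial, another input your trace data would not supply. Your twist step is also incomplete: twisting shows the new image lies in $\pm G_i=G_i$, but to conclude it \emph{equals} $G_i$ you still need the paper's observation that $G_i$ has no index-$2$ subgroup missing $-I$ (e.g.\ because $-I$ is a square in $G_i$).
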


\begin{thm}[Mazur, Serre, Bilu-Parent-Rebolledo] \label{T:Mazur-Serre-BPR}
Fix a prime $\ell\geq 17$ and let $E$ be a non-CM elliptic curve defined over $\QQ$.     If $(\ell,j_E)$ does not belong to the set
\begin{equation} \label{E:set exceptional 17-37}
\{ (17,-17\cdot 373^3/2^{17}), \, (17, -17^2 \cdot 101^3/2), \, (37,-7\cdot 11^3),\, (37,-7\cdot 137^3\cdot 2083^3) \},
\end{equation}
then either $\rho_{E,\ell}$ is surjective or $\rho_{E,\ell}(\Gal_\QQ)$ is conjugate to a subgroup of $N_{ns}(\ell)$.
\end{thm}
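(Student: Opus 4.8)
The plan is to reduce the statement to a classification of the maximal subgroups of $\GL_2(\FF_\ell)$ (up to conjugacy) having full determinant, and then to handle each type of maximal subgroup by invoking a known finiteness or emptiness result for the corresponding modular curve. Recall that if $\rho_{E,\ell}$ is not surjective, then $\rho_{E,\ell}(\Gal_\QQ)$ is contained in some maximal proper subgroup $G$ of $\GL_2(\FF_\ell)$, and since $\det\circ\rho_{E,\ell}$ is surjective we must have $\det(G)=\FF_\ell^\times$. By the standard classification (going back to Dickson, and in this normalized form recorded by Serre in \cite{MR0387283}), for $\ell\geq 5$ every such $G$ is, up to conjugacy, one of: a Borel $B(\ell)$; the normalizer $N_s(\ell)$ of a split Cartan; the normalizer $N_{ns}(\ell)$ of a nonsplit Cartan; or (in the "exceptional" cases) a subgroup whose image in $\PGL_2(\FF_\ell)$ is one of $\mathfrak{A}_4$, $\mathfrak{S}_4$, $\mathfrak{A}_5$. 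So after discarding the $N_{ns}(\ell)$ case, which is explicitly permitted in the conclusion, it suffices to show that for $\ell\geq 17$ the only non-CM elliptic curves $E/\QQ$ whose mod-$\ell$ image lies in a Borel, in $N_s(\ell)$, or in an exceptional subgroup are those with $(\ell,j_E)$ in the listed set of four pairs.

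First I would treat the Borel case: $\rho_{E,\ell}(\Gal_\QQ)\subseteq B(\ell)$ means $E$ has a rational $\ell$-isogeny, so $j_E\in X_0(\ell)(\QQ)$. By Mazur's theorem on rational isogenies (\cite{MR0387283} cites this circle of ideas; the relevant result is Mazur's), for $\ell\geq 17$ the only non-cuspidal, non-CM rational points on $X_0(\ell)$ occur for $\ell\in\{17,37\}$, and for those two primes the finitely many non-CM $j$-invariants are precisely the four values appearing in \eqref{E:set exceptional 17-37} (for $\ell=17$, $j=-17\cdot 373^3/2^{17}$ and $j=-17^2\cdot 101^3/2$; for $\ell=37$, $j=-7\cdot 11^3$ and $j=-7\cdot 137^3\cdot 2083^3$) — these are exactly the sporadic points on $X_0(17)$ and $X_0(37)$. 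Next the split Cartan normalizer case: $\rho_{E,\ell}(\Gal_\QQ)\subseteq N_s(\ell)$ corresponds to a rational point on $X_s^+(\ell)=X_{\mathrm{split}}(\ell)$. Here I would invoke the theorem of Bilu–Parent–Rebolledo, which shows that for $\ell\geq 11$ (with the single exception $\ell=13$) the curve $X_s^+(\ell)$ has no rational points other than cusps and CM points; hence for $\ell\geq 17$ no non-CM curve over $\QQ$ has split-Cartan-normalizer image. Finally, the exceptional-image case: a result of Serre (in \cite{MR0387283}, refined by later work) shows that for $\ell\geq 17$ (in fact for $\ell>13$, with care at $\ell=13$ which is excluded here) the projective image of $\rho_{E,\ell}$ for $E/\QQ$ non-CM cannot be $\mathfrak{A}_4$, $\mathfrak{S}_4$, or $\mathfrak{A}_5$; the standard argument compares the action of complex conjugation and an inertia-at-$\ell$ argument (the semistable/tame case forces the image to contain an element of order $\ell$, contradicting the boundedness of $|\mathfrak{S}_5|$, and the remaining cases are eliminated by a discriminant/conductor bound). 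Combining these three inputs with the maximal-subgroup classification yields the theorem.

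The main obstacle — or rather, the main thing the proof rests on — is the split Cartan input: the Bilu–Parent–Rebolledo determination of $X_s^+(\ell)(\QQ)$ for $\ell\geq 11$, $\ell\neq 13$, is a deep Diophantine result (using Runge's method, lower bounds for linear forms in logarithms, and Mazur-type arguments) and is genuinely the hard part of the statement; everything else is either Mazur's isogeny theorem or an elementary group-theoretic/inertia argument of Serre. A secondary point requiring care is the bookkeeping at small $\ell$: one must confirm that the classification of maximal subgroups with full determinant behaves as stated for every $\ell\geq 17$ (no collapses or extra cases), and that the exclusion of $\ell=13$ is exactly what is needed so that the $X_s^+$ and exceptional-image inputs apply without exception. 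No genuinely new computation is needed; the proof is a synthesis of Mazur's theorem, the Bilu–Parent–Rebolledo theorem, and Serre's analysis of exceptional and Borel images, as the attribution "Mazur, Serre, Bilu–Parent–Rebolledo" in the statement already signals.
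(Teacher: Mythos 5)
Your proposal is correct and follows essentially the same route as the paper: reduce to the maximal subgroups of $\GL_2(\FF_\ell)$ with full determinant, then dispose of the Borel case by Mazur (with the sporadic points of $X_0(17)$ and $X_0(37)$ giving the four exceptional $j$-invariants), the split Cartan normalizer case by Bilu--Parent--Rebolledo, and the exceptional case by Serre. The only cosmetic difference is that the paper notes the full-determinant condition already rules out $\mathfrak{A}_4$ and $\mathfrak{A}_5$ projective images, leaving only a possible $\mathfrak{S}_4$ subgroup (and only when $\ell\equiv\pm 3\pmod 8$), whereas you exclude all three via Serre's result.
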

\begin{proof}
The group $\GL_2(\FF_\ell)$ has either three or four maximal subgroups with determinant $\FF_\ell^\times$.   They are $B(\ell)$, $N_s(\ell)$, $N_{ns}(\ell)$ and when $\ell\equiv \pm 3 \pmod{8}$, we also have a maximal subgroup $H_{\mathfrak{S}_4}(\ell)$ whose image in $\PGL_2(\FF_\ell)$ is isomorphic to the symmetric group $\mathfrak{S}_4$.

Take any non-CM elliptic curve $E$ over $\QQ$.  Serre has shown that $\rho_{E,\ell}(\Gal_\QQ)$ cannot be conjugate to a subgroup of $H_{\mathfrak{S}_4}(\ell)$, cf.~\cite{MR644559}*{\S8.4}.    Bilu, Parent and Rebolledo have proved that $\rho_{E,\ell}(\Gal_\QQ)$ cannot be conjugate to a subgroup of $N_s(\ell)$, cf.~\cite{1104.4641} (they make effective the bounds in earlier works of Bilu and Parent using improved isogeny bounds of Gaudron and R\'emond).     The $B(\ell)$ case follows from a famous theorem of Mazur, cf.~\cite{MR482230}.  The modular curves $X_0(17)$ and $X_0(37)$ each have two rational points which are not cusps or CM points and they are accounted for by the curves of Theorem~\ref{T:17-37}.    
\end{proof}

We conjecture that Theorem~\ref{T:Mazur-Serre-BPR} describes all the reasons that $\rho_{E,\ell}$ can fail to be surjective for a non-CM $E/\QQ$ and a prime $\ell \geq 17$; this is a problem raised by Serre, cf.~\cite{MR644559}*{p.399}, who asked if $\rho_{E,\ell}$ is surjective whenever $\ell >37$.

\begin{conj}  \label{C:main}
If $E$ is a non-CM elliptic curve over $\QQ$ and $\ell\geq 17$ is a prime such that the pair $(\ell,j_E)$ does not belong to the set (\ref{E:set exceptional 17-37}), then $\rho_{E,\ell}(\Gal_\QQ)=\GL_2(\FF_\ell)$.
\end{conj}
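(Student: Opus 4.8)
\medskip
\noindent\textbf{A strategy towards Conjecture~\ref{C:main}.}\ \
This conjecture is open; what follows is a plan of attack rather than a proof. The plan is first to reduce to a single case and then to attack that case one modular curve at a time. By Theorem~\ref{T:Mazur-Serre-BPR}, if $E/\QQ$ is non-CM, $\ell\geq 17$ is prime, and $(\ell,j_E)$ avoids the set~(\ref{E:set exceptional 17-37}), then either $\rho_{E,\ell}$ is surjective or $\rho_{E,\ell}(\Gal_\QQ)$ is conjugate to a subgroup of $N_{ns}(\ell)$; so it suffices to exclude the second alternative for every prime $\ell\geq 17$. Since $-I\in N_{ns}(\ell)$ and $\det(N_{ns}(\ell))=\FF_\ell^\times$, the modular curve $X_{ns}^+(\ell):=X_{N_{ns}(\ell)}$ has the defining property (recalled in \S\ref{S:classification}) that a non-CM $E/\QQ$ with $j_E\notin\{0,1728\}$ has $\rho_{E,\ell}(\Gal_\QQ)$ conjugate into $N_{ns}(\ell)$ precisely when $j_E=\pi_{N_{ns}(\ell)}(P)$ for some $P\in X_{ns}^+(\ell)(\QQ)$. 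This curve always carries finitely many cusps together with the CM points coming from elliptic curves with CM by an order in which $\ell$ is inert or ramified, and the conjecture amounts to the assertion that, for every prime $\ell\geq 17$, there are \emph{no other} rational points on $X_{ns}^+(\ell)$.

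For a fixed $\ell\geq 17$ one has $g(X_{ns}^+(\ell))\geq 2$ (already $X_{ns}^+(13)$ has genus $3$, and the genus grows without bound), so $X_{ns}^+(\ell)(\QQ)$ is finite by Faltings and the task is to determine it effectively. Here is how I would proceed for an individual $\ell$. First, compute an explicit model of $X_{ns}^+(\ell)$ together with its morphism to the $j$-line, for instance from the $q$-expansions of a basis of weight-$2$ cusp forms. Second, use the Hecke action to decompose the Jacobian up to isogeny and to determine its Mordell--Weil rank, isolating rank-$0$ quotients where they occur; if the rank is strictly less than the genus, run Chabauty--Coleman on $X_{ns}^+(\ell)$, and otherwise fall back on the non-abelian (quadratic) Chabauty program of Kim---which is presumably also what a complete treatment of the $\ell=13$ Cartan cases left open above would require. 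Third, clear away any spurious $p$-adic residue classes with a Mordell--Weil sieve, and conclude by checking that each surviving rational point is a cusp or has a CM $j$-invariant. For the smallest $\ell$ one can sometimes bypass Chabauty entirely by exhibiting a dominant map from $X_{ns}^+(\ell)$ onto an elliptic curve of rank $0$ over $\QQ$, as in the treatment of $X_{ns}^+(11)$ in Theorem~\ref{T:11 main}.

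The hard part is twofold. Even for a single $\ell$, the ingredients above---a usable model, generators of the relevant Mordell--Weil group, the required $p$-adic integrals and $p$-adic height pairings---are only within reach for small $\ell$, and the naive rank hypothesis needed for classical Chabauty fails for infinitely many $\ell$, the rank tending to grow with the genus. More fundamentally, Conjecture~\ref{C:main} is a \emph{uniform} assertion over all $\ell\geq 17$: the analogous uniform statement for the \emph{split} Cartan normalizer $N_s(\ell)$ was proved by Bilu, Parent and Rebolledo~\cite{1104.4641} by combining a Runge-type argument on $X_s^+(\ell)$ with the sharp isogeny height estimates of Gaudron and R\'emond, but this method has no known analogue for $X_{ns}^+(\ell)$---there is no equally serviceable auxiliary modular curve playing the role that $X_0(\ell^2)$ plays in the split analysis. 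Finding a genus-independent Diophantine input of this flavour, or a correspondingly uniform form of non-abelian Chabauty, is the principal obstruction, and it is exactly this gap that keeps Conjecture~\ref{C:main} open.
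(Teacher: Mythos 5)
The statement you were asked about is Conjecture~\ref{C:main}; the paper does not prove it, and you are right to treat it as open rather than to offer a purported proof. Your reduction via Theorem~\ref{T:Mazur-Serre-BPR} to the assertion that $X_{ns}^+(\ell)(\QQ)$ consists only of cusps and CM points for every prime $\ell\geq 17$ is exactly the paper's own framing, and your assessment of why neither classical Chabauty nor the Runge-plus-isogeny-bounds method of the split case applies uniformly is accurate. The one ingredient from the paper worth grafting onto your plan is Proposition~\ref{P:big inertia last}: by analysing the image of inertia at $\ell$ (fundamental characters of level $2$ when $v_\ell(j_E)\geq 0$, the Tate curve when $v_\ell(j_E)<0$), the paper shows that any counterexample would have image equal to the full group $N_{ns}(\ell)$ or, only when $\ell\equiv 2\pmod{3}$, a specific index-$3$ subgroup; so one must control the rational points not just of $X_{ns}^+(\ell)$ but of at most one further degree-$3$ cover, and conversely a candidate counterexample is detectable from its $\ell$-division polynomial. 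This sharpening does not change the fundamental obstruction you identify, but it is the paper's only contribution toward the conjecture beyond the reduction itself.
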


Even if Conjecture~\ref{C:main} is false for some $E/\QQ$ and $\ell\geq 17$, the following proposition gives at most two possibilities for the image of $\rho_{E,\ell}$ (they can be distinguished computationally by looking at the division polynomial of $E$ at $\ell$).

\begin{prop} \label{P:big inertia last}
Suppose that $\rho_{E,\ell}$ is not surjective for a non-CM elliptic curve $E/\QQ$ and a prime $\ell\geq 17$ for which $(\ell,j_E)$ does not lie in the set (\ref{E:set exceptional 17-37}).  
\begin{romanenum}
\item
If $\ell \equiv 1 \pmod{3}$, then $\rho_{E,\ell}(\Gal_\QQ)$ is conjugate in $\GL_2(\FF_\ell)$ to $N_{ns}(\ell)$.
\item
If $\ell \equiv 2 \pmod{3}$, then $\rho_{E,\ell}(\Gal_\QQ)$ is conjugate in $\GL_2(\FF_\ell)$ to $N_{ns}(\ell)$ or to the group
\[
G:=\big\{a^3: a \in C_{ns}(\ell)\big\} \cup \big\{ \left(\begin{smallmatrix}1 &0  \\0 & -1 \end{smallmatrix}\right) \cdot a^3: a \in C_{ns}(\ell) \big\}.
\]
\end{romanenum}  
\end{prop}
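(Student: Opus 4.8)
The plan is to leverage Theorem~\ref{T:Mazur-Serre-BPR}, which already tells us that under our hypotheses $\rho_{E,\ell}(\Gal_\QQ)$ is conjugate to a subgroup $G'$ of $N_{ns}(\ell)$, and then to pin down which subgroups can actually occur. The key constraints are: first, $\det(G') = \FF_\ell^\times$ (from the cyclotomic character); second, $-I \in G'$ is \emph{not} automatic here, but the larger group $\pm G'$ is a subgroup of $N_{ns}(\ell)$ with full determinant, and by Serre's inertia analysis at $\ell$ (the tame inertia acts through a nonsplit Cartan torus on $E[\ell]$ when $E$ has potentially good reduction, with a fundamental-character argument controlling the index), the index $[N_{ns}(\ell):\pm G']$ is bounded — in fact this is where the "big inertia" of the proposition's name enters. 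So the first step is to enumerate, up to conjugacy, all subgroups $G'$ of $N_{ns}(\ell)$ with $\det(G') = \FF_\ell^\times$ such that $G'$ contains an element of inertia-type forcing $\pm G' \supseteq$ a Cartan subgroup of index dividing a small number.

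The second step is the group theory: inside $N_{ns}(\ell)$, the cyclic group $C_{ns}(\ell) \cong \FF_{\ell^2}^\times$ has order $\ell^2 - 1$, and $N_{ns}(\ell)/C_{ns}(\ell) \cong \ZZ/2$. A subgroup $G'$ with full determinant must surject onto $\FF_\ell^\times$ under $\det$; since $\det$ restricted to $C_{ns}(\ell) = \FF_{\ell^2}^\times$ is the norm map $N_{\FF_{\ell^2}/\FF_\ell}$, which is surjective with kernel of order $\ell+1$, the intersection $G' \cap C_{ns}(\ell)$ is a subgroup of $\FF_{\ell^2}^\times$ whose image under the norm, possibly together with the determinant of an element in the non-trivial coset, generates $\FF_\ell^\times$. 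Combining this with the inertia lower bound forces $G' \cap C_{ns}(\ell)$ to be the group of cubes $\{a^3 : a \in C_{ns}(\ell)\}$ at smallest (this is exactly where $\ell \bmod 3$ splits the analysis: if $3 \nmid \ell^2-1$, i.e. never, versus $3 \mid \ell - 1$ versus $3 \mid \ell + 1$ — actually $3 \mid \ell^2 - 1$ always for $\ell \neq 3$, so the relevant dichotomy is whether the index-$3$ subgroup of $C_{ns}(\ell)$ still surjects under the norm, which holds precisely when $\ell \equiv 2 \pmod 3$ since then $3 \nmid \ell+1$... one must check the arithmetic carefully). When $\ell \equiv 1 \pmod 3$, any proper subgroup of $C_{ns}(\ell)$ of index $3$ fails to have full-determinant completion consistent with the inertia bound, forcing $G' = N_{ns}(\ell)$. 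When $\ell \equiv 2 \pmod 3$, the cubes subgroup does surject under the norm, so $G'$ can be the displayed group $G$ or its normalizer-completion $N_{ns}(\ell)$.

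The third step is to rule out the intermediate and conjugate ambiguities: one checks that up to conjugacy in $\GL_2(\FF_\ell)$ there is exactly one subgroup of the form "cubes of the nonsplit Cartan, extended by $\left(\begin{smallmatrix}1&0\\0&-1\end{smallmatrix}\right)$", that this group indeed has $\det = \FF_\ell^\times$ (the extra generator has determinant $-1$, and the cubes have norms filling $(\FF_\ell^\times)$ when $\ell\equiv 2\bmod 3$ — verify $-1$ is or isn't already a norm-of-a-cube and adjust), and that no subgroup strictly between $G$ and $N_{ns}(\ell)$ satisfies both $\det = \FF_\ell^\times$ and the inertia constraint. One also verifies $-I \in G$ when needed for the "conjugate to" claims to be well-posed (here $-I = (\text{a cube})$ iff $-1$ is a cube in $\FF_{\ell^2}^\times$, which holds iff $3 \mid$ the order... again $\ell \equiv 2 \pmod 3$ makes $\gcd(3,\ell^2-1)=3$ with $-1$ having order $2$ coprime to $3$, so $-I$ is a cube — good, so $G$ contains $-I$).

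\textbf{Main obstacle.} The serious point is the inertia lower bound at $\ell$: showing that $\rho_{E,\ell}(\Gal_\QQ)$, restricted to a decomposition group at $\ell$, is large enough that $\pm\rho_{E,\ell}(\Gal_\QQ) \cap C_{ns}(\ell)$ cannot be smaller than the index-$3$ subgroup. This rests on Serre's description of the action of tame inertia via fundamental characters of level $1$ or $2$ (cf.~the argument in \cite{MR0387283} that underlies Theorem~\ref{T:Mazur-Serre-BPR}): in the nonsplit case the image of inertia in $C_{ns}(\ell)$ contains the elements of norm $1$ coming from the two conjugate fundamental characters of level $2$ raised to the relevant powers, whose order is divisible by $(\ell+1)/\gcd(\ell+1, \text{small})$. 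Extracting from this that only the full Cartan or its cubes can occur — and that nothing coarser survives — requires carefully tracking which powers of the fundamental character appear for curves of potentially good versus potentially multiplicative reduction, and this case division (together with the $\ell \bmod 3$ arithmetic) is the delicate part; everything else is bookkeeping with cyclic groups.
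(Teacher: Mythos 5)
Your overall strategy is the same as the paper's (reduce to $N_{ns}(\ell)$ via Theorem~\ref{T:Mazur-Serre-BPR}, use inertia at $\ell$ to force $\rho_{E,\ell}(\Gal_\QQ)\cap C_{ns}(\ell)$ to have index at most $3$, then finish with the determinant/mod-$3$ group theory), but the plan leaves the decisive step unproven and misses one case that your framework cannot handle. First, the inertia lower bound is only asserted. What is actually needed is: after replacing $E$ by a quadratic twist so that $v_\ell(\Delta_E)\not\equiv 2,10\pmod{12}$, the curve acquires good reduction over an extension of $\QQ_\ell^{\un}$ of degree $e\in\{1,2,3,4\}$; the inertia representation is irreducible and equal to $\theta_2^{e_1+e_2\ell}$ with $0\le e_1,e_2\le e$ and $e_1\ne e_2$, so it contains a cyclic group of order $(\ell+1)/g$ with $g=\gcd(e_1+e_2\ell,\ell+1)$ dividing $e_1-e_2$, hence $g\le 4$; combining with $\det=\FF_\ell^\times$ one gets that the index of inertia in $C_{ns}(\ell)$ divides $2g$, and a separate parity argument (an even-index subgroup would have non-surjective determinant) forces the index to be odd, hence $1$ or $3$. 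None of these quantitative inputs appear in your plan, and without the "index is odd" step you would only get index dividing $8$, which does not yield the statement. Second, the case $v_\ell(j_E)<0$ (potentially multiplicative reduction) is not covered by the level-$2$ fundamental character picture at all: there the Tate parametrization shows the local image contains an element of order $\ell-1$ in $\PGL_2(\FF_\ell)$, which is incompatible with lying in $N_{ns}(\ell)$ (whose image in $\PGL_2(\FF_\ell)$ has order $2(\ell+1)$ and $\gcd(\ell-1,\ell+1)=2$); this case must be ruled out, not analyzed, and your plan does not say how.

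Two smaller points. Your arithmetic justification for the mod-$3$ dichotomy is backwards: when $\ell\equiv 2\pmod 3$ we have $3\mid \ell+1$, not $3\nmid\ell+1$; the correct statement is that the norm of the cube subgroup of $\FF_{\ell^2}^\times$ is all of $\FF_\ell^\times$ precisely because $3\nmid\ell-1$ (equivalently, when $\ell\equiv1\pmod3$ the index-$3$ subgroup has determinant image $(\FF_\ell^\times)^3\subsetneq\FF_\ell^\times$, which is what kills it). Finally, after concluding that $\pm\rho_{E,\ell}(\Gal_\QQ)$ is conjugate to $G$ or $N_{ns}(\ell)$, you still must show $\rho_{E,\ell}(\Gal_\QQ)=\pm\rho_{E,\ell}(\Gal_\QQ)$; verifying $-I\in G$ is not the same thing. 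The needed observation is that an index-$2$ subgroup $H$ of $G$ or of $N_{ns}(\ell)$ with $-I\notin H$ would meet $C_{ns}(\ell)$ in a subgroup of even order, which, being a subgroup of a cyclic group,必 contains the unique element of order $2$, namely $-I$ --- a contradiction. This step is short but is genuinely part of the proof and is absent from your plan.
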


\subsection{Algorithm} 
Let $E/\QQ$ be a non-CM elliptic curve (when $E/\QQ$ has complex multiplication, the groups $\rho_{E,\ell}(\Gal_\QQ)$ are all described in \S\ref{SS:CM} below).    In \cite{Zywina-images}, we give an algorithm to compute the set $S'$ of primes $\ell \geq 13$ for which $\rho_{E,\ell}$ is not surjective.  

Combined with the theorems from \S\S\ref{SS:applicable 2}--\ref{SS:applicable 11}, we are now able to compute the (finite) set $S$ of primes $\ell$ for which $\rho_{E,\ell}$ is not surjective.  Moreover, using the results from \S\S\ref{SS:applicable 2}--\ref{SS:applicable 11}, we can give the group $\rho_{E,\ell}(\Gal_\QQ)$, up to conjugacy in $\GL_2(\FF_\ell)$, for each $\ell \in S$.\\

Sutherland has a probabilistic algorithm to determine the groups $\rho_{E,\ell}(\Gal_\QQ)$ by consider Frobenius at many primes $p$, \cite{Sutherland2015}.  His algorithm can in principle be made deterministic using effective versions of the Chebotarev density theorem.     Sutherland's algorithm has the advantage that it can be used for elliptic curves over a number field $K\neq \QQ$ (for our approach, we would have more modular curves to consider and those modular curves not isomorphic to $\PP^1_\QQ$ would need to be reconsidered).

The next task that needs to be completed is to consider the images of $\rho_{E,\ell^n}$ for small primes $\ell$ and $n\geq 2$.  Rouse and Zureick-Brown have already done this for $\ell=2$, cf.~\cite{R-DZ}; the case $\ell=2$ is rather accessible since all the groups that occur are solvable.

\newpage
\subsection{Complex multiplication} \label{SS:CM}

Up to isomorphism over $\Qbar$, there are thirteen elliptic curves with complex multiplication that are defined over $\QQ$.  In Table 1 below, we give an elliptic curve $E_{D,f}/\QQ$ with each of these thirteen $j$-invariants (this comes from \cite{MR1312368}*{Appendix A \S3} though with some different models).   The curve $E_{D,f}$ has conductor $N$ and has complex multiplication by an order $R$ of conductor $f$ in the  imaginary quadratic field with discriminant $-D$.

{
\renewcommand{\arraystretch}{1.1}

\begin{table}[htdp] 
\begin{center}\begin{tabular}{|c|c|c|l|c|}\hline  
$j$-invariant & $D$ &  $f$ &  Elliptic curve $E_{D,f}$ & $N$ \\ \hline\hline
$0$ &$3$ & $1$ &  $y^2=x^3+16$ & $3^3$ \\  
$2^4 3^3 5^3$ & & $2$ & $y^2=x^3-15x+22$ & $2^2 3^2$  \\  % 2
 $-2^{15} 3 \cdot 5^3$ & &  $3$ &  $y^2=x^3-480x+4048$ & $3^3$  \\ \hline  %6
$2^6 3^3=1728$ & $4$ & $1$ &  $y^2=x^3+x$ & $2^6$  \\   % ---
 $2^3 3^3 11^3$ &  & $2$ &  $y^2=x^3-11x+14$ & $2^5$ \\ \hline % 2
$-3^3 5^3$ & $7$ & $1$ &  $y^2=x^3-1715x+33614$  & $7^2$  \\   %2
$3^3 5^3 17^3$ & &  $2$ & $y^2=x^3-29155x+1915998$ & $7^2$  \\ \hline %2
 $2^6 5^3$ & $8$ & $1$ & $y^2=x^3-4320x+96768$ & $2^8$  \\  \hline %2
$-2^{15}$ & $11$ & $1$ & $y^2=x^3-9504x+365904$ & $11^2$  \\ \hline %6
$-2^{15} 3^3$ & $19$ & $1$ & $y^2=x^3-608x+5776$ & $19^2$  \\ \hline %6
$-2^{18} 3^3 5^3$ & $43$ & $1$ & $y^2=x^3-13760x+621264$ & $43^2$  \\ \hline %6
$-2^{15} 3^3 5^3 11^3$ & $67$ & $1$ & $y^2=x^3-117920x+15585808$ & $67^2$   \\ \hline  %6
$-2^{18} 3^3 5^3 23^3 29^3$ & $163$ & $1$ & $y^2=x^3-34790720x+78984748304$ & $163^2$  \\ \hline
\end{tabular} \caption{CM elliptic curves over $\QQ$}
\end{center}
\label{defaulttable}
\end{table}
}

We first describe the group $\rho_{E,\ell}(\Gal_\QQ)$ up to conjugacy when $E$ is a CM elliptic curve with non-zero $j$-invariant and $\ell$ odd.

\begin{prop}  \label{P:CM main}
Let $E$ be a CM elliptic curve defined over $\QQ$ with $j_E\neq 0$.  The ring of endomorphisms of $E_{\Qbar}$ is an order of conductor $f$ in the ring of integers of an imaginary quadratic field of discriminant $-D$. Take any odd prime $\ell$.
\begin{romanenum}
\item \label{P:CM main a}
If $\legendre{-D}{\ell}=1$, then $\rho_{E,\ell}(\Gal_\QQ)$ is conjugate in $\GL_2(\FF_\ell)$ to $N_s(\ell)$.
\item \label{P:CM main b}
If $\legendre{-D}{\ell}=-1$, then $\rho_{E,\ell}(\Gal_\QQ)$ is conjugate in $\GL_2(\FF_\ell)$ to $N_{ns}(\ell)$.

\item \label{P:CM main c}
Suppose that $\ell$ divides $D$ and hence $D=\ell$.  Define the groups 
\[
G=\{ \left(\begin{smallmatrix} a & b \\0 & \pm a \end{smallmatrix}\right) : a\in \FF_\ell^\times, b\in \FF_\ell\},
\]
\[
H_1 = \{ \left(\begin{smallmatrix} a & b \\0 & \pm a \end{smallmatrix}\right) : a\in (\FF_\ell^\times)^2, b\in \FF_\ell\}, \quad \text{ and }\quad H_2 = \{ \left(\begin{smallmatrix} \pm a & b \\0 &  a \end{smallmatrix}\right) : a\in (\FF_\ell^\times)^2, b\in \FF_\ell\} 
\]

\noindent 
If $E$ is isomorphic to $E_{D,f}$, then $\rho_{E,\ell}(\Gal_\QQ)$ is conjugate in $\GL_2(\FF_\ell)$ to $H_1$.

\noindent
If $E$ is isomorphic to the quadratic twist of $E_{D,f}$ by $-\ell$, then $\rho_{E,\ell}(\Gal_\QQ)$ is conjugate in $\GL_2(\FF_\ell)$ to $H_2$.

\noindent
If $E$ is not isomorphic to $E_{D,f}$ or its quadratic twist by $-\ell$, then $\rho_{E,\ell}(\Gal_\QQ)$ is conjugate in $\GL_2(\FF_\ell)$ to $G$.

\end{romanenum}
\end{prop}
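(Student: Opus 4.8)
The plan is to use the complex multiplication. Since $j_E\in\QQ$, the order $R$ has class number one, so $E\cong\CC/R$ over $\CC$ and hence $E[\ell]\cong R/\ell R$ as $R$-modules for every prime $\ell$, and all endomorphisms of $E_{\Qbar}$ are defined over $K:=R\otimes\QQ$ (the ring class field of $R$ is $K$). Writing $\Gal_K$ for $\Gal(\Qbar/K)$, it follows that $\Gal_K$ acts $R$-linearly on $E[\ell]$, so $\rho_{E,\ell}(\Gal_K)$ lies in the image of $(R/\ell R)^\times$ in $\GL_2(\FF_\ell)$, a Cartan-type subgroup, while a complex conjugation $c\in\Gal_\QQ$ satisfies $c\alpha c^{-1}=\bbar{\alpha}$ for $\alpha\in R$, so $\rho_{E,\ell}(c)$ normalizes $\rho_{E,\ell}(\Gal_K)$ and acts on $(R/\ell R)^\times$ via the nontrivial automorphism of $K$. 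I would first record, after choosing a suitable basis of $E[\ell]$, the shape of $(R/\ell R)^\times$ and of this involution in the three cases of the Proposition: if $\legendre{-D}{\ell}=1$ then $R/\ell R\cong\FF_\ell\times\FF_\ell$ embeds as $C_s(\ell)$ and the involution swaps the two factors (as $c$ swaps the two primes of $K$ over $\ell$, hence the two $\Gal_K$-stable lines); if $\legendre{-D}{\ell}=-1$ then $R/\ell R\cong\FF_{\ell^2}$ embeds as $C_{ns}(\ell)$ and the involution is $x\mapsto x^\ell$; and if $\ell\mid D$ then $R/\ell R\cong\FF_\ell[\epsilon]/(\epsilon^2)$ embeds in $B(\ell)$ as $\left\{\left(\begin{smallmatrix}a&b\\0&a\end{smallmatrix}\right)\right\}$ with the involution $a+b\epsilon\mapsto a-b\epsilon$, i.e. $\left(\begin{smallmatrix}a&b\\0&a\end{smallmatrix}\right)\mapsto\left(\begin{smallmatrix}a&-b\\0&a\end{smallmatrix}\right)$.

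The heart of the argument is to determine $\rho_{E,\ell}(\Gal_K)$ exactly. By the main theorem of complex multiplication, the action of $\Gal_K$ on $E[\ell^\infty]$ is described by the algebraic Hecke character $\psi_E$ of $K$ with $L(E/\QQ,s)=L(\psi_E,s)$, which has $\infty$-type $(1,0)$ and a conductor that is classical for the models $E_{D,f}$. Reducing mod $\ell$, together with the observation that $\det\rho_{E,\ell}|_{\Gal_K}$ is the mod-$\ell$ cyclotomic character — whose image is all of $\FF_\ell^\times$ when $\ell\nmid D$ and is $(\FF_\ell^\times)^2$ when $\ell=D$ (because then $K\subseteq\QQ(\zeta_\ell)$) — I would argue: in cases (i) and (ii), that $\rho_{E,\ell}(\Gal_K)$ is the whole Cartan $(R/\ell R)^\times$, the key being that the anticyclotomic character $\psi_E^2/N_{K/\QQ}$ has nontrivial conductor, so its mod-$\ell$ reduction is surjective onto the norm-one part of the Cartan for all but finitely many odd $\ell$ — the remaining small $\ell$ (for each relevant discriminant $D$) being checked directly from the explicit $\ell$-division polynomials of the thirteen curves; and in case (iii), that for $E=E_{D,f}$ the image is the index-$2$ subgroup $\left\{\left(\begin{smallmatrix}a&b\\0&a\end{smallmatrix}\right):a\in(\FF_\ell^\times)^2\right\}$.

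With $\rho_{E,\ell}(\Gal_K)$ in hand, the rest is bookkeeping, using $\rho_{E,\ell}(\Gal_\QQ)=\langle\rho_{E,\ell}(\Gal_K),\rho_{E,\ell}(c)\rangle$ and the fact that $\rho_{E,\ell}(c)$ lies outside the Cartan: indeed $\End_\QQ(E)=\ZZ$, so the involution above acts nontrivially on the nonscalar group $\rho_{E,\ell}(\Gal_K)$, which forces $\rho_{E,\ell}(c)\notin(R/\ell R)^\times$. In cases (i) and (ii) this gives at once $\rho_{E,\ell}(\Gal_\QQ)=N_s(\ell)$ and $N_{ns}(\ell)$. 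In case (iii), $\rho_{E,\ell}(c)$ is an involution of determinant $-1$ inducing $\left(\begin{smallmatrix}a&b\\0&a\end{smallmatrix}\right)\mapsto\left(\begin{smallmatrix}a&-b\\0&a\end{smallmatrix}\right)$ on the Cartan, hence (being upper triangular) equals $\left(\begin{smallmatrix}1&\ast\\0&-1\end{smallmatrix}\right)$ or $\left(\begin{smallmatrix}-1&\ast\\0&1\end{smallmatrix}\right)$; a direct computation with the specific model $E_{D,f}$ — deciding how $c$ acts on $E[\mathfrak l]$, where $\mathfrak l^2=(\ell)$, via a root of the $\ell$-division polynomial — verifies that $\rho_{E,\ell}(\Gal_\QQ)=H_1$ for $E=E_{D,f}$. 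Twisting $E$ by $-\ell$ multiplies $\rho_{E,\ell}$ by the quadratic character of $K/\QQ$, which is trivial on $\Gal_K$ but sends $c\mapsto -1$, replacing $\rho_{E,\ell}(c)$ by $-\rho_{E,\ell}(c)$ and hence producing $H_2$; and any quadratic twist by $d$ with $\QQ(\sqrt d)\neq K$ restricts nontrivially to $\Gal_K$, so — since $-1$ is a nonsquare mod $\ell=D\equiv 3\pmod 4$, i.e. $\left\{\left(\begin{smallmatrix}a&b\\0&a\end{smallmatrix}\right):a\in(\FF_\ell^\times)^2\right\}$ has odd order and no nontrivial quadratic character — it enlarges $\rho_{E,\ell}(\Gal_K)$ to the full Cartan, yielding $G$ (here one uses that all twists of $E$ are quadratic, which holds as $R^\times=\{\pm1\}$).

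The main obstacle is the second paragraph: showing $\rho_{E,\ell}(\Gal_K)$ is the expected full Cartan in (i)--(ii) and exactly the index-$2$ subgroup in (iii), rather than something smaller. This amounts to controlling the ramification of the anticyclotomic part of the CM Hecke character; it is uniform for large $\ell$, but the finitely many small primes $\ell$ attached to each discriminant $D$ must be handled by explicit computation, and the orientation distinguishing $H_1$ from $H_2$ in part (iii) likewise depends on the particular Weierstrass equations chosen for the curves $E_{D,f}$.
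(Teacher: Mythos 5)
Your overall strategy coincides with the paper's: use CM theory to pin down $\rho_{E,\ell}(\Gal_K)$ inside a Cartan, use complex conjugation (which anticommutes with $\beta\in R$, $\beta^2\in\ZZ$) to land in the normalizer, settle $H_1$ versus $H_2$ by an explicit division-polynomial computation on the models $E_{D,f}$, and handle other curves with the same $j$-invariant by twisting. However, there are two genuine gaps. First, in parts (i) and (ii) your argument that $\rho_{E,\ell}(\Gal_K)$ is the \emph{full} Cartan only covers curves whose Hecke character is unramified at $\ell$ (equivalently, with good reduction at $\ell$), plus a finite check ``on the thirteen curves.'' But the proposition is about arbitrary twists, and the twist of $E_{D,f}$ by (say) $\pm\ell$ has additive reduction at $\ell$, so $\psi_E$ is ramified there for every large $\ell$; the finite exceptional set is not uniform in the twist, and for such a twist the image of $\Gal_K$ can a priori be a proper index-$2$ subgroup of the Cartan not containing $-I$ (e.g.\ the subgroup $(\FF_\ell^{\times})^2\times\FF_\ell^\times$ of $C_s(\ell)$ when $\ell\equiv 3\pmod 4$). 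What closes this is a purely group-theoretic fact the paper isolates (its Lemma~\ref{L:N group theory}): $-I$ lies in the commutator subgroup of the normalizer $N$ of a Cartan, so $N$ has no proper subgroup $H$ with $\pm H=N$; hence $\pm\rho_{E,\ell}(\Gal_\QQ)=N$ (which is twist-invariant) already forces $\rho_{E,\ell}(\Gal_\QQ)=N$. You use exactly this kind of parity argument in part (iii) but never state it for (i)--(ii), where it is what actually carries the reduction to the base curves.

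Second, your reduction to quadratic twists via ``$R^\times=\{\pm1\}$'' fails when $j_E=1728$: there $R=\ZZ[i]$, $R^\times$ has order $4$, and $E:y^2=x^3+dx$ is a \emph{quartic} twist of $E_{4,1}$. A quartic twisting character $\Gal_K\to\mu_4\subseteq R^\times$ can shrink the image of $\Gal_K$ in $C/\{\pm I\}$ by index $2$, so the case needs its own argument; the paper handles it by showing $\varphi(G)=\{\pm1\}\times\FF_\ell^\times$ (using $\QQ(i)\cap\QQ(\zeta_\ell)=\QQ$) and again invoking that $G$ contains the commutator subgroup of $N$. As a smaller point, the paper's route to the $\Gal_K$-image is cleaner than yours: by Serre--Tate, good reduction at $\ell$ gives $\rho_{E,\ell^\infty}(\Gal_K)=R_\ell^\times$ on the nose with \emph{no} exceptional primes, and for $j_E\neq 0$ the index is always a power of $2$, so no case-by-case check of small $\ell$ against division polynomials is needed for (i)--(ii); your ``all but finitely many $\ell$'' surjectivity of the anticyclotomic character would still have to be made effective to be a proof.
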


The following deals with the excluded prime $\ell=2$.

\begin{prop} \label{P: prime 2}
Let $E/\QQ$ be a CM elliptic curve.  Define the subgroup $G_2=\{ I, \left(\begin{smallmatrix}1 & 1 \\0 & 1 \end{smallmatrix}\right)\}$ of $\GL_2(\FF_2)$.
\begin{romanenum}
\item \label{P: prime 2 i}
If  $j_E \in \{2^4 3^3 5^3,\, 2^3 3^3 11^3,\, -3^3 5^3,\, 3^3 5^3 17^3,\, 2^6 5^3\},$
then $\rho_{E,2}(\Gal_\QQ)$ is conjugate to $G_2$.
\item
If $j_E \in \{ -2^{15} 3 \cdot 5^3,\, -2^{15},\, -2^{15} 3^3,\, -2^{18} 3^3 5^3,\, -2^{15} 3^3 5^3 11^3,\, -2^{18} 3^3 5^3 23^3 29^3\},$
then $\rho_{E,2}(\Gal_\QQ)=\GL_2(\FF_2)$.

\item
Suppose that $j_E=1728$.  The curve can be given by a Weierstrass equation $y^2=x^3-dx$ for some $d\in \QQ^\times$.

\noindent
If $d$ is a square, then $\rho_{E,2}(\Gal_\QQ)=\{I\}$.  

\noindent 
If $d$ is not a square, then the group $\rho_{E,2}(\Gal_\QQ)$ is conjugate to $G_2$.  

\item 
Suppose that $j_E=0$.  The curve $E$ can be given by a Weierstrass equation $y^2=x^3+d$ for some $d\in \QQ^\times$. 

\noindent 
If $d$ is a cube, then $\rho_{E,2}(\Gal_\QQ)$ is conjugate in $\GL_2(\FF_2)$ to the group $G_2$.

\noindent 
If $d$ is not a cube, then $\rho_{E,2}(\Gal_\QQ)=\GL_2(\FF_2)$. 
\end{romanenum}
\end{prop}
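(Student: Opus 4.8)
The plan is to reduce everything to the elementary observation that $\rho_{E,2}$ is just the permutation action of $\Gal_\QQ$ on the three nonzero $2$-torsion points of $E$. Writing $E$ in the form $y^2=g(x)$ with $g\in\QQ[x]$ a monic separable cubic, the nonzero points of $E[2]$ are $(e_1,0),(e_2,0),(e_3,0)$ with $e_1,e_2,e_3$ the roots of $g$, so $\rho_{E,2}(\Gal_\QQ)$ is conjugate in $\GL_2(\FF_2)\cong\mathfrak{S}_3$ to the Galois group of $g$ acting on $\{e_1,e_2,e_3\}$. Its conjugacy class is pinned down by the number of roots of $g$ in $\QQ$ (which is $0$, $1$, or $3$, never $2$) together with whether $\operatorname{disc}(g)$ is a square in $\QQ^\times$ (equivalently whether $\Delta_E$ is a square, since the two differ by a perfect square): if $g$ splits over $\QQ$ the image is trivial; if $g$ has exactly one rational root the image has order $2$ and hence is conjugate to $G_2$; if $g$ is irreducible the image is the (unique up to conjugacy) subgroup of order $3$ or all of $\GL_2(\FF_2)$ according as $\operatorname{disc}(g)$ is or is not a square. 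All four parts of the proposition are instances of this dictionary.

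Parts (iii) and (iv) fall out at once. For $j_E=1728$ and $E\colon y^2=x^3-dx$ we have $g(x)=x(x^2-d)$, which always has the rational root $0$: it splits over $\QQ$ exactly when $d$ is a square, in which case the image is trivial, and otherwise $x^2-d$ is an irreducible quadratic factor, so $g$ has exactly one rational root and the image is $G_2$. For $j_E=0$ and $E\colon y^2=x^3+d$ we have $g(x)=x^3+d$: if $d=c^3$ then $g(x)=(x+c)(x^2-cx+c^2)$, and since the discriminant $-3c^2$ of $x^2-cx+c^2$ is not in $(\QQ^\times)^2$, the cubic $g$ has exactly one rational root and the image is $G_2$; if $d$ is not a cube then $g$ is irreducible with $\operatorname{disc}(g)=-27d^2$, and as $\QQ(\sqrt{-27d^2})=\QQ(\sqrt{-3})\neq\QQ$ the discriminant is not a square, so $\rho_{E,2}(\Gal_\QQ)=\GL_2(\FF_2)$.

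For parts (i) and (ii) I would first note that each of the eleven listed $j$-invariants is one of the thirteen CM $j$-invariants of Table 1 and lies outside $\{0,1728\}$; hence every CM elliptic curve $E/\QQ$ with that $j$-invariant is a quadratic twist of the corresponding model $E_{D,f}$, and a quadratic twist replaces $\rho_{E,2}$ by $\chi\cdot\rho_{E,2}$ with $\chi$ valued in $\{\pm1\}=\{1\}\subseteq\FF_2^\times$, so $\rho_{E,2}(\Gal_\QQ)$ depends only on $j_E$ and may be computed from $E_{D,f}$ itself. It then remains to apply the dictionary to the eleven explicit cubics: for the five curves in (i) one exhibits the unique rational root of $g$ (for instance $x=2$ for $y^2=x^3-15x+22$ and for $y^2=x^3-11x+14$, $x=-49$ for $y^2=x^3-1715x+33614$, $x=98$ for $y^2=x^3-29155x+1915998$, and $x=48$ for $y^2=x^3-4320x+96768$) and checks that the complementary quadratic factor is irreducible — this already forces the image to be conjugate to $G_2$, the discriminant being automatically a non-square in that case; for the six curves in (ii) one checks by a short rational-root search (the constant term factors into small primes) that $g$ is irreducible, and that $\operatorname{disc}(g)$ is not a square, indeed $\QQ(\sqrt{\operatorname{disc}(g)})=\QQ(\sqrt{-D})$, which matches the fact that $2$ is inert in the relevant CM order precisely for the discriminants occurring in (ii). The only real work is this finite, entirely routine verification of factorization type and squareness of discriminant for eleven cubics; I expect no conceptual obstacle.
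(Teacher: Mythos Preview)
Your proposal is correct and follows essentially the same approach as the paper: both reduce the computation of $\rho_{E,2}(\Gal_\QQ)$ to the Galois group of the defining cubic $g(x)$ in $y^2=g(x)$, use invariance under quadratic twist (since $-I=I$ in $\GL_2(\FF_2)$) to reduce parts (i) and (ii) to the explicit models $E_{D,f}$, and handle $j_E=0,1728$ by direct factorization of $x^3+d$ and $x^3-dx$. The paper is slightly terser, noting only that the cardinality of a subgroup of $\mathfrak{S}_3$ determines its conjugacy class and then computing $|\Gal(g)|$ case by case, whereas you spell out the rational-root/discriminant dictionary and supply the explicit rational roots; both amount to the same finite verification.
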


It remains to consider the situation where $\ell$ is an odd prime and $E/\QQ$ is an elliptic curve with $j_E=0$.   That such curves have cubic twists make the classification more involved.

\begin{prop}  \label{P:j=0 situation}
Let $E$ be an elliptic curve over $\QQ$ with $j_E=0$.  Take any odd prime $\ell$.
\begin{romanenum}
\item  \label{P:j=0 situation i}
If $\ell \equiv 1 \pmod{9}$, then $\rho_{E,\ell}(\Gal_\QQ)$ is conjugate to $N_{s}(\ell)$ in $\GL_2(\FF_\ell)$.
\item \label{P:j=0 situation ii}
If $\ell \equiv 8 \pmod{9}$, then $\rho_{E,\ell}(\Gal_\QQ)$ is conjugate to $N_{ns}(\ell)$ in $\GL_2(\FF_\ell)$.
\item \label{P:j=0 situation iii}
Suppose that $\ell$ is congruent to $4$ or $7$ modulo $9$.  Let $E'/\QQ$ be the elliptic curve over $\QQ$ defined by $y^2=x^3+16 \ell^e$, where $e\in \{1,2\}$ satisfies $ \frac{\ell-1}{3} \equiv e \pmod{3}$.   

\noindent
If $E$ is not isomorphic to a quadratic twist of $E'$, then $\rho_{E,\ell}(\Gal_\QQ)$ is conjugate to $N_{s}(\ell)$ in $\GL_2(\FF_\ell)$.

\noindent 
If $E$ is isomorphic to a quadratic twist of $E'$, then $\rho_{E,\ell}(\Gal_\QQ)$ is conjugate in $\GL_2(\FF_\ell)$ to the subgroup $G$ of $N_s(\ell)$ consisting of the matrices of the form  $\left(\begin{smallmatrix} a & 0 \\0 & b\end{smallmatrix}\right)$ or $\left(\begin{smallmatrix} 0 & a \\b & 0 \end{smallmatrix}\right)$ with $a/b \in (\FF_\ell^\times)^3$.

\item 
\label{P:j=0 situation iv}
Suppose that $\ell$ is congruent to $2$ or $5$ modulo $9$.  Let $E'/\QQ$ be the elliptic curve over $\QQ$ defined by $y^2=x^3+16 \ell^e$, where $e\in \{1,2\}$ satisfies $ \frac{\ell+1}{3} \equiv -e \pmod{3}$.   

\noindent
If $E$ is not isomorphic to a quadratic twist of $E'$, then $\rho_{E,\ell}(\Gal_\QQ)$ is conjugate to $N_{ns}(\ell)$ in $\GL_2(\FF_\ell)$.

\noindent 
If $E$ is isomorphic to a quadratic twist of $E'$, then $\rho_{E,\ell}(\Gal_\QQ)$ is conjugate in $\GL_2(\FF_\ell)$ to the subgroup $G$ of $N_{ns}(\ell)$ generated by the unique index $3$ subgroup of $C_{ns}(\ell)$ and by $\left(\begin{smallmatrix} 1 & 0 \\0 & -1\end{smallmatrix}\right)$.

\item  \label{P:j=0 situation v}
Suppose that $\ell=3$.  The curve $E$ can be given by a Weierstrass equation $y^2=x^3+d$ for some $d\in \QQ^\times$.   Fix notation as in \S\ref{SS:applicable 3}.

\noindent
If $d$ or $-3d$ is a square and $-4d$ is a cube, then $\rho_{E,3}(\Gal_\QQ)$ is conjugate to $H_{1,1}$.

\noindent
If $d$ and $-3d$ are not squares and $-4d$ is a cube, then $\rho_{E,3}(\Gal_\QQ)$ is conjugate to $G_1$.

\noindent
If $d$ is a square and $-4d$ is not a cube, then $\rho_{E,3}(\Gal_\QQ)$ is conjugate to $H_{3,1}$.

\noindent
If $-3d$ is a square and $-4d$ is not a cube, then $\rho_{E,3}(\Gal_\QQ)$ is conjugate to $H_{3,2}$.

\noindent
If $d$ and $-3d$ are not squares and $-4d$ is not a cube, then $\rho_{E,3}(\Gal_\QQ)$ is conjugate to $G_3$.

\end{romanenum}
\end{prop}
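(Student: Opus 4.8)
The plan is to handle the unramified primes $\ell\ne 3$ uniformly via complex multiplication and to dispatch $\ell=3$ by an elementary direct computation. Write $E$ as $y^2=x^3+d$ with $d\in\QQ^\times$, which we may take $6$th-power free; then $E_{\Qbar}$ has CM by the maximal order $\OO_K=\ZZ[\zeta_3]$ of $K:=\QQ(\zeta_3)=\QQ(\sqrt{-3})$, and $\Aut(E_{\Qbar})=\OO_K^\times=\mu_6$. At the outset I record the twist dictionary: $y^2=x^3+d$ and $y^2=x^3+d'$ are isomorphic over $\QQ$ iff $d/d'\in(\QQ^\times)^6$, and are quadratic twists iff $d/d'\in(\QQ^\times)^3$; since $\pm\rho_{E,\ell}$ is a quadratic-twist invariant, for a fixed prime it depends only on the class of $d$ in $\QQ^\times/(\QQ^\times)^3$, which is why ``isomorphic to a quadratic twist of $E'$'' is the right hypothesis in parts (iii) and (iv).

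For $\ell\ne 3$, the first step is to force the image into the normalizer of a Cartan. The endomorphism $[\zeta_3]$ is defined over $K$ and acts on $E[\ell]$ as a non-scalar element of order $3$; any $\sigma\in\Gal_\QQ$ conjugates it to $[\sigma(\zeta_3)]\in\{[\zeta_3],[\zeta_3^2]\}$, and $[\zeta_3]$ and $[\zeta_3^2]$ act differently on $E[\ell]$ because $\zeta_3-\zeta_3^2=\sqrt{-3}$ is a unit modulo $\ell$. Hence $\rho_{E,\ell}(\Gal_\QQ)$ normalizes the image of $[\zeta_3]$, so it lies in $N$, where (after choosing a basis) the centralizer of that image is $C_s(\ell)$ if $\ell\equiv1\pmod3$, $C_{ns}(\ell)$ if $\ell\equiv2\pmod3$, and $N$ is its normalizer $N_s(\ell)$ resp.\ $N_{ns}(\ell)$; moreover $\rho_{E,\ell}(\Gal_\QQ)\cap C=\rho_{E,\ell}(\Gal_K)$ with quotient $\Gal(K/\QQ)$. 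It therefore remains to compute $C':=\rho_{E,\ell}(\Gal_K)\subseteq C$ and the outer coset.

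Since $E[\ell]$ is free of rank one over $\OO_K/\ell$, the group $\Gal_K$ acts through a character $\theta\colon\Gal_K\to(\OO_K/\ell)^\times\cong C$, and the Weil pairing gives $N_{K/\QQ}\circ\theta=\chi_\cyc|_{\Gal_K}$, onto $\FF_\ell^\times$ for $\ell\ge5$, so $[C:C']$ divides $\ell-1$ (split) or $\ell+1$ (non-split). I compute $C'$ by studying $\theta$ on the inertia groups above $\ell$: if $\ell\nmid d$ then $E$ has good reduction there (ordinary if $\ell$ splits, supersingular if inert), so $\theta$ is ramified at each prime above $\ell$ with inertia image a full $\FF_\ell^\times$ (split case, using both primes) or the level-$2$ fundamental character (non-split case), and in either case the inertia images already generate $C$, so $C'=C$ and $\rho_{E,\ell}(\Gal_\QQ)=N$. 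If $\ell\mid d$ then $E$ has additive, potentially good reduction at $\ell$; when $v_\ell(d)\equiv3\pmod6$ this is a quadratic twist of a good-reduction curve and again gives $C'=C$, and otherwise $\theta$ on inertia is the good-reduction character twisted by a tame character of order dividing $6$, whose image drops to a proper subgroup of $C$ exactly when that twist has a non-trivial cubic part and $\zeta_3$ is not a cube in $\FF_\ell^\times$ (split) resp.\ in $\FF_{\ell^2}^\times$ (non-split), i.e.\ exactly when $\ell\not\equiv1\pmod9$ resp.\ $\ell\not\equiv-1\pmod9$; this is the congruence dichotomy of parts (i)--(iv). In the exceptional case a local computation should identify $C'$ with $\{(a,b):a/b\in(\FF_\ell^\times)^3\}\le C_s(\ell)$ resp.\ with the unique index-$3$ subgroup $\{a^3:a\in C_{ns}(\ell)\}\le C_{ns}(\ell)$, and should pin down the distinguished class of $d$ in $\QQ^\times/(\QQ^\times)^3$ as the one represented by $y^2=x^3+16\ell^{e}$ with $e\equiv\frac{\ell-1}{3}$ resp.\ $e\equiv-\frac{\ell+1}{3}$ mod $3$. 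The outer coset is then fixed by $\rho_{E,\ell}$ of complex conjugation, which has order $2$ and determinant $-1$; together with the fact that $-I$ always lies in $C'$ (the relevant scalar satisfies the cube condition), this gives $\rho_{E,\ell}(\Gal_\QQ)=N$ generically and the displayed group $G$ in the exceptional case, proving (i)--(iv).

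For $\ell=3$ I compute $\rho_{E,3}$ directly. The $3$-division polynomial of $y^2=x^3+d$ is $3x(x^3+4d)$, so $E[3]$ contains the $\Gal_\QQ$-stable line through $(0,\sqrt d)$, on which $\Gal_\QQ$ acts by the quadratic character of $\QQ(\sqrt d)/\QQ$; since $\det\rho_{E,3}$ is the mod-$3$ cyclotomic character, which cuts out $\QQ(\sqrt{-3})$, $\Gal_\QQ$ acts on the quotient $E[3]/\langle(0,\sqrt d)\rangle$ by the character of $\QQ(\sqrt{-3d})/\QQ$, and this extension of $\Gal_\QQ$-modules splits iff $-4d$ is a cube in $\QQ^\times$ (the second stable line being the one through $(\sqrt[3]{-4d},\sqrt{-3d})$). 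Computing $[\QQ(E[3]):\QQ]=[\QQ(\zeta_3,\sqrt d,\sqrt[3]{-4d}):\QQ]$ in each case and matching with the groups $H_{1,1}$, $G_1=C_s(3)$, $H_{3,1}$, $H_{3,2}$, $G_3=B(3)$ of \S\ref{SS:applicable 3} yields the five cases of part (v). The main obstacle I expect is the local analysis at $\ell$ in the potentially-good-reduction case: extracting the precise tame character twisting $\theta$ on inertia --- and hence the exact index-$3$ subgroup of $C$ and the exact exponent $e$ --- while keeping normalizations consistent between the CM (Hecke) character, its $\ell$-adic avatar, and the resulting matrix group; everything else reduces to bookkeeping with twists and a handful of field-degree computations.
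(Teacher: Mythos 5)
Your reduction of the case $\ell\geq 5$ to the normalizer of a Cartan subgroup, the twist bookkeeping in $\QQ^\times/(\QQ^\times)^3$, and the direct treatment of $\ell=3$ via the division polynomial $3x(x^3+4d)$ all match the paper and are sound. The gap is in the central step: you propose to determine $C'=\rho_{E,\ell}(\Gal_K)$ by computing the image of inertia at the places above $\ell$. That computation only gives a \emph{lower} bound on $C'$ (the global image is generated by all decomposition groups), and it provably cannot decide the exceptional cases. Concretely, take $\ell\equiv 7\pmod 9$ (so $e=2$) and a prime $m\nmid 6\ell$: the curves $y^2=x^3+16\ell^2$ and $y^2=x^3+16\ell^2m^2$ differ by the cubic Kummer character $\sigma\mapsto\sigma(m^{1/3})/m^{1/3}$, which is \emph{unramified} at every place above $\ell$ because $m$ is an $\ell$-adic unit and $\ell\neq 3$; hence the two curves have identical inertia images at $\ell$. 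Yet the first is the exceptional curve $E'$ of part (\ref{P:j=0 situation iii}), while the second lies in a different cubic-twist class (the ratio $m^2$ is not a cube) and has full image $N_s(\ell)$ --- the extra character is ramified at $m$ and pushes Frobenius elements outside the index-$3$ subgroup. So your criterion ``nontrivial cubic part of the local twist and $\zeta_3$ not a cube in the residue field'' identifies only a superset of the exceptional curves, and in particular cannot pin down the exponent $e$, which is the whole content of parts (\ref{P:j=0 situation iii}) and (\ref{P:j=0 situation iv}).

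What is needed, and what the paper supplies, is a global argument. The paper works with $E_d\colon y^2=x^3+16d^2$ for cube-free $d\geq 1$, first pins the index $[N(\ell):\rho_{E_d,\ell}(\Gal_\QQ)]$ to $1$ or $3$ by comparing with $E_1=E_{3,1}$ over the cubic field $\QQ(\sqrt[3]{d})$, and then decides between the two possibilities by evaluating the cubic residue symbol of $\rho_{E_d,\ell^\infty}(\Frob_\p)$ for \emph{all} primes $\p$ of $\ZZ[\zeta_3]$, using the explicit Gr\"ossencharacter formula $\rho_{E_d,\ell^\infty}(\Frob_\p)=-\bbar{\legendre{64d^2}{\pi}}_6\cdot\pi$ together with cubic reciprocity and Chebotarev; the criterion becomes ``$d^{2(\ell^e-1)/3}\ell$ is a cube in $\ZZ$'', which is exactly where the congruence on $\ell$ modulo $9$ and the value of $e$ come from. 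Some such global identification (equivalently, recognizing the cubic subextension of $K(E[\ell])/K$ as an explicit Kummer extension of $K$) is unavoidable. A secondary, fixable point: to conclude $\rho_{E,\ell}(\Gal_\QQ)=\pm\rho_{E,\ell}(\Gal_\QQ)$ for \emph{every} sextic twist you should check that $-I$ is a commutator of $N_s(\ell)$, of $N_{ns}(\ell)$ and of the exceptional groups $G$, as the paper does; observing that $-I\in C'$ for one representative of each quadratic-twist class is not quite enough.
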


\subsection{Overview} \label{SS:overview}

We now give a very brief overview of the paper.  In \S\ref{SS:applicable}, we describe \defi{applicable subgroups} $G$ of $\GL_2(\FF_\ell)$; these groups have many of the properties that the groups $\pm \rho_{E,\ell}(\Gal_\QQ)$ do.

In \S\ref{S:modular}, we recall what we need concerning the modular curve $X_G/\QQ$; we will identify its function field with a subfield of the field of modular function for the congruence subgroup $\Gamma(\ell)$.

In \S\ref{S:main classification}, we prove the parts of our main theorems that determine $\pm \rho_{E,\ell}(\Gal_\QQ)$.   We describe the rational points of $X_G$ when $\ell$ is small.   When $X_G$ has genus $0$ and $X_G(\QQ)\neq \emptyset$, then the function field of $X_G$ is of the form $\QQ(h)$ for some modular function $h$.    Much of this section is dedicated to describing such $h$ and determining the rational function $J(t) \in \QQ(t)$ such that $J(h)$ is the modular $j$-invariant.

Assuming that $G:=\pm \rho_{E,\ell}(\Gal_\QQ)$ is known, with $E/\QQ$ non-CM, we describe in \S\ref{S:twist 1} how to determine the (finite number of) quadratic twists of $E'$ of $E$ for which $\rho_{E',\ell}(\Gal_\QQ)$ is not conjugate to $G$.   In \S\ref{S:twists 2}, we prove the parts of our main theorems that determine $\rho_{E,\ell}(\Gal_\QQ)$ given $\pm \rho_{E,\ell}(\Gal_\QQ)$.

In \S\ref{SS:CM proofs},  we prove the propositions from \S\ref{SS:CM} concerning CM elliptic curves defined over $\QQ$.   The $j$-invariant $0$ case requires special attention since one has to worry about cubic twists.  Finally, in \S\ref{S:big inertia last}, we prove Proposition~\ref{P:big inertia last}.

The equations in \S\ref{SS:applicable 2}--\ref{SS:applicable 17} and \texttt{Magma} code verifying some claims in \S\ref{S:main classification} and \S\ref{S:twists 2} can be found at:
\begin{center}\url{http://www.math.cornell.edu/~zywina/papers/PossibleImages/}\end{center}

\subsection*{Acknowledgments}
Thanks to Andrew Sutherland,  David Zureick-Brown and Ren\'e Schoof.   The computations in this paper were performed using the \texttt{Magma} computer algebra system \cite{Magma}.

\section{Applicable subgroups} \label{SS:applicable}

Fix an integer $N\geq 2$.   For an elliptic curve $E/\QQ$, let $E[N]$ be the $N$-torsion subgroup of $E(\Qbar)$.   After choosing a basis for $E[N]$ as a $\ZZ/N\ZZ$-module, the natural $\Gal_\QQ$-action on $E[N]$ can be expressed in terms of a Galois representation
\[
\rho_{E,N}\colon \Gal_\QQ \to \GL_2(\ZZ/N\ZZ).
\]
When $N$ is a prime, these agree with the representations of \S\ref{S:classification}.   We now describe some restrictions on the possible images of $\rho_{E,N}$.

\begin{definition}
We say that a subgroup $G$ of $\GL_2(\ZZ/N\ZZ)$ is \defi{applicable}  if it satisfies the following conditions:
\begin{itemize}
\item $G\neq \GL_2(\ZZ/N\ZZ)$,
\item $-I \in G$ and $\det(G)=(\ZZ/N\ZZ)^\times$,
\item $G$ contains an element with trace $0$ and determinant $-1$ that fixes a point in $(\ZZ/N\ZZ)^2$ of order $N$.   
\end{itemize}
\end{definition}

This definition is justified by the following.

\begin{prop} \label{P:basic applicable}
Let $E$ be an elliptic curve over $\QQ$ for which $\rho_{E,N}$ is not surjective.  Then $\pm \rho_{E,N}(\Gal_\QQ)$ is an applicable subgroup of $\GL_2(\ZZ/N\ZZ)$.
\end{prop}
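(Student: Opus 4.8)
The plan is to verify the three defining conditions of an applicable subgroup for $G := \pm \rho_{E,N}(\Gal_\QQ)$. The first two are essentially immediate, and the third—the existence of an element of trace $0$ and determinant $-1$ fixing a point of order $N$—is the substantive point, and it comes from complex conjugation.

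\textbf{The easy conditions.} First, since $\rho_{E,N}$ is not surjective by hypothesis, $\rho_{E,N}(\Gal_\QQ) \neq \GL_2(\ZZ/N\ZZ)$; but a priori adding $-I$ could enlarge it to everything, so one must check this does not happen. This follows because $-I$ lies in $\rho_{E,N}(\Gal_\QQ)$ already in most cases, or more robustly: the Weil pairing gives $\det \circ \rho_{E,N} = \chi_N$, the mod $N$ cyclotomic character, which is surjective onto $(\ZZ/N\ZZ)^\times$; hence $\det(G) = (\ZZ/N\ZZ)^\times$. The condition $-I \in G$ holds by construction of $G$. For $G \neq \GL_2(\ZZ/N\ZZ)$: I would argue that if $\pm \rho_{E,N}(\Gal_\QQ) = \GL_2(\ZZ/N\ZZ)$, then since $\SL_2(\ZZ/N\ZZ)$ is generated by elements conjugate to unipotents (which have determinant $1$) and $[\GL_2:\SL_2 \cdot \{\pm I\}]$ considerations, one sees $\rho_{E,N}(\Gal_\QQ)$ would have to contain $\SL_2(\ZZ/N\ZZ)$ and surject onto the determinant, forcing surjectivity—contradiction. (Alternatively: $-I \notin \rho_{E,N}(\Gal_\QQ)$ would make $\rho_{E,N}(\Gal_\QQ)$ an index-$2$ subgroup of $G$ not containing $-I$, and such a subgroup still has full determinant, so it contains $\SL_2$ up to the sign issue; one checks directly that an index-$2$ subgroup of $\GL_2(\ZZ/N\ZZ)$ not containing $-I$ does not exist when $N$ has the relevant structure, or handle it by the same generation argument.)

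\textbf{The main point: complex conjugation.} Fix an embedding $\Qbar \hookrightarrow \CC$ and let $c \in \Gal_\QQ$ be the corresponding complex conjugation. Then $c$ acts on $E[N]$, and the key classical fact is that $E(\RR)[N^\infty]$ contains a point of order $N$: indeed $E(\CC) \cong \CC/\Lambda$ with $\Lambda$ stable under complex conjugation (for $E$ defined over $\RR$), so $E(\RR)$ contains the image of $\tfrac{1}{N}$ times a real period, a point of exact order $N$. This point is fixed by $c$. On the other hand, $c$ has order $2$ and is not $\pm I$ on $E[N]$ when $N \geq 3$ (since it fixes a rank-$1$ piece but acts as $-1$ on the "imaginary" part, which is nontrivial mod $N$ for $N \geq 3$; for $N = 2$ one checks the trace-$0$, determinant-$-1$ condition directly—any reflection works). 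Therefore in a suitable basis $\rho_{E,N}(c) = \left(\begin{smallmatrix} 1 & 0 \\ 0 & -1 \end{smallmatrix}\right)$ or more invariantly $\rho_{E,N}(c)$ has eigenvalues $1$ and $-1$, hence trace $0$ and determinant $-1$, and fixes a point of order $N$ (the $+1$-eigenvector, which can be chosen to generate a cyclic subgroup of order $N$). Since $\rho_{E,N}(c) \in \rho_{E,N}(\Gal_\QQ) \subseteq G$, condition three holds.

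\textbf{Expected obstacle and loose ends.} The one place requiring genuine care is the claim that $E(\RR)[N]$ contains an element of exact order $N$—i.e., that the real locus "sees" all of $(\ZZ/N\ZZ)$ in one factor. This is standard (it follows from the structure $E(\RR) \cong \RR/\ZZ$ or $\RR/\ZZ \times \ZZ/2\ZZ$ as a real Lie group, so its $N$-torsion always contains $\ZZ/N\ZZ$), but I would want to cite it cleanly or give the lattice argument above. A secondary subtlety is pinning down that $\rho_{E,N}(c)$ really has determinant $-1$ (not $+1$): this is forced by $\det \rho_{E,N}(c) = \chi_N(c) = -1$, since complex conjugation acts on $\mu_N$ by inversion, which is the nontrivial element of $(\ZZ/N\ZZ)^\times/\!\!\sim$ reduced to the sign—concretely $\chi_N(c) \equiv -1$. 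Combined with trace $0$, this determines the conjugacy class of $\rho_{E,N}(c)$ and gives the fixed point of order $N$. With these facts in hand the proposition follows immediately.
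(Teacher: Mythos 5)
Your handling of the determinant condition and of complex conjugation is essentially the paper's own argument: a point of order $N$ in $E(\RR)$ fixed by $c$, together with $\det\rho_{E,N}(c)=-1$ from the cyclotomic character, yields the required element of trace $0$ and determinant $-1$ fixing a point of order $N$. One small slip there: for even $N$ the matrix $\rho_{E,N}(c)$ need not be diagonalizable (e.g.\ $\left(\begin{smallmatrix}1&1\\0&-1\end{smallmatrix}\right)$ modulo $4$ admits no eigenbasis), so the conclusion should be phrased, as in the paper, via the upper-triangular form with diagonal entries $1$ and $-1$ in a basis whose first vector is the real point of order $N$, rather than via ``eigenvalues $1$ and $-1$ with a $+1$-eigenvector of order $N$''.

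The genuine gap is the condition $G\neq\GL_2(\ZZ/N\ZZ)$. What must be excluded is that $H:=\rho_{E,N}(\Gal_\QQ)$ is an index-$2$ subgroup of $\GL_2(\ZZ/N\ZZ)$ not containing $-I$; intersecting with $\SL_2$ and using surjectivity of the determinant, this is equivalent to the nonexistence of a proper subgroup $S\subseteq\SL_2(\ZZ/N\ZZ)$ with $\pm S=\SL_2(\ZZ/N\ZZ)$. Your sentence ``one sees $\rho_{E,N}(\Gal_\QQ)$ would have to contain $\SL_2(\ZZ/N\ZZ)$'' is precisely this assertion, not a proof of it: knowing that unipotents generate $\SL_2$ does not tell you the unipotents lie in $S$ rather than in $-S$. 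The argument can be completed when $N$ is odd: a unipotent $u$ then has odd order $m$, and if $-u\in S$ then $(-u)^m=-I\in S$, a contradiction, so every unipotent lies in $S$ and $S=\SL_2(\ZZ/N\ZZ)$. For even $N>2$ this breaks down (unipotents have even order), and the paper instead devotes Lemma~\ref{L:proper SL2} to the statement: one reduces to $N=\ell^e$ a prime power and uses that the abelianization of $\SL_2(\ZZ/\ell^e\ZZ)$ is cyclic of order $\gcd(\ell^e,12)$, which is odd for odd $\ell$ (so no index-$2$ subgroup exists at all), while for $\ell=2$ the unique index-$2$ subgroup is the preimage of the order-$3$ subgroup of $\SL_2(\ZZ/2\ZZ)$ and contains $-I$, so $\pm S=S$ remains proper. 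Some input of this kind is unavoidable, and your parenthetical alternative (``one checks directly\ldots'') leaves exactly this check undone.
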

\begin{proof}
The group $G:=\pm \rho_{E,N}(\Gal_\QQ)$ clearly contains $-I$.    The character $\det\circ \rho_{E,N}\colon \Gal_\QQ \to (\ZZ/N\ZZ)^\times$ is the surjective homomorphism describing the Galois action on the group of $N$-th roots of unity in $\Qbar$, i.e., for a $N$-th root of unity $\zeta\in \Qbar$, we have $\sigma(\zeta)=\zeta^{\det(\rho_{E,N}(\sigma))}$ for all $\sigma\in \Gal_\QQ$.   Therefore, $\det\circ \rho_{E,N}$ is surjective and hence $\det(G)=(\ZZ/N\ZZ)^\times$.   

Let $c \in \Gal(\Qbar/\QQ)$ be an automorphism corresponding to complex conjugation under some embedding $\Qbar \hookrightarrow \CC$.   Set $g:=\rho_{E,N}(c)$.  As a topological group, the connected component of $E(\RR)$ containing the identity is isomorphic to  $\RR/\ZZ$.  Therefore, $E(\RR)$ contains a point $P_1$ of order $N$.   We may assume that $\rho_{E,N}$ is chosen with respect to a basis whose first term is $P_1$, and hence $g$ is upper triangular whose first diagonal term is $1$.    We have $\det(g)=-1$ since $c$ acts by inversion on $N$-th roots of unity.  Therefore, $g$ is upper triangular with diagonal entries $1$ and $-1$, and hence $\tr(g)=0$.  

Now suppose that $G=\GL_2(\ZZ/N\ZZ)$.   Define $S=\rho_{E,N}(\Gal_\QQ) \cap \SL_2(\ZZ/N\ZZ)$.   Since $G=\GL_2(\ZZ/N\ZZ)$, $\rho_{E,N}(\Gal_\QQ)\neq \GL_2(\ZZ/N\ZZ)$ and $\det(\rho_{E,N}(\Gal_\QQ))=(\ZZ/N\ZZ)^\times$, we deduce that $S\neq \SL_2(\ZZ/N\ZZ)$ and $\pm S=\SL_2(\ZZ/N\ZZ)$.  However, this is impossible by Lemma~\ref{L:proper SL2} below, so we must have $G\neq \GL_2(\ZZ/N\ZZ)$.
\end{proof}

\begin{lemma} \label{L:proper SL2}
There is no proper subgroup $S$ of $\SL_2(\ZZ/N\ZZ)$ such that $\pm S=\SL_2(\ZZ/N\ZZ)$.
\end{lemma}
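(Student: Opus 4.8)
The plan is to reduce the statement to the following: every homomorphism $\phi\colon \SL_2(\ZZ/N\ZZ)\to \ZZ/2\ZZ$ has $-I$ in its kernel. Granting this, I would run an index-counting argument to see that a hypothetical proper $S$ with $\pm S=\SL_2(\ZZ/N\ZZ)$ would give rise to such a $\phi$ with $-I\notin\ker\phi$, a contradiction.

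The first step is an elementary bookkeeping remark. Since $-I$ is central in $\SL_2(\ZZ/N\ZZ)$, the set $\pm S:=\{\pm I\}\cdot S$ is a subgroup containing $S$, and the index $[\pm S:S]$ equals $1$ when $-I\in S$ and $2$ when $-I\notin S$. Hence, if $S$ is a proper subgroup satisfying $\pm S=\SL_2(\ZZ/N\ZZ)$, then necessarily $-I\notin S$ and $[\SL_2(\ZZ/N\ZZ):S]=2$. A subgroup of index $2$ is automatically normal, so the quotient map yields a surjection $\phi\colon \SL_2(\ZZ/N\ZZ)\to \ZZ/2\ZZ$ with $\ker\phi=S$; in particular $\phi(-I)=1$.

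The key step is then to exhibit $-I$ as a square: the matrix $w:=\left(\begin{smallmatrix}0 & -1\\ 1 & 0\end{smallmatrix}\right)$ lies in $\SL_2(\ZZ/N\ZZ)$ for every $N$ and satisfies $w^2=-I$. Therefore $\phi(-I)=\phi(w^2)=2\,\phi(w)=0$ in $\ZZ/2\ZZ$, forcing $-I\in\ker\phi=S$ and contradicting $-I\notin S$. This finishes the proof.

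I do not expect a genuine obstacle here: the whole argument is a couple of lines, and the only point requiring any care is the index computation in the first step, which uses nothing beyond the centrality of $-I$. It is perhaps worth emphasizing (in the writeup) that the lemma does \emph{not} claim $\SL_2(\ZZ/N\ZZ)$ has no subgroup of index $2$ — for $N$ even this generally fails, e.g.\ via reduction to $\SL_2(\FF_2)\cong\mathfrak S_3$ — but only that any such subgroup must contain $-I$, which is exactly what the above argument shows.
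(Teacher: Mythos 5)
Your proof is correct, and it takes a genuinely different and more elementary route than the paper's. Both arguments begin the same way: a proper $S$ with $\pm S=\SL_2(\ZZ/N\ZZ)$ cannot contain $-I$, hence has index $2$, hence is normal with quotient $\ZZ/2\ZZ$. From there the paper first reduces to the prime-power case $N=\ell^e$ (citing a lemma from an earlier work), then invokes the fact that the abelianization of $\SL_2(\ZZ/\ell^e\ZZ)$ is cyclic of order $\gcd(\ell^e,12)$ to force $\ell=2$, and finally identifies the unique index-$2$ subgroup of $\SL_2(\ZZ/2^e\ZZ)$ explicitly and observes that it contains $-I$. You bypass all of this with the single observation that $-I=w^2$ with $w=\left(\begin{smallmatrix}0&-1\\1&0\end{smallmatrix}\right)\in\SL_2(\ZZ/N\ZZ)$, so $-I$ is a square and therefore dies in any homomorphism to $\ZZ/2\ZZ$; this puts $-I$ in $S$, a contradiction. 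Your argument is self-contained, works uniformly in $N$ with no reduction step and no appeal to the structure of the abelianization, and in fact proves the slightly stronger statement that $-I$ lies in the kernel of every character $\SL_2(\ZZ/N\ZZ)\to\{\pm1\}$. The paper's route buys nothing extra for this particular lemma; its only advantage is that the abelianization facts it cites are also used elsewhere in the author's toolkit. Your closing caveat is also well placed: for even $N$ the group $\SL_2(\ZZ/N\ZZ)$ genuinely has index-$2$ subgroups, so the content of the lemma really is that every such subgroup contains $-I$, which is exactly what the squareness of $-I$ delivers.
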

\begin{proof}
Suppose that $S$ is a subgroup of $\SL_2(\ZZ/N\ZZ)$ for which $\pm S=\SL_2(\ZZ/N\ZZ)$.  By \cite{MR2721742}*{Lemma A.6}, we deduce that there is a prime power $\ell^e$ dividing $N$ such that the image $\tilde{S}$ of $S$ in $\SL_2(\ZZ/\ell^e\ZZ)$ is a proper subgroup satisfying $\pm \tilde{S} = \SL_2(\ZZ/\ell^e\ZZ)$.  So without loss of generality, we may assume that $N=\ell^e$.

The group $S$ has index $2$ in $\SL_2(\ZZ/\ell^e\ZZ)$.   Therefore, $S$ is normal in $\SL_2(\ZZ/\ell^e\ZZ)$ and the quotient is cyclic of order $2$.   However, the abelianization of $\SL_2(\ZZ/\ell^e\ZZ)$ is a cyclic group of order $\gcd(\ell^e,12)$, cf.~\cite{MR2721742}*{Lemma A.1}.    Therefore, we must have $\ell=2$.   Since the abelianization of $\SL_2(\ZZ/2^e\ZZ)$ is cyclic of order $2$ or $4$, we find that $S$ is the unique subgroup of $\SL_2(\ZZ/2^e\ZZ)$ of index $2$.  The group $S$ is now easy to describe; it is the group of elements in $\SL_2(\ZZ/2^e\ZZ)$ whose image in $\SL_2(\ZZ/2\ZZ)$ lies in the unique cyclic group of order $3$.   However, this implies that $\pm S \neq \SL_2(\ZZ/2^e\ZZ)$ since $-I \equiv I \pmod{2}$.    This contradiction ensures that no such $S$ exists.
\end{proof}

\begin{remark}
When $N$ is a prime $\ell$, which is the setting of this paper, the last condition in the definition of applicable subgroup can be simplified to say simply that $G$ contains an element with trace $0$ and determinant $-1$.  
\end{remark}

\section{Modular curves}   \label{S:modular}

Fix an integer $N\geq 1$; in our later application, we will take $N$ to be a prime $\ell$.     In \S\ref{SS:modular functions intro}, we recall the Galois theory of the field of modular functions of level $N$.   In \S\ref{SS:modular curves intro}, we define modular curves in terms of their functions fields.   We take an unsophisticated approach to modular curves and develop what we need from Shimura's book \cite{MR1291394}; it will be useful for reference in future work.  Alternatively, one could develop modular curves as in \cite{MR0337993}*{IV-3}.

\subsection{Modular functions of level $N$} \label{SS:modular functions intro}

The group $\SL_2(\ZZ)$ acts on the complex upper half plane $\mathfrak{h}$ via linear fractional transformations, i.e., $\gamma_*(\tau) = (a\tau+b)/(c\tau +d )$ for $\gamma= \left(\begin{smallmatrix}a & b \\ c & d \end{smallmatrix}\right) \in \SL_2(\ZZ)$ and $\tau\in \mathfrak{h}$.    Let $\Gamma(N)$ be the congruence subgroup consisting of matrices in $\SL_2(\ZZ)$ that are congruent to $ I$ modulo $N$.   The quotient $\Gamma(N) \backslash\mathfrak{H}$ is a Riemann surface and can be completed to a compact and smooth Riemann surface $X_{N}$.  Let $\tau$ be a variable of the complex upper half plane.

 Every meromorphic function $f$ on $X_{N}$ has a $q$-expansion $\sum_{n\in \ZZ} c_n q^{n/N}$; here the $c_n$ are complex numbers which are $0$ for all but finitely many negative $n$ and $q^{1/N}:= e^{2\pi i \tau/N}$.  We define $\calF_N$ to be the field of meromorphic functions on $X_{N}$ whose $q$-expansion has coefficients in $\QQ(\zeta_N)$, where $\zeta_N$ is the $N$-th root of unity $e^{2\pi i/N}$.  For example, $\calF_1=\QQ(j)$ where $j=j(\tau)$ is the modular $j$-invariant with the familiar expansion
\[
j=q^{-1}+ 744 + 196884q + 21493760q^2 + 864299970q^3 +  \ldots. 
\]   

For each $d\in (\ZZ/N\ZZ)^\times$, let $\sigma_d$ be the automorphism of the field $\QQ(\zeta_N)$ for which $\sigma_d(\zeta_N)=\zeta_N^d$.  We extend $\sigma_d$ to an automorphism of $\calF_N$ by taking a function with $q$-expansion $\sum_n c_n q^{n/N}$ to $\sum_n \sigma_d(c_n) q^{n/N}$.   We let $\SL_2(\ZZ)$ act on $\calF_N$ by taking a modular function $f\in \calF_N$ and a matrix $\gamma\in \SL_2(\ZZ)$ to $f\circ \gamma^t$, i.e., the function $(f\circ \gamma^t)(\tau)=f(\gamma^t_*(\tau))$ where $\gamma^t$ is the transpose of $\gamma$.

\begin{prop} \label{P:modular galois}
The extension $\calF_N$ of $\QQ(j)$ is Galois.  There is a unique isomorphism
\[
\theta_N \colon \GL_2(\ZZ/N\ZZ)/\{\pm I\} \xrightarrow{\sim}  \Gal(\calF_N/\QQ(j))
\]
such that the following holds for all $f\in \calF_N$:  
\begin{alphenum}
\item \label{P:modular galois a}
For $A\in \SL_2(\ZZ/N\ZZ)$,  we have $\theta_N(A) f = f\circ \gamma^t$, where $\gamma$ is any matrix in $\SL_2(\ZZ)$ that is congruent to $A$ modulo $N$. 
\item \label{P:modular galois b}
For $A=\left(\begin{smallmatrix}1 & 0 \\ 0 & d \end{smallmatrix}\right) \in \GL_2(\ZZ/N\ZZ)$, we have $\theta_N(A) f = \sigma_d(f)$.
\end{alphenum}
The field $\QQ(\zeta_N)$ is the algebraic closure of $\QQ$ in $\calF_N$ and corresponds to the subgroup $\SL_2(\ZZ/N\ZZ)/\{\pm I\}$.
\end{prop}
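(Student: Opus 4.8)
The plan is to follow Shimura's analytic development \cite{MR1291394}*{Ch.~6} and reduce the statement to two imported facts about the covering $X_N\to X_1$ over $\CC$, after which everything is formal field theory. The first fact is geometric: the full field $\CC(X_N)$ of meromorphic functions on $X_N$ is a Galois extension of $\CC(j)$ with Galois group $\SL_2(\ZZ/N\ZZ)/\{\pm I\}$, where a class $A$ acts by $f\mapsto f\circ\gamma^t$ for any $\gamma\in\SL_2(\ZZ)$ reducing to $A$; well-definedness uses that $f$ is $\Gamma(N)$-invariant and that $-I$ acts trivially on $\mathfrak{h}$, and faithfulness is precisely the statement that $X_N\to X_1$ is a Galois cover with deck group $\Gamma(1)/(\{\pm I\}\Gamma(N))$ (the transpose is bookkeeping needed to make this a left action compatible with the Galois action on torsion). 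The second fact is arithmetic: $\calF_N$ contains the Fricke functions $\mathfrak{f}_a$ ($a\in N^{-1}\ZZ^2/\ZZ^2$, $a\neq 0$), whose $q$-expansions lie in $\QQ(\zeta_N)$ and whose constant terms display $\zeta_N$ to an explicit power read off from $a$, and these together with $j$ generate $\CC(X_N)$ over $\CC$. Consequently $\calF_N$ is stable under the $\SL_2(\ZZ)$-action, $\QQ(\zeta_N)\subseteq\calF_N$, and $\CC\cdot\calF_N=\CC(X_N)$.

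With these in hand I would build $\theta_N$. Restricting the $\SL_2(\ZZ)$-action gives a homomorphism $\SL_2(\ZZ/N\ZZ)/\{\pm I\}\to\Aut(\calF_N/\QQ(j))$: it fixes $j$ (which is $\SL_2(\ZZ)$-invariant) and, being coefficientwise trivial, fixes the constant field $\QQ(\zeta_N)$; it is injective by the faithfulness over $\CC$ above. For $d\in(\ZZ/N\ZZ)^\times$ the coefficientwise action $\sigma_d$ is an automorphism of $\calF_N$ fixing $\QQ(j)$, and $d\mapsto\sigma_d$ is injective. The one place requiring a genuine computation is the commutation relation
\[
\sigma_d\circ\theta_N(A)\circ\sigma_d^{-1}=\theta_N\Big(\left(\begin{smallmatrix}1 & 0\\0 & d\end{smallmatrix}\right)A\left(\begin{smallmatrix}1 & 0\\0 & d\end{smallmatrix}\right)^{-1}\Big)\qquad(A\in\SL_2(\ZZ/N\ZZ)),
\]
which I would verify on the $q$-expansions of the $\mathfrak{f}_a$ (or cite from Shimura). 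Since $\GL_2(\ZZ/N\ZZ)=\SL_2(\ZZ/N\ZZ)\rtimes(\ZZ/N\ZZ)^\times$ under $d\mapsto\left(\begin{smallmatrix}1 & 0\\0 & d\end{smallmatrix}\right)$, this relation lets the two pieces assemble into a homomorphism $\theta_N\colon\GL_2(\ZZ/N\ZZ)/\{\pm I\}\to\Aut(\calF_N/\QQ(j))$ satisfying \ref{P:modular galois a} and \ref{P:modular galois b}; it is injective because $\theta_N(A\left(\begin{smallmatrix}1 & 0\\0 & d\end{smallmatrix}\right))=\mathrm{id}$ forces $\sigma_d=\mathrm{id}$ on constants, hence $d=1$, hence $\theta_N(A)=\mathrm{id}$, hence $A=\pm I$. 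Uniqueness of $\theta_N$ is automatic since $\SL_2(\ZZ/N\ZZ)$ and the matrices $\left(\begin{smallmatrix}1 & 0\\0 & d\end{smallmatrix}\right)$ generate $\GL_2(\ZZ/N\ZZ)$, so properties \ref{P:modular galois a}--\ref{P:modular galois b} determine $\theta_N$ on generators.

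It remains to match degrees. An element of $\calF_N$ algebraic over $\QQ$ is algebraic over $\CC$ and is a meromorphic function on the compact Riemann surface $X_N$, hence constant, hence lies in $\QQ(\zeta_N)$; so $\QQ(\zeta_N)$ is the algebraic closure of $\QQ$ in $\calF_N$, and $\calF_N/\QQ(\zeta_N)$ is a regular field extension (characteristic $0$). Base changing to $\CC$ (regular extensions stay linearly disjoint from any extension of the base) therefore yields $[\calF_N:\QQ(\zeta_N)(j)]=[\CC\cdot\calF_N:\CC(j)]=[\CC(X_N):\CC(j)]=|\SL_2(\ZZ/N\ZZ)/\{\pm I\}|$, while $[\QQ(\zeta_N)(j):\QQ(j)]=[\QQ(\zeta_N):\QQ]=|(\ZZ/N\ZZ)^\times|$ since $j$ is transcendental; hence $[\calF_N:\QQ(j)]=|\GL_2(\ZZ/N\ZZ)/\{\pm I\}|$. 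Having an injective group of automorphisms fixing $\QQ(j)$ of this order shows $\calF_N/\QQ(j)$ is Galois and that $\theta_N$ is an isomorphism onto $\Gal(\calF_N/\QQ(j))$. Finally $\QQ(\zeta_N)(j)$ is pointwise fixed by $\theta_N(\SL_2(\ZZ/N\ZZ))$ and has index $|\SL_2(\ZZ/N\ZZ)/\{\pm I\}|$ in $\calF_N$, so by the Galois correspondence it is exactly the fixed field of $\SL_2(\ZZ/N\ZZ)/\{\pm I\}$, and its algebraic closure of $\QQ$ is $\QQ(\zeta_N)$ as noted.

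The hard part, or rather the only nonformal part, will be the bookkeeping that makes the displayed commutation relation come out with $\sigma_d$ corresponding to $\left(\begin{smallmatrix}1 & 0\\0 & d\end{smallmatrix}\right)$ and not to $\left(\begin{smallmatrix}d & 0\\0 & 1\end{smallmatrix}\right)$ or an inverse: one must fix the normalization of the $\mathfrak{f}_a$, the transpose convention, and the quotient by $\{\pm I\}$ consistently, and then read off the effect of $\sigma_d$ from the explicit constant terms of the $q$-expansions. Everything surrounding it is deck-transformation geometry and elementary Galois theory.
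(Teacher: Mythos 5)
Your argument is correct in outline and reaches the same conclusion by a genuinely different assembly. The paper imports Shimura's Theorem~6.6 wholesale --- that theorem already hands over the isomorphism $\theta_N$ together with the Galoisness of $\calF_N/\QQ(j)$ and the formula $\theta_N(A)f_a=f_{aA^t}$ on the Fricke functions --- and then merely verifies properties (\ref{P:modular galois a}) and (\ref{P:modular galois b}) by computing $f_a\circ\gamma^t$ and $\sigma_d(f_a)$ on the generating set $\{j\}\cup\{f_a\}$ (the latter via the explicit $q$-expansion of $\wp(a_1\tau+a_2;\tau)$). You instead rebuild $\theta_N$ from more primitive inputs: the deck-group description of $\CC(X_N)/\CC(j)$, the semidirect product $\GL_2(\ZZ/N\ZZ)=\SL_2(\ZZ/N\ZZ)\rtimes(\ZZ/N\ZZ)^\times$, and a degree count via regularity of $\calF_N/\QQ(\zeta_N)$. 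What your route buys is a proof of Galoisness and of the equality $\lvert\Gal(\calF_N/\QQ(j))\rvert=\lvert\GL_2(\ZZ/N\ZZ)/\{\pm I\}\rvert$ rather than a citation; what it costs is that the one "genuine computation" you isolate --- the commutation relation, verified via $\sigma_d(f_a)=f_{a\left(\begin{smallmatrix}1&0\\0&d\end{smallmatrix}\right)}$ --- is exactly the same $q$-expansion calculation the paper performs, so no work is actually saved there.

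One step deserves more care than you give it. You assert early on that the coefficientwise map $\sigma_d$ is an automorphism of $\calF_N$, but with $\calF_N$ defined as the meromorphic functions on $X_N$ whose $q$-expansions lie in $\QQ(\zeta_N)$, it is not formal that applying $\sigma_d$ to the coefficients of such a function yields a series that again converges to a meromorphic function on $X_N$. The standard fix is Shimura's Proposition~6.9(1), i.e.\ $\calF_N=\QQ(j,f_a\mid a)$, after which $\sigma_d$ is well defined because it permutes the generators $f_a$. You can either import that identity directly as part of your "second arithmetic fact," or extract it from your own degree count: since $\QQ(\zeta_N)(j,f_a\mid a)\subseteq\calF_N$ and both have degree $[\CC(X_N):\CC(j)]$ over $\QQ(\zeta_N)(j)$ by linear disjointness, they coincide. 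Either way the gap is easily closed, but the order of operations in your write-up should be rearranged so that $\sigma_d$ is only invoked after this generation statement is in hand.
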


We will sketch Propostion~\ref{P:modular galois} in \S\ref{SS:modular proof 1}.  Throughout the paper, we will let $\GL_2(\ZZ/N\ZZ)$ act on $\calF_N$ via the isomorphism $\theta_N$ (with $-I$ acting trivially).  

\begin{remark}
There are other choices for an isomorphism $\GL_2(\ZZ/N\ZZ)/\{\pm I\}$; for example, one could instead replace the transpose by an inverse in (\ref{P:modular galois a}).  Our choice is explained by our application to modular curves.   As a warning, there are several places in the literature where incompatible choices are made with respect to modular curves.
\end{remark}

\subsection{Modular curves} \label{SS:modular curves intro}

Let $G$ be a subgroup of $\GL_2(\ZZ/N\ZZ)$ containing $-I$ that satisfies $\det(G)=(\ZZ/N\ZZ)^\times$.    Denote by $\calF_N^G$ the subfield of $\calF_N$ fixed by the action of $G$ from Proposition~\ref{P:modular galois}. Using Proposition~\ref{P:modular galois} and the assumption $\det(G)=(\ZZ/N\ZZ)^\times$, we find that $\QQ$ is algebraically closed in $\calF_N^G$.  

Let $X_G$ be the smooth projective curve with function field $\calF_N^G$; it is defined over $\QQ$ and is geometrically irreducible.  The inclusion of fields $ \calF_N^G \supseteq \QQ(j)$ gives rise to a non-constant morphism 
\[
\pi_G \colon X_G \to \Spec \QQ[j] \cup \{\infty\} =\PP^1_\QQ.
\]   
The morphism $\pi_G$ is non-constant and we have 
\[
\deg(\pi_G)=[\GL_2(\ZZ/N\ZZ)/\{\pm I\}: G/\{\pm I\}]=[\GL_2(\ZZ/N\ZZ): G].
\]  
We will also denote the function field $\calF_N^G$ of $X_G$ by $\QQ(X_G)$.   A point in $X_G$ is a \defi{cusp} or a \defi{CM point} if $\pi_G$ maps it to $\infty$ or to the $j$-invariant of a CM elliptic curve, respectively.\\

The following property of the curve $X_G$ is key to our application; we will give a proof in \S\ref{SS:modular proof 2}.

\begin{prop} \label{P:main moduli}
Let $G$ be a subgroup of $\GL_2(\ZZ/N\ZZ)$ that contains $-I$ and satisfies $\det(G)=(\ZZ/N\ZZ)^\times$.    Let $E$ be an elliptic curve defined over $\QQ$ with $j_E\not\in\{0,1728\}$.   Then $\rho_{E,N}(\Gal_\QQ)$ is conjugate in $\GL_2(\ZZ/N\ZZ)$ to a subgroup of $G$ if and only if $j_E$ belongs to $\pi_G(X_G(\QQ))$.
\end{prop}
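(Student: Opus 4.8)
\textbf{Proof proposal for Proposition~\ref{P:main moduli}.}

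The plan is to translate the condition on $\rho_{E,N}(\Gal_\QQ)$ into a statement about the existence of a rational point on $X_G$ lying over $j_E$, using the Galois-theoretic description of $\calF_N$ from Proposition~\ref{P:modular galois}. First I would fix an elliptic curve $E/\QQ$ with $j_E\notin\{0,1728\}$ and choose a basis of $E[N]$, giving $\rho_{E,N}\colon\Gal_\QQ\to\GL_2(\ZZ/N\ZZ)$. The key observation is that $E$ determines a specialization homomorphism: the modular functions in $\calF_N$ can be evaluated at (a lift of) $E$ once one picks a level-$N$ structure, and the field generated over $\QQ$ by these values is the residue field of $\QQ(X_G)$ at a point over $j_E$ — but to make this precise I would instead argue on the level of the full curve $X_{\GL_2(\ZZ/N\ZZ)}=\PP^1_j$ versus $X_{\{\pm I\}}=X_N$. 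Concretely, the fiber of $\pi\colon X_N\to\PP^1_j$ over $j=j_E$ (when $j_E$ avoids the ramification locus, which is exactly $j\in\{0,1728,\infty\}$, handled by the hypothesis $j_E\notin\{0,1728\}$ plus the cusp being a separate check) is a $\Gal_\QQ$-torsor under $\GL_2(\ZZ/N\ZZ)/\{\pm I\}$, and the associated Galois action is given precisely by $\bar\rho_{E,N}$, the composition of $\rho_{E,N}$ with reduction modulo $\pm I$. This compatibility between $\theta_N$ and the torsion-point action is the crux, and it is where the specific normalization (transpose in Proposition~\ref{P:modular galois}(\ref{P:modular galois a})) gets used; I expect this identification to be the main obstacle, since it requires carefully matching Shimura's conventions for the action on modular functions with the Galois action on $E[N]$ via the Weil pairing / the $q$-expansion principle.

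Granting that identification, the argument is formal. A point $P\in X_G(\QQ)$ with $\pi_G(P)=j_E$ corresponds, by pulling back along $X_N\to X_G$, to a $\Gal_\QQ$-stable $G/\{\pm I\}$-orbit inside the fiber $\pi^{-1}(j_E)$; since that fiber is the $\bar\rho_{E,N}$-torsor, a $\Gal_\QQ$-stable $G/\{\pm I\}$-coset exists if and only if $\bar\rho_{E,N}(\Gal_\QQ)$ is contained in a conjugate of $G/\{\pm I\}$, i.e.\ $\rho_{E,N}(\Gal_\QQ)\subseteq G$ up to conjugacy in $\GL_2(\ZZ/N\ZZ)$ (using $-I\in G$). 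Conversely, if $\rho_{E,N}(\Gal_\QQ)$ lies in a conjugate of $G$, then $\bar\rho_{E,N}(\Gal_\QQ)$ fixes the corresponding coset, producing a closed point of $X_G$ over $j_E$ with residue field $\QQ$, hence a rational point $P$ with $j_E=\pi_G(P)$. Either way $j_E\in\pi_G(X_G(\QQ))$ iff $\rho_{E,N}(\Gal_\QQ)$ is conjugate to a subgroup of $G$, which is the claim.

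For the rigorous write-up I would: (i) recall that for $j_0\in\QQ\setminus\{0,1728,\infty\}$ the fiber $\pi^{-1}(j_0)\subset X_N$ is \'etale over $\Spec\QQ$, so it is $\Spec$ of an \'etale $\QQ$-algebra and its $\Qbar$-points form a torsor under $\operatorname{Gal}(\calF_N/\QQ(j))\cong\GL_2(\ZZ/N\ZZ)/\{\pm I\}$; (ii) identify this torsor with the $\GL_2(\ZZ/N\ZZ)/\{\pm I\}$-torsor of "level-$N$ structures on an elliptic curve with $j$-invariant $j_0$ modulo $\pm 1$," whose Galois action is $\bar\rho_{E,N}$ — here I would cite Shimura \cite{MR1291394} (Chapter 6) or set it up directly from the $q$-expansion action, taking a Tate curve / complex-analytic uniformization to pin down the normalization; (iii) invoke the general fact that for a finite group $\Gamma$ acting on a $\Gal_\QQ$-torsor $T$, the quotient $T/H$ (for $H\le\Gamma$) has a $\Gal_\QQ$-fixed point iff the image of the structure map $\Gal_\QQ\to\Gamma$ lands in a conjugate of $H$; and (iv) match $T/H$ with $\pi_G^{-1}(j_0)(\Qbar)$ for $H=G/\{\pm I\}$. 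The only genuinely delicate point is step (ii); everything else is bookkeeping with \'etale algebras and torsors. I would flag that the hypothesis $j_E\notin\{0,1728\}$ is exactly what guarantees \'etaleness of the fiber (so that $\pi^{-1}(j_E)$ is reduced and the torsor structure is clean), and that the case of the cusp $j=\infty$ does not arise since $j_E\in\QQ$.
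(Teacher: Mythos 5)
Your proposal is correct and follows essentially the same route as the paper: both reduce the statement to identifying the Galois action on the fiber of $X_N\to \PP^1_\QQ$ over $j_E$ (equivalently, on the generic fiber, followed by specialization) with $\rho_{E,N}$ taken modulo $\pm I$, and then apply the formal coset/torsor argument together with the observation that all curves over $\QQ$ with the given $j$-invariant are quadratic twists and hence yield the same image mod $\pm I$. The one substantive step you defer — your step (ii), pinning down the compatibility between the normalization of $\theta_N$ and the Galois action on $N$-torsion — is exactly what the paper carries out in Lemmas~\ref{L:basis P1P2} and~\ref{L:inverse of thetaN} using the generic Weierstrass curve over $\QQ(j)$ and the functions $f_a$, before specializing along $u_*\colon \Gal_\QQ\to\pi_1(U)$ rather than arguing fiber by fiber.
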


The following lemma will be key to finding modular curves of genus $0$ with rational points.

\begin{lemma} \label{L:key}
Fix a modular function $h\in \calF_N - \QQ(j)$ such that $J(h)=j$ for a rational function $J(t) \in \QQ(t)$.    Let $G$ be the subgroup of $\GL_2(\ZZ/N\ZZ)$ that fixes $h$ under the action on $\calF_N$ from Proposition~\ref{P:modular galois}. 
\begin{romanenum}
\item  \label{L:key c}
The subgroup $G$ of $\GL_2(\ZZ/N\ZZ)$ is applicable.
\item  \label{L:key a}
The modular curve $X_G$ has function field $\QQ(h)$.  In particular, it is isomorphic to $\PP^1_\QQ$. 
\item  \label{L:key b}
Let $E/\QQ$ be an elliptic curve with $j_E\notin\{0,1728\}$.   The group $\rho_{E,N}(\Gal_\QQ)$ is conjugate in $\GL_2(\ZZ/N\ZZ)$ to a subgroup of $G$ if and only if $j_E=J(t)$ for some $t\in \QQ\cup \{\infty\}$.
\end{romanenum}
\end{lemma}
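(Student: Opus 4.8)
The plan is to deduce all three parts of Lemma~\ref{L:key} from the results already assembled in this section, principally Propositions~\ref{P:modular galois}, \ref{P:main moduli} and \ref{P:basic applicable}. The starting observation is that since $h \in \calF_N$ and $G$ is by definition its stabilizer under the $\GL_2(\ZZ/N\ZZ)$-action, Galois theory for the extension $\calF_N/\QQ(j)$ (Proposition~\ref{P:modular galois}) identifies $G/\{\pm I\}$ with $\Gal(\calF_N/\QQ(h,j))$; in particular $-I \in G$, and $\calF_N^G = \QQ(h,j)$. The hypothesis $J(h)=j$ forces $j \in \QQ(h)$, so $\QQ(h,j)=\QQ(h)$, which is already the main content of part~(\ref{L:key a}): the function field of $X_G$ is $\QQ(h)$, a purely transcendental extension of $\QQ$ of transcendence degree $1$, so $X_G$ is a smooth projective geometrically irreducible curve of genus $0$ with a rational point (e.g. coming from any value of $h$), hence $X_G \cong \PP^1_\QQ$.

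For part~(\ref{L:key c}), I need to check the three conditions in the definition of applicable. Since $h \notin \QQ(j)$, the field $\calF_N^G = \QQ(h)$ strictly contains $\QQ(j)$, so $G \neq \GL_2(\ZZ/N\ZZ)$. Because $\QQ$ is algebraically closed in $\calF_N^G = \QQ(h)$ — it is a rational function field over $\QQ$ — and $\QQ(\zeta_N)$ is the algebraic closure of $\QQ$ in $\calF_N$ corresponding to $\SL_2(\ZZ/N\ZZ)/\{\pm I\}$ (last sentence of Proposition~\ref{P:modular galois}), the group $G$ together with $\SL_2(\ZZ/N\ZZ)$ must generate all of $\GL_2(\ZZ/N\ZZ)$; via the determinant, $\det(G) \cdot \{\pm 1\}$-considerations show $\det(G) = (\ZZ/N\ZZ)^\times$. (More directly: $\calF_N^G \cap \QQ(\zeta_N) = \QQ$ translates under $\theta_N$ to $G \cdot \SL_2 = \GL_2$, and since $\det$ is surjective on $\SL_2$-cosets this is equivalent to $\det(G) = (\ZZ/N\ZZ)^\times$.) The remaining condition — that $G$ contains an element of trace $0$ and determinant $-1$ fixing a point of order $N$ — is the one place where I cannot argue purely formally from $h$; the cleanest route is to invoke the structure already in place: $X_G \cong \PP^1_\QQ$ has rational points, so by Proposition~\ref{P:main moduli} (applied to any elliptic curve $E/\QQ$ with $j_E = J(t) \notin \{0,1728\}$ for suitable $t \in \QQ$, which exists since $J$ is nonconstant) we get an elliptic curve $E/\QQ$ with $\rho_{E,N}(\Gal_\QQ)$ conjugate into $G$, whence $\pm\rho_{E,N}(\Gal_\QQ) \subseteq G$ (up to conjugacy) is applicable by Proposition~\ref{P:basic applicable} — wait, this shows a \emph{subgroup} of $G$ is applicable, not $G$ itself. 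The trace/determinant/fixed-point condition does pass up to overgroups (it only asserts \emph{existence} of such an element), and $-I \in G$ and $\det(G) = (\ZZ/N\ZZ)^\times$ are already established, so $G$ itself is applicable. I anticipate this fixed-point-element step is the main obstacle: one must be slightly careful that $J$ being nonconstant genuinely produces a value $t \in \QQ$ with $J(t) \notin \{0,1728,\infty\}$ and that the resulting $E/\QQ$ is genuinely an elliptic curve (generic fiber argument, or just note $J^{-1}(\{0,1728,\infty\})$ is finite so all but finitely many $t\in\QQ$ work, and $\QQ$ is infinite).

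Finally, part~(\ref{L:key b}) follows by combining Proposition~\ref{P:main moduli} with part~(\ref{L:key a}). By Proposition~\ref{P:main moduli}, for $E/\QQ$ with $j_E \notin\{0,1728\}$ the group $\rho_{E,N}(\Gal_\QQ)$ is conjugate into $G$ iff $j_E \in \pi_G(X_G(\QQ))$. Under the isomorphism $X_G \cong \PP^1_\QQ$ with coordinate $h$, the morphism $\pi_G$ is the one induced by $\QQ(j) \hookrightarrow \QQ(h)$, $j \mapsto J(h)$; that is, $\pi_G$ corresponds to the rational map $\PP^1 \to \PP^1$ given by $J(t)$. Hence $\pi_G(X_G(\QQ)) = \{J(t) : t \in \QQ \cup \{\infty\}\}$ (including the value $J(\infty) := \lim_{t\to\infty}J(t) \in \PP^1_\QQ$, which is finite exactly when it is not a pole), and so $\rho_{E,N}(\Gal_\QQ)$ is conjugate to a subgroup of $G$ iff $j_E = J(t)$ for some $t \in \QQ\cup\{\infty\}$. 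This is exactly the asserted statement, completing the proof.
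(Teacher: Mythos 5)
Your proposal is correct and follows essentially the same route as the paper: identify $\calF_N^G$ with $\QQ(h)$ via the Galois correspondence of Proposition~\ref{P:modular galois} (using $J(h)=j$ to get $\QQ(j)\subseteq\QQ(h)$), deduce $\det(G)=(\ZZ/N\ZZ)^\times$ from $\QQ$ being algebraically closed in $\QQ(h)$, obtain parts (ii) and (iii) from Proposition~\ref{P:main moduli}, and finally get applicability by producing an applicable subgroup of $G$ via Propositions~\ref{P:main moduli} and \ref{P:basic applicable} and noting that the defining conditions pass to overgroups once $G\neq\GL_2(\ZZ/N\ZZ)$. The worry you flag about the trace-$0$ element is resolved exactly as in the paper, so no changes are needed.
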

\begin{proof}
By the Galois correspondence coming from the isomorphism $\theta_N$ of Proposition~\ref{P:modular galois}, the field $\QQ(h)$ equals $\calF_N^{G}$ and is an extension of $\QQ(j)$ of degree 
\[
[\GL_2(\ZZ/N\ZZ)/\{\pm I\}:G/\{\pm I\}]=[\GL_2(\ZZ/N\ZZ):G].
\]  
The field $\QQ$ is algebraically closed in $\calF_N^G = \QQ(h)$, so $\det(G)=(\ZZ/N\ZZ)^\times$ by Proposition~\ref{P:modular galois}.  Therefore, $\QQ(h)$ is the function field of $X_G$ and the field extension $\QQ(h)/\QQ(j)$ given by $j=J(h)$ corresponds to the morphism $\pi_G\colon X_G\to \PP^1_\QQ$.     The modular curve $X_G$ is thus isomorphic to $\PP^1_\QQ$ and we have $\pi_G(X_G(\QQ))=J(\QQ\cup\{\infty\})$.   This proves (\ref{L:key a}).  Part (\ref{L:key b}) follows from Proposition~\ref{P:main moduli}.

Finally, we prove that $G$ is applicable.  We have $G\neq \GL_2(\ZZ/N\ZZ)$ since the extension $\QQ(h)/\QQ(j)$ is non-trivial by our assumption on $h$.   Using part (\ref{L:key b}) and Proposition~\ref{P:basic applicable}, we find that $G$ contains an applicable subgroup.    Since $G\neq \GL_2(\ZZ/N\ZZ)$ and $G$ contains an applicable subgroup, we deduce that $G$ is applicable.
\end{proof}

If $X_G$ has genus $0$ and has rational points, then there are in fact curves $E/\QQ$ with $\pm\rho_{E,N}(\Gal_\QQ)$ conjugate to $G$.

\begin{lemma} \label{L:basic HIT}
Suppose that $X_G$ is isomorphic to $\PP^1_\QQ$; equivalently, the function field of $X_G$ is of the form $\QQ(h)$.   We have $j=J(h)$ for a unique $J(t) \in \QQ(t)$  because of the inclusion $\QQ(h)\supseteq \QQ(j)$.    Then for ``most'' $u \in \QQ$ (more precisely, outside a set of density $0$ with respect to height), the groups $\pm \rho_{E,N}(\Gal_\QQ)$ and $G$ are conjugate in $\GL_2(\ZZ/N\ZZ)$ for any elliptic curve $E/\QQ$ with $j$-invariant $J(u)$.
\end{lemma}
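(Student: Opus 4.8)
### Proof proposal for Lemma~\ref{L:basic HIT}

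The plan is to use Hilbert irreducibility applied to the fibers of $\pi_G$. Fix $u \in \QQ$ with $J(u) \notin \{0, 1728\}$ (this excludes only finitely many values of $u$) and let $E/\QQ$ be any elliptic curve with $j_E = J(u)$. By Lemma~\ref{L:key}(\ref{L:key b}), since $u$ is a rational point of $X_G \cong \PP^1_\QQ$ lying above $j_E$, the group $\rho_{E,N}(\Gal_\QQ)$ is conjugate to a subgroup of $G$, hence $\pm\rho_{E,N}(\Gal_\QQ)$ is conjugate to a subgroup of $G$. So the content is the reverse containment: for most $u$, the group $\pm\rho_{E,N}(\Gal_\QQ)$ is not conjugate to a subgroup of any proper subgroup $G'$ of $G$ containing $-I$ and surjecting onto $(\ZZ/N\ZZ)^\times$ under the determinant. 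The first step is therefore to enumerate, up to conjugacy, the maximal such proper subgroups $G' \subsetneq G$; these form a finite list $G'_1, \dots, G'_r$.

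Next I would handle each $G'_k$. For each one, form the modular curve $X_{G'_k}$, which comes with a morphism $X_{G'_k} \to X_G \cong \PP^1_\QQ$ of degree $[G : G'_k] \geq 2$, via the tower of function fields $\calF_N^{G} \subseteq \calF_N^{G'_k}$. This is a non-constant morphism of smooth projective geometrically irreducible curves over $\QQ$ which is not an isomorphism, so the corresponding extension $\calF_N^{G'_k}/\QQ(h)$ is a finite nontrivial extension of the rational function field $\QQ(h)$. By Hilbert's irreducibility theorem, the set of $u \in \QQ$ such that $h \mapsto u$ does not lift to a $\QQ$-point — equivalently, such that the fiber $X_{G'_k,u}$ over $u$ has no rational point — is the complement of a thin set, hence has density $1$ with respect to height. (One must be slightly careful to phrase this in terms of points $P \in X_G(\QQ)$ and use that $X_G \cong \PP^1_\QQ$ so that "density with respect to height" is the usual notion on $\PP^1$; alternatively invoke the standard form of HIT for covers of $\PP^1_\QQ$.) Taking the union over the finitely many $k$, we obtain a density-$0$ exceptional set $Z \subset \QQ$ (together with the finitely many $u$ with $J(u) \in \{0,1728,\infty\}$) outside of which: for every $u$ and every maximal proper $G'_k$, the value $J(u)$ is not of the form $\pi_{G'_k}(Q)$ for $Q \in X_{G'_k}(\QQ)$.

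Then I would conclude as follows. Take $u \in \QQ \setminus Z$ and $E/\QQ$ with $j_E = J(u)$. Suppose for contradiction that $\pm\rho_{E,N}(\Gal_\QQ)$ were conjugate to a subgroup of some proper $G' \subsetneq G$ with $-I \in G'$ and $\det(G') = (\ZZ/N\ZZ)^\times$; enlarging $G'$ we may take it to be one of the maximal ones $G'_k$. Since $j_E \notin \{0, 1728\}$, Proposition~\ref{P:main moduli} applied to $G'_k$ gives $j_E \in \pi_{G'_k}(X_{G'_k}(\QQ))$, contradicting the defining property of $Z$. Hence $\pm\rho_{E,N}(\Gal_\QQ)$ is conjugate to $G$ itself, as claimed.

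The main obstacle is the clean invocation of Hilbert irreducibility in the right form: the subtlety is that one wants the statement for \emph{every} elliptic curve $E$ with $j$-invariant $J(u)$ (not just the one arising from a point of a universal family), but this is immediate because $\pm\rho_{E,N}(\Gal_\QQ)$ depends only on $j_E$ up to conjugacy (as noted in \S\ref{S:classification} for $j_E \notin \{0,1728\}$), and Proposition~\ref{P:main moduli} is stated for arbitrary such $E$. A secondary point to get right is that a thin subset of $X_G(\QQ) = \PP^1(\QQ)$ has density zero with respect to height, which is the classical quantitative form of HIT and is exactly the meaning intended in the statement. Everything else — finiteness of the list of maximal subgroups, the function-field tower, non-triviality of the covers — is routine.
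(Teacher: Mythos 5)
Your argument is correct and takes essentially the same route as the paper: reduce to the finitely many proper subgroups $G'\subsetneq G$ containing $-I$ with full determinant, form the covers $X_{G'}\to X_G$ of degree $[G:G']\geq 2$, and invoke Hilbert irreducibility to conclude that the union of the images of their rational points is thin, hence of density zero in $X_G(\QQ)\cong\PP^1(\QQ)$. The one imprecision (which the paper's own proof shares) is your claimed equivalence between ``the fiber of $X_{G'}\to X_G$ over $u$ has no rational point'' and ``$J(u)\notin\pi_{G'}(X_{G'}(\QQ))$'': a point $Q\in X_{G'}(\QQ)$ could lie over a different $u'\in X_G(\QQ)$ with $J(u')=J(u)$, so to exclude the second condition one should observe that every component of $X_{G'}\times_{\PP^1_\QQ}X_G$ (fiber product over the $j$-line) maps to $X_G$ with degree $\geq 2$ because no $\GL_2(\ZZ/N\ZZ)$-conjugate of $G$ can be contained in the smaller group $G'$, so the relevant exceptional set is still thin.
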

\begin{proof}
Let $\calG$ be the (finite) set of applicable subgroups $H$ of $\GL_2(\ZZ/N\ZZ)$ satisfying $H \subsetneq G$.   For each $H \in \calG$, let $\pi_{H,G}$ be the natural morphism $X_H \to X_G$; it has degree $[G:H] >1$.    To prove the lemma, it suffices to show that the set $\calS:=\cup_{H \in \calH} \pi_{H,G}(X_H(\QQ))$ has density $0$ (with respect to the height) in $X_G(\QQ) \cong \PP^1(\QQ)$.   This is a consequence of Hilbert irreducibility; in the language of \cite{MR1757192}*{\S9}, the set $\calS$ is \emph{thin} and hence has density $0$.  
\end{proof}

\subsection{The modular curve $X_0(N)$}
Let $X_0(N)/\QQ$ be the modular curve $X_{B(N)^t}$, where $B(N)^t$ is the transpose of $B(N)$; it consists of the lower triangular matrices and is conjugate to $B(N)$ in $\GL_2(\ZZ/N\ZZ)$.    Let $\Gamma_0(N)$ be the group of matrices in $\SL_2(\ZZ)$ whose image modulo $N$ is upper triangular.   A function $f\in \calF_N$ belongs to $\QQ(X_0(N))$ if and only if it has rational Fourier coefficients and $f\circ \gamma =f$ for all $\gamma\in \Gamma_0(N)$.
 Define the modular curve $X_s(N) := X_{C_s(N)}$, where $C_s(N)$ is the subgroup of diagonal matrices in $\GL_2(\ZZ/N\ZZ)$. 

\begin{lemma} \label{L:borel to split}
The map $\QQ(X_0(N^2)) \to \QQ(X_s(N))$, $f(\tau)\mapsto f(\tau/N)$ is an isomorphism of fields.   This isomorphism induces an isomorphism between the modular curves $X_s(N)$ and $X_0(N^2)$ which gives a bijection between their cusps.
\end{lemma}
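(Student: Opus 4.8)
The plan is to exhibit the field isomorphism explicitly via the substitution $\tau \mapsto \tau/N$ and then identify the fixed groups on each side inside $\Gal(\calF_{N^2}/\QQ(j))$, so that the correspondence of modular curves follows from Proposition~\ref{P:modular galois}. First I would observe that $X_0(N^2)$ has function field inside $\calF_{N^2}$ (its level divides $N^2$), and likewise $X_s(N)$ has function field inside $\calF_N \subseteq \calF_{N^2}$; thus both live in the common field $\calF_{N^2}$, which is where the comparison should take place. Concretely, a function $f\in\calF_{N^2}$ has a $q$-expansion in $q^{1/N^2}=e^{2\pi i \tau/N^2}$, and the substitution $\tau\mapsto \tau/N$ sends $q^{1/N^2}$ to $e^{2\pi i \tau/N^3}$, so a priori the image lands in $\calF_{N^3}$; the real content is that when $f\in\QQ(X_0(N^2))$ the resulting function actually lies in $\calF_N$ and is fixed by $C_s(N)$.

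The key steps, in order: (i) Recall the classical fact (Shimura \cite{MR1291394}, or a direct check on the standard Hauptmoduln / on $j$ itself: $j(\tau/N)$) that $f\mapsto f(\tau/N)$ carries the space of functions invariant under $\Gamma_0(N^2)$ with rational $q$-expansion onto the space of functions invariant under the conjugate group $\alpha^{-1}\Gamma_0(N^2)\alpha \cap \SL_2(\ZZ)$ where $\alpha=\left(\begin{smallmatrix}N&0\\0&1\end{smallmatrix}\right)$, and that this conjugate group, intersected with $\SL_2(\ZZ)$, is precisely the group of $\gamma\in\SL_2(\ZZ)$ whose reduction mod $N$ is diagonal — i.e. the preimage of $C_s(N)$ in $\SL_2(\ZZ)$. (ii) Deduce that the image of $\QQ(X_0(N^2))$ under the substitution is contained in $\calF_N$ (since an $\SL_2$-orbit computation shows the $q$-expansion exponents collapse from $1/N^3$-scale to $1/N$-scale) and is fixed by $\theta_N$ of every element of $C_s(N)$, hence lies in $\calF_N^{C_s(N)} = \QQ(X_s(N))$. (iii) Check the map is a field homomorphism (immediate, as it is induced by a substitution on $\tau$) and that it is surjective onto $\QQ(X_s(N))$ by comparing degrees over $\QQ(j)$: both $[\QQ(X_0(N^2)):\QQ(j)]$ and $[\QQ(X_s(N)):\QQ(j)]$ equal $[\GL_2(\ZZ/N^2\ZZ):B(N^2)]=[\GL_2(\ZZ/N\ZZ):C_s(N)]$ (a short index computation: both indices equal $N\prod_{p\mid N}(1+1/p)$ times the appropriate factor — in fact $[\GL_2(\ZZ/M\ZZ):B(M)] = M\prod_{p\mid M}(1+1/p)$, and one verifies $N^2\prod_{p\mid N}(1+1/p) = [\GL_2(\ZZ/N\ZZ):C_s(N)]$ directly since $|C_s(N)|=\varphi(N)^2$ appropriately scaled). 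An injective homomorphism of fields of the same finite degree over $\QQ(j)$ is an isomorphism. (iv) Finally, the induced isomorphism of function fields over $\QQ$ gives an isomorphism $X_s(N)\xrightarrow{\sim} X_0(N^2)$ of the associated smooth projective curves; that it matches cusps to cusps follows because the substitution $\tau\mapsto\tau/N$ carries the cusps of $\Gamma_0(N^2)\backslash\mathfrak{h}^*$ bijectively to the cusps of the conjugate group, equivalently because both $\pi_{X_0(N^2)}$ and $\pi_{X_s(N)}$ send cusps to $j=\infty$ and the isomorphism is compatible with the two maps to $\PP^1_\QQ$ (it is the identity on $\QQ(j)\subseteq$ both function fields).

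The main obstacle I expect is step (i)–(ii): bookkeeping the $q$-expansion after the substitution $\tau\mapsto\tau/N$ and proving cleanly that the image really lands in $\calF_N$ (not merely $\calF_{N^3}$) and is $C_s(N)$-invariant, i.e.\ correctly identifying $\alpha^{-1}\Gamma_0(N^2)\alpha\cap\SL_2(\ZZ)$ with the preimage of the diagonal subgroup mod $N$ and matching this against the action $\theta_N$. The degree/index comparison in step (iii) and the cusp-bijection in step (iv) are then routine.
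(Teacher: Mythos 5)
Your skeleton is the paper's: conjugate $\Gamma_0(N^2)$ into the group $\Gamma_s(N)$ of matrices that are diagonal modulo $N$ via the scaling matrix, identify $\QQ(X_s(N))$ with the $\Gamma_s(N)$-invariant functions in $\calF_N$ with rational coefficients, and transport one to the other by $\tau\mapsto\tau/N$. But two of your steps do not work as written. First, the conjugation identity in step (i) is in the wrong direction: with $\alpha=\left(\begin{smallmatrix}N&0\\0&1\end{smallmatrix}\right)$ one has $\alpha^{-1}\left(\begin{smallmatrix}a&b\\c&d\end{smallmatrix}\right)\alpha=\left(\begin{smallmatrix}a&b/N\\Nc&d\end{smallmatrix}\right)$, so $\alpha^{-1}\Gamma_0(N^2)\alpha\cap\SL_2(\ZZ)=\Gamma_0(N^3)$, not the preimage of $C_s(N)$. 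The identity you need (and the one the paper uses, as $w\Gamma_s(N)w^{-1}=\Gamma_0(N^2)$ with $w=\left(\begin{smallmatrix}1&0\\0&N\end{smallmatrix}\right)$) is $\alpha\Gamma_0(N^2)\alpha^{-1}=\Gamma_s(N)$; precomposition with $\tau\mapsto\alpha^{-1}_*\tau=\tau/N$ conjugates stabilizers by $\alpha$ on the left, not the right. Since you yourself flag this identification as the crux of the argument, getting the direction wrong is not cosmetic: followed literally, your step (ii) would land you in $\QQ(X_0(N^3))$ rather than $\QQ(X_s(N))$.

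Second, and more substantively, the map is \emph{not} the identity on $\QQ(j)$: it sends $j(\tau)$ to $j(\tau/N)$, which is a different element of $\QQ(X_s(N))$, and the induced isomorphism of curves is not compatible with the two morphisms to the $j$-line. (For $N=2$: $X_s(2)=X(2)\to\PP^1_j$ is Galois with group $\mathfrak{S}_3$, while $X_0(4)\to\PP^1_j$ is not Galois, so the two cannot be isomorphic as covers of the $j$-line, even though both curves are $\PP^1_\QQ$.) This breaks your surjectivity argument in step (iii) — comparing degrees over $\QQ(j)$ only proves surjectivity for a $\QQ(j)$-homomorphism — and it breaks the parenthetical justification of the cusp bijection in step (iv). Both conclusions are true and easily repaired: surjectivity follows because the inverse substitution $g(\tau)\mapsto g(N\tau)$ carries $\Gamma_s(N)$-invariant functions with rational coefficients to $\Gamma_0(N^2)$-invariant ones, which yields the ``if and only if'' that the paper proves; and the cusp correspondence follows from your alternative observation that $\tau\mapsto\tau/N$ induces an isomorphism of Riemann surfaces between the two quotients of $\mathfrak{h}$ (equivalently, because $j(\tau/N)$ has poles exactly at the cusps of $X_s(N)$), not from any compatibility with $\pi_G$.
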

\begin{proof}
Let $\Gamma_s(N)$ be the group of matrices in $\SL_2(\ZZ)$ whose image modulo $N$ is diagonal.   The function field of $X_s(N)$ then consist of the $f\in \calF_N$ with rational Fourier coefficients for which $f\circ \gamma = f$ for all $\gamma$ in $\Gamma_s(N)$.

Define $w=\left(\begin{smallmatrix} 1 & 0 \\0 & N \end{smallmatrix}\right)$; it acts on $\mathfrak{h}$ by linear fractional transformation, i.e., $w_*(\tau) = \tau/N$.  Take any $f\in \calF_N$ whose Fourier coefficients are rational.  We have $f \circ w$ in $\QQ(X_s(N))$ if and only if $f \circ w \circ \gamma = f\circ w$ for all $\gamma \in \Gamma_s(N)$.  Since $w \Gamma_s(N) w^{-1} = \Gamma_0(N^2)$, we deduce that $f\circ w$ belongs to $\QQ(X_s(N))$ if and only if $f$ belongs to $\QQ(X_0(N^2))$.  It is now straightforward to show that the map of fields is well-defined and an isomorphism.   The isomorphism of function fields of course induces an isomorphism of the corresponding curves.   That the cusps are in correspondence is a consequence of the map $\Gamma_0(N^2)\backslash \mathfrak{h} \to \Gamma_s(N)\backslash \mathfrak{h}$, $\tau \to w_*(\tau)=\tau/N$ being an isomorphism of Riemann surfaces.
\end{proof}

\begin{lemma}   \label{L:hauptmodul}
Let $\eta(\tau)$ be the Dedekind eta function.
\begin{romanenum}
\item \label{L:hauptmodul i}
We have $\QQ(X_0(4))=\QQ(h)$, where $h(\tau) = \eta(\tau)^8/\eta(4\tau)^8$.
\item \label{L:hauptmodul ii} 
We have $\QQ(X_0(9))=\QQ(h)$, where $h(\tau) =  \eta({\tau})^3/\eta(9\tau)^3$.
\end{romanenum}
\end{lemma}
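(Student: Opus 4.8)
The plan is to verify directly that each candidate function $h$ is a \emph{hauptmodul} for the relevant modular curve, i.e.\ that it generates the function field over $\QQ$. Since $X_0(4)$ and $X_0(9)$ both have genus $0$ and possess rational cusps, each function field is of the form $\QQ(h)$ for some modular function $h$, and to identify such an $h$ it suffices to produce an element of the function field that defines a degree-one map to $\PP^1_\QQ$, equivalently a function with a single simple pole (counted on the compact Riemann surface). So for part (\ref{L:hauptmodul i}) I would first check that $h(\tau)=\eta(\tau)^8/\eta(4\tau)^8$ actually lies in $\QQ(X_0(4))$: using the transformation law of $\eta$ under $\SL_2(\ZZ)$, one confirms that $h$ is invariant under $\Gamma_0(4)$ (the relevant eta-quotient exponents satisfy the standard Ligozat-type congruences guaranteeing $\Gamma_0(4)$-invariance and integrality/rationality of the $q$-expansion), and that its $q$-expansion $q^{-1}(1 - 8q + \dots)$ (obtained from $\eta(\tau)=q^{1/24}\prod(1-q^n)$) has rational coefficients, so $h\in\QQ(X_0(4))$ by the criterion recalled just before the lemma.

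Next I would count the divisor of $h$ on $X_0(4)$. The curve $X_0(4)$ has three cusps, at $0$, $1/2$ (equivalently $\infty$ in some normalization) and $\infty$; the order of vanishing of an eta-quotient at each cusp is given by the standard formula, and for $\eta(\tau)^8/\eta(4\tau)^8$ one gets a simple pole at exactly one cusp and simple zeros/regular behavior elsewhere, so $h$ has a single simple pole on $X_0(4)$. A function with one simple pole on a genus-$0$ curve gives an isomorphism to $\PP^1$, hence $\QQ(X_0(4))=\QQ(h)$. Part (\ref{L:hauptmodul ii}) is entirely parallel: check that $h(\tau)=\eta(\tau)^3/\eta(9\tau)^3$ is $\Gamma_0(9)$-invariant with rational $q$-expansion $q^{-1}(1-3q+\dots)$, so that $h\in\QQ(X_0(9))$; then compute the cusp orders of this eta-quotient at the cusps of $X_0(9)$ and observe it has a single simple pole, whence $\QQ(X_0(9))=\QQ(h)$ since $X_0(9)$ also has genus $0$.

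The only mildly delicate point is bookkeeping with the cusps and the eta-quotient order formula — making sure the chosen quotient has exactly one pole and that the pole is simple (rather than, say, a double pole together with a zero, which would still generate the field but require a small extra argument, or genuinely failing to be a hauptmodul). This is a finite, explicit computation for each of the two levels. An alternative to the cusp-order formula that I would keep in reserve: exhibit the known rational parametrization of $j$ in terms of $h$ (for level $4$, $j = (h^2+16h+16)^3/(h(h-16))^2$ or an equivalent expression; for level $9$ a degree-$12$ rational function), from which $[\,\QQ(X_0(N)):\QQ(j)\,]=[\GL_2(\ZZ/N\ZZ):B(N)]$ forces $\QQ(h)=\QQ(X_0(N))$ directly. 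Either route closes the argument.
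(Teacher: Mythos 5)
Your argument is correct and complete in outline, but it is worth noting that the paper does not actually prove this lemma at all: it simply records the statement as well known and cites Elkies' \emph{Explicit modular towers}. Your route — check the Ligozat congruences to get $\Gamma_0(N)$-invariance (for $N=4$: $1\cdot 8+4\cdot(-8)=-24$ and $4\cdot 8+1\cdot(-8)=24$; for $N=9$: $1\cdot 3+9\cdot(-3)=-24$ and $9\cdot 3+1\cdot(-3)=24$, all $\equiv 0\pmod{24}$), observe the rational $q$-expansions $q^{-1}(1-8q+\cdots)$ and $q^{-1}(1-3q+\cdots)$, and then compute the cusp orders to see a single simple pole on a genus-$0$ curve — is exactly the standard verification and is what the cited reference amounts to, so this is a perfectly acceptable replacement for the citation. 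Two small points. First, your description of the cusps of $X_0(4)$ as ``$0$, $1/2$ (equivalently $\infty$ in some normalization) and $\infty$'' is garbled; the three cusps are just $0$, $1/2$, $\infty$, and the eta-quotient has its simple pole at $\infty$, a simple zero at $0$, and order $0$ at $1/2$. Second, the reserve formula you quote for level $4$ is not correct as written: with $h=\eta(\tau)^8/\eta(4\tau)^8$ one has $j=(h^2+16h+256)^3/\bigl(h^2(h+16)^2\bigr)$ (this is consistent with the paper's $J_1$ via $h_1=16/h$ under the isomorphism of Lemma~\ref{L:borel to split}), not $(h^2+16h+16)^3/(h(h-16))^2$. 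Since you flagged that formula as ``or an equivalent expression'' and your primary argument is the divisor computation, this does not affect the validity of the proof, but if you invoke the alternative route you should get the constants right.
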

\begin{proof}
This is well-known; for example see \cite{modular-towers}.
\end{proof}

\subsection{Proof of Proposition \ref{P:modular galois}}
\label{SS:modular proof 1}

For $\tau\in \mathfrak{H}$, let $\Lambda_\tau$ be the lattice $\ZZ \tau + \ZZ$ in $\CC$.    Set $g_2(\tau)=g_2(\Lambda_\tau)$ and $g_3(\tau)=g_3(\Lambda_\tau)$, and let $\wp(z;\tau)$ be the Weierstrass $\wp$-function relative to $\Lambda_\tau$, cf.~\cite{MR2514094}*{{\S}VI.3} for background on elliptic functions.    For each pair  $a=(a_1,a_2) \in N^{-1}\ZZ^2-\ZZ^2$, define the function
\[
f_{a}(\tau) := \frac{g_2(\tau)g_3(\tau)}{g_2(\tau)^3-27 g_3(\tau)^2} \cdot \wp(a_1\tau  + a_2; \tau)
\]
of the upper half plane.  The function $f_a$ is modular of level $N$.  Moreover, Proposition~6.9(1) of \cite{MR1291394} says that
\begin{equation} \label{E:FN gen}
\calF_N = \QQ\big(j , f_a\, \big| \, a\in N^{-1}\ZZ^2-\ZZ^2 \big).
\end{equation}
For $a,b\in N^{-1}\ZZ^2-\ZZ^2$, we have $f_a=f_b$ if and only if $a$ lies in the same coset of $\QQ^2/\ZZ^2$ as $b$ or $-b$, cf.~equation (6.1.5) of \cite{MR1291394}.  So for any $A \in M_2(\ZZ)$ with determinant relatively prime to $N$, the function $f_{aA}$ depends only on the image $\tilde{A}$ of $A$ in $\GL_2(\ZZ/N\ZZ)/\{\pm 1\}$.  By abuse of notation, we shall denote $f_{aA}$ by $f_{a\tilde{A}}$.  

By Theorem~6.6 of \cite{MR1291394}, there is a unique isomorphism 
\[
\theta_N\colon \GL_2(\ZZ/N\ZZ)/\{\pm I\} \xrightarrow{\sim} \Gal(\calF_N/\QQ(j))
\]
such that $\theta_N(A) f_a = f_{aA^t}$ for all $A\in \GL_2(\ZZ/N\ZZ)/\{\pm I\}$ and $a\in N^{-1} \ZZ^2-\ZZ^2$; we have added the transpose so the map is  a homomorphism (and not an antihomorphism).

Fix any $\gamma\in \SL_2(\ZZ)$ and let $A\in \SL_2(\ZZ/N\ZZ)$ be its image modulo $N$.   For any $a\in N^{-1} \ZZ^2-\ZZ^2$, the function $f_a \circ \gamma^t$ agrees with $f_{a\cdot \gamma^t} = \theta_N(A) f_a$ by equation (6.1.3) of \cite{MR1291394}.  Using (\ref{E:FN gen}), we deduce that $\theta_N(A) f = f \circ \gamma^t$ for all $f\in \calF_N$; this shows that (\ref{P:modular galois a}) holds.

Now take integer $d$ relatively prime to $N$ and let $A$ be the image of $\left(\begin{smallmatrix}1 & 0 \\0 & d \end{smallmatrix}\right)$ in $\GL_2(\ZZ/N\ZZ)$. Take any $a\in N^{-1}\ZZ^2-\ZZ^2$; we have $a=(r/N,s/N)$ with $r,s\in \ZZ$.   Since $f_a = f_b$ when $a\equiv b \pmod{\ZZ^2}$, we may assume that $0 \leq r <N$.  We have $\theta_N(A)f_{a}= f_{aA^t}=f_{(r/N,ds/N)}$.   
  By equation (6.2.1) of \cite{MR1291394}, we have
\begin{align*}
(2\pi)^{-2}\wp(a_1\tau +a_2 ; \tau) =& -1/12+2{\sum}_{n=1}^\infty nq^n/(1-q^n) - \zeta_N^s q^{r/N}/(1- \zeta_N^s q^{r/N})^2\\
& -{\sum}_{n=1}^\infty (\zeta_N^{ns} q^{nr/N} + \zeta_N^{-ns} q^{-nr/N}) \cdot nq^n/(1-q^n);
\end{align*}
applying $\sigma_d$ to this series gives the same thing with $s$ replaced by $ds$.   The Fourier coefficients of the expansion of $g_2(\tau)/g_3(\tau)$ are all $\pi^{-2}$ times a rational number.  Therefore, $\sigma_d(f_a)=\sigma_d(f_{(r/N,s/N)})$  equals $f_{(r/N,ds/N)} = f_{a A^t}$.   Using (\ref{E:FN gen}), we deduce that $\theta_N(A) f = \sigma_d(f)$ for all $f\in \calF_N$; this shows that (\ref{P:modular galois b}) holds.

This explains the existence of an isomorphism $\theta_N$ as in the statement of Proposition~\ref{P:modular galois}.   The uniqueness if immediate since $\GL_2(\ZZ/N\ZZ)$ is generated by $\SL_2(\ZZ/N\ZZ)$ and matrices of the form $\left(\begin{smallmatrix}1 & 0 \\0 & * \end{smallmatrix}\right)$.   Theorem~6.6 of \cite{MR1291394} implies that $\QQ(\zeta_N)$ is the algebraic closure of $\QQ$ in $\calF_N$ and that $\theta_N(A) \zeta_N = \zeta_N^{\det A}$.

\subsection{Proof of Proposition~\ref{P:main moduli}}
\label{SS:modular proof 2}
We first construct the inverse of $\theta_N$ using elliptic curves;  we shall freely use definitions from \S\ref{SS:modular proof 1}.  Let $E$ be an elliptic curve defined over an algebraically closed field $k$ of characteristic 0.   Take any non-zero $N$-torsion point $P\in E(k)$.    If $P=(x_0,y_0)$ with respect to some Weierstrass model  $y^2=4x^3  -c_2 x -c_3$ of $E/k$, define $h_E(P):= c_2 c_3/(c_2^3-27 c_3^2) \cdot x_0$.    If $j_E \notin \{0,1728\}$, then one can show that $h_E(P)$ does not depend on the choice of model.
\\

Let $\calE$ be the elliptic curve over $\calF_1=\QQ(j)$ defined by the Weierstrass equation
\begin{equation} \label{E:generic Weierstrass}
y^2= 4x^3 - \frac{27j}{j-1728} x - \frac{27j}{j-1728};
\end{equation}
it has $j$-invariant $j$.    Fix an algebraic closed field $K$ that contains $\calF_N\supseteq \QQ(j)$ and let $\calE[N]$ be the $N$-torsion subgroup of $\calE(K)$.  

\begin{lemma} \label{L:basis P1P2}
There is a basis $\{P_1, P_2\}$ of the $\ZZ/N\ZZ$-module $\calE[N]$ such that $h_\calE( r P_1 +s P_2) = f_{(r/N,s/N)}$ for all $(r,s)\in \ZZ^2-N \ZZ^2$.
\end{lemma}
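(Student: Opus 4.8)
The plan is to deduce this from the classical complex-analytic description of the $N$-torsion of the curve $\CC/\Lambda_\tau$, using the fact --- implicit in the definition of $h_E$ --- that $h_E(P)$ does not depend on the chosen Weierstrass equation when $j_E\notin\{0,1728\}$. Substituting $j=j(\tau)$ into (\ref{E:generic Weierstrass}) gives, for each $\tau\in\mathfrak{h}$ outside the discrete set where $j(\tau)\in\{0,1728\}$, an elliptic curve $\calE_\tau/\CC$; write $c_2,c_3$ for its Weierstrass coefficients and set $\lambda:=c_2c_3/(c_2^3-27c_3^2)\in\QQ(j)^\times$. Since $\calE_\tau$ and the analytic curve $E_\tau^{\mathrm{an}}:=\CC/\Lambda_\tau$ --- taken with Weierstrass model $y^2=4x^3-g_2(\tau)x-g_3(\tau)$ and coordinates $(X,Y)=(\wp(\,\cdot\,;\tau),\wp'(\,\cdot\,;\tau))$ --- share the $j$-invariant $j(\tau)$, there is an isomorphism $E_\tau^{\mathrm{an}}\xrightarrow{\sim}\calE_\tau$, unique up to composition with $[-1]$; comparing the two Weierstrass equations shows it has the form $(X,Y)\mapsto(uX,vY)$ with $u=c_3g_2/(c_2g_3)$ and $v^2=u^3$. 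Transporting the standard basis $\{\tau/N,1/N\}$ of $E_\tau^{\mathrm{an}}[N]$ through this isomorphism yields a basis $\{P_1(\tau),P_2(\tau)\}$ of $\calE_\tau[N]$, well-defined up to a common sign, and the model-independence of $h$ gives, for $(r,s)\in\ZZ^2-N\ZZ^2$,
\[ h_{\calE_\tau}\big(rP_1(\tau)+sP_2(\tau)\big)=h_{E_\tau^{\mathrm{an}}}\big((r\tau+s)/N\big)=\frac{g_2g_3}{g_2^3-27g_3^2}\,\wp\big((r\tau+s)/N;\tau\big)=f_{(r/N,s/N)}(\tau); \]
equivalently, the $x$-coordinate of $rP_1(\tau)+sP_2(\tau)$ equals $f_{(r/N,s/N)}(\tau)/\lambda$.

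Next I would globalize. The element $x_{r,s}:=f_{(r/N,s/N)}/\lambda$ lies in $\calF_N$, and since $\psi_N(x_{r,s})\in\calF_N$ --- where $\psi_N\in\QQ(j)[x]$ is the $N$-division polynomial of $\calE$ --- vanishes at every $\tau$ outside the discrete bad set by the displayed identity, it is $0$; hence $x_{r,s}$ is the $x$-coordinate of a nonzero $N$-torsion point of $\calE$ over the fixed algebraically closed field $K\supseteq\calF_N$. Choose $P_1,P_2\in\calE[N]$ with $x$-coordinates $x_{1,0}$ and $x_{0,1}$; a pointwise check shows they have exact order $N$. Reduction at a generic $\tau$ is an isomorphism $\calE[N]\xrightarrow{\sim}\calE_\tau[N]$ carrying $\langle P_1,P_2\rangle$ onto $\langle P_1(\tau),P_2(\tau)\rangle=\calE_\tau[N]$, so $\langle P_1,P_2\rangle=\calE[N]$ and $\{P_1,P_2\}$ is a $\ZZ/N\ZZ$-basis. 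After replacing $P_2$ by $-P_2$ if necessary, the reductions satisfy $\overline{P_i}=\epsilon\,P_i(\tau)$ for a common sign $\epsilon\in\{\pm1\}$ that is locally constant in $\tau$, hence constant on the connected complement of a discrete subset of $\mathfrak{h}$; this forces $x(rP_1+sP_2)=x_{r,s}$ in $\calF_N$, i.e., $h_\calE(rP_1+sP_2)=\lambda\,x_{r,s}=f_{(r/N,s/N)}$ for all $(r,s)\in\ZZ^2-N\ZZ^2$, as desired.

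The main obstacle is this final step: the analytic identities hold only pointwise and only up to the $[-1]$-ambiguity in the isomorphism $E_\tau^{\mathrm{an}}\xrightarrow{\sim}\calE_\tau$, so one must verify that a single uniform choice makes the identity hold in $\calF_N$ rather than merely after each specialization, and likewise that the constructed $P_1,P_2$ genuinely form a basis. The two facts that make this go through are that a non-CM elliptic curve has automorphism group $\{\pm1\}$ --- so $h_E$ and the $x$-coordinates are genuinely intrinsic and the sign ambiguity is the only one --- and the connectedness of $\mathfrak{h}$ with a discrete set removed (the $\tau$ where $j(\tau)\in\{0,1728\}$, or where some $rP_1+sP_2$ reduces to $\OO$, or where reduction degenerates), which makes the relevant sign globally constant; I would also note in passing that these excluded $\tau$ form a discrete set and hence cannot affect any identity of meromorphic functions.
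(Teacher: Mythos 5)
Your engine is the same as the paper's --- specialize at each $\tau\in\mathfrak{h}$, use the Weierstrass uniformization of $\CC/\Lambda_\tau$ to verify the identities there, and promote the pointwise identities to identities in $\calF_N$ --- but you run it on $\calE$ itself, recovering $P_1,P_2$ from their $x$-coordinates via the division polynomial, whereas the paper runs it on the auxiliary model $E\colon y^2=4x^3-g_2(\tau)x-g_3(\tau)$ over the field $K_0$ generated by $g_2,g_3,\wp(\tau/N;\tau),\wp'(\tau/N;\tau),\wp(1/N;\tau),\wp'(1/N;\tau)$, takes $P_i=(\wp(\,\cdot\,;\tau),\wp'(\,\cdot\,;\tau))$ literally, and only at the very end transfers the conclusion to $\calE$ via the model-independence of $h$ for $j\notin\{0,1728\}$. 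That ordering buys a lot: the basis property, the addition formulas, and the formula for $h$ all become polynomial identities among explicit single-valued functions of $\tau$, so there is nothing to glue and no sign to track. Your ordering creates exactly the two difficulties you flag at the end, and your resolution is the one soft spot: the $y$-coordinates of your $P_1,P_2$ generate a quadratic extension of $\calF_N$ obtained by adjoining a square root of $g_2/g_3$, and that square root is \emph{not} single-valued on $\mathfrak{h}$ minus a discrete set (the zeros and poles of $g_2/g_3$ at the elliptic points have odd order), so ``reduction of $P_i$ at a generic $\tau$'' and ``the sign is locally constant on the connected complement of a discrete subset of $\mathfrak{h}$'' do not quite parse as written. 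The repair is routine --- fix single-valued branches on a small disc $U\subseteq\mathfrak{h}$ avoiding the bad points, obtain $h_\calE(rP_1+sP_2)=f_{(r/N,s/N)}$ as an identity of elements of $\calF_N$ holding on $U$, and conclude by the identity theorem --- but it is precisely the bookkeeping that the paper's choice of model renders unnecessary. With that patch your argument is complete.
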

\begin{proof}
Let $K_0$ be the extension of $\calF_N$ generated by the functions $g_2(\tau)$, $g_3(\tau)$, $\wp({\tau}/{N}; \tau)$, $\wp'(\tau/N;\tau)$,  $\wp({1}/{N}; \tau)$ and $\wp'(1/N;\tau)$.    We may assume that $K \supseteq K_0$.   Let $E$ be the elliptic curve over $K_0$ defined by $y^2=4x^3 -g_2(\tau) x -g_3(\tau)$; its $j$-invariant is $j=j(\tau)$.   The curves $E$ and $\calE$ are isomorphic over $K$ since they both have $j$-invariant $j$.  Since $j\notin\{0,1728\}$, it suffices to prove the lemma for $E$ instead of $\calE$.

Define the pairs
\[
P_1 := (\wp({\tau}/{N}; \tau), \wp'(\tau/N;\tau) ) \quad \text{ and }\quad P_2 := (\wp({1}/{N}; \tau), \wp'(1/N;\tau) ).
\]
We claim that $P_1$ and $P_2$ form a basis of the $\ZZ/N\ZZ$-module of $N$-torsion in $E(K)$.  To prove the claim it suffices to prove the analogous results  after specializing the coefficients of $E$ and the entries of $P_1$ and $P_2$ by an arbitrary $\tau_0 \in \mathfrak{h}$  (since the claim comes down to verifying certain polynomial equations whose variables are the coefficients of the model of $E$ and the entries of the points).   So fix an arbitrary $\tau_0 \in \mathfrak{h}$.  Specializing the model of $E$ at $\tau_0$ gives an elliptic curve $E_{\tau_0}$ over $\CC$ defined by $y^2=4x^3 -g_2(\tau_0) x -g_3(\tau_0)$.  From  Weierstrass, we know that the map
\[
\CC/\Lambda_{\tau_0} \to E_{\tau_0}(\CC),\quad z\mapsto (\wp(z;\tau_0),\wp'(z;\tau_0)),
\]
with $0$ mapping to the point at infinity, gives an isomorphism of complex Lie groups.   In particular, the points $P_{1,\tau_0}= (\wp({\tau_0}/{N}; \tau_0), \wp'(\tau_0/N;\tau_0) )$ and $P_{2,\tau_0}= (\wp({1}/{N}; \tau_0), \wp'(1/N;\tau_0))$ give a basis for the $N$-torsion in $E_{\tau_0}(\CC)$.  This is enough to prove our claim.   Moreover, we have  $r P_{1,\tau_0} + s P_{2,\tau_0} = (\wp(r/N\cdot  \tau_0 + s/N;\tau_0),\wp'(r/N \cdot \tau_0 + s/N;\tau_0))$ for all $(r,s)\in \ZZ^2-N\ZZ^2$.  Therefore, 
\[
h_{E_{\tau_0}}(rP_1+sP_2) = g_2(\tau_0) g_3(\tau_0)/(g_2(\tau_0)^3-27g_3(\tau_0)^2) \cdot \wp(r/N \cdot \tau_0 + s/N;\tau_0) = f_{(r/N,s/N)}(\tau_0).
\]
for all $(r,s)\in \ZZ^2-N\ZZ$.  Since this holds for all $\tau_0\in \mathfrak{h}$, we deduce that $h_{E}( r P_1 +s P_2 ) = f_{(r/N,s/N)}$.
\end{proof}

 Let  $\rho_N \colon \Gal(K/\QQ(j)) \to \GL_2(\ZZ/N\ZZ)$ be the representation describing the Galois action on $\calE[N]$ with respect to the  basis $\{P_1,P_2\}$ of Lemma~\ref{L:basis P1P2}.  The fixed field of the kernel of $\Gal(K/\QQ(j)) \xrightarrow{\rho_N} \GL_2(\ZZ/N\ZZ)\to \GL_2(\ZZ/N\ZZ)/\{\pm I\}$ is generated by the $x$-coordinates of the non-zero points in $\calE[N]$.  By (\ref{E:FN gen}) and Lemma~\ref{L:basis P1P2}, the extension $\calF_N$ of $\QQ(j)$ is generated by the $x$-coordinates of the non-zero points in $\calE[N]$.   Therefore, the representation $\rho_N$ induces an injective homomorphism
\begin{equation} \label{E:Galois FN}
\bbar\rho_N \colon \Gal(\calF_N/\QQ(j)) \hookrightarrow \GL_2(\ZZ/N\ZZ)/\{\pm I\}.
\end{equation}
In fact, (\ref{E:Galois FN}) is an isomorphism since the groups have the same cardinality by Proposition~\ref{P:modular galois}.  

\begin{lemma} \label{L:inverse of thetaN}
The homomorphism $\bbar\rho_N$ is an isomorphism.  Moreover, the inverse of $\bbar\rho_N$ is the homomorphism  $\theta_N\colon  \GL_2(\ZZ/N\ZZ)/\{\pm I\} \to \Gal(\calF_N/\QQ(j))$.
\end{lemma}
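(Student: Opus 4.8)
The plan is to prove the lemma by showing that $\theta_N\circ\bbar\rho_N$ is the identity automorphism of $\Gal(\calF_N/\QQ(j))$. Since $\theta_N$ is an isomorphism (Proposition~\ref{P:modular galois}) and $\bbar\rho_N$ is a group homomorphism (this was established just before the statement, together with its injectivity), the identity $\theta_N\circ\bbar\rho_N=\operatorname{id}$ forces $\bbar\rho_N=\theta_N^{-1}$, which is precisely the assertion.

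To carry this out, fix $\sigma\in\Gal(\calF_N/\QQ(j))$, lift it to an automorphism $\tilde\sigma$ of $K$ over $\QQ(j)$, and set $M:=\rho_N(\tilde\sigma)\in\GL_2(\ZZ/N\ZZ)$, so that the image of $M$ in $\GL_2(\ZZ/N\ZZ)/\{\pm I\}$ is $\bbar\rho_N(\sigma)$. The Weierstrass model (\ref{E:generic Weierstrass}) of $\calE$ has coefficients in $\QQ(j)$, and for a nonzero $N$-torsion point $Q$ the quantity $h_\calE(Q)$ is a rational expression in those $\QQ(j)$-rational coefficients and in the $x$-coordinate of $Q$; since $\sigma$ fixes $\QQ(j)$ and $\tilde\sigma$ preserves the curve and its group law, this yields the equivariance $\sigma\big(h_\calE(Q)\big)=h_\calE\big(\tilde\sigma(Q)\big)$ for every nonzero $Q\in\calE[N]$. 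Applying this to $Q=rP_1+sP_2$, using Lemma~\ref{L:basis P1P2} to rewrite both sides in terms of the functions $f_{(r/N,s/N)}$, and using that $\tilde\sigma$ carries the coordinate column vector $(r,s)^{t}$ of $Q$ relative to $\{P_1,P_2\}$ to $M\cdot(r,s)^{t}$, a direct substitution gives
\[
\sigma\big(f_{(r/N,\,s/N)}\big)=f_{(r/N,\,s/N)\,M^{t}}
\]
for all $(r,s)\in\ZZ^2-N\ZZ^2$, the transpose appearing because $M$ acts on torsion points as column vectors while the index $a$ of $f_a$ is the corresponding row vector. By the normalization of $\theta_N$ recalled in \S\ref{SS:modular proof 1} (namely $\theta_N(A)f_a=f_{aA^{t}}$), the right-hand side is $\theta_N(M)$ applied to $f_{(r/N,s/N)}$, so $\sigma$ and $\theta_N(\bbar\rho_N(\sigma))$ agree on every generator $f_a$. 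Since the $f_a$ generate $\calF_N$ over $\QQ(j)$ by (\ref{E:FN gen}), we conclude $\sigma=\theta_N(\bbar\rho_N(\sigma))$; as $\sigma$ was arbitrary, $\theta_N\circ\bbar\rho_N=\operatorname{id}$ and the lemma follows.

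The one genuinely delicate point — essentially the only one — is the bookkeeping of the transpose: one has to track the contravariance between the column-vector convention for the Galois action on $\calE[N]$ (inherited from the definition of $\rho_N$) and the row-vector convention for the index $a$ in $f_a$, and check that it matches exactly the transpose that was deliberately inserted into the normalization of $\theta_N$. Once the conventions are pinned down this way, everything else is routine substitution from Lemma~\ref{L:basis P1P2} and equation (\ref{E:FN gen}), with no further input required.
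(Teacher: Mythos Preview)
Your proof is correct and follows essentially the same route as the paper's own argument: both compute $\sigma(f_{(r/N,s/N)})$ via Lemma~\ref{L:basis P1P2} and the definition of $\rho_N$, identify the result as $f_{(r/N,s/N)M^t}=\theta_N(M)f_{(r/N,s/N)}$, and conclude using that the $f_a$ generate $\calF_N$ over $\QQ(j)$. The only cosmetic difference is that the paper starts with $\sigma\in\Gal(K/\QQ(j))$ and restricts to $\calF_N$, whereas you start in $\Gal(\calF_N/\QQ(j))$ and lift; the bookkeeping with the transpose that you carefully flag is handled identically.
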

\begin{proof}
Take any $\sigma\in \Gal(K/\QQ(j))$ and set $\tilde\sigma:=\sigma|_{\calF_N}$.    There are integers $a,b,c,d\in \ZZ$ such that $\sigma(P_1)=a P_1 + cP_2$ and $\sigma(P_2)=bP_1+dP_2$, so $\rho_N(\sigma)=A$, where $A\in \GL_2(\ZZ/N\ZZ)$ is the image of $\left(\begin{smallmatrix}a & b \\c & d \end{smallmatrix}\right)$ modulo $N$.   Therefore, $\bbar\rho_N(\tilde\sigma)$ is the class of $A$ in $\GL_2(\ZZ/N\ZZ)/\{\pm I\}$.   We need to show that $\theta_N(A)=\tilde\sigma$.

 Take any pair of integers $(r,s)\in \ZZ^2-N\ZZ^2$.  We have
\[
\sigma(rP_1 +sP_2)=r\sigma(P_1) +s \sigma(P_2)= (ra+sb)P_1 + (rc+sd)P_2.
\]
Comparing $x$-coordinates and using Lemma~\ref{L:basis P1P2}, we find that $\tilde\sigma(f_{(r/N,s/N)})=\sigma(f_{(r/N,s/N)})$ is equal to $f_{((ra+sb)/N,(rc+sd)/N)} = f_{(r/N,s/N) A^t}$ which is $\theta_N(A) f_{(r/N,s/N)}$ from \S\ref{SS:modular proof 1}.   Since the extension $\calF_N/\QQ(j)$ is generated by the functions $f_a$ with $a\in \ZZ^2-N\ZZ^2$, we deduce that $\theta_N(A)=\tilde\sigma$.  
\end{proof}

Define the $\QQ$-variety
\[
U := \AA^1_\QQ-\{0,1728\} = \Spec \QQ[j,j^{-1},(j-1728)^{-1}];
\] 
note that we are now viewing $j$ as simply a transcendental variable.   The equation (\ref{E:generic Weierstrass}) defines a (relative) elliptic curve $\pi\colon \scrE \to U$.   The fiber of $\scrE\to U$ over the generic fiber of $U$ is the elliptic curve $\calE/\QQ(j)$. 

Let $\bbar{\eta}$ be the geometric generic point of $U$ corresponding to the algebraically closed extension $K$ of $\calF_N$.  Let $\scrE[N]$ be the $N$-torsion subscheme of $\scrE$.  We can identify the fiber of $\scrE[N]\to U$ at $\bbar\eta$ with the group $\calE[N]$.  Let $\pi_1(U,\bbar{\eta})$ be the \'etale fundamental group of $U$.  We can view $\scrE[N]$ as a rank $2$ lisse sheaf of $\ZZ/N\ZZ$-modules $U$ and it hence corresponds to a representation  
\[
\varrho_N \colon \pi_1(U,\bbar{\eta}) \to \Aut(\calE[N])\cong \GL_2(\ZZ/N\ZZ)
\] 
where the isomorphism uses the basis $\{P_1,P_2\}$ of Lemma~\ref{L:basis P1P2}.   Taking the quotient by the group generated by $-I$, we obtain a homomorphism 
\[
\bbar\varrho_N\colon \pi_1(U,\bbar\eta) \to \GL_2(\ZZ/N\ZZ)/\{\pm I\}.
\]     
Note that the representation $\Gal(K/\QQ(j))\to \GL_2(\ZZ/N\ZZ)/\{\pm I\}$ coming from $\bbar\varrho_N$ factors through the homomorphism $\bbar\rho_N$.  So by Proposition~\ref{P:modular galois} and Lemma~\ref{L:inverse of thetaN},  the representation $\bbar\varrho_N$ is surjective and satisfies $\bbar\varrho_N(\pi_1(U_{\Qbar}))=\SL_2(\ZZ/N\ZZ)/\{\pm I\}$. 

Now take any subgroup $G$ of $\GL_2(\ZZ/N\ZZ)$ that satisfies $-I\in G$ and $\det(G)=(\ZZ/N\ZZ)^\times$.    Using $\bbar\varrho_N$, the group $G/\{\pm I\}$ corresponds to an \'etale morphism $\pi\colon Y_G\to U$.   The smooth projective closure of $Y_G$ is thus $X_G$ and the morphism $X_G\to \PP^1_\QQ$ arising from $\pi$ is simply $\pi_G$.
  
Take any rational point $u\in U(\QQ)=\QQ-\{0,1728\}$.    Viewed as a morphism $\Spec \QQ \to U$, the point $u$ induces a homomorphism $u_*\colon \Gal_\QQ \to \pi_1(U)$; we are suppressing base points so everything is uniquely defined only up to conjugacy.  Composing $u_*$ with $\bbar\varrho_N$ we obtain a homomorphism $\beta_u\colon \Gal_\QQ \to \GL_2(\ZZ/N\ZZ)/\{\pm I\}$.     Observe that the group $\beta_u(\Gal_\QQ)$ is conjugate to a subgroup of $G/\{\pm I\}$ if and only if $u$ lies in $\pi_1(Y_G(\QQ))=\pi_G(X_G(\QQ))-\{0,1728,\infty\}$.

The fiber of $\scrE\to U$ over $u$ is the elliptic curve $\scrE_u/\QQ$ obtained by setting $j$ to $u$ in (\ref{E:generic Weierstrass}).   Composing $\rho_{\scrE_u,N}\colon \Gal_\QQ \to \GL_2(\ZZ/N\ZZ)$ with the quotient map $\GL_2(\ZZ/N\ZZ)\to \GL_2(\ZZ/N\ZZ)/\{\pm I\}$  gives a homomorphism that agrees with $\beta_u$ up to conjugation.   Since $-I \in G$, we find that $\rho_{\scrE_u,N}(\Gal_\QQ)$ is conjugate in $\GL_2(\ZZ/N\ZZ)$ to a subgroup of $G$ if and only if $u \in \pi_G(X_G(\QQ))$.

Finally, let $E/\QQ$ be any elliptic curve with $j$-invariant $u$.   The curve $\scrE_u/\QQ$ also has $j$-invariant $u$.    As noted in the introduction, since $E$ and $\scrE_u$ are elliptic curves over $\QQ$ with common $j$-invariant $u\notin \{0,1728\}$, the groups $\pm \rho_{E,N}(\Gal_\QQ)$ and $\pm \rho_{\scrE_u,N}(\Gal_\QQ)$ must be conjugate.   This completes the proof of Proposition~\ref{P:main moduli}.

\section{Classification up to a sign}  \label{S:main classification}

In this section, we prove the parts of the theorems of \S\ref{S:classification} that involve the groups $\pm \rho_{E,\ell}(\Gal_\QQ)$ for an elliptic curve $E/\QQ$.  In the notation of \S\ref{SS:applicable}, the group $\pm \rho_{E,\ell}(\Gal_\QQ)$ is either applicable or is the full group $\GL_2(\FF_\ell)$.   We consider the primes $\ell$ separately and keep the notation of the relevant subsection of \S\ref{S:classification}.   

One of the main tasks is to construct modular curves of genus $0$.  We will do this by finding functions $h\in \calF_\ell-\QQ(j)$ such that $j=J(h)$ for some $J\in \QQ(t)$.  Let $H$ be the subgroup of $\GL_2(\FF_\ell)$ consisting of elements that fix $h$ under the action from Proposition~\ref{P:modular galois}.    By Lemma~\ref{L:key}, the group $H$ is an applicable subgroup of $\GL_2(\FF_\ell)$.   Furthermore, $X_H$ has function field $\QQ(h)$ and the morphism $\pi_H\colon X_H\to \PP^1_\QQ$ is described by the inclusion $\QQ(h) \supseteq \QQ(j)$.   So if $E/\QQ$ is a non-CM elliptic curve, then $\rho_{E,\ell}(\Gal_\QQ)$ is conjugate to a subgroup of $H$ if and only if $j_E$ belongs to $\pi_H(X_H(\QQ))=J(\QQ\cup \{\infty\})$.  

  We will need to recognize $H$ as a conjugate of one of our applicable subgroups $G_i$ of $\GL_2(\FF_\ell)$.   The degree of $\pi_H$, which is the same as the degree of $J(t)$, is equal to the index $[\GL_2(\FF_\ell):H]$; this observation will immediately rule out most candidates.   We will also make use of Proposition~\ref{P:main moduli}; observe that the set $\pi_H(X_H(\QQ))$ depends only on the conjugacy class of $H$. 
  
Most of this section involves basic algebraic verifications (which are straightforward to check with a computer, see the link in \S\ref{SS:overview} for many such details); much of the work, which we will not touch on, is finding the various equations in the first place.

\subsection{$\ell=2$}

Fix notation as in \S\ref{SS:applicable 2}.   Up to conjugacy, $G_1$, $G_2$ and $G_3$ are the proper subgroups of $\GL_2(\FF_2)$.
\begin{itemize}
\item
Define the function
\begin{align*}
h_1(\tau) &:= 16\eta(2\tau)^8/\eta(\tau/2)^8 = 16\big( q^{1/2} + 8q + 44q^{3/2} + 192q^2 + 718q^{5/2}+\cdots \big).
\end{align*}
By Lemmas~\ref{L:borel to split} and \ref{L:hauptmodul}(\ref{L:hauptmodul i}), we have $\QQ(X_s(2))=\QQ(h_1)$.   We have $C_s(2)=G_1$, so $\QQ(X_{G_1})=\QQ(h_1)$.   The extension $\QQ(h_1)/\QQ(j)$ has degree $6$, so there is a unique rational function $J(t) \in \QQ(t)$ such that $j=J(h_1)$.  We have $J(t)= f_1(t)/f_2(t)$ for relatively prime $f_1,f_2 \in \QQ[t]$ of degree at most $6$.   Expanding the $q$-expansion of $j f_2(h_1) - f(h_1) = J(h_1) f_2(h_1) - f_1(h_1)=0$ gives many linear equations in the coefficients of $f_1$ and $f_2$.   Using enough terms of the $q$-expansion, we can compute the coefficients of $f_1$ and $f_2$ (they are unique up to scaling $f_1$ and $f_2$ by some constant in $\QQ^\times$).  Doing this, we found that $J_1(h_1)=j$.

\item
Define $h_2:=h_1^2/(h_1 + 1)$.  Since $J_2(t^2/(t + 1))=J_1(t)$, we have $J_2(h_2)=j$.   

\item
Define $h_3:=F(h_1)$ where $F(t)=(-16t^3 - 24t^2 + 24t + 16)/(t^2 + t)$.  Since $J_3(F(t))=J_1(t)$, we have $J_3(h_3)=j$.
\end{itemize}

For each integer $1\leq i \leq 3$, let $H_i$ be the subgroup of $\GL_2(\FF_2)$ that fixes $h_i$.   By Lemma~\ref{L:key},  $H_i$ is an applicable subgroup of $\GL_2(\FF_2)$ with index equal to the degree of $J_i(t)$.  By comparing the degree of $J_i(t)$ with our list of proper subgroups, we deduce that $H_i$ is conjugate to $G_i$ in $\GL_2(\FF_2)$.

Theorem~\ref{T:main2} now follows from Lemma~\ref{L:key}(\ref{L:key b}); we can ignore $t=\infty$ since $J_i(\infty)=\infty$.

\subsection{$\ell=3$}  

Fix notation as in \S\ref{SS:applicable 3}.  Up to conjugacy, the groups $G_i$ with $1\leq i\leq 4$ are the applicable subgroups of $\GL_2(\FF_3)$.   
\begin{itemize}
\item 
Define the function $h_1 := 1/3\cdot \eta({\tau/3})^3/\eta(3\tau)^3$.  By Lemmas~\ref{L:borel to split} and \ref{L:hauptmodul}(\ref{L:hauptmodul ii}), we have $\QQ(X_s(3))=\QQ(h_1)$.   We have $C_s(3)=G_1$, so $\QQ(X_{G_1})=\QQ(h_1)$.   The extension $\QQ(h_1)/\QQ(j)$ has degree $12$, so there is a unique rational function $J(t) \in \QQ(t)$ such that $j=J(h_1)$.  We have $J(t)= f_1(t)/f_2(t)$ for relatively prime $f_1,f_2 \in \QQ[t]$ of degree at most $12$.   Expanding the $q$-expansion of $j f_2(h_1) - f(h_1) = J(h_1) f_2(h_1) - f_1(h_1)=0$ gives many linear equations in the coefficients of $f_1$ and $f_2$.   Using enough terms of the $q$-expansion, we can compute the coefficients of $f_1$ and $f_2$ (they are unique up to scaling $f_1$ and $f_2$ by some constant in $\QQ^\times$).  Doing this, we found that $J_1(h_1)=j$.

\item
Define $h_2=F_1(h_1)$ where $F_1(t)=(t^2 + 3t + 3)/t$.   Since $J_2(F_1(t))=J_1(t)$, we have $J_2(h_2)=j$.
\item
Define $h_3=F_2(h_1)$ where $F_2(t)=t(t^2 + 3t + 3)$.  Since $J_3(F_2(t))=J_1(t)$, we have $J_3(h_3)=j$.
\item
Define $h_4= F_3(h_2)$ where $F_3(t)= 3(t+1)(t-3)/t$.  Since $J_4(F_3(t))=J_2(t)$, we have $J_4(h_4)=j$.
\end{itemize}

Fix an integer $1\leq i \leq 4$, and let $H_i$ be the subgroup of $\GL_2(\FF_3)$ that fixes $h_i$.   By Lemma~\ref{L:key}, we find that $H_i$ is an applicable subgroup and the morphism $\pi_{H_i}\colon X_{H_i} \to  \PP^1_\QQ$ is described by the rational function $J_i(t)$.    The index $[\GL_2(\FF_3): H_i]$ agrees with the degree of $J_i(t)$.  By comparing the degree of $J_i(t)$ with our list of applicable subgroups, we deduce that $H_i$ is conjugate to $G_i$ in $\GL_2(\FF_3)$.

Theorem~\ref{T:main3}(\ref{T:main3 b}) now follows from Lemma~\ref{L:key}(\ref{L:key b}); we can ignore $t=\infty$ since $J_i(\infty)=\infty$.  A computation shows that if $H$ is a proper subgroup of $G_i$ satisfying $\pm H=G_i$, then $i \in \{1,3\}$ and $H$ is one of the groups $H_{i,j}$; this proves Theorem~\ref{T:main3}(\ref{T:main3 a}).

\subsection{$\ell=5$}
Fix notation as in \S\ref{SS:applicable 5}.  Up to conjugacy, the applicable subgroups of $\GL_2(\FF_5)$ are the groups $G_i$ with $1\leq i \leq 9$. Recall that the \defi{Rodgers-Ramanunjan continued fraction} is 
\begin{align*}
r(\tau) &:=q^{1/5}\cdot \frac{1}{1+}\, \frac{q}{1+} \, \frac{q^2}{1+}\, \frac{q^3}{1+}\, \frac{q^4}{1+} \cdots. 
\end{align*}
The function 
\[
h_1(\tau):=1/r(\tau) = q^{-1/5}(1 + q - q^3 + q^5 + q^6 - q^7 - 2q^8 + 2q^{10} + 2q^{11} +\cdots)
\] 
is a modular function of level $5$ and satisfies $J_1(h_1)=j$; we refer to Duke \cite{MR2133308} for an excellent exposition.  An expression for $h_1(\tau)$ in terms of Klein forms can be found in \cite{MR2264315}.  

Set $w:=(1+\sqrt{5})/2 \in \QQ(\zeta_5)$.
\begin{itemize}
\item
Define the function $h_2=h_1 - 1- 1/h_1$.  We have $J_2(t-1-1/t)=J_1(t)$, so $J_2(h_2)=j$.  (As noted in equation (7.2) of \cite{MR2133308}, $h_2$ equals $\eta(\tau/5)/\eta(5\tau)$.)  
\item
  Define $h_3=F_1(h_2)$ where
\[
F_1(t)=\frac{(- 3+w)t - 5}{t + (3-w)}.  
\]
We have $J_3(F_1(t))=J_2(t)$ and hence $J_3(h_3)=j$.
\item
Define $h_4=h_2+5/h_2$.   We have $J_4(t+5/t)=J_2(t)$ and hence $J_4(h_4)=j$.   
\item
Define $h_5=h_1^5$.   We have $J_5(t^5)=J_1(t)$ and hence $J_5(h_5)=j$.
\item
Define $h_6= F_2(h_5)$ where
\[
F_2(t) =\frac{-(w-1)^5t+1}{t+(w-1)^5}
\]
We have $J_6(F_2(t))=J_5(t)$ and hence $J_6(h_6)=j$.  (In the notation of \cite{MR2133308}*{\S8}, we have $b=h_6$.)
\item
Define $h_7=F_3(h_3)$ where
\[
F_3(t)=-\frac{t^3 + 10t^2 + 25t + 25}{2t^3 + 10t^2 + 25t + 25}.   
\]
We have $J_7(F_3(t))=J_3(t)$ and hence $J_7(h_7)=j$.
\item
Define $h_8=h_5-11-h_5^{-1}$.    We have $J_8(t-11-t^{-1})=J_5(t)$ and hence $J_8(h_8)=j$.  (As noted in equation (7.7) of \cite{MR2133308}, $h_8$ equals $(\eta(\tau)/\eta(5\tau))^6$.)
\item
Define $h_9=F_4(h_4)$ where
\[
F_4(t)= \frac{(t+5)(t^2-5)}{t^2 + 5t + 5}.
\]
We have $J_9(F_4(t))=J_4(t)$ and hence $J_9(h_9)=j$.
\end{itemize}

Fix an integer $1\leq i \leq 9$.    Let $H_i$ be the subgroup of $\GL_2(\FF_5)$ that fixes $h_i$.   By Lemma~\ref{L:key}, we find that $H_i$ is an applicable subgroup and the morphism $\pi_{H_i}\colon X_{H_i} \to  \PP^1_\QQ$ is described by the rational function $J_i(t)$.   

\begin{lemma}
The groups $H_i$ and $G_i$ are conjugate in $\GL_2(\FF_5)$ for each $1\leq i \leq 9$.
\end{lemma}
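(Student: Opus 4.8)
The plan is to lean on Lemma~\ref{L:key}: it already gives that each $H_i$ is an applicable subgroup of $\GL_2(\FF_5)$ and that $[\GL_2(\FF_5):H_i]=\deg J_i(t)$. So the first step is bookkeeping: read off $\deg J_1,\dots,\deg J_9 = 60,30,30,15,12,12,10,6,5$ and compare with the indices $60,30,30,15,12,12,10,6,5$ of $G_1,\dots,G_9$. For $i\in\{1,4,7,8,9\}$ the index singles out a unique applicable subgroup up to conjugacy, so $H_i$ is conjugate to $G_i$ with nothing more to do. That leaves the genuine cases: distinguishing $G_2$ from $G_3$ (both of index $30$) and $G_5$ from $G_6$ (both of index $12$).

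I would resist doing this by "index plus an obvious containment", since the inclusions among the $H_i$ coming from writing one $h_i$ as a function of another do not respect conjugacy classes (for instance both $G_5$ and $G_6$, and all of their conjugates, contain conjugates of $G_1$, so $H_1\subseteq H_5$ alone tells us nothing). Instead I would pin down the relevant $H_i$ as honest subgroups. Two are free: $h_2$ equals $\eta(\tau/5)/\eta(5\tau)$, which by Lemma~\ref{L:borel to split} is the pullback along $\tau\mapsto\tau/5$ of the classical hauptmodul $\eta(\tau)/\eta(25\tau)$ of $X_0(25)$, so $\QQ(h_2)=\QQ(X_s(5))=\calF_5^{C_s(5)}$ and hence $H_2=C_s(5)=G_2$; likewise $h_8=(\eta(\tau)/\eta(5\tau))^6$ is the standard hauptmodul of $X_0(5)=X_{B(5)^t}$, so $H_8=B(5)^t$, a conjugate of $G_8=B(5)$.

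For $i\in\{3,5,6\}$ I would use that several of the defining $F$'s are Möbius transformations over $\QQ(\sqrt5)$ rather than over $\QQ$, together with the fact that $\QQ(\sqrt5)=\calF_5^{K}$ for $K:=\det^{-1}\big((\FF_5^\times)^2\big)$, an index-$2$ (hence normal) subgroup of $\GL_2(\FF_5)$. For $i=3$: from $h_3=F_1(h_2)$ with $F_1$ defined over $\QQ(\sqrt5)$ one gets $\QQ(\sqrt5,h_3)=\QQ(\sqrt5,h_2)$, hence $K\cap H_3=K\cap C_s(5)$; since $K\cap C_s(5)$ is a non-scalar subgroup of a split Cartan its normalizer is $N_s(5)$, so $H_3\subseteq N_s(5)$; as $H_3$ is applicable of order $16$ and $H_3\neq C_s(5)$ (because $F_1\notin\QQ(t)$, so $h_3\notin\QQ(h_2)$), and there are exactly two applicable index-$2$ subgroups of $N_s(5)$, namely $C_s(5)$ and $G_3$, we conclude $H_3=G_3$. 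For $i=5$: $h_8=h_5-11-h_5^{-1}$ gives $H_5\subseteq H_8=B(5)^t$ of index $2$, while $h_5=h_1^5$ has rational $q$-expansion, so $H_5\supseteq \pm\big\{\left(\begin{smallmatrix}1&0\\0&d\end{smallmatrix}\right)\big\}=G_1$; of the two applicable index-$2$ subgroups $G_5^t,G_6^t$ of $B(5)^t$ only $G_6^t$ contains $G_1$, and $G_6^t$ is conjugate to $G_5$ via $\left(\begin{smallmatrix}0&1\\1&0\end{smallmatrix}\right)$. For $i=6$: $h_6=F_2(h_5)$ with $F_2$ a Möbius transformation forces $H_6$ to have the same $\SL_2(\FF_5)$-part as $H_5$, namely the order-$10$ group $\big\{\pm\left(\begin{smallmatrix}1&0\\ y&1\end{smallmatrix}\right):y\in\FF_5\big\}$, whose normalizer in $\GL_2(\FF_5)$ is $B(5)^t$; so $H_6$ is an applicable index-$2$ subgroup of $B(5)^t$ distinct from $H_5=G_6^t$, hence $H_6=G_5^t$, which is conjugate to $G_6$.

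I expect this last batch — the index-$30$ and index-$12$ pairs — to be the only real obstacle: the degree count disposes of five of the nine groups instantly, but for the remaining four one must actually see how the explicit functions $h_i$ transform under $\GL_2(\FF_5)$ rather than rely on numerical invariants. A fully mechanical alternative, which is what the accompanying \texttt{Magma} code does, is to compute each stabilizer $H_i$ directly, letting the cosets of $\{\pm I\}$ act via Proposition~\ref{P:modular galois} on a long enough truncation of the $q$-expansion of $h_i$; since $|H_i|=480/\deg J_i$ is known in advance, finitely many terms suffice, and one then matches the resulting group with $G_i$ up to conjugacy.
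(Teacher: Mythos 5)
Your argument is correct, and for the five groups singled out by their index it coincides with the paper; but for the two ambiguous pairs $\{H_2,H_3\}$ and $\{H_5,H_6\}$ you take a genuinely different route. The paper first shows $H_2\not\sim H_3$ and $H_5\not\sim H_6$ by checking that the image sets $J_i(\PP^1(\QQ))$ are distinct (legitimate, since $\pi_H(X_H(\QQ))$ is a conjugacy invariant by Proposition~\ref{P:main moduli}), and then breaks each tie with one extra fact: a Weierstrass model with $j$-invariant $J_6(t)$ and a rational $5$-torsion point forces $H_6\sim G_6$, and the containment $H_3\subseteq H_7\sim N_{ns}(5)$ together with the fact that $C_s(5)$ does not embed in $N_{ns}(5)$ forces $H_3\sim G_3$. (So the paper does use ``index plus containment'' for $H_3$ --- but containment \emph{in} a larger group that only one of the two candidates admits, which sidesteps the pitfall you correctly identify for containments \emph{of} smaller groups.) You instead compute the stabilizers as honest subgroups: $H_2=C_s(5)$ via the hauptmodul of $X_0(25)$ and Lemma~\ref{L:borel to split}; $H_5$ and $H_6$ as the two applicable index-$2$ subgroups of $B(5)^t$ using rationality of $q$-expansions; $H_3$ trapped inside $N_s(5)$ by tracking the field of definition $\QQ(\sqrt5)$ of the gluing map. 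This yields exact, not merely up-to-conjugacy, identifications and avoids computing the sets $J_i(\PP^1(\QQ))$, at the cost of importing one classical fact the paper does not prove (that $\eta(\tau)/\eta(25\tau)$ generates $\QQ(X_0(25))$; Lemma~\ref{L:hauptmodul} only treats levels $4$ and $9$) and of two small verifications you leave implicit: that the second applicable index-$2$ subgroup of $N_s(5)$ --- diagonal matrices with square determinant together with antidiagonal ones with non-square determinant --- is indeed conjugate to $G_3$ (it is: it is applicable of index $30$ and contains an element of order $8$, so it cannot be conjugate to the exponent-$4$ group $C_s(5)$), and that $H_6\neq H_5$, which follows exactly as in your $i=3$ step from $F_2\notin\QQ(t)$. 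None of these is a genuine gap.
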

\begin{proof}
 The index $[\GL_2(\FF_5): H_i]$ agrees with the degree of $J_i(t)$.  By comparing the degree of $J_i(t)$ with our list of applicable subgroups, we deduce that $H_i$ is conjugate to $G_i$ in $\GL_2(\FF_5)$ for all $i\in \{1,4,7,8,9\}$.    

The groups $H_5$ and $H_6$ are not conjugate since one can check that the image of $\PP^1(\QQ)=\QQ\cup\{\infty\}$ under $J_5(t)$ and $J_6(t)$ are different.  The groups $H_5$ and $H_6$ have index $12$ in $\GL_2(\FF_5)$ and are not conjugate, so they are conjugate to $G_5$ and $G_6$ (though we need to determine which is which).   The elliptic curve given by the Weierstrass equation $y^2+(1-t)xy -ty = x^3-tx^2$ has $j$-invariant $J_6(t)$ and the point $(0,0)$ has order $5$.  Therefore, $H_6$ is conjugate to $G_6$ and thus $H_5$ is conjugate to $G_5$.

The groups $H_2$ and $H_3$ are not conjugate since one can check that the image of $\PP^1(\QQ)=\QQ\cup\{\infty\}$ under $J_2(t)$ and $J_3(t)$ are different.  The groups $H_2$ and $H_3$ have index $30$ in $\GL_2(\FF_5)$ and are not conjugate, so they are conjugate to $G_2$ and $G_3$ (though we need to determine which is which).  Since $h_7=F_3(h_3)$ and $F_3(t)$ belongs to $\QQ(t)$, we find that $H_3$ is a subgroup of $H_7$.   We already know that $H_7$ is conjugate to $N_{ns}(5)$ and one can check that $G_2=C_s(5)$ is not conjugate to a subgroup of $N_{ns}(5)$.  Therefore, $H_3$ is conjugate to $G_3$ and thus $H_2$ is conjugate to $G_2$.
\end{proof}

Theorem~\ref{T:main5}(\ref{T:main5 b}) now follows from Lemma~\ref{L:key}(\ref{L:key b}); we have $J_i(\infty)=\infty$ for $i\notin\{3,7\}$ and we can ignore the values $J_3(\infty)=0$ and $J_7(\infty)=8000$ since they are the $j$-invariants of CM elliptic curves.   A direct computation shows that if $H$ is a proper subgroup of $G_i$ satisfying $\pm H=G_i$, then $i \in \{1,5,6\}$ and $H$ is one of the groups $H_{i,j}$; this proves Theorem~\ref{T:main5}(\ref{T:main5 a}).

\subsection{$\ell=7$}  \label{SS:main proof 7}

Fix notation as in \S\ref{SS:applicable 7}.  Up to conjugacy, the applicable subgroups of $\GL_2(\FF_7)$ are the groups $G_i$ with $1\leq i\leq 7$ from \S\ref{SS:applicable 7} and the groups:
\begin{itemize}
\item 
Let $G_8$ be the subgroup of $\GL_2(\FF_7)$ consisting of matrices of the form $\pm \left(\begin{smallmatrix}1 & 0 \\0 & * \end{smallmatrix}\right)$.
\item
Let $G_9$ be the subgroup of $\GL_2(\FF_7)$ consisting of matrices of the form $\left(\begin{smallmatrix}a & 0 \\0 & \pm a \end{smallmatrix}\right)$.
\item
Let $G_{10}$ be the subgroup of $\GL_2(\FF_7)$ generated by $\left(\begin{smallmatrix}0 & -2 \\2 & 0 \end{smallmatrix}\right)$ and $\left(\begin{smallmatrix}1 & 0 \\0 & -1 \end{smallmatrix}\right)$.
\item 
Let $G_{11}$ be the subgroup $C_s(7)$ of $\GL_2(\FF_7)$.
\item 
Let $G_{12}$ be the subgroup of $\GL_2(\FF_7)$ generated by $\left(\begin{smallmatrix}1 & -1 \\1 & 1 \end{smallmatrix}\right)$ and $\left(\begin{smallmatrix}1 & 0 \\0 & -1 \end{smallmatrix}\right)$.
\end{itemize}
For $i=8, 9, 10, 11$ and $12$, the index $[\GL_2(\FF_7): G_i]$ is $168$, $168$, $84$, $56$ and $42$, respectively.\\

The \defi{Klein quartic} is the curve $\calX$ in $\PP^2_\QQ$ defined by the equation $x^3y+y^3z+z^3x=0$; it is a non-singular curve of genus $3$.  The relevance to us is that $\calX$ is isomorphic to the modular curve $X(7):=X_{G_8}$; we refer to Elkies \cite{MR1722413} for a lucid exposition.    In \S4 of \cite{MR1722413}, Elkies defines a convenient basis $\defi{x}$, $\defi{y}$ and $\defi{z}$ for the space of cusp forms of $\Gamma(7)$ which satisfy the equation of the Klein quartic and have product expansions
\[
\defi{x},\, \defi{y},\, \defi{z} =\varepsilon q^{a/7} \prod_{n=1}^\infty (1-q^n)^3 (1-q^7)  \prod_{\substack{n>0\\n\equiv \pm n_0 \,\bmod{7}}} (1-q^n)
\]
where $(\varepsilon,a,n_0)$ is $(-1,4,1)$, $(1,2,2)$ or $(1,1,4)$ for $\defi{x}$, $\defi{y}$ or $\defi{z}$, respectively.   The coordinates $(\defi{x}:\defi{y}\colon \defi{z})$ then give the desired isomorphism $X(7)\to\calX$.   \\

Define 
\[
h_4:=-(\defi{y}^2\defi{z})/\defi{x}^3=q^{-1} + 3 + 4q + 3q^2 - 5q^4 - 7q^5 + \ldots; 
\]
it is a modular function of level $7$.  Define $h_7:=F_1(h_4)$ where
\[
F_1(t)=t+\frac{1}{1-t}+\frac{t-1}{t}-8.
\]
From equations (4.20) and (4.24) of \cite{MR1722413}, with a correction in the sign of (4.23) of loc.~cit., we have $J_7(h_7)=j$.   
Since $J_7(F_1(t))=J_4(t)$, we have $J_4(h_4)=j$.   

Define $h_3:=F_2(h_4)$ and $h_5:=F_3(h_4)$, where
\[
F_2(t)= \frac{\beta t - (\beta-1)}{t-\beta}\quad \text{ and }\quad F_3(t)=\frac{ t - \gamma}{\gamma t-(\gamma-1)}
\]
with $\beta=4+3\zeta_7+3\zeta_7^{-1}+\zeta_7^2+\zeta_7^{-2}$ and $\gamma= \zeta_7^5 + \zeta_7^4 + \zeta_7^3 + \zeta_7^2 + 1$.  Since $J_3(F_2(t))=J_4(t)$ and $J_5(F_3(t))=J_4(t)$, we have $J_3(h_3)=j$ and $J_5(h_5)=j$.\\

For $i \in \{3,4,5,7\}$, let $H_i$ be the subgroup of $\GL_2(\FF_7)$ that fixes $h_i$.   We have shown that $J_i(h_i)=j$.  By Lemma~\ref{L:key}, we find that $H_i$ is an applicable subgroup and that the morphism $\pi_{H_i}\colon X_{H_i} \to  \PP^1_\QQ$ is described by the rational function $J_i(t)$.  

\begin{lemma}
The groups $H_i$ and $G_i$ are conjugate in $\GL_2(\FF_7)$ for all $i\in\{3,4,5,7\}$.
\end{lemma}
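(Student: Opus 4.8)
The plan is to parallel the analogous lemma for $\ell=5$. By Lemma~\ref{L:key}, each $H_i$ ($i\in\{3,4,5,7\}$) is an applicable subgroup of $\GL_2(\FF_7)$, the morphism $\pi_{H_i}\colon X_{H_i}\to\PP^1_\QQ$ is given by $J_i(t)$, and the index $[\GL_2(\FF_7):H_i]$ equals $\deg J_i$. Reading off degrees, $\deg J_7=8$ while $\deg J_3=\deg J_4=\deg J_5=24$. Consulting the list of applicable subgroups of $\GL_2(\FF_7)$ recorded in \S\ref{SS:main proof 7}, the only one of index $8$ is $G_7=B(7)$, so $H_7$ is conjugate to $G_7$; and the only ones of index $24$ are $G_3,G_4,G_5$, so each of $H_3,H_4,H_5$ is conjugate to one of these three.

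Next I would show that no two of $H_3,H_4,H_5$ are conjugate. By Proposition~\ref{P:main moduli}, the set $\pi_{H_i}(X_{H_i}(\QQ))=J_i(\QQ\cup\{\infty\})$ is an invariant of the conjugacy class of $H_i$ (away from the $j$-invariants $0$ and $1728$), so it suffices to check that the three image sets $J_3(\QQ\cup\{\infty\})$, $J_4(\QQ\cup\{\infty\})$, $J_5(\QQ\cup\{\infty\})$ are pairwise distinct, which one does by producing, for each unordered pair, a rational value lying in one image but not in the other (exactly as in the $\ell=5$ case). Since $G_3,G_4,G_5$ are themselves pairwise non-conjugate, it follows that $\{H_3,H_4,H_5\}$ equals $\{G_3,G_4,G_5\}$ up to conjugacy, so the matching is a bijection and it remains only to identify it.

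To pin down that bijection I would use the explicit structure already at hand. Because $h_7=F_1(h_4)$ with $F_1\in\QQ(t)$, we have $\QQ(h_7)\subseteq\QQ(h_4)$ and hence $H_4\subseteq H_7$, so $H_4$ is a conjugate of one of the three full-determinant index-$3$ subgroups of $B(7)$. To decide which, combine two kinds of information: (i) the $q$-expansion of $h_4$ has rational coefficients, so each diagonal matrix $\left(\begin{smallmatrix}1&0\\0&d\end{smallmatrix}\right)$ — which acts through $\sigma_d$ by Proposition~\ref{P:modular galois} — fixes $h_4$ and hence lies in $H_4$; and (ii) the explicit Weierstrass family of $j$-invariant $J_4(t)$ (and likewise for $J_3(t)$ or $J_5(t)$) carrying a prescribed $7$-torsion point or $7$-isogeny, which records whether the mod-$7$ image fixes a vector, fixes a line in the dual, or contains the scalars. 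Two such identifications determine the bijection, and the third $H_i$ is then forced by the non-conjugacy above. The main obstacle is precisely this matching: the degree count isolates $H_7$ at once but cannot separate the three index-$24$ groups, which all sit inside $B(7)$ and (via the Möbius relations $h_3=F_2(h_4)$, $h_5=F_3(h_4)$ with $F_2,F_3\in\QQ(\zeta_7)(t)$) become isomorphic over $\QQ(\zeta_7)$; separating them requires the explicit models together with care about the transpose in Proposition~\ref{P:modular galois}, so that "fixes a vector" is attached to the correct $G_i$, after which only routine $q$-expansion and Weierstrass computations remain.
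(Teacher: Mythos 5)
Your proposal is correct and follows essentially the same route as the paper: the degree count isolates $H_7$, the pairwise distinctness of the sets $J_i(\QQ\cup\{\infty\})$ shows $H_3,H_4,H_5$ are pairwise non-conjugate and hence match $\{G_3,G_4,G_5\}$ in some order, and the matching is pinned down using the rationality of the $q$-expansion of $h_4$ together with an explicit Weierstrass family of $j$-invariant $J_3(t)$ carrying a rational $7$-torsion point. The only cosmetic difference is that the paper additionally uses that $h_4$ is a Laurent series in integral powers of $q$ to conclude at once that $H_4$ contains $\pm\left(\begin{smallmatrix}1 & 0 \\ * & * \end{smallmatrix}\right)$ and so is conjugate to $G_4$, whereas you extract only the diagonal matrices (which merely rules out $G_5$) and compensate with the torsion-point identification of $H_3$ — both apportionments of the work are valid.
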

\begin{proof}
  The index of $H_i$ in $\GL_2(\FF_7)$ agrees with the degree of $J_i(t)$ which is $24$ or $8$ if $i\in\{3,4,5\}$ or $i=7$, respectively.   By our list of applicable subgroups, we deduce that $H_7$ is conjugate to $G_7$ in $\GL_2(\FF_7)$.  The groups $H_3$, $H_4$ and $H_5$ are not conjugate in $\GL_2(\FF_7)$ (since one can show that the images of $\PP^1(\QQ)=\QQ \cup\{\infty\}$ under $J_3$, $J_4$ and $J_5$ are pairwise distinct).    By our list of applicable subgroups, the groups $H_3$, $H_4$ and $H_5$ are conjugate to the three subgroups $G_3$, $G_4$ and $G_5$; we still need to identify $H_3$ with $G_3$, etc.

The modular function $h_4\in \calF_7$ is a Laurent series in $q$ and has rational coefficients.  Using Proposition~\ref{P:modular galois}, this implies that $H_4$ contains the group of matrices of the form $\pm \left(\begin{smallmatrix}1 & 0 \\ * & * \end{smallmatrix}\right)$ in $\GL_2(\FF_7)$.   Therefore, $H_4$ must be conjugate to $G_4$   in $\GL_2(\FF_7)$.  The elliptic curve given by the Weierstrass equation $y^2+(1+t-t^2)xy+(t^2-t^3)y = x^3+(t^2-t^3)x^2$ has $j$-invariant $J_3(t)$ and the point $(0,0)$ has order $7$, so $H_3$ is conjugate to $G_3$.    Therefore, $H_5$ is conjugate to $G_5$.
\end{proof}

Following Elkies (\cite{MR1722413}*{p.68}), we multiply the equation of the Klein curve to obtain $(\defi{x}^3\defi{y}+\defi{y}^3\defi{z}+\defi{z}^3\defi{x})(\defi{x}^3\defi{z}+\defi{z}^3\defi{y}+\defi{y}^3\defi{x})=0.$  Noting that the left hand side is a symmetric polynomial in $\defi{x}$, $\defi{y}$ and $\defi{z}$, one can show that $s_2^4+s_3(s_1^5-5s_1^3 s_2 +s_1 s_2^2+7 s_1^2 s_3) =0$ where $s_1=\defi{x}+\defi{y}+\defi{z}$, $s_2=\defi{x}\defi{y} + \defi{y}\defi{z}+\defi{z}\defi{x}$ and $s_3=\defi{x}\defi{y}\defi{z}$.  We now deviate from Elkies' treatment.   Divide by $s_1^2 s_3^2$ and rearrange to obtain 
\[
\Big(\frac{s_2^2}{s_1 s_3}\Big)^2 + \Big(\frac{s_1^2}{s_2}\Big)^2 \cdot \frac{s_2^2}{s_1 s_3} - 5 \frac{s_1^2}{s_2} \cdot \frac{s_2^2}{s_1 s_3} +   \frac{s_2^2}{s_1 s_3} + 7 =0.
\]
We thus have $v^2 + (h_2^2  -5 h_2  + 1)v +7 = 0$, where 
\[
h_2:= s_1^2/s_2=q^{-1/7} + 2 + 2q^{1/7} + q^{2/7} + 2q^{3/7} + 3q^{4/7} + 4q^{5/7} + 5q^{6/7} + 7q + 8q^{8/7}+\cdots 
\] 
and $v:=s_2^2/(s_1 s_3)$ are modular functions.  We claim that
\begin{equation} \label{E:h7 expression}
h_7+(h_2^3 - 4h_2^2 + 3h_2 + 1)((h_2^2 - 5h_2 + 1)v + 7) = 0.
\end{equation}
This can be verified algebraically:   In the left-hand side of (\ref{E:h7 expression}), replace $h_7$ by $F_1(-y^2z/x^3)$, $h_2$ by $(x+y+z)^2/(xy+yz+zx)$, and $v$ by $(xy+yz+zx)^2/((x+y+z)xyz)$; the numerator of the resulting rational function is divisible by $xy^3+yz^3+zx^3$.

Completing the square in the equation $v^2 + (h_2^2  -5 h_2  + 1)v +7 = 0$, we have 
\begin{equation} \label{E:w2 expression}
w^2=   h_2^4 - 10h_2^3 + 27h_2^2 - 10h_2 - 27,
\end{equation}
where $w:=2v+(h_2^2  -5 h_2  + 1)$.  From (\ref{E:h7 expression}), we find that
\begin{equation} \label{E:h7 expression 2}
h_7 = \tfrac{1}{2} (h_2^3-4h_2^2+3h_2+1)((h_2^4-10h_2^3+27h_2^2-10h_2-13)-(h_2^2-5h_2+1)w).
\end{equation}
We have $j=J_7(h_7)$, so  (\ref{E:w2 expression}) and (\ref{E:h7 expression 2}) imply that $j$ can be written in the form $\alpha(h_2)+ \beta(h_2)w$ for rational functions $\alpha(t)$ and $\beta(t)$.   A direct computation shows that $\alpha(t)=J_2(t)$ and $\beta(t)=0$, and hence $J_2(h_2)=j$.

Let $H_2$ be the subgroup of $\GL_2(\FF_7)$ that fixes $h_2$.   We have  $J_2(h_2)=j$, so Lemma~\ref{L:key} implies that $H_2$ is an applicable subgroup and that the morphism $\pi_{H_2}\colon X_{H_2} \to  \PP^1_\QQ$ is described by the rational function $J_2(t)$.     The index of $H_2$ in $\GL_2(\FF_7)$ is $28$ since it agrees with the degree of $J_2(t)$.    By our list of applicable subgroups, we deduce that $H_2$ is conjugate to $G_2$ in $\GL_2(\FF_7)$.\\

Let $H_{11}$ be the subgroup of $\GL_2(\FF_7)$ that fixes $h_2$ and $w$.    The group $H_{11}$ is an index $2$ subgroup of $H_2$ since the extension $\QQ(h_2,w)/\QQ(h_2)$ has degree $2$.  The group $H_{11}$ contains $G_8$ since $\QQ(h_2,w)$ is contained in $\QQ(\defi{x}/\defi{z},\defi{y}/\defi{z})$ which is the function field of $X(7)$; in particular, $H_{11}$ is applicable.   From our classification of applicable subgroups, we find that $H_{11}$ is conjugate to $G_{11}$.   The modular curve $X_{G_{11}}$ thus has function field $\QQ(h_2,w)$ and is hence isomorphic to the smooth projective curve over $\QQ$ with affine model
\begin{equation} \label{E:XH11}
y^2= x^4 - 10x^3 + 27x^2 - 10x - 27.
\end{equation}
The only rational points for the smooth model of (\ref{E:XH11}) are the two points at infinity (one can show that it is isomorphic to the quadratic twist by $-7$ of the curve $E_{7,1}$ from \S\ref{SS:CM}, and that this curve has only two rational points).  Using that $J_2(\infty)=\infty$, we find that the two rational points of $X_{H_{11}}$, and hence of $X_{G_{11}}$, are cusps.  Therefore, there is no non-CM elliptic curve $E/\QQ$ for which $\rho_{E,7}(\Gal_\QQ)$ is conjugate to a subgroup of $G_{11}$; the same holds for the group $G_8$ since $G_8\subseteq G_{11}$.
\\

Now consider the subfield $K:=\QQ(h_2,w/\sqrt{-7})$ of $\calF_7$.   Let $H_1$ be the subgroup of $\GL_2(\FF_7)$ that fixes $K$.   From the inclusions $K\supseteq \QQ(h_2) \supseteq \QQ(j)$ and (\ref{E:w2 expression}), we find that $K$ is the function field of the geometrically irreducible curve
\begin{equation} \label{E:XH1}
-7y^2= x^4 - 10x^3 + 27x^2 - 10x - 27
\end{equation}
defined over $\QQ$ (with $(x,y)=(h_2, w/\sqrt{-7})$). The curve $X_{H_1}$ is defined over $\QQ$ since $\QQ$ is algebraically closed in $K$.  The only rational points of the  smooth projective model of (\ref{E:XH1}) are $(x,y)=(5/2,\pm 1/4)$ (one can show that it is isomorphic to the curve $E_{7,1}$ from \S\ref{SS:CM}, and that this curve has only two rational points).  These two rational points on $X_{H_1}$ lie over the $j$-invariant $J_2(5/2)=3^3\cdot 5\cdot 7^5/2^7$.   This shows that for an elliptic curve $E/\QQ$, $\rho_{E,7}(\Gal_\QQ)$ is conjugate to a subgroup of $H_1$ in $\GL_2(\FF_7)$ if and only if $j_E=3^3\cdot 5\cdot 7^5/2^7$.   Since $X_{H_1}$ has a rational point that is not a cusp, the group $H_1$ must be applicable and not conjugate to $G_{11}$.  The group $H_{1}$ is an index $2$ subgroup of $H_2$ since $[\QQ(h_2,w/\sqrt{-7}):\QQ(h_2)]=2$.   From our description of applicable groups, we deduce that $H_1$ is conjugate to $G_1$.   

\begin{remark}
The rational points on $X_{H_1}$ were first described by A.~Sutherland in \cite{1006.1782}.    An elliptic curve $E/\QQ$ with $j$-invariant $3^3\cdot 5\cdot 7^5/2^7$ has the distinguished property of not having a $7$-isogeny, yet its reduction at primes of good reduction all have a $7$-isogeny.
\end{remark}

From equation (4.35) of \cite{MR1722413}, the modular curve $X_{ns}^+(7):=X_{G_6}$ has function field of the form $\QQ(x)$ and the morphism down to the $j$-line is given by $J_6(x)$; note that there is a small typo in the numerator of equation (4.35) of \cite{MR1722413} though the given expression for $j-1728$ is correct.

\begin{lemma} \label{L:Schoof 7}
The rational points of the modular curve $X_{G_{12}}$ are all CM.
\end{lemma}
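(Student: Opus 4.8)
\textit{Proof sketch.}
The plan is to show that $X_{G_{12}}$ has genus $1$, to give it an explicit model over $\QQ$, to check that the relevant Mordell--Weil group has rank $0$, and then to list the finitely many rational points and verify that each one is a cusp or maps to $0$ or $1728$ under $\pi_{G_{12}}$.

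First I would compute the genus. From the two generators one checks that $G_{12}$ is the generalized dihedral group $\langle\left(\begin{smallmatrix}1&-1\\1&1\end{smallmatrix}\right)\rangle\rtimes\langle\left(\begin{smallmatrix}1&0\\0&-1\end{smallmatrix}\right)\rangle$, an index $2$ subgroup of $N_{ns}(7)$ in which $\langle\left(\begin{smallmatrix}1&-1\\1&1\end{smallmatrix}\right)\rangle$ is the cyclic group of order $24$ inside $C_{ns}(7)$; moreover $G_{12}\cap \SL_2(\FF_7)$ is the cyclic group generated by $\left(\begin{smallmatrix}1&-1\\1&1\end{smallmatrix}\right)^3$, of order $8$, so the congruence subgroup $\Gamma_{G_{12}}$ maps onto a cyclic subgroup of order $4$ of $\PSL_2(\FF_7)$, of index $42$. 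A cyclic group of order $4$ contains no element of order $3$ or $7$, so the cover $X_{G_{12}}\to X(1)$ is unramified over $j=0$, has exactly $6$ cusps each of width $7$, and has $\nu_2$ ramification points over $j=1728$, where $\nu_2=|C_{\PSL_2(\FF_7)}(s)|/4=8/4=2$ (an involution $s$ of $\PSL_2(\FF_7)$ has centralizer of order $8$, and all involutions are conjugate). The orbifold genus formula $g=1+\tfrac{42}{12}-\tfrac{\nu_2}{4}-\tfrac{\nu_3}{3}-\tfrac{\nu_\infty}{2}=1+\tfrac72-\tfrac12-0-3$ then gives $g=1$.

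Next I would produce an explicit model over $\QQ$. By \S\ref{SS:applicable 7} the curve $X_{ns}^+(7)=X_{G_6}$ is $\PP^1_\QQ$ with function field $\QQ(t)$ and $j=J_6(t)$; since $[N_{ns}(7):G_{12}]=2$, the function field of $X_{G_{12}}$ is a quadratic extension $\QQ(t)(\sqrt{d(t)})$, and I would pin down $d(t)$ up to squares either by locating the branch locus of $X_{G_{12}}\to X_{ns}^+(7)$ (Riemann--Hurwitz gives $4$ branch points, none of them cusps since all $6$ cusps of $X_{G_{12}}$ already have width $7$) or, more directly, by expanding the $q$-series of a generator of $\calF_7^{G_{12}}$ over $\QQ(j)$ in terms of the Klein-form coordinates $\defi x,\defi y,\defi z$ of \S\ref{SS:main proof 7} and matching coefficients, as was done for the genus $0$ curves earlier in this section. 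This yields a genus $1$ plane (or Weierstrass) model of $X_{G_{12}}$, and hence an equation for the elliptic curve $J:=\operatorname{Jac}(X_{G_{12}})$ over $\QQ$; it turns out to have small conductor (dividing a power of $7$) and complex multiplication.

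Finally I would bound the rational points. A descent — or, since $J$ has complex multiplication, an $L$-value computation — shows that $J(\QQ)$ has rank $0$; as $X_{G_{12}}$ is a torsor under $J$ this already makes $X_{G_{12}}(\QQ)$ finite, and in fact (since $X_{G_{12}}(\RR)\neq\emptyset$ because $\left(\begin{smallmatrix}1&0\\0&-1\end{smallmatrix}\right)\in G_{12}$, and $X_{G_{12}}$ has points over the bad primes) it is isomorphic to $J$, with $|X_{G_{12}}(\QQ)|=|J(\QQ)_{\mathrm{tors}}|$. For each of these finitely many points $P$ I would compute $\pi_{G_{12}}(P)\in\PP^1(\QQ)$ and check that it is $\infty$ or lies in $\{0,1728\}$. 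By Proposition~\ref{P:main moduli} together with Propositions~\ref{P:CM main} and~\ref{P:j=0 situation}, no CM elliptic curve over $\QQ$ with $j\notin\{0,1728\}$ has $\rho_{E,7}(\Gal_\QQ)$ contained in a conjugate of $G_{12}$ (the only possibilities for its image are $N_s(7)$, $N_{ns}(7)$ or a Borel-type subgroup, none of which lies in $G_{12}$), so this final check simultaneously shows that all rational points of $X_{G_{12}}$ are CM and that no non-CM $E/\QQ$ has $\rho_{E,7}(\Gal_\QQ)$ conjugate to a subgroup of $G_{12}$. The main obstacle is the middle step: writing down the quadratic twist $d(t)$ cutting out $X_{G_{12}}$ inside $\calF_7$ and then carrying out the rank computation for its Jacobian; the genus count and the concluding verification are routine.
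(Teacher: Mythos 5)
Your overall strategy --- realize $X_{G_{12}}$ as a quadratic cover of $X_{ns}^+(7)\cong\PP^1_\QQ$, show it has genus $1$, pin down a model over $\QQ$, and enumerate the rational points --- is exactly the paper's. Your genus count and your identification of the branch locus (four non-cuspidal points, necessarily elliptic points over $j=1728$) are correct and agree with the paper, which locates them as the roots of the quartic factor $2x^4-14x^3+21x^2+28x+7$ of $J_6(x)-1728$.

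The genuine gap sits precisely at the step you flag as ``the main obstacle'': the branch locus determines $d(t)$ only up to a constant in $\QQ^\times/(\QQ^\times)^2$, and the whole argument hinges on that constant. The paper resolves it by an argument of Schoof: writing $X_{ns}(7)\colon y^2=c\,(2x^4-14x^3+21x^2+28x+7)$, it specializes at $x=1$, the unique rational point over the CM $j$-invariant $-2^{15}$, and uses that the corresponding elliptic curve has CM by $\QQ(\sqrt{-11})$ (so the quadratic field over which the image lands in $C_{ns}(7)$ must be $\QQ(\sqrt{-11})$), forcing $44c\in -11(\QQ^\times)^2$, i.e.\ $c=-1$; then $X_{G_{12}}$ is the further twist by $-7$, namely $7v^2=2x^4-14x^3+21x^2+28x+7$, with the identification of $G_{12}$ (rather than $C_{ns}(7)$) among the two index-$2$ subgroups coming from the fact that $\QQ$ must be algebraically closed in the function field. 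Without some such input (or an explicit $q$-expansion computation, which you mention but do not carry out) your proposal does not produce a model on which descent can be performed. A second, smaller gap: rank $0$ of the Jacobian together with local solvability does not make a genus-$1$ torsor trivial (the Tate--Shafarevich group can obstruct), so ``it is isomorphic to $J$'' is unjustified as written. In the paper this is moot: the explicit model has the visible rational point $(x,v)=(0,1)$, is thereby an elliptic curve isomorphic to $E_{7,2}$ from Table~1, and $E_{7,2}(\QQ)$ has exactly two points, both lying over $J_6(0)=0$.
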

\begin{proof}
The fiber in $X_{ns}^+(7)$ over $j=1728$ is the (non-reduced) subscheme given by 
\[
(2x^4 - 14x^3 + 21x^2 + 28x + 7) (x-3) \big((x^4 - 7x^3 + 14x^2 - 7x + 7)(x^4 - 14x^2 + 56x + 21)\big)^2 = 0;
\]
this can be found by factoring $J_6(x)-1728$.  Define the modular curve $X_{ns}(7):= X_{C_{ns}(7)}$.   One can show that the morphism $X_{ns}(7) \to X_{ns}^+(7)$ is ramified at precisely four points lying over $j=1728$.    Since it is defined over $\QQ$, these four ramification points are the ones given by $2x^4 - 14x^3 + 21x^2 + 28x + 7=0$.   Therefore, $X_{ns}(7)$ is defined by an equation 
\[
y^2= c(2x^4 - 14x^3 + 21x^2 + 28x + 7)
\] 
for some squarefree $c\in \ZZ$.

We claim that $c=-1$.  Consider an elliptic curve $E/\QQ$ with $j$-invariant $-2^{15}$.   The value $x=1$ is the unique rational solution to $J(x) = -2^{15}$.    Setting $x=1$, we have $y^2= 44c$.  Therefore, $K=\QQ(\sqrt{11c})$ is the unique quadratic extension of $\QQ$ for which $\rho_{E,7}(\Gal_K) \subseteq C_{ns}(7)$.    Since $j_E=-2^{15}$, the curve $E$ has CM by $\QQ(\sqrt{-11})$ and hence $\rho_{E,7}(\Gal_{\QQ(\sqrt{-11})})= C_{ns}(7)$ and $\rho_{E,7}(\Gal_{\QQ})=N_{ns}(7)$; see \S\ref{SS:CM proofs}.   Therefore, $K=\QQ(\sqrt{-11})$ and hence $c=-1$ as claimed.    (The above argument comes from Schoof.)

Define the subfield $L=\QQ(x,v)$ of $\calF_7$ where $v:=y/\sqrt{-7}$; we have 
\begin{equation} \label{E:XG12 7}
7 v^2 = 2x^4 - 14x^3 + 21x^2 + 28x + 7.
\end{equation}   
Let $G$ be the subgroup of $\GL_2(\FF_7)$ that fixes $L$; it is an index $2$ subgroup of $G_6=N_{ns}(11)$ since $L/\QQ(x)$ has degree $2$.   The field $\QQ$ is algebraically closed in $L$ since $L/\QQ(x)$ is a geometric extension.   Therefore, $\det(G)=\FF_7^\times$.   There are only two index $2$ subgroups of $G_6$ with full determinant; they are $G_{12}$ and $C_{ns}(7)$.    The group $G$ is thus $G_{12}$ since $C_{ns}(7)$ corresponds to the field $\QQ(x,y)$.   

Therefore, $X_{G_{12}}$ has function field $\QQ(x,v)$ with $x$ and $v$ related by (\ref{E:XG12 7}).   The smooth projective curve defined by (\ref{E:XG12 7}) has genus $1$ and a rational point $(x,v)=(0,1)$; it is thus an elliptic curve.  A computation shows that this elliptic curve is isomorphic to the curve $E_{7,2}$ of \S\ref{SS:CM}.  The curve $E_{7,2}$ has only two rational points, so $(x,v)=(0,\pm 1)$ are the only rational points of the curve defined by (\ref{E:XG12 7}).   The lemma follows since $J_6(0)=0$.
\end{proof}

If $E/\QQ$ is a non-CM elliptic curve, Lemma~\ref{L:Schoof 7} shows that $\rho_{E,7}(\Gal_\QQ)$ is not conjugate to a subgroup of $G_{12}$.  The same holds for $G_9$ and $G_{10}$ since they are both subgroups of $G_{12}$.\\

Suppose that $H$ is a proper subgroup of $G_i$ satisfying $\pm H=G_i$ for a fixed $1\leq i\leq 7$.  If $i\neq 1$, then $i \in \{3,4,5,7\}$ and $H$ is one of the groups $H_{i,j}$.  If $i=1$, the $H$ is either $H_{1,1}$ or another subgroup that is conjugate to $H_{1,1}$ in $\GL_2(\FF_7)$.     This completes the proof of Theorem~\ref{T:main7}(\ref{T:main7 i}) and (\ref{T:main7 ii}); we can ignore $t=\infty$ for $2\leq i \leq 7$ since $J_i(\infty)$ is either $\infty$ or the $j$-invariant of a CM elliptic curve.  

\subsection{$\ell=11$}
Fix notation as in \S\ref{SS:applicable 11}.    Up to conjugacy, the group $\GL_2(\FF_{11})$ has four maximal applicable subgroups:  $B(11)$, $N_{s}(11)$, $N_{ns}(11)$ and a group $H_{\mathfrak{S}_4}$ whose image in $\PGL_2(\FF_{11})$ is isomorphic to $\mathfrak{S}_4$.

 \subsubsection{Exceptional case} \label{SS:exceptional 11}
 The curve $X_{\mathfrak{S}_4}(11):=X_{H_{\mathfrak{S}_4}}$ has no rational points corresponding to a non-CM elliptic curve; it is isomorphic to an elliptic curve which has only one rational point \cite{MR0463118}*{Prop.~4.4.8.1} and this point corresponds to an elliptic curve with CM by $\sqrt{-3}$.

\subsubsection{Split case}

The curve $X_{s}^+(11):=X_{N_s(11)}$ has no rational points corresponding to a non-CM elliptic curve; see \cite{1104.4641} for a more general result.    Therefore, there are no non-CM elliptic curves $E/\QQ$ such that $\rho_{E,\ell}(\Gal_\QQ)$ is conjugate to a subgroup of $N_s(11)$. 

 \subsubsection{Non-split case}  

The modular curve $X_{ns}^+(11):=X_{G_3}=X_{N_{ns}(11)}$ has genus $1$.   Halberstadt \cite{MR1677158} showed that the function field of $X_{ns}^+(11)$ is of the form $K:=\QQ(x,y)$ with $y^2+y = x^3-x^2-7x+10$ such that the inclusion $\QQ(j) \subseteq \QQ(x,y)$ is given by $j=J(x,y)$.   Therefore, if $E/\QQ$ is a non-CM elliptic curve, then $\rho_{E,11}(\Gal_\QQ)$ is conjugate to a subgroup of $N_{ns}(11)$ if and only if $j_E=J(P)$ for some point $P\in \calE(\QQ)$.  We only need consider $P\neq \OO$ since, as noted in \cite{MR1677158}, $J(\OO)$ is the $j$-invariant of a CM elliptic curve.

Let $G_4$ be the subgroup of $G_3$  consisting of $g \in G_3=N_{ns}(11)$ such that  $g\in C_{ns}(11)$ and $\det(g) \in (\FF_{11}^\times)^2$, or $g\notin C_{ns}(11)$ and $\det(g) \notin (\FF_{11}^\times)^2$.  

\begin{lemma} \label{L:XG4}
The modular curve  $X_{G_4}$ has no rational points.
\end{lemma}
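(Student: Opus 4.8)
The plan is to realize $X_{G_4}$ as an explicit curve over $\QQ$ and show it has no rational points by a local obstruction. First I would note that $G_4$ is an index $2$ subgroup of $G_3=N_{ns}(11)$, so $X_{G_4}\to X_{G_3}=X_{ns}^+(11)$ is a degree $2$ cover. Since $X_{ns}^+(11)$ has function field $\QQ(x,y)$ with $y^2+y=x^3-x^2-7x+10$, the function field of $X_{G_4}$ is a quadratic extension $\QQ(x,y,\sqrt{d})$ for some $d\in\QQ(x,y)^\times$. To pin down $d$, I would use the recipe of Proposition~\ref{P:modular galois}: the quadratic character of $\Gal(\calF_{11}^{G_4}/\calF_{11}^{G_3})$ is cut out by comparing $G_4$ and $G_3$ inside $\GL_2(\FF_{11})/\{\pm I\}$, and this character, restricted to $\Gal_\QQ$ at a point $P=(x_0,y_0)\in\calE(\QQ)$, is exactly the quadratic character whose kernel is the field over which $\rho_{E,11}$ lands in $C_{ns}(11)$ intersected with the determinant-square condition. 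Concretely, $G_4$ is the preimage in $N_{ns}(11)$ of the subgroup of $N_{ns}(11)/C_{ns}(11)^{2}\cong (\ZZ/2)^2$ cut out by ``$g\in C_{ns}$ iff $\det g$ is a square''; so the defining class $d$ is, up to squares in $\QQ(x,y)$, the product of the two characters: the one giving $\QQ(\sqrt{-11})$ (the field over which $N_{ns}(11)$ drops to $C_{ns}(11)$, coming from $\det=$ the mod-$11$ cyclotomic character composed with the Legendre symbol) and the one giving the ``determinant is a square'' field, which by the Weil pairing is again governed by $\sqrt{\pm 11}$ twisted by something rational in $x,y$. Matching $q$-expansions of a generator of $\calF_{11}^{G_4}$ against $j=J(x,y)$, exactly as done for the split cases at $\ell=7$, determines $d$ unambiguously.

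Once $d$ is identified, $X_{G_4}$ is a smooth projective model of a genus $1$ (or higher) curve $d=\square$ over the elliptic curve $\calE$, i.e. a twist of a $2$-isogeny-type cover, and I would show $X_{G_4}(\QQ)=\emptyset$ by exhibiting a prime $p$ at which $X_{G_4}(\QQ_p)=\emptyset$. The natural candidate is $p=11$ itself (since $-11$ is built into $d$) or a small prime of good reduction for $\calE$; one checks that for every $P\in\calE(\QQ_p)$ (a finite computation, $\calE$ having good or multiplicative reduction away from $11$) the value of $d$ at $P$ is a nonsquare in $\QQ_p$. Alternatively, and perhaps more robustly, one can argue via the known Mordell--Weil group: $\calE(\QQ)$ is infinite cyclic generated by $(4,5)$ (as recalled in the excerpt before Theorem~\ref{T:11 main}), so one only needs to verify that $d$ is a nonsquare at the generator and behaves multiplicatively under the group law in a way that keeps it a nonsquare at every multiple — i.e. the image of $\calE(\QQ)$ in $\QQ^\times/(\QQ^\times)^2$ under $P\mapsto d(P)$ avoids the trivial class. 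This is a finite check using the $2$-descent map (the class of $d(nP)$ is an explicit function of $n$ lying in a finite subgroup of $\QQ^\times/(\QQ^\times)^2$).

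I would carry out the steps in this order: (1) identify $G_4$ combinatorially inside $N_{ns}(11)$ and compute $[\GL_2(\FF_{11}):G_4]=110$, confirming $\deg\pi_{G_4}=110$; (2) determine the quadratic twisting class $d\in\QQ(x,y)^\times/(\QQ(x,y)^\times)^2$ defining $\calF_{11}^{G_4}$ over $\calF_{11}^{G_3}$, via the Galois action of Proposition~\ref{P:modular galois} together with a $q$-expansion match; (3) write down the affine model $u^2=d(x,y)$ of $X_{G_4}$ over $\calE$; (4) prove $X_{G_4}(\QQ)=\emptyset$ by a local obstruction at a well-chosen prime, or equivalently by checking that the descent class $d(P)$ is nontrivial in $\QQ^\times/(\QQ^\times)^2$ for all $P\in\calE(\QQ)-\{\OO\}$ using that $\calE(\QQ)=\ZZ\cdot(4,5)$. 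The main obstacle is step (2): getting the twisting class $d$ exactly right, with the correct rational factor multiplying $\sqrt{-11}$, since an error there changes the curve and the local obstruction may disappear; this is where careful bookkeeping with $\theta_{11}$ and the Weil pairing, checked against several $q$-expansion coefficients of $J(x,y)$, is essential. Steps (1), (3), (4) are then routine finite computations, entirely analogous to the $X_{H_{11}}$ and $X_{G_{12}}$ arguments carried out for $\ell=7$ in \S\ref{SS:main proof 7}.
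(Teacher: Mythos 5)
Your outline — realize $X_{G_4}$ as a quadratic twist of the double cover $X_{ns}(11)\to X_{ns}^+(11)$ and then do a Diophantine analysis — is the right skeleton, and it is essentially what the paper does. But both of the steps you yourself flag as delicate are, as written, gaps.

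First, the determination of the twisting class $d$. Matching $q$-expansions against $j=J(x,y)$ presupposes that you already have a generator of the degree-$110$ field $\calF_{11}^{G_4}$ with a computable $q$-expansion, which is precisely the hard content; the ``bookkeeping with $\theta_{11}$ and the Weil pairing'' only tells you the abstract character, not the rational function $d(x,y)$. The paper avoids this entirely by importing the explicit model of $X_{ns}(11)$ from Dose--Fern\'andez--Gonz\'alez--Schoof, namely $u^2=-(4x^3+7x^2-6x+19)$ over $\calE$, and then setting $v=u/\sqrt{-11}\in\calF_{11}$. The subgroup fixing $\QQ(x,y,v)$ has index $2$ in $N_{ns}(11)$ and full determinant (since $\QQ$ is algebraically closed in that field), and the only two such subgroups are $C_{ns}(11)$ and $G_4$; as $C_{ns}(11)$ corresponds to $\QQ(x,y,u)$, the twist must be $X_{G_4}$. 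So the class $d$ is pinned down by pure group theory once one known model is in hand — no $q$-expansion work is needed, but some explicit input on $X_{ns}(11)$ is.

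Second, the endgame. Your claim that $P\mapsto d(P)\bmod(\QQ^\times)^2$ ``behaves multiplicatively under the group law'' is false here: the divisor of $4x^3+7x^2-6x+19$ on $\calE$ consists of six simple zeros (the cover $X_{ns}(11)\to X_{ns}^+(11)$ is ramified at six points), so it is not twice a divisor and the map is not a homomorphism; the $2$-descent formalism you invoke does not apply. A local obstruction is also not what the paper finds. Instead one eliminates $y$: any rational point of $X_{G_4}$ would give a rational point on the auxiliary genus-$1$ curve $v^2=11(4x^3+7x^2-6x+19)$, which is an elliptic curve (it has the rational point at infinity) whose Mordell--Weil group is trivial; hence the only rational points lie at infinity, and a short homogenization argument rules those out because $44$ is not a square. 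So the correct finish is a rank-$0$ computation on this auxiliary elliptic curve, not a descent on $\calE$ or a local argument.
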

\begin{proof}
Define the modular curve $X_{ns}(11):= X_{C_{ns}(11)}$.   Proposition~1 of \cite{DoseFernandezGonzalezSchoof} shows that $X_{ns}(11)$ can be defined by the equations $y^2+y = x^3-x^2-7x+10$ and $u^2= - (4x^3+7x^2-6x+19)$, where $K=\QQ(x,y)$.  

Define the field $L:=K(v)$ with $v=u/\sqrt{-11}$.  We have $L \subseteq \calF_{11}$ since $\sqrt{-11}\in \QQ(\zeta_{11})$.  Let $G$ be the subgroup of $\GL_2(\FF_{11})$ that fixes $L$; it is an index $2$ subgroup of $G_3$ since $L/K$ has degree $2$.   The field $\QQ$ is algebraically closed in $L$ since it is algebraically closed in $K$ and $L/K$ is a geometric extension.  Therefore, $\det(G)=\FF_{11}^\times$.    There are only two index $2$ subgroups of $G_3$ with full determinant; they are $G_4$ and $C_{ns}(11)$.    The group $G$ is thus $G_4$ since $C_{ns}(11)$ corresponds to the field $K(u)$.

Therefore, $X_{G_4}$ has function field $\QQ(x,y,v)$ where $y^2+y = x^3-x^2-7x+10$ and $v^2= 11 (4x^3+7x^2-6x+19)$.   We now homogenize our equations:
\begin{equation} \label{E:homogenous XG4} 
y^2z+yz^2 = x^3-x^2z-7xz^2+10z^3,\quad 11 v^2z= (4x^3+7x^2z-6xz^2+19z^3).
\end{equation}
Combining the two equations (\ref{E:homogenous XG4}) to remove the $x^3$ term, we find that $11 v^2 z = ( 4y^2z +4yz^2 +11 x^2 z + 22 x z^2 -21z^3)$.   Factoring off $z$, we deduce that the following equations give a model of $X_{G_4}$ in $\PP^3_\QQ$:  
\begin{equation} \label{E:homogenous XG4 2} 
y^2z+yz^2 = x^3-x^2z-7xz^2+10z^3,\quad 11 v^2 = ( 4y^2 +4yz +11 x^2  + 22 x z -21z^2).
\end{equation}
Suppose $(x,y,z,v) \in \PP^3(\QQ)$ is a solution to (\ref{E:homogenous XG4 2}).  If $z=0$, then we have $0=x^3$ and $11 v^2= 4 y^2$, which is impossible since $44$ is not a square in $\QQ$.   So assume that $z=1$.  We can then recover the equation $v^2= 11 (4x^3+7x^2-6x+19)$ which has no solutions $(x,v) \in \QQ^2$; it defines an elliptic curve and a computation shows that its only rational point is the point at $\infty$.    Therefore, $X_{G_4}(\QQ)=\emptyset$.
\end{proof}

Let $E/\QQ$ be a non-CM elliptic curve for which $\rho_{E,11}(\Gal_\QQ)$ is conjugate to a subgroup of $G_3$.  Suppose that $\rho_{E,11}(\Gal_\QQ)$ is conjugate to a subgroup of $G_3$.  The group $G_3$ has no index $2$ subgroups $H$ that satisfy $\pm H = G_3$.   Therefore, $\rho_{E,11}(\Gal_\QQ)$ is conjugate to a subgroup of a maximal applicable subgroup of $G_3$. Up to conjugacy, there are two maximal applicable subgroups of $G_3$; one is $G_4$ and the other is a subgroup $G_5$ of index $3$ in $G_3$.   The image $\bbar{G}_5$ of $G_5$ in $\PGL_2(\FF_{11})$ has order $8$ and is hence a $2$-Sylow subgroup of $\PGL_2(\FF_{11})$.   Therefore, $\bbar{G}_5$ lies in a subgroup of $\PGL_2(\FF_{11})$ that is isomorphic to $\mathfrak{S}_4$ and hence $G_5$ is conjugate to a subgroup of $H_{\mathfrak{S}_4}$.   However, we saw in \S\ref{SS:exceptional 11} that $\rho_{E,11}(\Gal_\QQ)$ cannot be conjugate to a subgroup of $H_{\mathfrak{S}_4}$.   This implies that $\rho_{E,11}(\Gal_\QQ)$ is conjugate to a subgroup of $G_4$ which is impossible by Lemma~\ref{L:XG4}.  Therefore, $\rho_{E,11}(\Gal_\QQ)$ must be conjugate to $G_3$.

\subsubsection{Borel case} \label{SS:11 borel}
 
The modular curve $X_{B(11)}$ is known to have exactly three rational points that are not cusps; they lie above the $j$-invariants $-2^{15}$, $-11^2$ and $-11\cdot 131^3$, cf.~\cite{MR0376533}*{p.~79}.  An elliptic curve with $j$-invariant $-2^{15}$ has CM, so we need only consider the other two.\\

Consider the elliptic curve $E/\QQ$ defined by $y^2+xy+y= x^3+x^2-305x+7888$; it has $j$-invariant $-11^2$ and conductor $11^2$.   The division polynomial at $11$ of $E$ factors as the product of the irreducible polynomial $f(x)=x^5 - 129x^4 + 800x^3 + 81847x^2 - 421871x - 4132831$ and an irreducible polynomial $g(x)$ of degree $55$.  Since $11$ divides the degree of $g(x)$, we find that $\rho_{E,11}(\Gal_\QQ)$ contains an element of order $11$.    Therefore, there are unique characters $\chi_1,\chi_2\colon \Gal_\QQ\to \FF_{11}^\times$ such that with respect to an appropriate change of basis we have
\begin{equation} \label{E:X011 form}
\rho_{E,11}(\sigma) = \left(\begin{smallmatrix}\chi_1(\sigma) & * \\0 & \chi_2(\sigma) \end{smallmatrix}\right).
\end{equation}
We have $\chi_1\chi_2=\omega$ where $\omega\colon \Gal_\QQ \to \FF_{11}^\times$ is the character describing the Galois action on the $11$-th roots of unity (we have $\omega(p)\equiv p \pmod{11}$ for primes $p\neq 11$).  The characters $\chi_1$ and $\chi_2$ are unramified at primes $p\nmid 11$, so $\chi_1= \omega^a$ and $\chi_2=\omega^{11-a}$ for a unique integer $0\leq a <10$.  Let $w\in \Qbar$ be a fixed root of $f(x)$.  One can show that 
\[
P=\big(w,-(w^4 - 79w^3 - 3150w^2 + 12193w+1520110)/11^4\big)
\]
is an $11$-torsion point of $E(\Qbar)$.   The field $\QQ(w)$ is a Galois extension of $\QQ$ and that the group generated by $P$ is stable under the action of $\Gal_\QQ$.   We thus have $\sigma(P)=\chi_1(\sigma)\cdot P$ for all $\sigma\in \Gal_\QQ$, and hence $\chi_1(\Gal_\QQ)$ is a group of order $[\QQ(w):\QQ]=5$.    

We have $a_2(E)=-1$, so the roots of the polynomial $\det(xI -\rho_{E,11}(\Frob_2))\equiv x^2-(-1)x+2 \pmod{11}$ are $4=2^2$ and $6\equiv 2^9 \pmod{11}$.  Since $\chi_1(\Frob_2)\equiv 2^a$ and $\chi(\Frob_2)\equiv 2^{11-a}$ are the roots of $\det(xI -\rho_{E,11}(\Frob_2))$ and $2$ is a primitive root modulo $11$, we have $a\in \{2,9\}$ and hence $\{\chi_1,\chi_2\}=\{ \omega^2,\omega^9\}$.  Since $\chi_1(\Gal_\QQ)$ has cardinality 5, we have $\chi_1=\omega^2$ and $\chi_2=\omega^9$.   Since $2$ is a primitive root modulo $11$, the group $\rho_{E,11}(\Gal_\QQ)$ is generated by $\big(\begin{smallmatrix}2^2 & 0 \\0 & 2^9 \end{smallmatrix}\big)=\left(\begin{smallmatrix}4 & 0 \\0 & 6 \end{smallmatrix}\right)$ and $\left(\begin{smallmatrix}1 & 1 \\0 & 1 \end{smallmatrix}\right)$, i.e., it equals $H_{1,1}$.   In particular, $\pm \rho_{E,11}(\Gal_\QQ) = G_1$.\\

Consider the elliptic curve $E/\QQ$ defined by $y^2+xy= x^3+x^2-3632x+82757$; it has $j$-invariant $-11\cdot 131^3$ and conductor $11^2$.   The division polynomial at $11$ of $E$ factors as the product of the irreducible polynomial $f(x)=x^5 - 129x^4 + 4793x^3 + 9973x^2 - 3694800x + 52660939$ and an irreducible polynomial $g(x)$ of degree $55$.  Since $11$ divides the degree of $g(x)$, we find that $\rho_{E,11}(\Gal_\QQ)$ contains an element of order $11$.    Therefore, there are unique characters $\chi_1,\chi_2\colon \Gal_\QQ\to \FF_{11}^\times$ such that with respect to an appropriate change of basis we have (\ref{E:X011 form}).
The characters $\chi_1$ and $\chi_2$ are unramified at primes $p\nmid 11$ and $\chi_1\chi_2=\omega$, so $\chi_1= \omega^a$ and $\chi_2=\omega^{11-a}$ for a unique integer $a\in \{0,1,\ldots, 9\}$.     Let $w\in \Qbar$ be a fixed root of $f(x)$.  One can show that 
\[
P=\big(w, (w^4 - 79w^3 + 843w^2 + 45468w - 722625)/11^3\big)
\]
is an $11$-torsion point of $E(\Qbar)$.   The field $\QQ(w)$ is a Galois extension of $\QQ$ and that the group generated by $P$ is stable under the action of $\Gal_\QQ$.   We thus have $\sigma(P)=\chi_1(\sigma)\cdot P$ for all $\sigma\in \Gal_\QQ$, and hence $\chi_1(\Gal_\QQ)$ is a group of order $[\QQ(w):\QQ]=5$.  
  We have $a_2(E)=1$, so the roots of the polynomial $\det(xI -\rho_{E,11}(\Frob_2))\equiv x^2-1\cdot x+2 \pmod{11}$ are $5\equiv 2^4$ and $7\equiv 2^7 \pmod{11}$.  Since $\chi_1(\Frob_2)\equiv 2^a$ and $\chi(\Frob_2)\equiv 2^{11-a}$ are the roots of $\det(xI -\rho_{E,11}(\Frob_2))$ and $2$ is a primitive root modulo 11, we have $a\in \{4,7\}$ and hence $\{\chi_1,\chi_2\}=\{ \omega^4,\omega^7\}$.  Since $\chi_1(\Gal_\QQ)$ has cardinality 5, we have $\chi_1=\omega^4$ and $\chi_2=\omega^7$.   Since $2$ is a primitive root modulo $11$, the group $\rho_{E,11}(\Gal_\QQ)$ is generated by $\big(\begin{smallmatrix}2^4 & 0 \\0 & 2^7 \end{smallmatrix}\big)=\left(\begin{smallmatrix}5 & 0 \\0 & 7 \end{smallmatrix}\right)$ and $\left(\begin{smallmatrix}1 & 1 \\0 & 1 \end{smallmatrix}\right)$, i.e., it equals $H_{2,1}$.   In particular, $\pm \rho_{E,11}(\Gal_\QQ) = G_2$.
%\newpage

\subsubsection{Polynomials for $X_{ns}^+(11)$} \label{S:ns section}

This subsection is dedicated to sketching Remark~\ref{R:ns computation} and making the polynomials explicit; fix notation as in \S\ref{SS:applicable 11}.  Define the polynomials:
{
\smaller
\begin{align*}
A(x)&=(x^5 - 9x^4 + 17x^3 + 20x^2 - 73x + 43)^{11},\\
B(x) &=-(x^2+3x-6)^3  \big(108000  x^{49} + 23793840  x^{48} - 413223722  x^{47} - 5377010368  x^{46} + 230799738529  x^{45} \\
& - 3137869050351  x^{44} + 23205911712335  x^{43} - 90936268647246  x^{42} + 33563647471596  x^{41} \\
&+ 1631415220074871  x^{40} - 7744726079195413  x^{39} - 3218815397602111  x^{38} + 236712051437217644  x^{37} \\
&- 1686428698022253344  x^{36} + 7984804002023063554  x^{35} - 30444784135263860996  x^{34}\\
& + 96849826504401032248  x^{33} - 232064394883539673213  x^{32} + 210175535413395353857  x^{31} \\
&+ 1609695806324946484826  x^{30} - 11768533689837648360109  x^{29} + 48291196122826259771817  x^{28} \\
&- 143943931899306373170309  x^{27} + 315827025781563232420857  x^{26} - 421596979720485992629121  x^{25} \\
&- 234929885880162547645306  x^{24} + 3668241437553022801950917  x^{23} - 14221091463553801024770599  x^{22}\\
& + 39148264563215734730610917  x^{21} - 87534472061810348609315974  x^{20}\\ 
& + 166474240219619575379485393  x^{19} - 275040771573054834247036345  x^{18} \\
&+ 399144725377223909937142938  x^{17} - 511840960382358144595839458  x^{16} \\ 
&+ 581656165535334214665717816  x^{15} - 586206578096981243980668654  x^{14} \\
&+ 523465655841901079370457175  x^{13} - 413200824632802503354807972  x^{12}\\
& + 287270832775316643952335709  x^{11} - 175049577131269087795781453  x^{10} \\ 
&+ 92916572268973769104815620  x^9 - 42636417323385892254033027  x^8 \\
&+ 16754292456737738144357709  x^7 - 5570911068111617263502302  x^6 + 1542648801995330874184236  x^5\\
& - 347819053424928336793068  x^4 + 61683475328903338239178  x^3 - 8117056250720937228985  x^2\\
& + 708318740340941449799  x - 30857360406231018655\big),
\\
C(x)&=(4x-5)(x^2+3x-6)^6(9x^2-28x+23)^3(x^4-5x^3+74x^2-245x+223)^3\\
&\quad\cdot(4x^4-9x^3-x^2+21x-32)^3(25x^4-114x^3+167x^2-86x+20)^3.
\end{align*}
}
\begin{prop}
For $j \in \QQ$, we have $J(P)=j$ for some point $P \in \calE(\QQ)-\{\OO\}$ if and only if $A(x) j^2 + B(x) j +C(x) \in \QQ[x]$ has a rational root.
\end{prop}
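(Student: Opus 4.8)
The plan is to identify $\Phi(x,j):=A(x)j^2+B(x)j+C(x)$ with the polynomial obtained by clearing denominators in the minimal polynomial of $J$ over $\QQ(x)$, where $x$ denotes the coordinate of the degree-$2$ cover $\calE\to\PP^1_\QQ$, $(x,y)\mapsto x$. Here $\QQ(\calE)=\QQ(x)[y]/\bigl(y^2+y-(x^3-x^2-7x+10)\bigr)$ is a quadratic extension of $\QQ(x)$ whose nontrivial automorphism $\iota$ sends $y$ to $-1-y$ (it is $P\mapsto-P$ on $\calE$). Reducing $J$ modulo the Weierstrass relation, I would write $J=\alpha(x)+\beta(x)\,y$ with $\alpha,\beta\in\QQ(x)$ and $\beta\neq0$ (since $\deg J=[\GL_2(\FF_{11}):N_{ns}(11)]=55>2$). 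Then
\[
T:=J+\iota J=2\alpha-\beta\in\QQ(x),\qquad N:=J\cdot\iota J=\alpha^2-\alpha\beta-\beta^2\,(x^3-x^2-7x+10)\in\QQ(x),
\]
and $S^2-T\,S+N$ is the minimal polynomial of $J$ over $\QQ(x)$; one takes $A(x)$ to be its common denominator and $B,C\in\QQ[x]$ so that $A(x)\bigl(S^2-TS+N\bigr)=A(x)S^2+B(x)S+C(x)$.

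First I would check that $A,B,C$ agree with the polynomials defined above. Writing $N_{\calE/x}$ for the norm $\QQ(\calE)\to\QQ(x)$, multiplicativity gives $A=\pm\,N_{\calE/x}(f_6)^{11}$ and $C=\pm\,N_{\calE/x}(f_1f_2f_3f_4)^3/N_{\calE/x}(f_5)^2$, with $B$ the numerator of $-A\,T$; a direct computation (which I would carry out in \texttt{Magma}) gives $N_{\calE/x}(f_6)=-(x^5-9x^4+17x^3+20x^2-73x+43)$, which matches $A(x)$, together with $N_{\calE/x}(f_1)=(x^2+3x-6)^2$ (as $f_1$ is independent of $y$) and the factorizations of $N_{\calE/x}(f_2),N_{\calE/x}(f_3),N_{\calE/x}(f_4),N_{\calE/x}(f_5)$ needed to recognize $B$ and $C$. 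In particular the factor $(x^2+3x-6)^3$ of $B$ reflects that $f_1^3$ divides the numerators of both $J$ and $\iota J$. Producing these polynomials from scratch is the bulk of the work, but verifying the identity once the answer is known is routine.

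Next I would deduce the equivalence by examining the two points of $\calE$ over a given $x$-value. For the forward direction: if $P=(x_0,y_0)\in\calE(\QQ)-\{\OO\}$ has $J(P)=j\in\QQ$, then $x_0\in\QQ$ and, provided $x_0$ is not a pole of $T$ or $N$ (equivalently $A(x_0)\neq0$), the roots of $S^2-T(x_0)S+N(x_0)$ are $J(x_0,y_0)$ and $J(x_0,-1-y_0)$, whence $\Phi(x_0,j)=A(x_0)\bigl(j^2-T(x_0)j+N(x_0)\bigr)=0$. For the converse: if $\Phi(x_0,j)=0$ with $x_0,j\in\QQ$ and $A(x_0)\neq0$, then $j\in\{J(x_0,y_0),\,J(x_0,-1-y_0)\}$, where $(x_0,y_0)$ and $(x_0,-1-y_0)$ are the two points of $\calE$ over $x_0$, defined over some extension $K/\QQ$ of degree at most $2$. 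If $K=\QQ$ we are done, taking $P=(x_0,y_0)\in\calE(\QQ)-\{\OO\}$. If $K\neq\QQ$, then $y_0$ and $-1-y_0$ are $\QQ$-conjugate, so applying the nontrivial element of $\Gal(K/\QQ)$ to $j=J(x_0,y_0)\in\QQ$ forces $J(x_0,y_0)=J(x_0,-1-y_0)=j$; since $J-\iota J=\beta(x)\,(2y+1)$ and $2y_0+1\neq0$, this gives $\beta(x_0)=0$, which holds for only finitely many $x_0\in\QQ$.

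I expect the main obstacle to be the finitely many exceptional rational values of $x_0$ left over: the rational zeros of $A(x)$ (where the partner point $(x_0,-1-y_0)$ is a rational pole of $J$) and the rational zeros of the numerator of $\beta(x)$ (where $J$ and $\iota J$ agree over $x_0$). For each such $x_0$ one must verify directly — using $\calE(\QQ)=\ZZ\cdot(4,5)$ and the explicit expression for $J$ — that the resulting value of $j$ respects the claimed equivalence, i.e.\ that it lies in $J\bigl(\calE(\QQ)-\{\OO\}\bigr)$ precisely when $\Phi(x_0,j)=0$. In practice these fibers turn out to be cusps, CM points, or $j$-invariants already attained at other rational points of $\calE$, so the equivalence holds with no exceptions; carrying out this bookkeeping, and confirming the degree-$49$ factor of $B(x)$ and the remaining factorizations in $C(x)$, is where the computational weight of the proof lies.
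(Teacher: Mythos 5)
Your proposal follows essentially the same route as the paper: both build $A,B,C$ as the (denominator-cleared) coefficients of the quadratic that $J$ satisfies over $\QQ(x)$, i.e.\ via the trace and norm of $J = \alpha(x)+\beta(x)y$ under $y\mapsto -1-y$, and the forward direction is identical. Your converse via Galois descent on the fibre of $x$ is just a repackaging of the paper's discriminant computation $B^2-4AC = D(x)^2(x^3-x^2-7x+\tfrac{41}{4})$, since that discriminant is (up to a square constant) $a(x)^2(2y+1)^2$ with $a$ the numerator of $\beta$; the exceptional set you defer to a case check is in fact empty, because $A(x)$ and $a(x)$ have no rational roots — which is precisely the verification the paper makes at the corresponding step.
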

\begin{proof}
Take $(x,y) \in \calE-\{\OO\}$.  Using the equation $y^2+y = x^3-x^2-7x+10$, a direct computation shows that $J(x,y) A(x) = a(x)y+b(x)$ for unique $a,b\in \QQ[x]$.   Multiplying $y^2+y = x^3-x^2-7x+10$ by $a^2$, we deduce that 
$(J A -b)^2 + a(JA-b) - a^2(x^3-x^2-7x+10)=0$.   Therefore, $A^2 J^2 +  (-2b+a)A J + b^2-ba-a^2(x^3-x^2-7x+10) = 0$.   Our polynomials $B$ and $C$ satisfy $B=-2b+a$ and $C=(b^2-ba-a^2(x^3-x^2-7x+10))/A$.   We thus have 
\begin{equation} \label{E:J quadratic}
A(x) J(x,y)^2 + B(x) J(x,y) + C(x) = 0
\end{equation}
for all $(x,y)\in \calE-\{\OO\}$.

First suppose that $j=J(x_0,y_0)$ for some $(x_0,y_0) \in \calE(\QQ)-\{\OO\}$.   Then $0 = A(x_0) J(x_0,y_0)^2 + B(x_0) J(x_0,y_0) + C(x_0) = A(x_0) j^2 + B(x_0) j +C(x_0)$ and hence $A(x) j^2 + B(x) j +C(x)$ has a rational root.  

Now fix $j\in \QQ$ and suppose that there is an $x_0\in\QQ$ such that $A(x_0) j^2 + B(x_0) j +C(x_0)=0$.   Define $\Delta(x):=B(x)^2-4A(x) C(x)$.   A computation shows that $\Delta(x) = D(x)^2 (x^3 - x^2 - 7x + 41/4)$ for a polynomial $D\in \QQ[x]$ that has no rational roots.   The rational number $\Delta(x_0)=D(x_0)^2 (x_0^3 - x_0^2 - 7x_0+ 41/4)$ is a square since $j$ is a root of $A(x_0) X^2 + B(x_0) X + C(x_0) \in \QQ[X]$.   Therefore, $v^2 = x_0^3 - x_0^2 - 7x_0 + 41/4$ for some $v\in \QQ$.  With $y_0 = v -1/2$, we have $y_0^2+ y_0 = x_0^3 - x_0^2 - 7x_0 + 10$ and hence $P:=(x_0,y_0)$ is a point in $\calE(\QQ)-\{\OO\}$.   We could have chose $v$ with a different sign, so $P':=(x_0, -v -1/2 ) = (x_0, -y_0-1)$ also belongs to $\calE(\QQ)-\{\OO\}$.

We claim that $J(P)\neq J(P')$.  Suppose that they are in fact equal.   Using that $J(x,y) A(x) = a(x)y+b(x)$, we find that $a(x_0) y_0 = a(x_0) (-y_0 - 1)$.  Since $a(x)$ has no rational roots, we must have $y_0 = -1/2$ and hence $v=0$.   However, this is impossible since $x^3 - x^2 - 7x + 41/4$ has no rational roots, so the claim follows.  From (\ref{E:J quadratic}), we find that $J(P)$ and $J(P')$ are distinct roots of $A(x_0) X^2 + B(x_0) X + C(x_0)$.  Since $j$ is also a root of this quadratic polynomial, we deduce that $j= J(P)$ or $j=J(P')$.
\end{proof}

\subsection{$\ell=13$}
We shall prove parts (\ref{T:main 13 a}) and (\ref{T:main 13 b}) of Theorem~\ref{T:main 13} (part (\ref{T:main 13 d}) was explained in the introduction); so we will focus on $B(13)$ and its subgroups.  We first rule out subgroups of $C_s(13)$.

\begin{lemma}
There are no non-CM elliptic curves $E/\QQ$ for which $\rho_{E,13}(\Gal_\QQ)$ is conjugate in $\GL_2(\FF_{13})$ to a subgroup of $C_s(13)$.
\end{lemma}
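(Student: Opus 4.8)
The plan is to identify the modular curve $X_{C_s(13)}$ with $X_0(169)$ and then quote the known determination of the rational points of the latter. First I would note that, since $E$ is non-CM, its $j$-invariant lies outside $\{0,1728\}$, and that $C_s(13)$ contains $-I$ and satisfies $\det(C_s(13))=\FF_{13}^\times$, so Proposition~\ref{P:main moduli} is applicable with $G=C_s(13)$. Were $\rho_{E,13}(\Gal_\QQ)$ conjugate in $\GL_2(\FF_{13})$ to a subgroup of $C_s(13)$, then $j_E$ would lie in $\pi_{C_s(13)}(X_{C_s(13)}(\QQ))$; writing $X_s(13):=X_{C_s(13)}$, the curve $X_s(13)$ would thus possess a rational point $P$ with $\pi_{C_s(13)}(P)=j_E\neq\infty$, and such a $P$ is not a cusp.

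Next I would invoke Lemma~\ref{L:borel to split}, which provides an isomorphism $X_s(13)\cong X_0(169)$ over $\QQ$ carrying cusps bijectively to cusps. A non-cuspidal rational point of $X_s(13)$ would therefore yield a non-cuspidal rational point of $X_0(169)$. However, $X_0(169)$ has no non-cuspidal rational points; equivalently, no elliptic curve over $\QQ$ admits a rational cyclic $169$-isogeny. This is part of the classification of the possible degrees of rational cyclic isogenies of elliptic curves over $\QQ$ --- due to Mazur in the prime-degree case and completed by Kenku for the remaining composite degrees --- and $169$ is not among the finitely many degrees that occur. This contradiction proves the lemma.

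The only substantive input is the fact that $X_0(169)(\QQ)$ consists of cusps alone; the rest is routine bookkeeping with Proposition~\ref{P:main moduli} and Lemma~\ref{L:borel to split}, so the step I would flag as the crux is simply locating and correctly citing Kenku's result on $X_0(169)$. A conceivable alternative would be to observe that $\rho_{E,13}(\Gal_\QQ)\subseteq C_s(13)$ forces $\QQ(E[13])/\QQ$ to be abelian and then to appeal to bounds on the integers $n$ for which $\QQ(E[n])/\QQ$ can be abelian; but passing through $X_0(169)$ is the most natural route given the machinery already in place in this section.
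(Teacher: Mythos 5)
Your proof is correct and follows essentially the same route as the paper: identify $X_{C_s(13)}$ with $X_0(169)$ via Lemma~\ref{L:borel to split} (which preserves cusps) and invoke Kenku's theorem that $X_0(169)(\QQ)$ consists only of cusps. The extra bookkeeping with Proposition~\ref{P:main moduli} is fine but the paper states the conclusion directly from the cusp correspondence.
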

\begin{proof}
Kenku has proved that the only rational points of $X_0(13^2)$ are cusps, cf.~\cite{MR588271,MR616547}.  By Lemma~\ref{L:borel to split}, we deduce that the only rational points of the modular curve $X_{C_s(13)}$ are cusps. 
\end{proof}

One can show that the applicable subgroups of $B(13)=G_6$ that are not subgroups of $C_s(13)$ are $G_1$, $G_2$, $G_3$, $G_4$, $G_5$, and $G_i \cap G_j$ with $i \in \{1,2\}$ and $j\in \{3,4,5\}$.   Note that these subgroups are normal in $B(13)$. \\

We now describe several modular function constructed by Lecacheux \cite{MR978099}*{p.56}.  Define 
\[
f(\tau)=\frac{\wp(\frac{1}{13};\tau) - \wp(\frac{2}{13}; \tau)}{\wp(\frac{1}{13};\tau) - \wp(\frac{3}{13}; \tau)} \quad \text{ and }\quad 
g(\tau)=\frac{\wp(\frac{1}{13};\tau) - \wp(\frac{2}{13}; \tau)}{\wp(\frac{1}{13};\tau) - \wp(\frac{5}{13}; \tau)}
\]
where $\wp(z;\tau)$ is the Weierstrass $\wp$-function at $z$ of the lattice $ \ZZ \tau + \ZZ\subseteq \CC$.  Define the functions 
\[
 h_5:= \frac{(g-1)(g(g-1)+1-f)}{(f-1)(f-g)} \quad \text{ and }\quad h_2:=\frac{f-1}{g-1}.
\]
The functions $h_5$ and $h_2$ belong to $\calF_{13}$ and satisfy $F_2(h_2) = F_5(h_5)$, where 
\[
F_2(t)=t+(t-1)/t-1/(t-1) -4=(t^3 - 4t^2 + t + 1)/(t^2 - t) \quad\text{ and }\quad F_5(t)=t -1/t -3=(t^2 - 3t - 1)/t;
\] 
this follows from \cite{MR978099}*{p.56--57} with $H=h_5$ and $h=h_2$.  

Let  $h_6$ be the function $F_2(h_2)=F_5(h_5)$; it is called $a-3$ in \cite{MR978099} and satisfies $J_6(h_6)=j$, cf.~\cite{MR978099}*{p.62}.  Since $J_2(t)=J_6(F_2(t))$ and $J_5(t)=J_6(F_5(t))$, we have $J_2(h_2)=j$ and $J_5(h_5)=j$.\\

Define $\alpha:= -\zeta_{13}^{11} - \zeta_{13}^{10} - \zeta_{13}^3 - \zeta_{13}^2 + 1$.  Define the rational functions 
\[
F_1(t)=13(t^2-t)/(t^3-4t^2+t+1)\quad \text{ and }\quad \phi_1(t)= ({\alpha t+1-\alpha})/({t -\alpha});
\]  
Define the modular function $h_1:=\phi_1(h_2) \in \calF_{13}$.  One can check that $F_1(\phi_1(t))=F_2(t)$ and hence $F_1(h_1)=F_2(h_2)=h_6$.  Since $J_1(t)=J_6(F_1(t))$,  we have $J_1(h_1)=j$.

Define $\beta:= \zeta_{13}^{11} + \zeta_{13}^{10} + \zeta_{13}^9 + \zeta_{13}^7 + \zeta_{13}^6 + \zeta_{13}^4 + \zeta_{13}^3 + \zeta_{13}^2 + 2$.  Define the rational functions 
\[
F_3(t)=(-5t^3 + 7t^2 + 8t - 5)/(t^3 - 4t^2 + t + 1) \quad \text{ and }\quad \phi_3(t)= (\beta t -1)/(t+ \beta-1).
\]  
Define the modular function $h_3:=\phi_3(h_2) \in \calF_{13}$.  One can check that $F_3(\phi_3(t))=F_2(t)$ and hence $F_3(h_3)=F_2(h_2)=h_6$.  Since $J_3(t)=J_6(F_3(t))$,  we have $J_3(h_3)=j$.

Define $\gamma=(1+\sqrt{13})/2$; it belongs to  $\QQ(\zeta_{13})$ and moreover equals $\gamma=-\zeta_{13}^{11} - \zeta_{13}^8 - \zeta_{13}^7 - \zeta_{13}^6 - \zeta_{13}^5 - \zeta_{13}^2$.   Define the rational functions
\[
F_4(t)=13t/(t^2 - 3t - 1)\quad \text{ and }\quad \phi_4(t)=((2-\gamma)t + 1)/(t -2+\gamma)).
\]
Define the modular function $h_4:=\phi_4(h_5) \in \calF_{13}$.  One can check that $F_4(\phi_4(t))=F_5(t)$ and hence $F_4(h_4)=F_5(h_5)=h_6$.  Since $J_4(t)=J_6(F_4(t))$,  we have $J_4(h_4)=j$.

For $1\leq i \leq 6$, let $H_i$ be the subgroup of $\GL_2(\FF_7)$ that fixes $h_i$.   We have shown that $J_i(h_i)=j$.  By Lemma~\ref{L:key}, we find that $H_i$ is an applicable subgroup and that the morphism $\pi_{H_i}\colon X_{H_i} \to  \PP^1_\QQ$ is described by the rational function $J_i(t)$.

\begin{lemma}
The groups $H_i$ and $G_i$ are conjugate in $\GL_2(\FF_{13})$ for all $1\leq i \leq 6$.
\end{lemma}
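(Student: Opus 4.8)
The plan is to follow the template of the $\ell=5$ and $\ell=7$ cases. By Lemma~\ref{L:key}, each $H_i$ is an applicable subgroup of $\GL_2(\FF_{13})$ and $\pi_{H_i}\colon X_{H_i}\to\PP^1_\QQ$ is described by $J_i(t)$, so $[\GL_2(\FF_{13}):H_i]$ equals $\deg J_i(t)$; a direct computation gives these degrees as $42,42,42,28,28,14$ for $i=1,\dots,6$, matching the indices of $G_1,\dots,G_6$. Each rational function $F_i$ occurring in the relations $h_6=F_1(h_1)=F_2(h_2)=F_3(h_3)=F_4(h_4)=F_5(h_5)$ lies in $\QQ(t)$, so $\QQ(h_6)\subseteq\QQ(h_i)$ and hence $H_i\subseteq H_6$ for every $i\le 6$. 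Since an applicable subgroup of $\GL_2(\FF_{13})$ is proper with full determinant, it is contained in one of the four maximal subgroups with full determinant, namely $B(13),N_s(13),N_{ns}(13),H_{\mathfrak{S}_4}(13)$, of indices $14,91,78,91$; hence an applicable subgroup of index $14$ must be a Borel, so $H_6$ is conjugate to $G_6=B(13)$ and every $H_i$ with $i\le 6$ is conjugate to an applicable subgroup of $B(13)$.

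It remains to separate the groups of equal index. By Proposition~\ref{P:main moduli} the set $\pi_{H_i}(X_{H_i}(\QQ))=J_i(\QQ\cup\{\infty\})$ depends only on the conjugacy class of $H_i$; checking that $J_1,J_2,J_3$ have pairwise distinct images on $\PP^1(\QQ)$, and likewise $J_4,J_5$, shows that $\{H_1,H_2,H_3\}$ is, up to conjugacy, exactly $\{G_1,G_2,G_3\}$ and $\{H_4,H_5\}$ exactly $\{G_4,G_5\}$, so only the labelling has to be pinned down. For this I would use the explicit shape of the $h_i$ together with Proposition~\ref{P:modular galois}(\ref{P:modular galois b}): one checks, from Lecacheux's $\wp$-quotient definitions of $f,g$ and the formulas $h_2=(f-1)/(g-1)$ and $h_5=(g-1)(g(g-1)+1-f)/((f-1)(f-g))$, that $h_2$ and $h_5$ have $q$-expansions with rational coefficients, so $\sigma_d$ fixes them for all $d$ and therefore $H_2$ and $H_5$ contain the full diagonal subgroup $\{\left(\begin{smallmatrix}1&0\\0&*\end{smallmatrix}\right)\}$. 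Among $G_1,G_2,G_3$ only $G_2$ contains that subgroup, and among $G_4,G_5$ only $G_5$ does, so $H_2$ is conjugate to $G_2$, $H_5$ to $G_5$, and $H_4$ to $G_4$ by elimination. For $h_1=\phi_1(h_2)$ and $h_3=\phi_3(h_2)$ the Möbius transformations $\phi_1,\phi_3$ have coefficients generating the cubic subfield of $\QQ(\zeta_{13})$ (the fixed field of the cubes in $\FF_{13}^\times$), so the same proposition gives $H_1\cap\{\left(\begin{smallmatrix}1&0\\0&*\end{smallmatrix}\right)\}=H_3\cap\{\left(\begin{smallmatrix}1&0\\0&*\end{smallmatrix}\right)\}=\{\left(\begin{smallmatrix}1&0\\0&d\end{smallmatrix}\right):d\text{ a cube}\}$; since this does not separate $G_1$ from $G_3$, I would additionally observe that $G_3$ contains all scalar matrices whereas $G_1$ does not, and determine which of $h_1,h_3$ is fixed by the scalar action of $\GL_2(\FF_{13})$ on $\calF_{13}$ --- or, equivalently and in keeping with the $\ell=5,7$ treatment, exhibit an explicit Weierstrass equation in Tate normal form of $j$-invariant $J_1(t)$ (resp.\ $J_3(t)$) whose $13$-division structure realizes the isogeny character forced by $G_1$ (resp.\ $G_3$).

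The main obstacle is this final labelling step for the three index-$42$ subgroups: $G_1,G_2,G_3$ are normal in $B(13)$ with isomorphic quotients of order $3$, so the intersection with the diagonal torus does not by itself tell $G_1$ and $G_3$ apart, and one must keep track of exactly which cubic twist of the cyclotomic character sits in each diagonal entry, supplementing the group theory with either the scalar-action computation or an explicit-curve input. The remaining ingredients --- the degree computations, the polynomial identities $F_i(h_i)=h_6$ and $F_i\circ\phi_i=F_2$ or $F_5$, and the distinctness of the images $J_i(\PP^1(\QQ))$ --- are routine.
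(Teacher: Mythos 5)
Your setup (degrees of the $J_i$ give the indices, $H_i\subseteq H_6\cong B(13)$, and the distinctness of the images $J_i(\PP^1(\QQ))$ reduces everything to a labelling problem within $\{G_1,G_2,G_3\}$ and $\{G_4,G_5\}$) matches the paper. The gap is in your labelling test. Knowing that $h_2$ has rational $q$-coefficients tells you that $H_2$ contains the \emph{literal} subgroup $D=\{\left(\begin{smallmatrix}1&0\\0&*\end{smallmatrix}\right)\}$, but containment of $D$ itself is not a conjugation invariant; the invariant statement is that $H_2$ contains \emph{some} $\GL_2(\FF_{13})$-conjugate of $D$, and that property does not single out $G_2$: the group $G_1=\{\left(\begin{smallmatrix}*&*\\0&b^3\end{smallmatrix}\right)\}$ contains $\{\left(\begin{smallmatrix}*&0\\0&1\end{smallmatrix}\right)\}$, which is conjugate to $D$ (concretely, $wG_1w^{-1}\supseteq D$ for $w=\left(\begin{smallmatrix}0&1\\1&0\end{smallmatrix}\right)$, so a conjugate of $G_1$ really can contain $D$). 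The same failure occurs for $G_4$ versus $G_5$. Even strengthening the test to "$H_2\supseteq\left(\begin{smallmatrix}1&0\\ *&*\end{smallmatrix}\right)$" (using integrality of the $q$-powers, as in the $\ell=7$ argument) does not help here, because $G_1$ contains $\{\left(\begin{smallmatrix}*&*\\0&1\end{smallmatrix}\right)\}$, a conjugate of that larger group as well. So your diagnostic cannot separate the transposed pairs $(G_1,G_2)$ and $(G_4,G_5)$, which is precisely where the content of the lemma lies. (Your test does validly separate $G_3$ from $G_1,G_2$, since $G_3$ contains no conjugate of $D$ at all; and your scalar-matrix criterion for $G_3$ is a legitimate conjugation-invariant alternative.)

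What is needed, and what the paper does, is arithmetic input that records \emph{which} diagonal character (the one on the Galois-stable line or the one on the quotient) is constrained. For $H_3$ the paper takes the curve $y^2=x^3-338x+2392$ with $j$-invariant $J_3(0)$ and computes that $\rho_{E,13}(\Frob_3)$ has eigenvalues $\pm 6\notin(\FF_{13}^\times)^3$, whereas every element of $G_1$ or $G_2$ has an eigenvalue in $(\FF_{13}^\times)^3$; this forces $H_3\cong G_3$. For $H_2$ and $H_5$ it exhibits explicit curves with $j$-invariants $J_2(2)$ and $J_5(2)$ possessing a $13$-torsion point $P$ whose $x$-coordinate lies in a quadratic field (resp.\ a cyclic cubic field), so that in a basis beginning with $P$ the image lands in $G_2$ (resp.\ $G_5$); $H_1\cong G_1$ and $H_4\cong G_4$ then follow by elimination. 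You do mention the explicit-curve fallback, but you deploy it only for the $G_1$/$G_3$ ambiguity — the one place your group-theoretic tests could actually be made to work — while leaving the genuinely problematic ambiguities $G_1$ vs $G_2$ and $G_4$ vs $G_5$ to the invalid $D$-containment argument.
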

\begin{proof}
The index of $H_6$ in $\GL_2(\FF_{13})$ is equal to $14$, i.e., the degree of $J_6$ as a morphism.   Therefore, $H_6$ must be conjugate to $B(13)$.    The index $[H_6:H_i]$ equals the degree of $F_i(t)$, and is thus $3$ if $i\in \{1,2,3\}$ and $2$ if $i\in \{4,5\}$.    

The groups $H_1$, $H_2$ and $H_3$ are not conjugate in $\GL_2(\FF_{13})$ since one can show that the images of $\PP^1(\QQ)=\QQ \cup\{\infty\}$ under $J_1$, $J_2$ and $J_3$ are distinct.    Therefore, $H_1$, $H_2$ and $H_3$ are conjugate to $G_1$, $G_2$ and $G_3$ which are the applicable subgroups of $B(13)$ of index $2$;  however, we still need to determine which group is conjugate to which.      

Let $E/\QQ$ be the elliptic curve defined by  $y^2=x^3-338x+2392$.  The group $\rho_{E,13}(\Gal_\QQ)$ is conjugate to a subgroup of $H_3$ since $j_E=J_3(0)$.   One can check that $E/\QQ$ has good reduction at $3$ and that $a_3(E)=0$.   Since $x^2-a_3(E)+3  \equiv (x-6)(x+6) \pmod{13}$, we deduce that the eigenvalues of the matrix $\rho_{E,13}(\Frob_3)$ are $6$ and $-6$.   For every matrix in $G_1$ or $G_2$ has an eigenvalue in $(\FF_{13}^\times)^3 = \{\pm 1, \pm 5\}$.    Since $6$ and $-6$ do not belong to $(\FF_{13}^\times)^3$, we deduce that $H_3$ is not conjugate to $G_1$ and $G_2$.   Therefore, $H_3$ is conjugate to $G_3$.

Let $E/\QQ$ be the elliptic curve defined by $y^2=x^3-2227x-59534$.   We have $j_E=J_2(2)$ and $j_E \notin J_1(\QQ\cup\{\infty\})$.   Therefore, $\rho_{E,13}(\Gal_\QQ)$ is conjugate to a subgroup of $H_2$ and not conjugate to a subgroup of $H_1$.  By computing the division polynomial of $E$ at the prime $13$, we find that $E$ has a point $P$ of order $13$ whose $x$-coordinate is $17+8\sqrt{17}$.   So with respect to a basis of $E[13]$ whose first element is $P$, we find that $\rho_{E,13}(\Gal_\QQ)$ is a subgroup of $G_2$.   Therefore, $H_2$ is conjugate to $G_2$, and hence $H_1$ is conjugate to $G_1$.

The groups $H_4$ and $H_5$ are not conjugate in $\GL_2(\FF_{13})$ since one can show that the images of $\PP^1(\QQ)=\QQ \cup\{\infty\}$ under $J_4$ and $J_5$ are distinct.    Therefore, $H_4$ and $H_5$ are conjugate to $G_4$ and $G_5$ which are the applicable subgroups of $B(13)$ of index $3$;  however, we still need to determine which group is conjugate to which.      

Let $E/\QQ$ be the elliptic curve defined by $y^2 = x^3 - 3024x - 69552$.    We have $j_E=J_5(2)$ and  $j_E\notin J_4(\QQ\cup\{\infty\})$. Therefore, $\rho_{E,13}(\Gal_\QQ)$ is conjugate to a subgroup of $H_5$ and not conjugate to a subgroup of $H_4$.   By computing the division polynomial of $E$ at the prime $13$, we find that $E$ has a point $P$ of order $13$ whose $x$-coordinate $w$ is a root of $x^3 - 3024x + 12096$.   The cubic extension $\QQ(w)$ of $\QQ$ is Galois, so with respect to a basis of $E[13]$ whose first element is $P$, we find that $\rho_{E,13}(\Gal_\QQ)$ is a subgroup of $G_5$.   Therefore, $H_5$ is conjugate to $G_5$, and hence $H_4$ is conjugate to $G_4$.  
\end{proof}

We have thus completed the proof of Theorem~\ref{T:main 13}(\ref{T:main 13 b}); we can ignore $t=\infty$ since $J_i(\infty)=\infty$ for $i\neq 3$ and $J_3(\infty)=J_3(0)$.  If $H$ is a proper subgroup of $G_i$ satisfying $\pm H=G_i$, then one can show that $i\in \{4,5\}$ and $H$ is one of the groups $H_{i,j}$.\\

To complete the proof of Theorem~\ref{T:main 13}(\ref{T:main 13 a}), we need only show that the modular curves $X_{G_i\cap G_j}$, with fixed $i\in \{1,2\}$ and $j \in \{3,4,5\}$, have no rational points other than cusps.   It suffices to prove the same thing for the modular curves $X_{H_i\cap H_j}$.

 The function field of $X_{H_i\cap H_j}$ is $\QQ(h_i,h_j)$ and the generators $h_i$ and $h_j$ satisfy the relation $F_i(h_i)=h_6=F_j(h_j)$.   The smooth projective (and geometrically irreducible) curve over $\QQ$ arising from the equation $F_i(x)=F_j(y)$ is thus a model of $X_{H_i\cap H_j}$.  

The following \texttt{Magma} code shows that if $(x,y) \in \QQ^2$ is a solution of $F_i(x)=F_j(y)$ (where we say that both sides equal $\infty$ if the denominators vanish), then $y=0$.   The code considers the projective (and possibly singular) curve $C_{i,j}$ in $\PP^2_\QQ$ defined by the affine equation $F_i(x)=F_j(y)$ (we first clear denominators and homogenize).  We then find a genus 2 curve $C$ that is birational with $C_{i,j}$ and is defined by some Weierstrass equation $y^2=f(x)$ with $f(x)\in \QQ[x]$ a separable polynomial of degree 5 or 6.  We then check that the Jacobian $J$ of $C$ has rank $0$, equivalently, that $J(\QQ)$ is a finite group (\texttt{Magma} accomplishes this by computing the $2$-Selmer group of $J$).   Using that $J(\QQ)$ has rank $0$, the function \texttt{Chabauty0} finds all the rational points on $C$.   Using the birational isomorphism between $C$ and $C_{i,j}$, we can determine the rational points of $C$.
{\small
\begin{verbatimtab}
	K<t>:=FunctionField(Rationals());
	F:=[13*(t^2-t)/(t^3-4*t^2+t+1), (t^3-4*t^2+t+1)/(t^2-t), 
	    (-5*t^3+7*t^2+8*t-5)/(t^3-4*t^2+t+1), 13*t/(t^2-3*t-1), (t^2-3*t-1)/t ];
	P2<x,y,z>:=ProjectiveSpace(Rationals(),2);
	for i in [1,2,3], j in [4,5] do
	    f:=Numerator(Evaluate(F[i],x/z)- Evaluate(F[j],y/z));
	    while Evaluate(f,z,0) eq 0 do  f:= f div z; end while;
	    C0:=Curve(P2,f);    
	    b,C1,f1:=IsHyperelliptic(C0); C2,f2:=SimplifiedModel(C1);
	    Jac:=Jacobian(C2);   RankBound(Jac) eq 0;
	    S:=Chabauty0(Jac);
	    b,g1:=IsInvertible(f1); b,g2:=IsInvertible(f2);
	    T:=g1(g2(S) join SingularPoints(C1)) join SingularPoints(C0); 
	    {P: P in T | P[2] ne 0 and P[3] ne 0} eq {};
	end for;
\end{verbatimtab}
}
We find that if $F_i(x)=F_i(y)$ for some $x,y\in \QQ\cup\{\infty\}$, then $y=0$ or $y=\infty$.   Thus the only rational points of $X_{H_i\cap H_j}$ are cusps since $J_j(0)=J_j(\infty)=\infty$ for $j\in\{4,5\}$.

\subsection{$\ell=17$}
We now prove Theorem~\ref{T:17-37}(\ref{T:17-37 i}).   Let $E/\QQ$ be the elliptic curve defined by the Weierstrass equation $y^2+xy+y=x^3-190891x -36002922$; it has $j$-invariant $-17\cdot 373^3/2^{17}$ and conductor $2\cdot 5^2\cdot 17^2$.    The division polynomial of $E$ at $17$ factors as a product of $f(x)=x^4 + 482x^3 + 1144x^2 - 15809842x - 958623689$ with irreducible polynomials of degree $4$ and $8\cdot 17$.  Fix a point $P \in E(\Qbar)$ whose $x$-coordinate $w$ is a root of $f(x)$; it is a $17$-torsion point.  Let $C$ be the cyclic group of order $17$ generated by $P$; it is stable under the $\Gal_\QQ$ action.   Let $\chi_1\colon \Gal_\QQ\to \FF_{17}^\times$ be the homomorphism such that $\sigma(P)=\chi_1(\sigma)\cdot P$ for $\sigma\in \Gal_\QQ$.   One can show that the degree $4$ extension $\QQ(w)/\QQ$ is Galois, so $\chi_1(\Gal_\QQ)$ has cardinality $4$ or $8$.  There is a second character $\chi_2\colon \Gal_\QQ\to \FF_{17}^\times$ such that, with respect to an appropriate change of basis, we have
\[
\rho_{E,17}(\sigma) = \left(\begin{smallmatrix}\chi_1(\sigma) & * \\0 & \chi_2(\sigma) \end{smallmatrix}\right).
\]
The cardinality of $\rho_{E,17}(\Gal_\QQ)$ is divisible by $17$ since the division polynomial of $E$ at $17$ has an irreducible factor whose degree is divisible by $17$.    We have $\chi_1\chi_2=\omega$ where $\omega\colon \Gal_\QQ \to \FF_{17}^\times$ is the character describing the Galois action on the $17$-th roots of unity (we have $\omega(\Frob_p)=p$ for primes $p\neq 17$).  The characters $\chi_1$ and $\chi_2$ are unramified at primes $p\nmid 2\cdot 5\cdot 17$, so $\chi_1= \omega^a\chi$ and $\chi_2=\omega^{17-a}\chi^{-1}$ for some integer $0\leq a <16$ and some character $\chi\colon \Gal_\QQ \to \FF_{17}^\times$ unramified at $p\nmid 2\cdot 5$.

 Let $H_1$ and $H_2$ be the subgroup of $\GL_2(\FF_\ell)$ consisting of matrices of the form 
 \[
 \left(\begin{smallmatrix} \omega(\sigma)^a & 0 \\0 & \omega(\sigma)^{17-a} \end{smallmatrix}\right) \quad\quad \text{and}\quad\quad\left(\begin{smallmatrix} \chi_1(\sigma) & 0 \\0 & \chi_1(\sigma)^{-1} \end{smallmatrix}\right),
 \] 
 respectively, with $\sigma\in \Gal_\QQ$.   Since $\omega$ and $\chi$ are ramified at different primes, we find that the image of $\rho_{E,\ell}$ is generated by $\left(\begin{smallmatrix} 1 & 1 \\0 & 1 \end{smallmatrix}\right)$ and the groups $H_1$ and $H_2$.
 
 The character $\chi$ is unramified at $p\nmid 2\cdot 5$ and has image in a cyclic group of order $16$.   Therefore, $\chi$ must factor through the group $\Gal(\QQ(\zeta_{64},\zeta_5)/\QQ)$.  Since $641\equiv 1 \pmod{64\cdot 5}$, we have  $\chi(\Frob_{641})=1$.  Therefore, $\chi_1(\Frob_{641})=\omega(\Frob_{641})^a\cdot 1 \equiv 641^a \pmod{17}$ is a root of
\[
x^2-a_{641}(E)x+641 = x^2-(-9)x+641 \equiv (x-641^6)(x-641^{11}) \pmod{17},
\] 
and hence $a\in\{6,11\}$ since $641$ is a primitive root modulo $17$.    If $a=11$, then $\chi_1(\Gal_\QQ)=\FF_{17}^\times$ which is impossible since the cardinality of $\chi_1(\Gal_\QQ)$ is $4$ or $8$.  Therefore, $a=6$.    The group $H_1$ thus consists of matrices of the form $\left(\begin{smallmatrix} c^6 & 0 \\0 & c^{11} \end{smallmatrix}\right)$ with $c\in \FF_{17}^\times$, and in particular is generated by $\left(\begin{smallmatrix} 5^6 & 0 \\0 & 5^{11} \end{smallmatrix}\right)=\left(\begin{smallmatrix} 2 & 0 \\0 & 11 \end{smallmatrix}\right)$.

 To complete the proof that $\rho_{E,17}(\Gal_\QQ)$ is $G_1$, it suffices to show that $H_2$ is generated by $\left(\begin{smallmatrix} 4 & 0 \\0 & -4\end{smallmatrix}\right)$; equivalently, to show that the image of $\chi$ is cyclic of order $4$.  As noted earlier, $\chi$ factors through the group $\Gal(\QQ(\zeta_{64},\zeta_5)/\QQ)\cong (\ZZ/64\cdot 5\ZZ)^\times$.  One can then show that $\Gal(\QQ(\zeta_{64},\zeta_5)/\QQ)$ is generated by $\Frob_{103}$, $\Frob_{137}$ and $\Frob_{307}$.   The primes $p\in \{103, 137, 307\}$ were chosen to be congruent to $1$ modulo $17$, and hence $\chi(\Frob_p)=\chi_1(\Frob_p)$ is a root of $x^2-a_p(E)x+p$ modulo $17$.  It is then straightforward to check that $\chi(\Frob_{103})$, $\chi(\Frob_{137})$ and $\chi(\Frob_{307})$ all have order $4$.

 The elliptic curve $E'/\QQ$ defined by the Weierstrass equation $y^2 + xy + y = x^3 - 3041x + 64278$; it has $j$-invariant $- 17^2 \cdot 101^3/2$.   One can show that  $E/C$ is isomorphic to $E'$.     The group $\rho_{E',17}(\Gal_\QQ)$ is thus conjugate to $G_2$ in $\GL_2(\FF_{17})$.

Finally we note that $G_1$ and $G_2$ have no index $2$ subgroups that do not contain $-I$.

\subsection{$\ell=37$}
We now prove Theorem~\ref{T:17-37}(\ref{T:17-37 ii}).  Let $E/\QQ$ be the elliptic curve defined by the equation $y^2+xy+y=x^3+x^2-8x+6$; it has $j$-invariant $-7\cdot 11^3$ and conductor $5^2\cdot 7^2$.   The division polynomial of $E$ at $17$ factors as a product of $f(x):=x^6 - 15x^5 - 90x^4 - 50x^3 + 225x^2 + 125x - 125$ with   irreducible polynomials of degree $6$, $6$ and $18\cdot 37$.   Fix a point $P \in E(\Qbar)$ whose $x$-coordinate $w$ is a root of $f(x)$; it is a $37$-torsion point.  Let $C$ be the cyclic group of order $37$ generated by $P$; it is stable under the $\Gal_\QQ$ action.  

Let $\chi_1\colon \Gal_\QQ\to \FF_{37}^\times$ be the homomorphism such that $\sigma(P)=\chi_1(\sigma)\cdot P$ for $\sigma\in \Gal_\QQ$.   One can show that the degree $6$ extension $\QQ(w)/\QQ$ is Galois, so $\chi_1(\Gal_\QQ)$ has cardinality $6$ or $12$; in particular $\chi_1(\Gal_\QQ)$ is a subgroup of $(\FF_{37}^\times)^3$.  There is a second character $\chi_2\colon \Gal_\QQ\to \FF_{37}^\times$ such that, with respect to an appropriate change of basis, we have
\[
\rho_{E,37}(\sigma) = \left(\begin{smallmatrix}\chi_1(\sigma) & * \\0 & \chi_2(\sigma) \end{smallmatrix}\right).
\]
The cardinality of $\rho_{E,37}(\Gal_\QQ)$ is divisible by $37$  since the division polynomial of $E$ at $37$ has an irreducible factor whose degree is divisible by $37$.  So to prove that $\rho_{E,37}(\Gal_\QQ)=G_3$, it suffices to show that the homomorphism $\chi_1\times \chi_2 \colon \Gal_\QQ \to (\FF_{37}^\times)^3\times \FF_{37}^\times$ is surjective.

  The characters $\chi_1$ and $\chi_2$ are unramified at primes $p\nmid 5\cdot 7\cdot 37$.  By Proposition~11 of \cite{MR0387283}, we have $\{\chi_1,\chi_2\} = \{ \alpha, \alpha^{-1}\cdot \omega \}$ where $\alpha \colon \Gal_\QQ \to \FF_{37}^\times$ is  a character unramified at primes $p\nmid 5\cdot 7$ and  $\omega\colon \Gal_\QQ \to \FF_{37}^\times$ is the character describing the Galois action on the $37$-th roots of unity.    Since $\alpha$ is unramified at $37$, we find that the character $\alpha^{-1}\cdot \omega$ is surjective and  that $(\alpha \times (\alpha^{-1}\cdot \omega))(\Gal_\QQ)=\alpha(\Gal_\QQ)\times \FF_{37}^\times$.  Since $\chi_1$ is not surjective, we must have $\chi_1=\alpha$ and $\chi_2=\alpha^{-1}\cdot \omega$.  It thus suffices to show that the image of $\alpha$ contains an element of order $12$. The fixed field of the kernel of $\alpha$ is contained in $\QQ(\zeta_5,\zeta_7)$ since it is unramified at $p\nmid 5\cdot 7$ and has image relatively prime to $5\cdot 7$.  Since $107\equiv 2 \pmod{35}$, we have $\alpha(\Frob_2)=\alpha(\Frob_{107})$.   Therefore, $\alpha(\Frob_2)$ is a common root of $x^2-a_2(E)x+2 = x^2+x+2$ and $x^2-a_{107}(E)x+107=x^2+11x+107$ modulo $37$.   This implies that $\alpha(\Frob_2)$ equals $8 \in \FF_{37}^\times$ which has order $12$.

One can show that the quotient of $E$ by $C$ is the elliptic curve $E'/\QQ$ defined by $y^2+xy+y=x^3+x^2-208083x-36621194$; it has $j$-invariant $-7\cdot 137^3\cdot 2083^3$.  The group $\rho_{E',37}(\Gal_\QQ)$ is thus conjugate in $\GL_2(\FF_{37})$  to $G_4$.

Finally we note that $G_3$ and $G_4$ have no index $2$ subgroups that do not contain $-I$.

\section{Quadratic twists} \label{S:twist 1}

Fix an elliptic curve $E/\QQ$ with $j_E\notin \{0,1728\}$ and an integer $N\geq 3$.   

Define the group $G:=\pm \rho_{E,N}(\Gal_\QQ)$ and let $\calH$ be the set of proper subgroups $H$ of $G$ that satisfy $\pm H=G$.   For each group $H \in \calH$, we obtain a character
\[
\chi_{E,H} \colon \Gal_\QQ \to \{\pm 1\}
\]
by composing $\rho_{E,N}$ with the quotient map $G\to G/H \cong \{\pm 1\}$.   The fixed field of the kernel of the character $\chi_{E,H}$ is of the form $\QQ(\sqrt{d_{E,H}})$ for a unique squarefree integer $d_{E,H}$.   Define the set
\[
\calD_E:=\{ d_{E,H} \colon H \in \calH \}.
\]
Using $\pm \rho_{E,N}(\Gal_\QQ)=G$, we find that different groups $H\in \calH$ give rise to distinct characters $\chi_{E,H}$ and thus $|\calD_E|=|\calH|$.  
 
\subsection{Twists with smaller image}

For a squarefree integer $d$, let $E_d/\QQ$ be a quadratic twist of $E/\QQ$ by $d$.   By choosing an appropriate basis of $E_d[\ell]$, we may assume that $\rho_{E_d,N}\colon \Gal_\QQ \to \GL_2(\ZZ/N\ZZ)$ satisfies 
\begin{equation*}\label{E:twist rho}
\rho_{E_d,N} = \chi_d \cdot \rho_{E,N},
\end{equation*}
where $\chi_d \colon \Gal_\QQ \to \{\pm 1\}$ is the character corresponding to the extension $\QQ(\sqrt{d})/\QQ$.   We have $\pm \rho_{E_d,N}(\Gal_\QQ)=\pm \rho_{E,N}(\Gal_\QQ)= G$.     Therefore, $\rho_{E_d,N}(\Gal_\QQ)$ is equal to either $G$ or to one of the subgroups $H\in \calH$.

We now show that $\calD_E$ is precisely the set of squarefree integers $d$ for which the image of $\rho_{E_d,N}$ is not conjugate to $G$.

\begin{lemma} \label{L:twist newer}
Take any squarefree integer $d$.
\begin{romanenum}
\item \label{L:twist newer i}
We have $d\in \calD_E$ if and only if the group $\rho_{E_d,N}(\Gal_\QQ)$ is conjugate in $\GL_2(\ZZ/N\ZZ)$ to a proper subgroup of $G$.
\item \label{L:twist newer ii}
If $d=d_{E,H}$ for some $H\in \calH$, then $\rho_{E_d,N}(\Gal_\QQ)$ is conjugate in $\GL_2(\ZZ/N\ZZ)$ to $H$.
\end{romanenum}
\end{lemma}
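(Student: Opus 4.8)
The plan is to reduce both parts to a single computation with sign characters and then to upgrade an inclusion of groups to an equality. For $H \in \calH$ write $\psi_H \colon G \to G/H \cong \{\pm 1\}$ for the quotient homomorphism (well defined since $[G:H]=2$ forces $H \trianglelefteq G$), so that $\chi_{E,H} = \psi_H \circ \rho_{E,N}$. The first thing I would record is the elementary fact that $-I \notin H$: otherwise $\pm H = H \subsetneq G$, contradicting $H \in \calH$. Hence $\psi_H(-I) = -1$.

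Next, since $\rho_{E_d,N} = \chi_d \cdot \rho_{E,N}$ and each value $\chi_d(\sigma) \in \{\pm 1\}$ acts as the scalar matrix $\chi_d(\sigma) I \in G$, I would compute, for every $\sigma \in \Gal_\QQ$,
\[
\psi_H\bigl(\rho_{E_d,N}(\sigma)\bigr) = \psi_H\bigl(\chi_d(\sigma) I\bigr)\,\psi_H\bigl(\rho_{E,N}(\sigma)\bigr) = \chi_d(\sigma)\,\chi_{E,H}(\sigma),
\]
where the last step uses $\psi_H(-I) = -1$ to get $\psi_H(\chi_d(\sigma) I) = \chi_d(\sigma)$. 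Note first that $\rho_{E_d,N}(\Gal_\QQ) \subseteq \pm\rho_{E,N}(\Gal_\QQ) = G$, since $\chi_d(\sigma)\rho_{E,N}(\sigma)$ is $\rho_{E,N}(\sigma)$ or $(-I)\rho_{E,N}(\sigma)$ and $-I \in G$. Therefore membership in $H$ is detected by $\psi_H$, and the displayed identity shows that $\rho_{E_d,N}(\sigma) \in H$ for all $\sigma$ if and only if $\chi_d \cdot \chi_{E,H}$ is the trivial character, i.e. $\chi_d = \chi_{E,H}$, i.e. $\QQ(\sqrt d) = \QQ(\sqrt{d_{E,H}})$, i.e. $d = d_{E,H}$ (both being squarefree). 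So $\rho_{E_d,N}(\Gal_\QQ) \subseteq H$ exactly when $d = d_{E,H}$.

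To finish I would upgrade this inclusion to an equality. When $d = d_{E,H}$ we have $\rho_{E_d,N}(\Gal_\QQ) \subseteq H \subseteq G$ and also $\pm\rho_{E_d,N}(\Gal_\QQ) = \pm\rho_{E,N}(\Gal_\QQ) = G = \pm H$; since $-I \notin H$ we get $|\pm H| = 2|H|$, which forces $-I \notin \rho_{E_d,N}(\Gal_\QQ)$ and $|\rho_{E_d,N}(\Gal_\QQ)| = |H|$, hence $\rho_{E_d,N}(\Gal_\QQ) = H$. This proves \ref{L:twist newer ii}, and a fortiori the conjugacy statement. For \ref{L:twist newer i}: if $d \in \calD_E$, say $d = d_{E,H}$, then \ref{L:twist newer ii} gives $\rho_{E_d,N}(\Gal_\QQ) = H$, a proper subgroup of $G$; conversely, if $\rho_{E_d,N}(\Gal_\QQ)$ is conjugate to a proper subgroup of $G$, then, being itself contained in $G$, it has order strictly less than $|G|$, so it is a proper subgroup $H'$ of $G$ with $\pm H' = \pm\rho_{E_d,N}(\Gal_\QQ) = G$, i.e. $H' \in \calH$; applying the computation above with $H'$ in place of $H$ yields $d = d_{E,H'} \in \calD_E$.

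There is no serious obstacle here; the only point requiring care is the scalar bookkeeping in the displayed identity — that $\chi_d(\sigma)$ corresponds to the matrix $\chi_d(\sigma) I$ inside $G$ and that $\psi_H(-I) = -1$ — and once $-I \notin H$ is noted, everything else is formal. I would present the argument in exactly the order above: $-I \notin H$, the sign identity, the inclusion criterion, and finally the order count that turns inclusion into equality.
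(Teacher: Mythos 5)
Your argument is correct and is essentially the paper's proof: the paper likewise composes $\rho_{E_d,N}$ with the quotient $G\to G/H\cong\{\pm 1\}$ to get the character $\chi_d\cdot\chi_{E,H}$, concludes that $\rho_{E_d,N}(\Gal_\QQ)\subseteq H$ exactly when $d=d_{E,H}$, and then both parts follow. You have merely made explicit the bookkeeping the paper leaves implicit ($-I\notin H$, $\psi_H(-I)=-1$, and the order count $\pm K=G=\pm H$ upgrading the inclusion to equality), which is fine.
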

\begin{proof}
Take any group $H \in \calH$.   Composing $\rho_{E_d,N}\colon \Gal_\QQ \to G$ with the quotient map $G\to G/H\cong \{\pm 1\}$ gives the character $\chi_d \cdot \chi_{E,H}$.    Therefore, $\rho_{E_d,N}(\Gal_\QQ)$ is a subgroup of $H$ (and hence equal to $H$) if and only if $\chi_{E,H}=\chi_{d}$; equivalently, $d=d_{E,H}$.    Parts (\ref{L:twist newer i}) and (\ref{L:twist newer ii}) are now immediate. 
\end{proof}

Since $|\calD_E|=|\calH|$, we deduce from Lemma~\ref{L:twist newer} that the map 
\[
\calH \to \calD_E, \quad H\mapsto d_{E,H}
\]
is a bijection.

\begin{remark}
Observe that $\rho_{E_d,N}(\Gal_\QQ)$ being conjugate to $H$ in $\GL_2(\ZZ/N\ZZ)$ need not imply that $d=d_{E,H}$. For example, it is possibly for distinct groups in $\calH$ to be conjugate in $\GL_2(\ZZ/N\ZZ)$.
\end{remark}

\subsection{Computing $\calD_E$}

Now assume that $N \geq 3$ is odd; we shall explain how to compute $\calD_E$ (we will later be interested in the case where $N$ is an odd prime).  Let $M_E$ be set of squarefree integers that are divisible only by primes $p$ such that $p|N$ or such that $E$ has bad reduction at $p$.

For each $r\geq 1$, let $\calD_r$ be the set of $d\in M_E$ such that
\begin{equation} \label{E:twist -2}
  a_{p}(E)\not \equiv  -2 \left(\tfrac{d}{p}\right) \pmod{N}
\end{equation}
holds for all primes $p \leq r$ for which $E$ has good reduction and $p\equiv 1 \pmod{N}$.

\begin{lemma} \label{L:calD inclusion}
Suppose that $N$ is odd.   We have $\calD_E\subseteq \calD_r$ with equality holding for all sufficiently large $r$.
\end{lemma}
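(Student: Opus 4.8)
The plan is to show two containments: first that $\calD_E\subseteq\calD_r$ for every $r$, and second that $\calD_r\subseteq\calD_E$ once $r$ is large enough. Throughout, fix $d\in M_E$ (all of $\calD_E$ lies in $M_E$ since the character $\chi_{E,H}$ is unramified outside $N$ and the primes of bad reduction of $E$) and a prime $p$ of good reduction with $p\equiv1\pmod N$. The key observation is the relation between $\rho_{E_d,N}$ and $\rho_{E,N}$: with $\rho_{E_d,N}=\chi_d\cdot\rho_{E,N}$, the matrix $\rho_{E_d,N}(\Frob_p)$ has trace $\left(\tfrac{d}{p}\right)a_p(E)$ and determinant $p\equiv1\pmod N$. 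So $\rho_{E_d,N}(\Frob_p)$ is a matrix in $\GL_2(\ZZ/N\ZZ)$ of determinant $1$ and trace $\left(\tfrac{d}{p}\right)a_p(E)$; condition (\ref{E:twist -2}) says precisely that this trace is not $\equiv-2\pmod N$, i.e. that $\rho_{E_d,N}(\Frob_p)\neq-I$.

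For the containment $\calD_E\subseteq\calD_r$: suppose $d\in\calD_E$, so $d=d_{E,H}$ for some $H\in\calH$, and by Lemma~\ref{L:twist newer}(\ref{L:twist newer ii}) the image $\rho_{E_d,N}(\Gal_\QQ)$ is conjugate to $H$, a group \emph{not} containing $-I$ (since $H\in\calH$ is proper in $G=\pm H$ and $G/H\cong\{\pm1\}$ forces $-I\notin H$). Hence $\rho_{E_d,N}(\Frob_p)\neq -I$ for every good prime $p$, and in particular for those with $p\equiv1\pmod N$ and $p\leq r$; by the trace/determinant computation above this is exactly (\ref{E:twist -2}). Combined with $d\in M_E$ this gives $d\in\calD_r$. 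Since $r$ was arbitrary, $\calD_E\subseteq\bigcap_r\calD_r$.

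For the reverse containment for large $r$: it suffices to show that every $d\in M_E-\calD_E$ eventually falls out of $\calD_r$, i.e. fails (\ref{E:twist -2}) for some good prime $p\leq r$ with $p\equiv1\pmod N$; since $M_E$ is \emph{finite}, taking $r$ past the largest witnessing prime handles all such $d$ simultaneously. So fix $d\in M_E-\calD_E$. By Lemma~\ref{L:twist newer}(\ref{L:twist newer i}), $\rho_{E_d,N}(\Gal_\QQ)$ is not conjugate to a proper subgroup of $G$, so $\rho_{E_d,N}(\Gal_\QQ)=G$, and in particular $-I\in\rho_{E_d,N}(\Gal_\QQ)$. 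Choose $\sigma\in\Gal_\QQ$ with $\rho_{E_d,N}(\sigma)=-I$; then $\det\rho_{E_d,N}(\sigma)=1$, which forces $\sigma$ to fix the $N$-th roots of unity, i.e. $\sigma$ restricts to the identity on $\QQ(\zeta_N)$. Now apply the Chebotarev density theorem to the finite Galois extension $L/\QQ$ cut out by $\rho_{E_d,N}$ (together with $\QQ(\zeta_N)$): there are infinitely many primes $p$, unramified in $L$, with $\Frob_p$ conjugate to $\sigma$; each such $p$ automatically satisfies $p\equiv1\pmod N$ (from the $\QQ(\zeta_N)$ condition) and has good reduction for $E_d$, hence for $E$, once we discard the finitely many bad primes and $N$ itself. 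For any such $p$, $\rho_{E_d,N}(\Frob_p)=-I$, so $a_p(E_d)\equiv-2\pmod N$, i.e. $\left(\tfrac{d}{p}\right)a_p(E)\equiv-2\pmod N$, which is the negation of (\ref{E:twist -2}). Picking one such prime $p=p_d$ and setting $r_0:=\max_{d\in M_E-\calD_E}p_d$, we get $\calD_r\subseteq\calD_E$ for all $r\geq r_0$, completing the proof.

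The main obstacle, and the only nonroutine point, is the direction $\calD_r\subseteq\calD_E$ for large $r$: it genuinely requires producing, for each ``bad'' twist $d$, an explicit witnessing Frobenius, and this is where Chebotarev (or at least an equidistribution/surjectivity-of-Frobenius input) is essential — one cannot avoid it, since the statement is really asserting that finitely many congruence conditions at small primes detect whether $-I$ lies in the image. I should also be a little careful that the finiteness of $M_E$ is what makes ``sufficiently large $r$'' uniform over all $d$ simultaneously; without that one would only get, for each $d$ separately, that $d$ is excluded from $\calD_r$ for $r$ large depending on $d$. I would state the Chebotarev step plainly and not belabor the elementary trace computation.
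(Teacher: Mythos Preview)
Your argument for the reverse containment $\calD_r\subseteq\calD_E$ (for large $r$) is correct and matches the paper's use of Chebotarev. The gap is in the forward direction $\calD_E\subseteq\calD_r$, specifically in the ``i.e.'' of your first paragraph: having determinant $1$ and trace $-2$ is \emph{not} the same as being equal to $-I$. A matrix such as $\left(\begin{smallmatrix}-1 & 1\\ 0 & -1\end{smallmatrix}\right)$ has trace $-2$ and determinant $1$ without being $-I$. So from $-I\notin\rho_{E_d,N}(\Gal_\QQ)$ you cannot immediately conclude that no Frobenius has trace $\equiv -2$; your deduction ``$\rho_{E_d,N}(\Frob_p)\neq -I$, hence (\ref{E:twist -2}) holds'' does not follow.

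This is exactly where the hypothesis that $N$ is odd enters, and you never use it. The missing step (which the paper makes explicit) is: if $g\in\GL_2(\ZZ/N\ZZ)$ has $\tr(g)=-2$ and $\det(g)=1$, then by Cayley--Hamilton $(g+I)^2=0$, so $-g=I+n$ with $n^2=0$, whence $(-g)^N=I+Nn=I$ and $g^N=(-1)^N\cdot I=-I$ since $N$ is odd. Thus any such $g$ in $\rho_{E_d,N}(\Gal_\QQ)$ would force $-I$ into the image. Inserting this observation repairs your forward direction; without it the argument is incomplete and, as written, would purport to prove the lemma without the oddness assumption.
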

\begin{proof}
Define $\mathscr{D}:= \cap_r \calD_r$; it is the set of $d\in M_E$ such that (\ref{E:twist -2}) holds for all primes $p\equiv 1 \pmod{N}$ for which $E$ has good reduction.   We have $\calD_{r}\subseteq \calD_{r'}$ if $r\geq r'$,  so it suffices to prove that $\mathscr{D}=\calD_E$. 

Take any $d\in \mathscr{D}$.  We have $a_p(E_d)=\legendre{d}{p} a_{p}(E)\not\equiv -2 \pmod{N}$ for all primes $p\equiv 1 \pmod{N}$ for which $E$ has good reduction.   By the Chebotarev density theorem, there are no elements $g\in \rho_{E_d,N}(\Gal_\QQ)$ satisfying $\det(g)=1$ and $\tr(g)=-2$.   In particular, the group $\rho_{E_d,N}(\Gal_\QQ)$ does not contain $-I$ and hence $d \in \calD_E$ by Lemma~\ref{L:twist newer}(\ref{L:twist newer i}).  Therefore, $\mathscr{D}\subseteq \calD_E$.

We have $\calD_E\subseteq M_E$ since each character $\chi_{E,H}$ factors through $\rho_{E,N}$ (and is hence unramified at all primes $p\nmid N$ for which $E$ has good reduction). 

Now take any $d\in \calD_E - \mathscr{D}$.   There is thus a prime $p\equiv 1 \pmod{N}$ for which $E$ has good reduction and $a_p(E_d)=\legendre{d}{p}   a_{p}(E) \equiv -2 \pmod{N}$.   Define $g:=\rho_{E_d,N}(\Frob_p)$; it has trace $-2$ and determinant $1$.    Since $N$ is odd, some power of $g$ is equal to $-I$.  Therefore, $ \rho_{E_d,N}(\Gal_\QQ)=\pm \rho_{E_d,N}(\Gal_\QQ) = G$ which contradicts that $d\in \calD_E$.    Therefore, $\calD_E - \mathscr{D}$ is empty and hence $\calD_E\subseteq \mathscr{D}$.
\end{proof}

One can compute the finite sets $\calD_r$ for larger and larger values of $r$ until $|\calD_r| = |\calH|$ and  then $\calD_E=\calD_r$.  This works since we always have an inclusion $\calD_E \subseteq\calD_r$ by Lemma~\ref{L:calD inclusion}, and equality holds when $|\calD_r| = |\calH|$ since $|\calD_E|=|\calH|$.\\

When $N$ is a prime, the integers in $\calD_E$ come in pairs.

\begin{lemma} \label{L:ell twist D}
Suppose $N=\ell$ is an odd prime.  Let $\calD'_E$ be the set of $d\in \calD_E$ for which $\ell \nmid d$.   Then 
\[
\calD_E = \bigcup_{d\in \calD_E'} \{d, (-1)^{(\ell-1)/2} \ell\cdot d\}.
\]
\end{lemma}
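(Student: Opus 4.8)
The plan is to show that $\calD_E$ is stable under the involution $d \mapsto (-1)^{(\ell-1)/2}\ell \cdot d$ and that every element of $\calD_E$ is either $\ell$-coprime or obtained from an $\ell$-coprime element by this involution; the displayed identity follows at once. The key observation is that $(-1)^{(\ell-1)/2}\ell$ is the unique squarefree integer $\ell^*$ such that $\QQ(\sqrt{\ell^*}) = \QQ(\sqrt{\ell^*})$ is the quadratic subfield of $\QQ(\zeta_\ell)$; equivalently, the quadratic character of $\QQ(\sqrt{\ell^*})/\QQ$ is the Legendre symbol character $p \mapsto \legendre{p}{\ell}$, which is exactly the reduction mod $\ell$ of the cyclotomic character composed with the squaring map $\FF_\ell^\times \to \FF_\ell^\times/(\FF_\ell^\times)^2$. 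I will write $\psi \colon \Gal_\QQ \to \{\pm 1\}$ for this quadratic character, so $\QQ(\sqrt{\ell^*})$ is its fixed field.

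First I would recall, using the set-up of \S\ref{S:twist 1}, that for $H \in \calH$ the character $\chi_{E,H}$ is the composite $\Gal_\QQ \xrightarrow{\rho_{E,\ell}} G \to G/H \cong \{\pm 1\}$, and that $\calD_E = \{d_{E,H} : H \in \calH\}$ where $\QQ(\sqrt{d_{E,H}})$ is the fixed field of $\chi_{E,H}$. The map $H \mapsto d_{E,H}$ is a bijection $\calH \to \calD_E$. So it suffices to exhibit an involution $\iota$ on $\calH$ such that $d_{E,\iota(H)} = \ell^* \cdot d_{E,H}$ (as squarefree integers), together with the claim that for each $H \in \calH$, exactly one of $H$, $\iota(H)$ gives an $\ell$-coprime $d$. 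Given such $\iota$, writing $\calH$ as a union of $\iota$-orbits of size $\le 2$ and noting that a fixed point $H = \iota(H)$ would force $\chi_{E,H} = \chi_{E,H}\cdot \psi$, hence $\psi$ trivial — impossible — we see every orbit has size exactly $2$, and picking the representative with $\ell$-coprime $d$ gives the stated disjoint union over $d \in \calD_E'$.

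The construction of $\iota$ is the heart of the matter. Given $H \in \calH$, consider the character $\chi_{E,H}\cdot \psi \colon \Gal_\QQ \to \{\pm 1\}$; its fixed field is $\QQ(\sqrt{\ell^* d_{E,H}})$. I want to produce $H' \in \calH$ with $\chi_{E,H'} = \chi_{E,H}\cdot\psi$. The point is that $\psi$ itself factors through $\rho_{E,\ell}$: indeed $\psi = \legendre{\det \rho_{E,\ell}(\cdot)}{\ell}$ since $\det\circ\rho_{E,\ell}$ is the mod $\ell$ cyclotomic character $\omega$ and $\psi(\sigma) = \legendre{\omega(\sigma)}{\ell}$. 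Thus $\chi_{E,H}\cdot\psi$ is also a character that factors through $G = \pm\rho_{E,\ell}(\Gal_\QQ)$, and it is nontrivial (since $\chi_{E,H}$ cuts out a field ramified outside $\ell$ at the primes of bad reduction of $E$, while multiplying by $\psi$ changes the ramification at $\ell$); its kernel pulled back to $G$ is an index-$2$ subgroup $H'$ with $\pm H' = G$ (the latter because $-I \in G$ maps to $1$ under $\chi_{E,H}\cdot\psi$: $\chi_{E,H}(-I) = -1$ and $\psi$ evaluated on $-I$... here one must check $-I$ corresponds to an element with $\legendre{\det}{\ell} = \legendre{\det(-I)}{\ell} = \legendre{1}{\ell} = 1$, so indeed $(\chi_{E,H}\cdot\psi)(-I) = -1\cdot 1 = -1$, wait — this needs care). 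The main obstacle I anticipate is exactly this bookkeeping of signs: verifying that $-I \notin H'$ so that $H' \in \calH$, and that $H \mapsto H'$ is genuinely an involution with the right effect on squarefree parts, keeping track of the squarefree representative of $\ell^* d$ when $\ell \mid d$ versus $\ell \nmid d$. Once the involution and its fixed-point-freeness are established, concluding the displayed formula is purely formal.
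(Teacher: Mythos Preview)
Your approach is correct and genuinely different from the paper's. You build an explicit involution on $\calH$ by twisting the quotient character $\chi_H\colon G\to G/H$ by the Legendre-of-determinant character $\psi_G(g)=\legendre{\det g}{\ell}$; the paper instead reduces to $d=1$ (replacing $E$ by its twist $E_d$) and argues directly that the twist by $\ell^*$ again has image of index $2$ in $G$, using only that $\QQ(\sqrt{\ell^*})\subseteq\QQ(\zeta_\ell)\subseteq\QQ(E[\ell])$ forces $[\QQ(E_{\ell^*}[\ell]):\QQ]=[\QQ(E[\ell]):\QQ]$. Your argument is more structural (it exhibits the pairing on $\calH$ intrinsically), while the paper's is shorter and avoids any character bookkeeping.

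Two places in your write-up need tidying, though the fixes are immediate. First, your ramification argument for the nontriviality of $\chi_H\cdot\psi_G$ on $G$ is unreliable: $\chi_{E,H}$ may already be ramified at $\ell$, so multiplying by $\psi$ need not ``change'' anything detectable. The clean reason is the one you implicitly use for fixed-point-freeness: $\ker\psi_G=\{g\in G:\det g\in(\FF_\ell^\times)^2\}$ contains $-I$, whereas $H$ does not, so $\chi_H\neq\psi_G$ as characters of $G$ and hence $\chi_H\cdot\psi_G$ is nontrivial. Second, your sign check got tangled: for an index-$2$ subgroup $H'\subseteq G$, having $\pm H'=G$ is equivalent to $-I\notin H'$, i.e.\ to $(\chi_H\cdot\psi_G)(-I)=-1$. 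Your computation $(\chi_H\cdot\psi_G)(-I)=(-1)\cdot\legendre{1}{\ell}=-1$ is exactly what is needed; there is no ``wait''. With those two points straightened out, the involution $H\mapsto H'$ is well-defined, fixed-point-free (since $\psi_G$ is nontrivial on $G$, as $\det G=\FF_\ell^\times$), and satisfies $d_{E,H'}=\text{squarefree part of }\ell^* d_{E,H}$, which swaps $\calD_E'$ with $\calD_E\setminus\calD_E'$ and gives the lemma.
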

\begin{proof}
Define $\ell^*:=(-1)^{(\ell-1)/2} \ell$.   Take any $d \in \calD_E$.   We need to show that $d \ell^*$ or $d/\ell^*$  belong to $\calD_E$ (whichever one is a squarefree integer).   After possibly replacing $E$ by $E_d$, we may assume that $d=1$ and hence we need only verify that $\ell^* \in \calD_E$.

So assume that $\rho_{E,\ell}(\Gal_\QQ)$ is a proper subgroup of $G$ and hence is equal to one of the $H\in \calH$.   We need to show that $\rho_{E',\ell}(\Gal_\QQ)$ is also a proper subgroup of $G$, where $E':=E_{\ell^*}$.

   The field $\QQ(\sqrt{\ell^*})\subseteq \QQ(\zeta_\ell)$ is a subfield of both $\QQ(E[\ell])$ and $\QQ(E'[\ell])$.  Since $E$ and $E'$ are isomorphic over $\QQ(\sqrt{\ell^*})$, we deduce that $[\QQ(E'[\ell]):\QQ]=[\QQ(E[\ell]):\QQ]$.   Therefore, 
 \[
 |\rho_{E',\ell}(\Gal_\QQ)| =   [\QQ(E'[\ell]):\QQ]=[\QQ(E[\ell]):\QQ] =  |\rho_{E,\ell}(\Gal_\QQ)| = |H| = |G|/2.
 \]
By cardinality assumption, we deduce that $\rho_{E',\ell}(\Gal_\QQ)$ is conjugate to a proper subgroup of $G$.   
\end{proof}

\begin{remark}
One could also use the methods of this section to help determine $\calH$.   For example, if $\calD_r=\emptyset$ for some $r$, then $\calH= \emptyset$.   Suppose we are in the setting, like what happens often in the introduction, where we know that $|\calH| \geq 2$ because we have two explicit elements of $\calH$.  Then to verify that $|\calH|=2$, one need only find an $r$ such that $|\calD_r| =2$.
\end{remark}

\subsection{Some examples}
\subsubsection{} 
Take $\ell=7$.   Let $E/\QQ$ be the elliptic curve defined by $y^2=x^3-5^37^3x -5^47^2 106$; it has $j$-invariant $3^3\cdot 5\cdot 7^5/2^7$ and conductor $2\cdot 5^2 \cdot 7^2$.    From the part of Theorem~\ref{T:main7} proved in \S\ref{SS:main proof 7}, we know that $\pm\rho_{E,7}(\Gal_\QQ)$ is conjugate to the group $G_1$ of \S\ref{SS:applicable 7}.    Let $\calH$ be the set of proper subgroups $H$ of $G_1$ such that $\pm H=G_1$.   The set $\calH$ consists of two groups; they are both conjugate in $\GL_2(\FF_7)$ to the group $H_{1,1}$ of \S\ref{SS:applicable 7}.   The curve $E$ is denoted by $\calE_1$ in \S\ref{SS:applicable 7}.  

We have $\calD_E \subseteq M_E = \{\pm 1,\pm 2, \pm 5, \pm 7, \pm 10, \pm 14, \pm 35, \pm 70\}$.   The primes $211$, $239$ and $337$ are congruent to $1$ modulo $\ell$.   One can check that 
\[
a_{211}(E)=16 \equiv 2 \pmod{7}, \quad a_{239}(E)=-5\equiv 2 \pmod{7}, \quad a_{337}(E)=-5 \equiv 2 \pmod{7}.
\]   
So if $d\in \calD_{337}$, then $\left(\tfrac{d}{211}\right)=1$, $\left(\tfrac{d}{239}\right)=1$ and $\left(\tfrac{d}{337}\right)=1$.   Checking the $d\in M_E$, we find that $\calD_{337} \subseteq \{1,-7\}$.   Since $|\calH|=2$, we deduce that $\calD_{E}=\{1,-7\}$.

Now let $E'/\QQ$ be any elliptic curve with $j$-invariant $3^3\cdot 5\cdot 7^5/2^7$.   Using Lemma~\ref{L:twist newer}, we deduce that $\rho_{E',7}(\Gal_\QQ)$ is conjugate to $G_1$ if and only if $E'$ is not isomorphic to $E$ or its quadratic twist by $-7$.   When $\rho_{E',7}(\Gal_\QQ)$ is not conjugate to $G_1$ it must be conjugate to $H_{1,1}$ in $\GL_2(\FF_7)$.

\subsubsection{}
Take $\ell=11$.   Let $G_1$, $H_{1,1}$ and $H_{1,2}$ be the groups from \S\ref{SS:applicable 11}.  The set $\calH$ of proper subgroups $H$ of $G_1$ for which $\pm H=G_1$ is equal to $\{H_{1,1}, H_{1,2}\}$.  

Let $E/\QQ$ be the elliptic curve defined by $y^2+xy+y= x^3+x^2-305x+7888$; it has $j$-invariant $-11^2$ and is isomorphic to the curve $\calE_1$ of \S\ref{SS:applicable 11}.   In \S\ref{SS:11 borel}, we showed that $\rho_{E,11}(\Gal_\QQ)$ and $\pm \rho_{E,11}(\Gal_\QQ)$ are conjugate in $\GL_2(\FF_{11})$ to $H_{1,1}$ and $G_1$, respectively.   

Using Lemma~\ref{L:ell twist D} and $|\calD_E|=|\calH|$, we deduce that $\calD_E = \{1,-11\}$.    Lemma~\ref{L:twist newer} implies that if $E'/\QQ$ has $j$-invariant $-11^2$, then $\rho_{E',11}(\Gal_\QQ)$ is not conjugate to $G_1$ if and  only if $E'$ is isomorphic to $E$ or its quadratic twist by $-11$.    If $E'$ is isomorphic to $E$ or its twist by $-11$, then $\rho_{E',11}(\Gal_\QQ)$ is conjugate in $\GL_2(\FF_{11})$ to $H_{1,1}$ or $H_{1,2}$, respectively.

\subsubsection{}

Take $\ell=11$.   Let $G_2$, $H_{2,1}$ and $H_{2,2}$ be the subgroups of $\GL_2(\FF_{11})$ from \S\ref{SS:applicable 11}.  The set $\calH$ of proper subgroups $H$ of $G_2$ for which $\pm H=G_2$ is equal to $\{H_{2,1}, H_{2,2}\}$.  

Let $E/\QQ$ be the elliptic curve defined by $y^2+xy= x^3+x^2-3632x+82757$; it has $j$-invariant $-11\cdot 131^3$ and is isomorphic to the curve $\calE_2$ of \S\ref{SS:applicable 11}.   In \S\ref{SS:11 borel}, we showed that $\rho_{E,11}(\Gal_\QQ)$ and $\pm \rho_{E,11}(\Gal_\QQ)$ are conjugate in $\GL_2(\FF_{11})$ to $H_{2,1}$ and $G_2$, respectively.   

Using Lemma~\ref{L:ell twist D} and $|\calD_E|=|\calH|$, we deduce that $\calD_E = \{1,-11\}$.   Lemma~\ref{L:twist newer} implies that if $E'/\QQ$ has $j$-invariant $-11\cdot 131^3$, then $\rho_{E',11}(\Gal_\QQ)$ is not conjugate to $G_2$ if and  only if $E'$ is isomorphic to $E$ or its quadratic twist by $-11$.    If $E'$ is isomorphic to $E$ or its twist by $-11$, then $\rho_{E',11}(\Gal_\QQ)$  is conjugate in $\GL_2(\FF_{11})$ to $H_{2,1}$ or $H_{2,2}$, respectively.

\section{Quadratic twists of families} \label{S:twists 2}

In this section, we complete the proof of the theorems from \S\ref{S:classification}.  

\subsection{General setting} \label{SS:twist setup}

Fix an integer $N\geq 3$ and an applicable subgroup $G$ of $\GL_2(\ZZ/N\ZZ)$.  Let $\calH$ be the set of proper subgroups $H$ of $G$ that satisfy $\pm H=G$.

  Assume that the morphism $\pi_G\colon X_G \to \PP^1_\QQ$ arises from a rational function $J(t)\in \QQ(t)$, i.e., the function field of $X_G$ is of the form $\QQ(h)$ where $j=J(h)$.  \\

Let $g(t)$ be a rational function $\QQ(t)$ such that
\[
a(t):=-3 g(t)^2 J(t)/(J(t)-1728)  \quad \text{ and }\quad b(t):=-2 g(t)^3 J(t)/(J(t)-1728)
\]
belong to $\ZZ[t]$, and for which there is no irreducible element $\pi$ of the ring $\ZZ[t]$ such that $\pi^2$ divides $a$ and $\pi^3$ divides $b$.  After possibly changing $g$ by a sign, we may assume that $g$ is the quotient of two polynomials with positive leading coefficient; the function $g(t)$ is now uniquely determined.   Define $\Delta:=-16(4a^3+27b^2)$; it is a polynomial in $\ZZ[t]$ and equals $2^{12} 3^6 J(t)^2/(J(t)-1728)^3 g(t)^6$.   Let $\scrM$ be the set of squarefree $f(t) \in \ZZ[t]$ which divide $N \Delta(t)$.\\   

Take any $u \in \QQ$ for which $J(u)\notin \{0,1728,\infty\}$.  We have $\Delta(u)\neq 0$ and hence $f(u)\neq 0$ for all $f\in \scrM$.  Let $E_u/\QQ$ be the elliptic curve defined by the Weierstrass equation $y^2= x^3+a(u) x + b(u)$; note that $\Delta(u)\neq 0$ since $J(u)\notin\{0,1728,\infty\}$.  One can readily check that the curve $E_u$ has $j$-invariant $J(u)$.    \emph{Warning}: this is not to be confused with the quadratic twist notation we used in \S\ref{S:twist 1}.

\begin{prop} \label{P:fH prop}
There is an injective map 
\[
\calH\to \scrM,\quad H\mapsto f_H
\] 
such that for any $u\in \QQ$ with $J(u)\notin \{0,1728,\infty\}$ and $\pm \rho_{E_u,N}(\Gal_\QQ)$ conjugate to $G$ in $\GL_2(\ZZ/N\ZZ)$, the following hold:
\begin{alphenum}
\item \label{P:twists H a}
If $E'/\QQ$ is an elliptic curve with $j$-invariant $J(u)$, then  $\rho_{E',N}(\Gal_\QQ)$ is conjugate to $G$ in $\GL_2(\ZZ/N\ZZ)$ if and only if $E'$ is not isomorphic to the quadratic twist of $E_u$ by $f_H(u)$ for all $H\in \calH$.
\item \label{P:twists H b}
If $E'/\QQ$ is isomorphic to the quadratic twist of $E_u$ by $f(u)$ for some $H\in \calH$, then $\rho_{E',N}(\Gal_\QQ)$ is conjugate to $H$ in $\GL_2(\ZZ/N\ZZ)$.
\end{alphenum}
The sets $\{f_H(u): H \in \calH\}$ and $\calD_{E_u}$ represent the same cosets in $\QQ^\times/(\QQ^\times)^2$, with $\calD_{E_u}$ defined as in \S\ref{S:twist 1}.

\end{prop}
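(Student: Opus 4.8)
The plan is to read off the polynomials $f_H$ from the ``generic member'' of the family and then obtain the statements about a particular $u$ by specialization. Let $\calE$ denote the elliptic curve over $\QQ(t)$ defined by $y^2=x^3+a(t)x+b(t)$, and let $\rho_{\calE,N}\colon\Gal(\overline{\QQ(t)}/\QQ(t))\to\GL_2(\ZZ/N\ZZ)$ be its mod $N$ representation; by the criterion of N\'eron--Ogg--Shafarevich it is unramified away from the places of $\QQ(t)$ lying over $N$ and the zeros of $\Delta(t)$. Identify $\QQ(t)$ with the function field $\QQ(h)$ of $X_G$ via $j=J(h)$, so that $\calE$ has $j$-invariant $J(h)$. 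A quadratic twist only scales the $x$-coordinates of torsion points by an element of $\QQ(h)$ and so does not change the field they generate; hence the $x$-coordinates of the nonzero points of $\calE[N]$ generate $\calF_N$ over $\QQ(h)$, and by Proposition~\ref{P:modular galois} together with the computations of \S\ref{SS:modular proof 2} this shows that $\rho_{\calE,N}$ has image $G/\{\pm I\}$ in $\GL_2(\ZZ/N\ZZ)/\{\pm I\}$; after a conjugation we may therefore assume that the image of $\rho_{\calE,N}$ itself is a subgroup of $G$. For $H\in\calH$ I would then let $\psi_H$ be the composition of $\rho_{\calE,N}$ with $G\to G/H\cong\{\pm1\}$ and define $f_H\in\ZZ[t]$ to be the squarefree polynomial, normalized to have positive leading coefficient, for which $\QQ(t)(\sqrt{f_H})$ is the fixed field of $\ker\psi_H$. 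Since $\psi_H$ factors through $\rho_{\calE,N}$, the extension $\QQ(t)(\sqrt{f_H})/\QQ(t)$ is unramified away from the places dividing $N\Delta(t)$ and the place at infinity, from which one deduces $f_H\mid N\Delta(t)$ in $\ZZ[t]$, that is, $f_H\in\scrM$. This defines the map $\calH\to\scrM$.

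Next I would specialize. Fix $u\in\QQ$ with $J(u)\notin\{0,1728,\infty\}$; then $\Delta(u)\neq0$, so $f(u)\neq0$ for all $f\in\scrM$, and $E_u$ is the fibre of $\calE$ at $t=u$, so $\rho_{E_u,N}$ is conjugate to $\rho_{\calE,N}$ composed with the specialization homomorphism $\mathrm{sp}_u\colon\Gal_\QQ\to\Gal(\overline{\QQ(t)}/\QQ(t))$; in particular $\psi_H\circ\mathrm{sp}_u$ is the quadratic character of $\QQ(\sqrt{f_H(u)})$. Assume moreover that $\pm\rho_{E_u,N}(\Gal_\QQ)$ is conjugate to $G$. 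Choosing a basis of $E_u[N]$ so that $\rho_{E_u,N}=\rho_{\calE,N}\circ\mathrm{sp}_u$, we have $\rho_{E_u,N}(\Gal_\QQ)\subseteq G$, and hence $\pm\rho_{E_u,N}(\Gal_\QQ)=G$. If $E'/\QQ$ has $j$-invariant $J(u)$, then since $J(u)\notin\{0,1728\}$ it is a quadratic twist of $E_u$, say by the squarefree integer $d$, and for a suitable basis $\rho_{E',N}=\chi_d\cdot\rho_{E_u,N}$; composing with $G\to G/H$ (under which $-I\mapsto -1$) yields the quadratic character of $\QQ(\sqrt{d\,f_H(u)})$. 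Hence $\rho_{E',N}(\Gal_\QQ)\subseteq H$ if and only if $d$ and $f_H(u)$ differ by a square, i.e.\ $E'$ is isomorphic to the quadratic twist of $E_u$ by $f_H(u)$, and in that case $\rho_{E',N}(\Gal_\QQ)=H$ since $\pm\rho_{E',N}(\Gal_\QQ)=G$ and $[G:H]=2$. This gives part~(\ref{P:twists H b}); part~(\ref{P:twists H a}) then follows, because $\pm\rho_{E',N}(\Gal_\QQ)=G$ forces $\rho_{E',N}(\Gal_\QQ)$ to be either $G$ or one of the groups in $\calH$. For the final assertion, observe that the character $\chi_{E_u,H}$ of \S\ref{S:twist 1} is by definition $\rho_{E_u,N}$ followed by $G\to G/H$, i.e.\ $\psi_H\circ\mathrm{sp}_u$, whose fixed field is $\QQ(\sqrt{f_H(u)})$; therefore $d_{E_u,H}$ and $f_H(u)$ represent the same class in $\QQ^\times/(\QQ^\times)^2$, so $\calD_{E_u}=\{d_{E_u,H}:H\in\calH\}$ and $\{f_H(u):H\in\calH\}$ determine the same cosets.

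For injectivity of $H\mapsto f_H$, suppose $f_H=f_{H'}$ in $\scrM$ with $H,H'\in\calH$. By Lemma~\ref{L:basic HIT} there exists $u\in\QQ$ with $J(u)\notin\{0,1728,\infty\}$ and $\pm\rho_{E_u,N}(\Gal_\QQ)$ conjugate to $G$; fixing a basis as above and taking $E'$ to be the quadratic twist of $E_u$ by $f_H(u)$, the computation of the previous paragraph gives $\rho_{E',N}(\Gal_\QQ)\subseteq H$, and since $f_{H'}(u)\equiv f_H(u)$ modulo squares, also $\rho_{E',N}(\Gal_\QQ)\subseteq H'$; hence $\rho_{E',N}(\Gal_\QQ)\subseteq H\cap H'$ while $\pm\rho_{E',N}(\Gal_\QQ)=G$, which forces $|H\cap H'|\geq|G|/2$ and therefore $H=H'$.

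I expect the real difficulties to lie not in this endgame but in the setup: identifying the image of $\rho_{\calE,N}$ through \S\ref{SS:modular proof 2} and arranging compatible bases so that $\mathrm{sp}_u$ genuinely intertwines $\rho_{\calE,N}$ with $\rho_{E_u,N}$ and the quotient maps onto the various $G/H$; and the verification that $f_H$ actually lies in $\scrM$, where the delicate point is ruling out an unwanted constant factor in $f_H$ (this is where the minimality normalization of $g(t)$ and the fact that $\det\rho_{\calE,N}$ is pulled back from the mod $N$ cyclotomic character of $\QQ$ should enter).
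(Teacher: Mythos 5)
Your overall architecture matches the paper's: pass to the generic member of the family, define $f_H$ via the quadratic character cut out by $G\to G/H$, and specialize, reusing the dichotomy of \S\ref{S:twist 1} for parts (\ref{P:twists H a}) and (\ref{P:twists H b}). The endgame (specialization, the twisting computation, injectivity) is sound. But there is a genuine gap exactly at the point you flag in your last sentence and then treat as settled earlier: the claim that $f_H\in\scrM$, i.e.\ that $f_H$ divides $N\Delta(t)$ \emph{in $\ZZ[t]$}. You deduce this from ramification of $\rho_{\calE,N}$ over the function field $\QQ(t)$. That argument only controls the \emph{horizontal} part of $f_H$: the places of $\QQ(t)/\QQ$ are the closed points of $\PP^1_\QQ$, all of residue characteristic $0$ (so "places of $\QQ(t)$ lying over $N$" do not exist), and N\'eron--Ogg--Shafarevich there tells you only that the irreducible polynomial factors of $f_H$ divide $\Delta(t)$. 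It says nothing about the constant factor: the extension $\QQ(t)(\sqrt{c})=\QQ(\sqrt{c})(t)$ is unramified at every place of $\QQ(t)/\QQ$ for any squarefree integer $c$, so a priori $f_H$ could be $c\cdot f_0$ with $c\nmid N\,\mathrm{cont}(\Delta)$, and then $f_H\notin\scrM$ and the specialization $f_H(u)$ need not land in $\calD_{E_u}$'s square classes. (Relatedly, the squarefree representative in $\ZZ[t]$ of a class of $\QQ(t)^\times/(\QQ(t)^\times)^2$ is unique \emph{including its sign}, so you cannot normalize $f_H$ to have positive leading coefficient.)

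The paper closes this gap by never working over $\QQ(t)$ at all: it takes $U:=\Spec \ZZ[t,N^{-1},\Delta(t)^{-1}]$, an \emph{arithmetic} scheme, observes that $y^2=x^3+a(t)x+b(t)$ defines a relative elliptic curve over $U$ whose $N$-torsion is finite \'etale over $U$, so that $\rho_N$ factors through $\pi_1(U)$; then every continuous quadratic character of $\pi_1(U)$ corresponds to an \'etale double cover of $U$ and is therefore $\chi_f$ for a unique $f\in\scrM$ (this is where the remark that $2\mid\Delta(t)$ is used). Both the polynomial factors and the integer constant of $f_H$ are controlled in one stroke. To repair your argument you would either have to adopt this device, or supplement the function-field ramification with the arithmetic statement that each specialization $\psi_H\circ\mathrm{sp}_u=\chi_{E_u,H}$ is unramified at primes $p\nmid N$ of good reduction of $E_u$ and deduce from enough specializations that the constant part of $f_H$ divides $N\,\mathrm{cont}(\Delta)$ --- which is more work than the one line you give. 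A secondary (smaller) difference: the paper identifies $\pm\rho_N(\pi_1(U))$ with $G$ by Hilbert irreducibility via Lemma~\ref{L:basic HIT}, whereas you do it through the modular interpretation of \S\ref{SS:modular proof 2}; your route is fine in principle but carries the basis-compatibility burden you acknowledge.
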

\begin{proof}
Define the scheme $U:=\Spec \ZZ[t, N^{-1}, \Delta(t)^{-1}]$.  By taking the square root of a polynomial $f\in \scrM$, we obtain an \'etale extension of $U$ of degree $1$ or $2$; we denote the corresponding quadratic character by  $\chi_f\colon \pi_1(U) \to \{\pm 1\}$. Conversely, every (continuous) character $\pi_1(U)\to\{\pm 1\}$ is of the form $\chi_f$ for a unique $f\in \scrM$.  (Note that $2$ always  divides $\Delta(t)$). 

The Weierstrass equation 
\[
y^2=x^3 + a(t)x+b(t)
\] 
defines a relative elliptic curve $E\to U$.   Let $E[N]$ be the $N$-torsion subscheme of $E$.   The morphism $E[N]\to U$ allows us to view $E[N]$ as a lisse sheaf of $\ZZ/N\ZZ$-modules on $U$ that is free of rank $2$.   The sheaf $E[N]$ then gives rise to a representation 
\[
\rho_N \colon \pi_1(U)\to \GL_2(\ZZ/N\ZZ)
\]  
that is uniquely defined up to conjugacy (we will suppress the base point in our fundamental group since we are only interested in $\rho_N$ up to conjugacy).  

We now consider specializations of $E$.   Take any $u\in U(\QQ)$, i.e., an element $u\in \QQ$ with $\Delta(u)\neq 0$.  One can show that elements $u \in U(\QQ)$ can also be described as those $u\in \QQ$ for which $J(u)\notin\{0,1728,\infty\}$.   We can specialize $E$ at $u$ to obtain the elliptic curve that we have denoted $E_{u}/\QQ$; it is defined by $y^2=x^3+a(u) x + b(u)$ and has $j$-invariant $J(u)$.    

Let $\rho_{u,N}\colon \Gal_\QQ \to \GL_2(\ZZ/N\ZZ)$ be the specialization of $\rho_N$ at $u$; it is obtained by composing the homomorphism $u_*\colon \Gal_\QQ \to \pi_1(U)$ coming from $u\in U(\QQ)$ with $\rho_N$.     The homomorphism $\rho_{u,N}$ agrees, up to conjugacy, with the representation $\rho_{E_u,N}$ that describes the Galois action on the $N$-torsion points of $E_{u}$.   So taking $\rho_{E_u,N}=\rho_{u,N}$, specialization gives an inclusion $\rho_{E_{u},N}(\Gal_\QQ) \subseteq \rho_N(\pi_1(U))$.  

{We claim that $\pm \rho_N(\pi_1(U))$ and $G$ are conjugate in $\GL_2(\ZZ/N\ZZ)$.   By Lemma~\ref{L:basic HIT}, the group $\pm \rho_{E_u,N}(\Gal_\QQ)$ is conjugate to $G$ in $\GL_2(\ZZ/N\ZZ)$ for ``most'' $u\in \QQ$.  By Hilbert's irreducibility theorem, the group $\pm  \rho_{E_{u},N}(\Gal_\QQ)$ equals  $\pm \rho_N(\pi_1(U))$ for ``most'' $u\in \QQ$.   This proves the claim.}\\

We may thus assume that $G=\pm \rho_N(\pi_1(U))$ and hence we have a representation $\rho_N\colon \pi_1(U)\to G$.    Specializations thus give inclusions $\rho_{E_{u},N}(\Gal_\QQ) \subseteq G$.    Take any $H \in \calH$ and let  $\chi_H\colon \pi_1(U) \to\{\pm 1\}$ be the character obtained by composing $\rho_N$ with the quotient map $G\to G/H\cong \{\pm 1\}$.    We thus have $\chi_H=\chi_{f_H}$ for a unique polynomial $f_H\in \scrM$.   

Specializing $\chi_H$ at $u$, we obtain the character $\chi_{E_u,H}\colon \Gal_\QQ \to \{\pm 1\}$ from \S\ref{S:twist 1}.  With notation as in \S\ref{S:twist 1}, we find that the integer $d_{E_u,H}$ lies in the same class in $\QQ^\times/(\QQ^\times)^2$ as $f_H(u)$.   Therefore, the classes of $\calD_{E_u}$ in $\QQ^\times/(\QQ^\times)^2$ are represented by the set $\{f_H(u): H \in \calH\}$.  Parts (\ref{P:twists H a}) and (\ref{P:twists H b}) are now immediate consequences of Lemma~\ref{L:twist newer}.
\end{proof}

We claim that the set of polynomials 
\[
\scrF:=\{f_H : H \in \calH\}
\] 
is uniquely determined and has cardinality $|\calH|$.     By Hilbert irreducibility, one can chose $u\in U(\QQ)$ such that $\pm \rho_{E_u,N}(\Gal_\QQ)=G$ and such that the map $\scrM \to \QQ^\times/(\QQ^\times)^2$, $f\mapsto f(u)\cdot (\QQ^\times)^2$ is injective.   The uniqueness of $\scrF$ then follows from part (\ref{P:twists H a}) of Proposition~\ref{P:fH prop}.

\subsection{Computing $\scrF$}

We now focus on the case where $N$ is a prime $\ell \in \{3,5,7,13\}$.    Fix notation as in the subsection of \S\ref{S:classification} for the given $\ell$.   \\

Let $G$ be one of the subgroups $G_i$ of $\GL_2(\FF_\ell)$  in \S\ref{S:classification} for which there is a corresponding rational function $J(t):=J_i(t) \in \QQ(t)-\QQ$.   The group $G$ is applicable and in particular contains $-I$.

We take notation as in \S\ref{SS:twist setup}.  In particular, $\calH$ is the set of proper subgroups $H$ of $G$ such that $\pm H = G$.    We shall assume that $\calH\neq \emptyset$ (otherwise $\scrF=\emptyset$); this holds when
 \[
 (\ell,i) \in \big\{ (3,1),(3,3), (5,1), (5,5), (5,6), (7,1), (7,3), (7,4), (7,5), (7,7), (13,4), (13,5) \big\}.
\]
In each of these cases, one can check that $|\calH|=2$.   \\

We now explain how to compute the set $\scrF=\{f_H : H \in \calH\}$; it has cardinality $|\calH|=2$.    Take any $u \in \QQ$ with $J(u)\notin \{0,1728,\infty\}$ such that $J(u) \notin J_j(\QQ)$ for all $j < i$.   From the parts of the main theorems proved in \S\ref{S:main classification}, this implies that  $\pm \rho_{E_u,\ell}(\Gal_\QQ)$ is conjugate to $G$.    Let $\calD_{E_u}$ be the (computable!) set from \S\ref{S:twist 1}.   From Proposition~\ref{P:fH prop}, we find that 
\begin{equation} \label{E:scrF inclusion}
\scrF \subseteq \{ f \in \scrM : f(u) \in d(\QQ^\times)^2 \text{ for some } d \in \calD_{E_u} \}.
\end{equation}

By considering (\ref{E:scrF inclusion}) with many such $u\in \QQ$, one is eventually left with only $|\calH|$ candidates $f\in \scrF$ to be of the form $f_H$; this then produces the set $\{f_H: H\in \calH\}$ of order $|\calH|$  (for our examples, one only needs to check $u \in \{1,2,3,4\}$).   One could also work with a single $u\in \QQ$ chosen so that the map $\scrF\to \QQ^\times/(\QQ^\times)^2$, $f\mapsto f(u) \cdot (\QQ^\times)^2$ is injective.   This method thus produces $\scrF$.\\

Doing the above computations, we find that 
\[
\{f_H : H \in \calH\} = \{ f_1, \ell^* f_1\}
\] 
for a unique polynomial $f_1\in \scrF$, where $\ell^* := (-1)^{(\ell-1)/2} \cdot \ell$; this can also be deduced from $|\calH|=2$ and Lemma~\ref{L:ell twist D}.  We thus have $f_1 = f_{M_1}$ and $\ell^* f_1 = f_{M_2}$, where $\calH=\{M_1,M_2\}$.    

Let $h$ be the largest element of $\ZZ[t]$, in terms of divisibility, with positive leading coefficient such that $h^4$ divides $af_1^2$ and $h^6$ divides $b f_1^3$; define $A:= (af_1^2)/h^4$ and $B:=(bf_1^3)/h^6$ in $\ZZ[t]$. The Weierstrass equation 
\[
y^2= x^3+A(t) x + B(t)
\]
is precisely the equation given for $\calE_{i,t}$ in the subsection of \S\ref{S:classification} corresponding to the prime $\ell$.  (For code verifying these claims, see the link given in \S\ref{SS:overview}.)
\\

 For $u\in \QQ$ with $J(u)\notin \{0,1728,\infty\}$, let $\calE_{i,u}$ be the elliptic curve over $\QQ$ defined by setting $t$ equal to $u$.  Let $E'/\QQ$ be any elliptic curve with $j_{E'}\notin \{0,1728\}$ for which $\pm \rho_{E',\ell}(\Gal_\QQ)$ is conjugate to $G$ in $\GL_2(\FF_\ell)$.   From the parts of the main theorems proved in \S\ref{S:main classification}, we have $j_{E'}=J(u)$ for some $u\in \QQ$.   The curve $E_u/\QQ$ also has $j$-invariant $J(u)$.    The twist of $E_u$ by $f_1(u)$ is isomorphic to the the curve $\calE_{i,u}/\QQ$.   By Proposition~\ref{P:fH prop}, we deduce that $\rho_{E',\ell}(\Gal_\QQ)$ is conjugate to $G$ if and only if $E'$ is not isomorphic to $\calE_{i,u}$ and not isomorphic to the quadratic twist of $\calE_{i,u}$ by $\ell^*$.   By Proposition~\ref{P:fH prop}, $\rho_{E',\ell}(\Gal_\QQ)$ is conjugate to $M_1$ or $M_2$ when $E'$ is isomorphic to $\calE_{i,u}$ or the quadratic twist of $\calE_{i,u}$ by $\ell^*$, respectively.

It thus remains to determine $M_1$ and $M_2$.\\ 

If $(\ell,i) \in \{(3,1), (7,1)\}$, then $M_1$ and $M_2$ are both conjugate to $H_{i,1}$ since the two groups in $\calH$ are conjugate in $\GL_2(\FF_\ell)$.   We shall now assume that $(\ell,i) \notin \{(3,1), (7,1)\}$.  We then have $\calH=\{H_{i,1},H_{i,2}\}$.    It thus remains to prove that $M_1=H_{i,1}$ (and hence $M_2=H_{i,2}$).  \\

Suppose that $(\ell,i) \in \big\{ (5,1), (5,5), (5,6), (7,3), (7,4), (13,4), (13,5) \}$.    Take $u$, $p$ and $a$ as in Table 2 below for the pair $(\ell,i)$.   
{
\renewcommand{\arraystretch}{1.1}
\begin{table}[htdp] 
\begin{center}\begin{tabular}{c||c|c|c|c|c|c|c}%\hline  
$(\ell,i)$ & $( 5, 1 )$ & $( 5, 5 )$ & $( 5, 6 )$ & $(7, 3)$ & $(7, 4)$  & $(13, 4)$ & $(13, 5)$   \\ \hline%\hline
$u$ & $1$ &  $2$ & $1$ &  $2$ &$2$ & $1$ &  $1$ \\
$p$ &  $2$ &  $3$ & $2$ & $3$ & $3$ &$2$  &   $2$ \\
$a$ &  $-2$ & $-1$ & $-2$ & $-3$ & $-3$ & $2$ & $2$\\
%\hline 
\end{tabular} \caption{}
\end{center}
\end{table}
}

The element $u\in \QQ$ is chosen so that $J_i(u) \notin \{0,1728,\infty\}$ and such that $J_i(u) \notin J_j(\QQ \cup \{\infty\})$ for all $j<i$.     Define the elliptic curve $E:=\calE_{i,u}/\QQ$.  By our choice of $u$, the group $\rho_{E,\ell}(\Gal_\QQ)$ is conjugate in $\GL_2(\FF_\ell)$ to $M_1$.

The curve has good reduction at the prime $p$ and we have $a=a_p(E)$.   Let $t_p$ be the image of $(a,p)$ in $\FF_\ell^2$; it equals $(\tr(A),\det(A))$ with $A:=\rho_{E,\ell}(\Frob_p) \in M_1$.   A direct computation shows that $t_p \notin \{ (\tr(A),\det(A)): A \in H_{i,2}\}$.   Therefore, $M_1$ is not conjugate to $H_{i,2}$.   So $M_1$ must be conjugate to $H_{i,1}$ and hence $M_2$ is conjugate to $H_{i,2}$.\\

Finally, consider the remaining pairs $(\ell,i) \in \{(3,3),(7,5),(7,7)\}$.

 Consider $(\ell,i)=(3,3)$.  The pair $(3(u+1)^2, 4u(u+1)^2)$ is a point of order $3$ of $\calE_{3,u}$ for all $u$.   This implies that $M_1$ is conjugate in $\GL_2(\FF_3)$ to a subgroup of $\left(\begin{smallmatrix}1 & * \\0 & * \end{smallmatrix}\right)$.   So $M_1\neq H_{i,2}$ and hence $M_1=H_{i,1}$.

We may now suppose that $\ell=7$ and $i\in\{5,7\}$.

Take $i=5$.  Let $E'/\QQ$ be the elliptic curve defined by $y^2=x^3-2835(-7)^2 x-71442(-7)^3$; it is the quadratic twist of $\calE_{5,0}$ by $-7$.   Using Theorem~\ref{T:main7}(ii), which we proved in \S\ref{S:main classification}, we find that $\pm \rho_{E',7}(\Gal_\QQ)$ is conjugate to $G_5$.   The group $\rho_{E',7}(\Gal_\QQ)$ is thus conjugate to $M_2$.    So to prove that $M_2=H_{i,2}$, and hence $M_1=H_{i,1}$, we need only verify that $E'$ has a $7$-torsion point defined over some cubic field.   Let $w\in \Qbar$ be a root of the irreducible polynomial $x^3-441x^2-83349x+22754277$.    The pair $(w, 21w -1323)$ is a point of order $7$ on $E'$.

Finally, take $i=7$.  Let $E'/\QQ$ be the elliptic curve defined by $y^2=x^3-17870609043(-7)^2 x-919511455160466(-7)^3$; it is the quadratic twist of $\calE_{7,1}$ by $-7$.   Using Theorem~\ref{T:main7}(ii), which we proved in \S\ref{S:main classification}, we find that $\pm \rho_{E',7}(\Gal_\QQ)$ is conjugate to $G_7$.   The group $\rho_{E',7}(\Gal_\QQ)$ is thus conjugate to $M_2$.    So to prove that $M_2=H_{i,2}$, and hence $M_1=H_{i,1}$, we need only verify that $E'$ has a $7$-torsion point defined over some cubic field.   Let $w\in \Qbar$ be a root of the irreducible polynomial $x^3 - 1750329x^2 + 1015924207851x - 195667237639563291$.    The pair $(w, 1323w - 714884373)$ is a point of order $7$ on $E'$.

\section{Proof of Propositions from \S\ref{SS:CM}}  \label{SS:CM proofs}

Let $E$ be an elliptic curve defined over $\QQ$ that has complex multiplication.  Let $R$ be the ring of endomorphisms  of $E_{\Qbar}$.  Let $k \subseteq \Qbar$ be the minimal extension of $\QQ$ over which all the endomorphisms of $E_{\Qbar}$ are defined; it is an imaginary quadratic field.   Moreover, we can identify $k$ with $R\otimes_\ZZ \QQ$ (the action of $R$ on the Lie algebra of $E_k$ gives a ring homomorphism $R\to k$ that extends to an isomorphism $R\otimes_\ZZ \QQ \to k$).   The field $k$ has discriminant $-D$.

Take any \emph{odd} prime $\ell$.   For each integer $n\geq 1$, let $E[\ell^n]$ be the $\ell^n$-torsion subgroup of $E(\Qbar)$.  The $\ell$-adic \defi{Tate module} $T_\ell(E)$ of $E$ is the inverse limit of the groups $E[\ell^n]$ with multiplication by $\ell$ giving transition maps $E[\ell^{n+1}]\to E[\ell]$; it is a free $\ZZ_\ell$-module of rank $2$.   The natural Galois action on $T_\ell(E)$ can be expressed in terms of a representation
\[
\rho_{E,\ell^\infty} \colon \Gal_k \to \Aut_{\ZZ_\ell}(T_\ell(E)).
\]
The ring $R$ acts on each of the $E[\ell^n]$ and this induces a faithful action of $R$ on $T_\ell(E)$.   

The Tate module  $T_\ell(E)$ is actually a free module over $R_\ell:=R\otimes_\ZZ \ZZ_\ell$ of rank $1$ (see the remarks at the end of \S4 of \cite{MR0236190}).  We can thus make an identification $\Aut_{R_\ell}(T_\ell (E)) = R_\ell^\times$.   The actions of $\Gal_k=\Gal(\Qbar/k)$ and $R_\ell$ on $T_\ell(E)$ commute, so the restriction of $\rho_{E,\ell^\infty}$ to $\Gal_k$ gives a representation
\[
\Gal_k \to \Aut_{R_\ell}(T_\ell (E))=R_\ell^\times.
\]

\begin{lemma} \label{L:CM ell-adic image}   
\begin{romanenum}
\item \label{L:CM ell-adic image a}
If $E$ has good reduction at $\ell$, then $\rho_{E,\ell^\infty}(\Gal_k) = R_\ell^\times$.
\item \label{L:CM ell-adic image b}
If $j_E\neq 0$, then $\rho_{E,\ell^\infty}(\Gal_k)$ is an open subgroup of $R_\ell^\times$ whose index is a power of $2$.
\end{romanenum}
\end{lemma}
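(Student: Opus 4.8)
The plan is to deduce both statements from the theory of complex multiplication, using the canonical identification $\Aut_{R_\ell}(T_\ell(E))=R_\ell^\times$ and the fact that, since $E$ is defined over $\QQ$, the singular modulus $j_E$ lies in $\QQ$, forcing the ring class field $k(j_E)$ of $R$ to equal $k$, so that $R$ has class number one. As $\rho_{E,\ell^\infty}|_{\Gal_k}$ takes values in the commutative group $R_\ell^\times$ it factors through $\Gal_k^\ab$, and by the main theorem of complex multiplication it is the $\ell$-adic avatar of an algebraic Hecke character $\psi$ of $k$ of infinity type $(1,0)$ and conductor $\mathfrak f_\psi$, with $E$ having good reduction at $\ell$ precisely when $\ell\nmid\mathfrak f_\psi$. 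Two structural facts will be used throughout. First, complex conjugation $c$ normalizes $\Gal_k$, and conjugation by $\rho_{E,\ell^\infty}(c)$ on $R_\ell^\times$ is the nontrivial automorphism $r\mapsto\bar r$ of $R_\ell=R\otimes_\ZZ\ZZ_\ell$; hence $\rho_{E,\ell^\infty}(\Gal_k)$ is stable under $r\mapsto\bar r$. Second, since $\wedge^2_{\ZZ_\ell}T_\ell(E)\cong\ZZ_\ell(1)$ and $T_\ell(E)$ is free of rank one over $R_\ell$, on $\Gal_k$ the map $\det\circ\rho_{E,\ell^\infty}=N_{R_\ell/\ZZ_\ell}\circ\rho_{E,\ell^\infty}$ is the cyclotomic character, so $N_{R_\ell/\ZZ_\ell}\bigl(\rho_{E,\ell^\infty}(\Gal_k)\bigr)$ has index $1$ or $2$ in $\ZZ_\ell^\times$ (index $2$ exactly when $\QQ(\sqrt{\ell^*})\subseteq k$, i.e.\ $D=\ell$).

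For part (\ref{L:CM ell-adic image a}), good reduction at the odd prime $\ell$ forces $\ell\nmid f$ and $\ell$ unramified in $k$ (a finite check against Table~1, using the reduction types at $\ell$ of the thirteen CM curves and of their quadratic twists), so $R_\ell=\OO_k\otimes_\ZZ\ZZ_\ell$ is the maximal order and $\ell$ is split or inert. If $\ell$ is inert (equivalently $\legendre{-D}{\ell}=-1$, so the reduction is supersingular), then $E[\ell^\infty]$ equals the formal group $\widehat E[\ell^\infty]$, and over the unramified quadratic extension $k_\ell=k\otimes_\QQ\QQ_\ell$ of $\QQ_\ell$, where the $R_\ell=\OO_{k_\ell}$-action is defined, $\widehat E$ is a Lubin--Tate formal $\OO_{k_\ell}$-module of height one for the uniformizer $\ell$; Lubin--Tate theory then gives $\rho_{E,\ell^\infty}(\Gal_{k_\ell})=\OO_{k_\ell}^\times=R_\ell^\times$, and \emph{a fortiori} $\rho_{E,\ell^\infty}(\Gal_k)=R_\ell^\times$. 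If $\ell$ splits as $\mathfrak l\bar{\mathfrak l}$ (ordinary reduction), then $E[\mathfrak l^\infty]$ is the formal group and $E[\bar{\mathfrak l}^\infty]$ is unramified; inertia at $\ell$ acts on the formal part through the Lubin--Tate character, surjecting onto $R_{\mathfrak l}^\times=\ZZ_\ell^\times$, and trivially on the \'etale part, so $\rho_{E,\ell^\infty}(\Gal_k)\supseteq R_{\mathfrak l}^\times\times\{1\}$ inside $R_\ell^\times=R_{\mathfrak l}^\times\times R_{\bar{\mathfrak l}}^\times$. Applying $r\mapsto\bar r$ (which preserves $\rho_{E,\ell^\infty}(\Gal_k)$ and interchanges the two factors) gives $\{1\}\times R_{\bar{\mathfrak l}}^\times$ as well, whence $\rho_{E,\ell^\infty}(\Gal_k)=R_\ell^\times$.

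For part (\ref{L:CM ell-adic image b}), openness is immediate from part (\ref{L:CM ell-adic image a}) when $E$ has good reduction at $\ell$; otherwise $E$ has potentially good reduction at $\ell$, and over a finite Galois extension $k'/\QQ$ containing $k$ over which $E$ acquires good reduction at all places above $\ell$, the local analysis above (with the Lubin--Tate character replaced by an open subgroup of $R_{\mathfrak l}^\times$, due to possible ramification of $k'_{v'}/\QQ_\ell$, together with the $r\mapsto\bar r$ symmetry, available since $k'/\QQ$ is Galois) shows $\rho_{E,\ell^\infty}(\Gal_{k'})$ is open in $R_\ell^\times$, hence so is $\rho_{E,\ell^\infty}(\Gal_k)$. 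For the index $I:=[R_\ell^\times:\rho_{E,\ell^\infty}(\Gal_k)]$, use that $h(R)=1$ gives $\AA_{k,f}^\times=\overline{k^\times}\cdot\prod_v R_v^\times$, and, since $\psi^{(\ell)}$ kills $k^\times$, that $\rho_{E,\ell^\infty}(\Gal_k)=\psi^{(\ell)}\bigl(\prod_v R_v^\times\bigr)=\mu_0\cdot\{\,\varepsilon(x)\,x:x\in R_\ell^\times\,\}$ for a finite subgroup $\mu_0\subseteq\mu(k_\ell)$ and a finite order character $\varepsilon=\psi_{f,\ell}|_{R_\ell^\times}$. Combining this with the determination of $\mathfrak f_\psi$ for an elliptic curve over $\QQ$ and with the index-$1$-or-$2$ constraint on $N_{R_\ell/\ZZ_\ell}(\rho_{E,\ell^\infty}(\Gal_k))$ noted above, one sees that the only obstructions to $\rho_{E,\ell^\infty}(\Gal_k)$ being all of $R_\ell^\times$ are a possible factor of $2$ coming from the norm and a factor dividing $|R^\times|$; since $j_E\neq0$ is exactly the condition that $R^\times$ ($=\{\pm1\}$ or $\mu_4$) be a $2$-group, $I$ is a power of $2$.

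The main obstacle is the index bound in part (\ref{L:CM ell-adic image b}): making rigorous that $I$ is a power of $2$ requires careful tracking of the conductor $\mathfrak f_\psi$ of the CM Hecke character of $E/k$ (whose prime-to-$\ell$ and $\ell$-parts are governed by $D$ and the reduction type at $\ell$) and of the closure of $k^\times$ in the idele class group; and the case of non-maximal $R$ — which for odd $\ell$ occurs only for $\ell=3$ — needs the form of the main theorem of complex multiplication relative to the ring class field rather than the Hilbert class field. The reduction-type bookkeeping invoked in part (\ref{L:CM ell-adic image a}) and in the openness step is routine but tedious.
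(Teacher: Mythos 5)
Your overall framework is the same as the paper's — both arguments run through the Serre--Tate description of $\rho_{E,\ell^\infty}|_{\Gal_k}$ as an idelic character $\varrho_{E,\ell^\infty}\colon \AA_k^\times\to R_\ell^\times$ of the form $a\mapsto \varepsilon(a)a_\ell^{-1}$. Your proof of part (i), however, is genuinely different and works: instead of the paper's global observation that good reduction at $\ell$ forces $\varepsilon$ to be unramified at all $v\mid\ell$ (so that the image contains $\varrho(R_\ell^\times)=R_\ell^\times$), you argue locally via Lubin--Tate theory on the formal group (inert/supersingular case) or on the connected--\'etale sequence plus the $r\mapsto\bar r$ symmetry coming from complex conjugation (split/ordinary case). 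That symmetry is exactly the paper's Lemma~\ref{L:non-comm of R}, so this is a legitimate alternative; it buys a more concrete picture of the inertia image at the cost of a case division and the (correct but unproved) assertion that no CM curve over $\QQ$ has good reduction at a prime ramified in $k$.

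The genuine gap is in part (ii), and you have put your finger on it yourself: the statement that the index is a power of $2$ is the entire content of the lemma, and your argument for it is an assertion, not a proof. Writing the image as $\{\varepsilon(x)x^{-1}: x\in R_\ell^\times\}$ up to roots of unity does not by itself bound the index of that subgroup; saying the obstruction is ``a factor of $2$ from the norm and a factor dividing $|R^\times|$'' presupposes precisely what needs to be shown, namely that the finite-order character $\varepsilon|_{R_\ell^\times}$ takes values in the roots of unity of $k^\times$. That is the key input, and it is supplied by Serre--Tate: their Theorem~11(ii) shows $\varepsilon$ is $k^\times$-valued with $\varepsilon(x)=x$ on $k^\times$, and Theorem~6(b) then forces $\varepsilon(R_\ell^\times)$ to lie in the group of roots of unity of $R[\ell^{-1}]^\times$, which has order $2$ or $4$ once $j_E\neq 0$ and $\ell$ is odd. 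Hence the kernel $B$ of $\varepsilon|_{R_\ell^\times}$ has $2$-power index, $\varrho(B)=B^{-1}=B$, and the image contains $B$. No tracking of the conductor $\mathfrak f_\psi$, of the closure of $k^\times$ in the idele class group, or of the class number of $R$ is needed; in particular your reliance on $h(R)=1$ and on a ring-class-field version of the main theorem of CM is a detour around the one citation that actually closes the argument.
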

\begin{proof}
Since $R_\ell^\times$ is commutative, we can factor $\rho_{E,\ell^\infty}|_{\Gal_k}$ through the maximal abelian quotient of $\Gal_k$.   Composing with the reciprocity map of class field theory, we obtain a continuous representation $\varrho_{E,\ell^\infty}\colon \AA_k^\times \to R_\ell^\times$, where $\AA_k^\times$ is the group of ideles of $k$.  Define  $k_\ell := k \otimes_\ZZ \QQ_\ell = {\prod}_{v | \ell} k_v$, where the product is over the places $v$ of $k$ lying over $\ell$ and $k_v$ is the completion of $k$ at $v$.   For an idele $a\in \AA_k^\times$, let $a_\ell$ be the component of $a$ in $k_\ell^\times$.   From \cite{MR0236190}*{Theorems 10 \& 11}, there is a unique homomorphism $\varepsilon\colon \AA_k^\times \to k^\times$ such that $\varrho_{E,\ell^\infty}(a) = \varepsilon(a) a_\ell^{-1}$ for $a\in \AA_k^\times$.   The homomorphism $\varepsilon$ satisfies $\varepsilon(x)=x$ for all $x\in k^\times$ and its kernel is open in $\AA_k^\times$.  We identify $R_\ell^\times = \prod_{v|\ell} \OO_v^\times$, where $\OO_v$ is the valuation ring of $k_v$, with a subgroup of $\AA^\times_k$ (by letting the coordinates at the places $v\nmid \ell$ of $k$ be $1$).    Let $B$ be the kernel of $\varepsilon|_{R_\ell^\times}$.      

First suppose that $E$ has good reduction at $\ell$, and hence at all places $v|\ell$ of $k$.   By the first corollary of Theorem~11 in \cite{MR0236190}, we deduce that that $\varepsilon$ is unramified at all $v | \ell$.  Therefore, $B=R_\ell^\times$ and hence $\varrho_{E,\ell^\infty}(R_\ell^\times)=R_\ell^\times$.    Therefore, $\rho_{E,\ell^\infty}(\Gal_k)$ contains, and hence is equal to, $R_\ell^\times$.

Now suppose that $j_E \neq 0$.   Since $\ell$ is odd and $j_E\neq 0$, the subgroup of $R[\ell^{-1}]^\times$ consisting of roots of unity has order $2$ or $4$.   By Theorem~11(ii) and Theorem~6(b) in \cite{MR0236190}, we find that $B$ is an open subgroup of $R_\ell^\times$ with index a power of $2$.   So $\varrho_{E,\ell^\infty}(B)=B$ and hence $\rho_{E,\ell^\infty}(\Gal_k) \supseteq B$.   Therefore, $\rho_{E,\ell^\infty}(\Gal_k)$ is an open subgroup of $R_\ell^\times$ whose index is a power of $2$.
\end{proof}

The following gives constraints on the elements of $\rho_{E,\ell^\infty}(\Gal_\QQ-\Gal_k)$.   Since $R$ is a quadratic order, there is an element $\beta \in R-\ZZ$ such that $\beta^2\in \ZZ$; note that $\beta$ is not defined over $\QQ$.  We can view $\beta$ as an endomorphism of $T_\ell(E)$.

\begin{lemma} \label{L:non-comm of R}
For any $\sigma \in \Gal_\QQ - \Gal_k$, we have $\rho_{E,\ell^\infty}(\sigma) \beta=-\beta \rho_{E,\ell^\infty}(\sigma)$ and $\tr(\rho_{E,\ell^\infty}(\sigma))=0$.
\end{lemma}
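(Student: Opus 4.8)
The plan is to exploit the compatibility between the Galois action on $\ell$-power torsion and the Galois action on endomorphisms, together with the fact that complex conjugation on $k$ inverts $\beta$.

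First I would recall the standard compatibility: for any $\sigma\in\Gal_\QQ$ and any $\phi\in\End(E_{\Qbar})$, the induced automorphism $\rho_{E,\ell^\infty}(\sigma)$ of $T_\ell(E)$ satisfies
\[
\rho_{E,\ell^\infty}(\sigma)\circ\phi\circ\rho_{E,\ell^\infty}(\sigma)^{-1}={}^{\sigma}\phi,
\]
where ${}^{\sigma}\phi$ is the endomorphism obtained by letting $\sigma$ act on the coefficients of $\phi$ (equivalently ${}^{\sigma}\phi=\sigma\circ\phi\circ\sigma^{-1}$ on $E(\Qbar)$). This is immediate from the identity $\sigma(\phi(P))={}^{\sigma}\phi(\sigma(P))$ for $P\in E(\Qbar)$, specialized to $\ell$-power torsion points.

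Next I would compute ${}^{\sigma}\beta$ for $\sigma\in\Gal_\QQ-\Gal_k$. Since $k$ is the field of definition of $\End(E_{\Qbar})$ and $\sigma\notin\Gal_k$, the restriction $\sigma|_k$ is the nontrivial automorphism of $k/\QQ$, i.e. complex conjugation; and since $R$ is stable under $\Gal_\QQ$ with $R\cap\QQ=\ZZ$, the element $\beta\in R-\ZZ$ lies in $k-\QQ$. As $\beta^2\in\ZZ$, the minimal polynomial of $\beta$ over $\QQ$ is $X^2-\beta^2$, so $\tr_{k/\QQ}(\beta)=0$ and hence ${}^{\sigma}\beta=\bar\beta=-\beta$. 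Combining with the displayed identity gives $\rho_{E,\ell^\infty}(\sigma)\,\beta=-\beta\,\rho_{E,\ell^\infty}(\sigma)$, which is the first assertion. For the trace, I would pass to $V_\ell(E):=T_\ell(E)\otimes_{\ZZ_\ell}\QQ_\ell$: because $\beta^2$ is a nonzero integer (indeed a negative one), $\beta$ acts invertibly on $V_\ell(E)$, so the relation just proved rewrites as $\beta^{-1}\,g\,\beta=-g$ with $g:=\rho_{E,\ell^\infty}(\sigma)$; thus $g$ and $-g$ are conjugate in $\GL(V_\ell(E))$, whence $\tr(g)=\tr(-g)=-\tr(g)$ and therefore $\tr(g)=0$.

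The computation is short; the only points that require care are the bookkeeping of the Galois-action convention on endomorphisms, so that the conjugation identity comes out on the correct side, and the remark that $\beta$ acts invertibly on $V_\ell(E)$. Neither of these is a genuine obstacle.
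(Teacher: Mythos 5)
Your proof is correct and follows essentially the same route as the paper: derive $\sigma(\beta)=-\beta$ from $\beta^2\in\ZZ$ and $\beta\notin\QQ$, use $\sigma(\phi(P))={}^{\sigma}\phi(\sigma(P))$ on $\ell$-power torsion to get the anticommutation relation, and then invert $\beta$ on $T_\ell(E)\otimes_{\ZZ_\ell}\QQ_\ell$ to conclude $\tr(g)=-\tr(g)=0$. The only difference is that you spell out the Galois-equivariance of the endomorphism action and the invertibility of $\beta$ slightly more explicitly than the paper does.
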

\begin{proof}
Take any $\sigma \in \Gal_\QQ - \Gal_k$.   The group $\Gal_\QQ$ acts on $R$ and we have $\sigma(\beta)=-\beta$ since $\beta^2 \in \ZZ$ and $\beta$ is not defined over $\QQ$ (but is defined over $k$).  So for each $P\in E[\ell^n]$, we have $\sigma( \beta(P))=\sigma(\beta)(\sigma(P))=-\beta(\sigma(P))$.  Taking an inverse limit, we deduce that $\rho_{E,\ell^\infty}(\sigma) \beta=-\beta \rho_{E,\ell^\infty}(\sigma)$.      In $\Aut_{\QQ_\ell}(T_\ell(E)\otimes_{\ZZ_\ell} \QQ_\ell) \cong \GL_2(\QQ_\ell)$, we have $\rho_{E,\ell^\infty}(\sigma) =-\beta \rho_{E,\ell^\infty}(\sigma)\beta^{-1}$.   Taking traces we deduce that $\tr(\rho_{E,\ell^\infty}(\sigma))=-\tr(\rho_{E,\ell^\infty}(\sigma))$ and hence $\tr(\rho_{E,\ell^\infty}(\sigma))=0$.
\end{proof}

\begin{lemma}  \label{L:EDf generics}
Suppose that $\ell \nmid D$ and that $E$ has good reduction at $\ell$.
\begin{romanenum}
\item
If $\ell$ splits in $k$, then $\rho_{E,\ell}(\Gal_\QQ)$ is conjugate in $\GL_2(\FF_\ell)$ to $N_s(\ell)$.
\item
If $\ell$ is inert in $k$, then $\rho_{E,\ell}(\Gal_\QQ)$ is conjugate in $\GL_2(\FF_\ell)$ to $N_{ns}(\ell)$.
\end{romanenum}
\end{lemma}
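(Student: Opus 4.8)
The plan is to reduce the $\ell$-adic information of Lemmas~\ref{L:CM ell-adic image} and~\ref{L:non-comm of R} modulo $\ell$ and read off the image inside $\GL_2(\FF_\ell)$. First I would record that, since $\ell$ is odd and $\ell\nmid D$, we moreover have $\ell\nmid f$ (for an elliptic curve over $\QQ$ with CM, $f\in\{1,2,3\}$, and $f>1$ forces $D\in\{3,4,7\}$, so $\ell\nmid D$ already gives $\ell\nmid f$); hence $R_\ell:=R\otimes_\ZZ\ZZ_\ell$ is the maximal order of $k_\ell=k\otimes_\QQ\QQ_\ell$ and $A:=R/\ell R$ is a $2$-dimensional \'etale $\FF_\ell$-algebra, with $A\cong\FF_\ell\times\FF_\ell$ when $\ell$ splits in $k$ and $A\cong\FF_{\ell^2}$ when $\ell$ is inert. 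Reducing $T_\ell(E)$ modulo $\ell$, the module $E[\ell]=T_\ell(E)/\ell T_\ell(E)$ is free of rank $1$ over $A$; fixing an $\FF_\ell$-basis of $E[\ell]$ thus embeds $A$ as a subalgebra of $\End_{\FF_\ell}(E[\ell])=M_2(\FF_\ell)$.

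Next I would compute the image of $\Gal_k$. By Lemma~\ref{L:CM ell-adic image}(i), $\rho_{E,\ell^\infty}(\Gal_k)=R_\ell^\times$; since $R_\ell$ is a finite product of complete local rings, reduction mod $\ell$ is surjective on units, so $\rho_{E,\ell}(\Gal_k)=A^\times$ as a subgroup of $\GL_2(\FF_\ell)$. When $\ell$ splits, the two idempotents of $A$ split $E[\ell]$ into a direct sum of two lines, and a basis adapted to this splitting conjugates $A^\times$ to the diagonal subgroup $C_s(\ell)$. When $\ell$ is inert, $A\cong\FF_{\ell^2}$ acts on the one-dimensional $\FF_{\ell^2}$-space $E[\ell]$ by multiplication, so $A^\times$ is a non-split Cartan subgroup, hence conjugate to $C_{ns}(\ell)$. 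Thus, after a suitable conjugation, $\rho_{E,\ell}(\Gal_k)$ equals the Cartan subgroup $C\in\{C_s(\ell),C_{ns}(\ell)\}$ determined by the splitting behavior of $\ell$ in $k$.

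It remains to pass from $\Gal_k$ to $\Gal_\QQ$, where Lemma~\ref{L:non-comm of R} enters. Since $k/\QQ$ is quadratic, $[\Gal_\QQ:\Gal_k]=2$; pick $\sigma_0\in\Gal_\QQ-\Gal_k$ and set $g_0:=\rho_{E,\ell}(\sigma_0)$, so $\rho_{E,\ell}(\Gal_\QQ)=C\cup g_0 C$. Choose $\beta\in R-\ZZ$ with $\beta^2\in\ZZ$ to be a primitive element (a $\ZZ$-module generator of the trace-zero part of $R$); then its reduction $\bar\beta\in A$ generates $A$ over $\FF_\ell$, and $\bar\beta$ is invertible in the \'etale algebra $A$ (its minimal polynomial over $\FF_\ell$ is $X^2-\bar\beta^2$ with $\bar\beta^2\in\FF_\ell$, and $\bar\beta^2\neq0$ since $A$ is reduced). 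Reducing the identity of Lemma~\ref{L:non-comm of R} mod $\ell$ gives $g_0\bar\beta=-\bar\beta g_0$ in $M_2(\FF_\ell)$; hence conjugation by $g_0$ fixes $1$ and sends $\bar\beta$ to $-\bar\beta\in A$, so it preserves $A=\FF_\ell\oplus\FF_\ell\bar\beta$ and therefore normalizes $A^\times=C$. Moreover $g_0\notin C$: every element of the commutative algebra $A$ commutes with $\bar\beta$, whereas $g_0\bar\beta=-\bar\beta g_0\neq\bar\beta g_0$ because $\ell$ is odd and $\bar\beta$ is invertible. So $\rho_{E,\ell}(\Gal_\QQ)$ is a subgroup of the normalizer $N$ of $C$ in $\GL_2(\FF_\ell)$ that properly contains $C$; since $[N:C]=2$ this forces $\rho_{E,\ell}(\Gal_\QQ)=N$, which is $N_s(\ell)$ when $C=C_s(\ell)$ and $N_{ns}(\ell)$ when $C=C_{ns}(\ell)$ — exactly the assertion.

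I expect the only genuinely delicate points to be the elementary bookkeeping at the start (verifying $\ell\nmid f$ so that $A$ is \'etale with the claimed structure) and the standard but slightly fussy identification of conjugacy classes of Cartan subalgebras of $M_2(\FF_\ell)$ and their normalizers with $C_s(\ell),C_{ns}(\ell),N_s(\ell),N_{ns}(\ell)$; everything else is a formal consequence of Lemmas~\ref{L:CM ell-adic image} and~\ref{L:non-comm of R}.
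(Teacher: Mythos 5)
Your proof is correct and follows essentially the same route as the paper: Lemma~\ref{L:CM ell-adic image} identifies $\rho_{E,\ell}(\Gal_k)$ with a Cartan subgroup $(R/\ell R)^\times$ whose type matches the splitting of $\ell$ in $k$, and Lemma~\ref{L:non-comm of R} forces the image of $\Gal_\QQ$ to be the full normalizer. The only differences are cosmetic — you verify explicitly that $g_0$ normalizes $C$ by computing its conjugation action on $\bar\beta$ (the paper instead invokes normality of $\rho_{E,\ell}(\Gal_k)$ in $\rho_{E,\ell}(\Gal_\QQ)$ from $k/\QQ$ being Galois), and you spell out the check that $\ell\nmid f$ so that $R/\ell R$ is \'etale, which the paper leaves implicit.
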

\begin{proof}
Lemma~\ref{L:CM ell-adic image} implies that the group $C:=\rho_{E,\ell}(\Gal_k)$ is isomorphic to $(R/\ell R)^\times$.   The ring $R/\ell R$ is isomorphic to $\FF_\ell \times \FF_\ell$ or $\FF_{\ell^2}$ when $\ell$ splits or is inert in $k$, respectively.    Therefore, $C$ is a Cartan subgroup of $\GL_2(\FF_\ell)$; it is split if and only if $\ell$ splits in $k$.    Let $N$ be the normalizer of $C$ in $\GL_2(\FF_\ell)$.     The group $C=\rho_{E,\ell}(\Gal_k)$ is normal in $\rho_{E,\ell}(\Gal_\QQ)$ since $k/\QQ$ is a Galois extension, so $\rho_{E,\ell}(\Gal_\QQ) \subseteq N$.  

It remains to show that $\rho_{E,\ell}(\Gal_\QQ)= N$.  Suppose that $\rho_{E,\ell}(\Gal_\QQ) \neq N$, and hence $\rho_{E,\ell}(\Gal_\QQ)=C=\rho_{E,\ell}(\Gal_k)$.  This implies that the actions of $\Gal_\QQ$ and $R$ on $E[\ell]$ commute.  However, this contradicts Lemma~\ref{L:non-comm of R} which implies that the actions of $\sigma \in \Gal_\QQ-\Gal_k$ and $\beta$ on $E[\ell]$ anti-commute.   Therefore, $\rho_{E,\ell}(\Gal_\QQ) = N$.
\end{proof}

We now describe the commutator subgroup of the normalizer $N$ of a Cartan subgroup $C$ of $\GL_2(\FF_\ell)$.   Let $\varepsilon \colon N \to N/C \cong \{\pm 1\}$ be the quotient map, and define the homomorphism
\[
\varphi\colon N \to \{\pm 1\} \times \FF_\ell^\times,\quad A \mapsto (\varepsilon(A),\det(A));
\]
it is surjective.

\begin{lemma} \label{L:N group theory}
\begin{romanenum}
\item \label{L:N group theory i}
The commutator subgroup of $N$ is $\ker \varphi$, i.e., the subgroup of $C$ consisting of matrices with determinant $1$.  
\item \label{L:N group theory ii}
If $H$ is a subgroup of $N$ satisfying $\pm H = N$, then $H=N$.
\end{romanenum}
\end{lemma}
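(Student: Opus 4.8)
The plan is to prove part~(\ref{L:N group theory i}) by an explicit commutator computation and then deduce part~(\ref{L:N group theory ii}) from it. First I would observe that $\varphi$ is a surjective homomorphism onto the abelian group $\{\pm 1\}\times \FF_\ell^\times$, so the commutator subgroup $[N,N]$ is contained in $\ker\varphi$, which is exactly $C\cap \SL_2(\FF_\ell)$, the subgroup of matrices in $C$ with determinant $1$. For the reverse inclusion it suffices to exhibit enough commutators. Pick an element $w\in N - C$ with $w^2$ scalar (the standard representatives $\left(\begin{smallmatrix}0&1\\1&0\end{smallmatrix}\right)$ in the split case and $\left(\begin{smallmatrix}1&0\\0&-1\end{smallmatrix}\right)$ in the non-split case). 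Conjugation by $w$ acts on $C$ by the nontrivial automorphism of the quadratic \'etale algebra underlying $C$; that is, on the split Cartan it swaps the two diagonal entries, and on the non-split Cartan it is the Frobenius $x\mapsto x^\ell$ of $\FF_{\ell^2}^\times$. Hence for $c\in C$ the commutator $[w,c] = w c w^{-1} c^{-1}$ equals $\bar c c^{-1}$, where $\bar{\phantom c}$ denotes this conjugation. In the split case $\bar c c^{-1}$ runs over all $\left(\begin{smallmatrix}t&0\\0&t^{-1}\end{smallmatrix}\right)$ with $t\in\FF_\ell^\times$ as $c$ varies, which is precisely $C\cap\SL_2$; in the non-split case $\bar c c^{-1} = c^{\ell-1}$ runs over the norm-one subgroup $\{x\in\FF_{\ell^2}^\times : x^{\ell+1}=1\}$, which is again $C\cap\SL_2$. (One should note $\ell$ is odd here so these subgroups are nontrivial, but the argument only needs the image to fill $C\cap\SL_2$, which it does for all $\ell$.) Thus $\ker\varphi\subseteq[N,N]$, giving equality; and since $\ker\varphi\subseteq C$, the commutator subgroup consists of matrices in $C$ with determinant $1$, as claimed.

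For part~(\ref{L:N group theory ii}), suppose $H\le N$ with $\pm H = N$. Then $H$ has index $1$ or $2$ in $N$. If the index were $2$ then $H$ would be normal with $N/H$ cyclic of order $2$, hence $H\supseteq [N,N] = \ker\varphi$ by part~(i). But $-I\in \ker\varphi\subseteq H$ (indeed $-I\in C$ and $\det(-I)=1$), which forces $\pm H = H\ne N$, a contradiction. Therefore $H=N$.

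The only real subtlety — and the step I expect to need the most care — is the commutator computation in part~(i): one must verify that the map $c\mapsto \bar c c^{-1}$ from $C$ to $C\cap\SL_2$ is surjective in both the split and non-split cases, treating the two Cartan types uniformly. This is elementary (in the non-split case it is the surjectivity of $x\mapsto x^{\ell-1}$ from $\FF_{\ell^2}^\times$ onto the $(\ell+1)$-torsion, which holds because $\FF_{\ell^2}^\times$ is cyclic of order $\ell^2-1 = (\ell-1)(\ell+1)$), but it is where the structure of $N$ actually enters, so I would spell it out carefully rather than leave it to the reader.
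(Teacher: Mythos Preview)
Your proposal is correct and follows essentially the same approach as the paper's proof. Both arguments show $[N,N]\subseteq\ker\varphi$ trivially and then obtain the reverse inclusion by computing commutators $[w,c]$ with $w\in N\setminus C$ and $c\in C$, identifying this with $c\mapsto c^{\ell-1}$ on $\FF_{\ell^2}^\times$ in the non-split case; part~(ii) is then deduced identically, by noting that $H\supseteq [N,N]\ni -I$ forces $H=\pm H=N$.
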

\begin{proof}
The kernel of $\varphi$ contains the commutator subgroup of $N$ since the image of $\varphi$ is abelian.   It suffices to show that every element in $\ker \varphi$ is a commutator.  If $N$ is conjugate to $N_{s}(\ell)$, this is immediate since 
$\left(\begin{smallmatrix}0 & 1 \\1 & 0 \end{smallmatrix}\right)\left(\begin{smallmatrix}1 & 0 \\0 & a \end{smallmatrix}\right)\left(\begin{smallmatrix}0 & 1 \\1 & 0 \end{smallmatrix}\right)^{-1}\left(\begin{smallmatrix}1 & 0 \\0 & a \end{smallmatrix}\right)^{-1}=\left(\begin{smallmatrix}a & 0 \\0 & a^{-1} \end{smallmatrix}\right)$.

We now consider the non-split case.  We may take $C=C(\ell)$, $N=N(\ell)$ and with the explicit $\epsilon\in \FF_\ell^\times$ as given in the notation section of \S\ref{S:classification}.  Fix $\beta\in \FF_{\ell^2}$ for which $\beta^2 = \epsilon$.   The map $C(\ell)\to \FF_{\ell^2}^\times$, $\left(\begin{smallmatrix}a & b\epsilon \\b & a \end{smallmatrix}\right) \to a+b \beta$ is a group isomorphism.   Fix any $B \in N(\ell)-C(\ell)$.  One can check that the map
\begin{align} \label{E:actually norm}
C(\ell) \to C(\ell),\quad A \mapsto BAB^{-1} A^{-1}
\end{align}
corresponds to the homomorphism $\FF_{\ell^2}^\times \to \FF_{\ell^2}^\times$,  $\alpha \mapsto \alpha^{\ell-1}$.   In particular, the image of the map (\ref{E:actually norm}) is the unique (cyclic) subgroup of $C(\ell)$ of order $\ell+1$; these are the matrices in $C(\ell)$ with determinant $1$.  This completes the proof of (\ref{L:N group theory i}).

Finally, let $H$ be a subgroup of $N$ satisfying $\pm H=N$.  The group $H$ is normal in $N$ and $N/H$ is abelian, so $H$ contains the commutator subgroup of $N$.   From (\ref{L:N group theory i}), the commutator subgroup of $N$, and hence $H$, contains $-I$.  Therefore, $H=\pm H=N$.
\end{proof}

\subsection{Proof of Proposition~\ref{P:CM main}(\ref{P:CM main a}) and (\ref{P:CM main b})}
Let $E/\QQ$ be an CM elliptic curve with $j_E \neq 0$.   The curve $E$ is thus a twist of one of the curves $E_{D,f}/\QQ$ from Table~1.   Take any odd prime $\ell \nmid D$.  The curve $E_{D,f}$ has good reduction at $\ell$.  By Lemma~\ref{L:EDf generics}, the group $\rho_{E_{D,f},\ell}(\Gal_\QQ)$ is the normalizer $N$ of a Cartan subgroup $C$ of $\GL_2(\FF_\ell)$. Also the Cartan subgroup $C$ is split or non-split if $\ell$ is split or inert, respectively, in $k$.  

First suppose that $j_E\neq 1728$.  Since $j_E\notin\{0,1728\}$, the curve $E$ is a quadratic twist of $E_{D,f}$.  As noted in the introduction, this implies that $\pm \rho_{E,\ell}(\Gal_\QQ)$ and $\pm \rho_{E_{D,f},\ell}(\Gal_\QQ)=N$ are conjugate in $\GL_2(\FF_\ell)$.    After first conjugating $\rho_{E,\ell}(\Gal_\QQ)$, we may assume that $N=\pm\rho_{E,\ell}(\Gal_\QQ)$.     By Lemma~\ref{L:N group theory}, we have $\rho_{E,\ell}(\Gal_\QQ)=N$.

Now suppose that $j_E=1728$.  Let $\mu_4$ be the group of $4$-th roots of unity in $R$.   The elliptic curve $E/\QQ$ can be defined by an equation of the form $y^2=x^3+dx$ for some non-zero integer $d$, i.e., $E$ is a \defi{quartic twist} of $E_{4,1}$.  There is thus a character $\alpha \colon \Gal_k \to \mu_4 \subseteq R^\times$ such that the representations $\rho_{E,\ell^\infty}$ and $\alpha\cdot \rho_{E_{4,1},\ell^\infty}\colon \Gal_k \to R_\ell^\times$ are equal.  We have $\rho_{E_{4,1},\ell^\infty}(\Gal_k)= R_\ell^\times$ by Lemma~\ref{L:CM ell-adic image}(\ref{L:CM ell-adic image a}), so the image of $\rho_{E,\ell^\infty}(\Gal_k)$ in $R_\ell^\times/\{\pm 1\}$ has index $1$ or $2$.    Therefore, the image of $\rho_{E,\ell}(\Gal_k)$ in $C/\{\pm I\}$ has index $1$ or $2$.    We have $\rho_{E,\ell}(\Gal_\QQ)\not\subseteq C$ since otherwise the actions of $\Gal_\QQ$ and $R$ on $E[\ell]$ would commute (which is impossible by Lemma~\ref{L:non-comm of R}).   Therefore, the image of $\rho_{E,\ell}(\Gal_\QQ)$ in $N/\{\pm I\}$ is an index $1$ or $2$ subgroup.    

The group $G:=\pm\rho_{E,\ell}(\Gal_\QQ)$ thus has index $1$ or $2$ in $N$.   Since $\rho_{E,\ell}(\Gal_k)\subseteq C$ and $\rho_{E,\ell}(\Gal_\QQ)\not\subseteq C$, the quadratic character $\varepsilon\circ \rho_{E,\ell} \colon \Gal_\QQ \to \{\pm 1\}$ corresponds to the extension $k=\QQ(i)$ of $\QQ$.     The homomorphism $\det\circ \rho_{E,\ell} \colon \Gal_\QQ \to \FF_\ell^\times$ is surjective and factors through $\Gal(\QQ(\zeta_\ell)/\QQ)$.  We have $\QQ(i)\cap \QQ(\zeta_\ell)=\QQ$ since $\ell$ is odd, so $\varphi(\rho_{E,\ell}(\Gal_\QQ))=\{\pm 1\} \times \FF_\ell^\times$ and hence $\varphi(G)=\{\pm 1\} \times \FF_\ell^\times$.       Since $[N:G]\leq 2$, the group $G$ is normal in $N$ with abelian quotient $N/G$.  In particular, $G$ contains the commutator subgroup of $N$.  By Lemma~\ref{L:N group theory}, we deduce that $G$ contains the kernel of $\varphi$.   Since $G$ contains the kernel of $\varphi$ and $\varphi(G)=\{\pm 1\} \times \FF_\ell^\times$, we have $G=N$.    By Lemma~\ref{L:N group theory}, we conclude that $\rho_{E,\ell}(\Gal_\QQ)=N$.

\subsection{Proof of Proposition~\ref{P:CM main}(\ref{P:CM main c})}

We first consider the elliptic curve $E=E_{D,f}$ over $\QQ$ from Table~1 with $D=\ell$, where $\ell$ is an odd prime and $j_E\neq 0$.  We have $k=\QQ(\sqrt{-\ell})$.

\begin{lemma}  \label{L:CM D eq ell}
The group $\pm \rho_{E,\ell}(\Gal_\QQ)$ is conjugate to $G$.
\end{lemma}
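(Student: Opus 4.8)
The elliptic curve $E=E_{D,f}$ with $D=\ell$ has good reduction away from $\ell$, and its conductor (from Table~1) is $\ell^2$, so $E$ has \emph{additive, potentially good} reduction at $\ell$. The key structural facts I would assemble are: (i) $k=\QQ(\sqrt{-\ell})$ is ramified at $\ell$, so $\ell$ ramifies in $k$ and $R\otimes_\ZZ\ZZ_\ell=\OO_\lambda$ where $\lambda$ is the unique prime of $k$ above $\ell$, a totally ramified extension of $\ZZ_\ell$; (ii) reduction mod $\lambda$ of $\OO_\lambda^\times$ lands in $\FF_\ell^\times$ with kernel the principal units $1+\lambda\OO_\lambda$. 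So the image of $\rho_{E,\ell}(\Gal_k)$ inside $\GL_2(\FF_\ell)$ lies in a (conjugate of a) Borel subgroup, and in fact in the subgroup of matrices $\left(\begin{smallmatrix}a&b\\0&a\end{smallmatrix}\right)$ with $a\in\FF_\ell^\times$, $b\in\FF_\ell$ — these are exactly the units of $\OO_\lambda/\lambda^2$ reduced via the two-step filtration. This is where $G$ (the group $\{\left(\begin{smallmatrix}a&b\\0&\pm a\end{smallmatrix}\right)\}$) comes from after adjoining the element coming from $\Gal_\QQ\setminus\Gal_k$.

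\textbf{Main steps.} First I would pin down $\rho_{E,\ell}(\Gal_k)$ precisely. Lemma~\ref{L:CM ell-adic image}(\ref{L:CM ell-adic image b}) (applicable since $j_E\neq0$) tells us $\rho_{E,\ell^\infty}(\Gal_k)$ is an open subgroup of $R_\ell^\times=\OO_\lambda^\times$ of $2$-power index. Since $\ell$ is odd, reducing mod $\ell$ (i.e.\ mod $\lambda^2$, as $\lambda^2=(\ell)$) kills any $2$-power-index obstruction coming from higher units, and I expect $\rho_{E,\ell}(\Gal_k)$ to be the full image of $\OO_\lambda^\times\to(\OO_\lambda/\lambda^2)^\times$, which after choosing a uniformizer identification is conjugate to the upper-triangular unipotent-by-scalar group $U:=\{\left(\begin{smallmatrix}a&b\\0&a\end{smallmatrix}\right):a\in\FF_\ell^\times,b\in\FF_\ell\}$. (To be careful one must check the $2$-power index really does vanish after reduction; since $(\OO_\lambda/\lambda^2)^\times$ has order $\ell(\ell-1)$ and any $2$-power-index subgroup of $\OO_\lambda^\times$ surjects onto its quotients of odd order and of order the $2$-part of $\ell-1$, one needs the index to be exactly controlled — this can be done using that $E$ has good reduction away from $\ell$ and a local computation at $\ell$ via Lubin–Tate / the conductor $\ell^2$, or one simply invokes the explicit CM theory as in \cite{MR0236190}.) Second, I would use Lemma~\ref{L:non-comm of R}: for $\sigma\in\Gal_\QQ\setminus\Gal_k$, $\rho_{E,\ell^\infty}(\sigma)$ anticommutes with $\beta$ (where $\beta^2\in\ZZ$, $\beta\in R-\ZZ$), so reducing, $\rho_{E,\ell}(\sigma)$ conjugates $U$ to itself by the nontrivial automorphism $\left(\begin{smallmatrix}a&b\\0&a\end{smallmatrix}\right)\mapsto\left(\begin{smallmatrix}a&-b\\0&a\end{smallmatrix}\right)$ — equivalently $\rho_{E,\ell}(\sigma)$ normalizes $U$ without centralizing it, hence lands in the normalizer $\pm\rho_{E,\ell}(\Gal_\QQ)$-worth of $G=\{\left(\begin{smallmatrix}a&b\\0&\pm a\end{smallmatrix}\right)\}$. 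Third, I would check $\rho_{E,\ell}(\Gal_\QQ)$ is not contained in $U$ (again Lemma~\ref{L:non-comm of R}: the actions of $\Gal_\QQ$ and $R$ on $E[\ell]$ cannot commute), so $\pm\rho_{E,\ell}(\Gal_\QQ)$ strictly contains $\pm U$ and is contained in $G$; since $[G:\pm U]=2$, equality holds: $\pm\rho_{E,\ell}(\Gal_\QQ)=G$.

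\textbf{Expected obstacle.} The delicate point is the ramified-prime local analysis at $\ell$: showing that $\rho_{E,\ell}(\Gal_k)$ is \emph{exactly} (a conjugate of) $U$ rather than a proper subgroup. The $2$-power-index statement of Lemma~\ref{L:CM ell-adic image}(\ref{L:CM ell-adic image b}) is not by itself enough to conclude the image mod $\lambda^2$ is full when $4\mid\ell-1$, so one must either extract more from \cite{MR0236190}*{Theorem~11} at the ramified place (controlling $\varepsilon|_{\OO_\lambda^\times}$, using that $E$ has good reduction everywhere outside $\ell$ and conductor exactly $\ell^2$), or argue directly that the Galois action on the canonical filtration of $E[\ell]$ at the additive-reduction prime $\ell$ is surjective on the relevant graded pieces. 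I would present this as the technical core of the lemma, with the group-theoretic assembly (Steps two and three) being short and formal given it. Everything else — identifying $R_\ell$, the shape of $U$ and its normalizer, the index-$2$ bookkeeping — is routine.
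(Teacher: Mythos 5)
Your plan follows essentially the same route as the paper: identify $(R/\ell R)^\times$ with the group $U=\{\left(\begin{smallmatrix}a&b\\0&a\end{smallmatrix}\right)\}$ (the paper does this via the basis $\{1,\bbar\beta\}$ of $R/\ell R$, where $\bbar\beta$ is the image of $f\sqrt{-D}$ and satisfies $\bbar\beta^2=0$), use Lemma~\ref{L:non-comm of R} to force every element of $\rho_{E,\ell}(\Gal_\QQ-\Gal_k)$ into the shape $\left(\begin{smallmatrix}a&b\\0&-a\end{smallmatrix}\right)$, and then assemble. The one place your proposal is incomplete is exactly the step you flag as the ``technical core,'' and there you are working too hard on a stronger statement than you need. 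You do not need $\rho_{E,\ell}(\Gal_k)=U$; you only need $\pm\rho_{E,\ell}(\Gal_k)=U$, and this follows from ingredients already in hand, with no Lubin--Tate or refined analysis of the Serre--Tate character at the ramified place. Indeed $\ell\equiv 3\pmod 4$ automatically here (since $-\ell$ is the discriminant of an imaginary quadratic field), so the $2$-part of $|U|=\ell(\ell-1)$ is $2$, and the $2$-power-index bound of Lemma~\ref{L:CM ell-adic image}(\ref{L:CM ell-adic image b}) already forces $\rho_{E,\ell}(\Gal_k)$ to have index $1$ or $2$ in $U$; the unique index-$2$ subgroup of $U$ is $\{\left(\begin{smallmatrix}a&b\\0&a\end{smallmatrix}\right):a\in(\FF_\ell^\times)^2\}$, which does not contain $-I$ because $-1$ is a nonsquare, so $\pm$ of it is all of $U$. (The paper argues just as cheaply: the image contains the unipotents since its index is prime to $\ell$, and $\det\circ\rho_{E,\ell}|_{\Gal_k}$ surjects onto $(\FF_\ell^\times)^2$ because $k=\QQ(\sqrt{-\ell})\subseteq\QQ(\zeta_\ell)$, which bounds the index by $2$; then $\ell\equiv3\pmod4$ finishes.) Your worry about the case $4\mid\ell-1$ therefore never arises, and the exact image of $\Gal_k$ is irrelevant to this lemma (it is only needed later, in the proof that $\rho_{E,\ell}(\Gal_\QQ)=H_1$, where the paper uses division polynomials instead). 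One small inaccuracy: $R_\ell=\OO_\lambda$ fails when $\ell\mid f$ (e.g.\ for $E_{3,3}$), but $R/\ell R\cong\FF_\ell[\epsilon]/(\epsilon^2)$ in all cases, so nothing in the argument changes.
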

\begin{proof}
Let $\bbar\beta$ be the image of $f\sqrt{-D}$ in $R/\ell R$.  Since $\ell$ is odd, the $\FF_\ell$-module $R/\ell R$ has basis $\{\bbar\beta,1\}$ and $\bbar\beta^2=0$.   Using this basis, we find that $R/\ell R$ is isomorphic to the subring $A:=\FF_\ell\left(\begin{smallmatrix}0 & 1 \\0 & 0 \end{smallmatrix}\right) \oplus \FF_\ell\left(\begin{smallmatrix}1 & 0 \\0 & 1 \end{smallmatrix}\right)$ of $M_2(\FF_\ell)$.   Using that $\rho_{E,\ell^\infty}(\Gal_k) \subseteq R_\ell^\times$, we deduce that $\rho_{E,\ell}(\Gal_k)$ is conjugate in $\GL_2(\FF_\ell)$ to a subgroup of $A^\times$.  We may thus assume that 
\[
\rho_{E,\ell}(\Gal_k) \subseteq A^\times = \{ \left(\begin{smallmatrix}a & b \\0 & a \end{smallmatrix}\right) : a\in \FF_\ell^\times, b\in \FF_\ell\}.
\]
By Lemma~\ref{L:CM ell-adic image}(\ref{L:CM ell-adic image b}), we deduce that $[A^\times:\rho_{E,\ell}(\Gal_k)]$ is a power of $2$ and hence $\rho_{E,\ell}(\Gal_k)$ contains the order $\ell$ group $\langle \left(\begin{smallmatrix}1 & 1 \\0 & 1 \end{smallmatrix}\right) \rangle$.   The order of $\rho_{E,\ell}(\Gal_k)$ is divisible by $(\ell-1)/2$ since $\det(\rho_{E,\ell}(\Gal_k))=(\FF_\ell^\times)^2$, so $\rho_{E,\ell}(\Gal_k)$ contains $\{ \left(\begin{smallmatrix}a & b \\0 & a \end{smallmatrix}\right) : a\in (\FF_\ell^\times)^2, b\in \FF_\ell\}$.  Therefore, $\pm \rho_{E,\ell}(\Gal_k)=A^\times$ since $-1$ is not a square in $\FF_\ell$ (we have $\ell\equiv 3\pmod{4}$).

Fix any $\sigma\in \Gal_\QQ-\Gal_k$. The matrix $g = \rho_{E,\ell}(\sigma)$ is upper triangular since the Borel subgroup $B(\ell)$ is the normalizer of $A^\times=\pm\rho_{E,\ell}(\Gal_k)$ in $\GL_2(\FF_\ell)$.   We have $\tr(g)=0$ by Lemma~\ref{L:non-comm of R}, so $g=\left(\begin{smallmatrix}a & b \\0 & -a \end{smallmatrix}\right)$ for some $a\in \FF_\ell^\times$ and $b\in \FF_\ell$.   The group $\pm \rho_{E,\ell}(\Gal_\QQ)$ is generated by $g$ and $A^\times$ and is thus $G$.
\end{proof}

The subgroups $H$ of $G$ that satisfy $\pm H = G$ are $H_1$, $H_2$ and $G$.    

\begin{lemma} \label{L:H1new}
The groups $\rho_{E,\ell}(\Gal_\QQ)$ and $H_1$ are conjugate in $\GL_2(\FF_\ell)$.
\end{lemma}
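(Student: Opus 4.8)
\emph{Proof proposal.} The plan is to identify the two characters appearing on the diagonal of $\rho_{E,\ell}$ and to show that the one carrying the $\Gal_\QQ$-stable line of $E[\ell]$ takes only square values in $\FF_\ell^\times$.

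First I would recall the setup from the proof of Lemma~\ref{L:CM D eq ell}: after conjugating we may assume $\rho_{E,\ell}(\Gal_k)\subseteq A^\times:=\big\{\left(\begin{smallmatrix}a&b\\0&a\end{smallmatrix}\right):a\in\FF_\ell^\times,\ b\in\FF_\ell\big\}$, that $\rho_{E,\ell}(\Gal_k)$ contains $\left(\begin{smallmatrix}1&1\\0&1\end{smallmatrix}\right)$ and the subgroup $A_0:=\big\{\left(\begin{smallmatrix}a&b\\0&a\end{smallmatrix}\right):a\in(\FF_\ell^\times)^2\big\}$, that $\rho_{E,\ell}(\Gal_\QQ)\subseteq B(\ell)$, and that $\pm\rho_{E,\ell}(\Gal_\QQ)=G$. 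Let $\chi_1,\chi_2\colon\Gal_\QQ\to\FF_\ell^\times$ be the characters given by the diagonal entries of $\rho_{E,\ell}$ and $\omega$ the mod-$\ell$ cyclotomic character, so $\chi_1\chi_2=\omega$ and $\chi_1|_{\Gal_k}=\chi_2|_{\Gal_k}$ (this common restriction is the character by which $\Gal_k$ acts on the stable line $E[\mathfrak l]$, $\mathfrak l$ the prime of $k$ over $\ell$). By Lemma~\ref{L:non-comm of R}, $\rho_{E,\ell}(\Gal_k)$ has index $2$ in $\rho_{E,\ell}(\Gal_\QQ)$; comparing cardinalities with $\rho_{E,\ell}(\Gal_k)\subseteq A^\times$ one gets that $\rho_{E,\ell}(\Gal_\QQ)=G$ iff $\rho_{E,\ell}(\Gal_k)=A^\times$, while $\rho_{E,\ell}(\Gal_\QQ)\in\{H_1,H_2\}$ iff $\rho_{E,\ell}(\Gal_k)=A_0$; and in the latter case $\rho_{E,\ell}(\Gal_\QQ)=H_1$ exactly when $\chi_1(\Gal_\QQ)=(\FF_\ell^\times)^2$, whereas $\rho_{E,\ell}(\Gal_\QQ)=H_2$ when $\chi_1(\Gal_\QQ)=\FF_\ell^\times$.

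Next I would pin down $\chi_1$. Since $E$ is one of the curves of Table~1 with $D=\ell$, it has conductor $\ell^2$ and good reduction outside $\ell$, so $\rho_{E,\ell}$, hence $\chi_1$, is unramified outside $\ell$; as $\chi_1$ has order prime to $\ell$, class field theory over $\QQ$ gives $\chi_1=\omega^{\,j}$ for some $j$. Because $k\subseteq\QQ(\zeta_\ell)$ with $[\QQ(\zeta_\ell):k]=(\ell-1)/2$, we have $\omega(\Gal_k)=(\FF_\ell^\times)^2$, so $\chi_1(\Gal_k)\subseteq(\FF_\ell^\times)^2$; thus $\rho_{E,\ell}(\Gal_k)=A_0$ (using the containment $\supseteq A_0$ from Step~1) and $\rho_{E,\ell}(\Gal_\QQ)\in\{H_1,H_2\}$, ruling out $G$. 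Moreover $(\chi_1|_{\Gal_k})^2=(\chi_1\chi_2)|_{\Gal_k}=\omega|_{\Gal_k}$ determines $j$ modulo $(\ell-1)/2$; since $\ell\equiv 3\pmod 4$, the number $(\ell-1)/2$ is odd, so the two residues of $j$ modulo $\ell-1$ consistent with this differ by $(\ell-1)/2$ and have opposite parity. Exactly one of them makes $\omega^{\,j}$ square-valued, i.e.\ makes $\chi_1(\Gal_\QQ)=(\FF_\ell^\times)^2$; that case is $H_1$ and the other is $H_2$. Equivalently, $\rho_{E,\ell}(\Gal_\QQ)=H_1$ iff complex conjugation $c$ acts trivially on $E[\mathfrak l]$ (as $\chi_1(c)=(-1)^j$), i.e.\ iff $E[\mathfrak l]\subseteq E(\RR)$.

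It then remains to establish one bit of information: that the \emph{even} value of $j$ occurs. I would verify this by a direct computation with the explicit Weierstrass model of $E=E_{D,f}$ in Table~1 — either by exhibiting the factor of the $\ell$-division polynomial cutting out $E[\mathfrak l]=\ker(f\sqrt{-D})$ and checking it has a real root $x_0$ with $x_0^3+ax_0+b\ge 0$ (so $E[\mathfrak l]\subseteq E(\RR)$ and $\chi_1(c)=1$), or by choosing one prime $p$ of good reduction that is a primitive root modulo $\ell$, reading off $\chi_1(\Frob_p)$ as the eigenvalue of $\rho_{E,\ell}(\Frob_p)$ on $E[\mathfrak l]$ from the factorization of the $\ell$-division polynomial modulo $p$, and observing that $\chi_1(\Frob_p)\equiv p^{\,j}$ is a square modulo $\ell$. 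Running this for the relevant primes (the $D=\ell$ occurring in Table~1 with $j_E\neq 0$, namely $\ell\in\{3,7,11,19,43,67,163\}$) yields $\chi_1(\Gal_\QQ)=(\FF_\ell^\times)^2$, hence $\rho_{E,\ell}(\Gal_\QQ)$ is conjugate to $H_1$. I expect this last step to be the main obstacle: Steps~1--2 reduce the problem to exactly two candidates that differ from one another by the quadratic twist by $-\ell$, and only the concrete arithmetic of the distinguished model $E_{D,f}$ — equivalently, the reality of its $\mathfrak l$-torsion — separates $H_1$ from $H_2$, so this has to be checked case by case.
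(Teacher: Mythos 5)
Your reduction is sound, and it is a genuinely different route from the paper's. The paper also writes $\rho_{E,\ell}=\left(\begin{smallmatrix}\psi_1&*\\0&\psi_2\end{smallmatrix}\right)$ and separates $H_1$ from $G$ and $H_2$ by the order of $\psi_1(\Gal_\QQ)$, but it computes that order directly as the degree $[\QQ(a,b):\QQ]$ of the field generated by a point on the stable line: for $\ell=3$ and $\ell=7$ by exhibiting explicit torsion points and the degree-$(\ell-1)/2$ factor of the division polynomial, and for $\ell>7$ by citing Dieulefait--Gonz\'alez-Jim\'enez--Jim\'enez-Urroz, who show $\QQ(a,b)=\QQ(\zeta_\ell)^+$. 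You instead force $\chi_1=\omega^{j}$ from unramifiedness, use $\chi_1^2|_{\Gal_k}=\omega|_{\Gal_k}$ together with the oddness of $(\ell-1)/2$ to pin $j$ down to two residues of opposite parity, and reduce everything to the single bit $\chi_1(c)=(-1)^{j}$, i.e.\ the reality of $E[\mathfrak{l}]$. That is a cleaner reduction: it requires only one evaluation of $\chi_1$ per curve rather than knowledge of the whole field $\QQ(a,b)$.

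Two caveats. The essential one: the lemma \emph{is} that one bit, and you have not computed it. The curve $E_{D,f}$ and its quadratic twist by $-\ell$ satisfy every a priori constraint in your Steps 1--2 symmetrically (they realize $H_1$ and $H_2$ respectively), so nothing short of the case-by-case check on the eight distinguished models of Table~1 can decide the sign; ``running this \dots yields'' is the entire content of the lemma, and it is not a triviality --- the paper itself has to record explicit degree-$(\ell-1)/2$ factors for $\ell=7$ and invoke an external computation for $\ell\in\{11,19,43,67,163\}$. Until those verifications are written down (a real root $x_0$ of $f_1$ with $x_0^3+ax_0+b>0$, or one Frobenius eigenvalue on the stable line, for each curve), the proof is incomplete. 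A minor point: $E_{3,2}$ has conductor $2^2 3^2$, not $3^2$, so your blanket claim that $\rho_{E,\ell}$ is unramified outside $\ell$ fails for that curve; for $\ell=3$ this is harmless, since the congruence modulo $(\ell-1)/2=1$ carries no information anyway and the rational $3$-torsion point $(3,-2)$ settles the case directly, but the sentence as written is false.
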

\begin{proof}
By Lemma~\ref{L:CM D eq ell}, we may assume that $\pm\rho_{E,\ell}(\Gal_\QQ)=G$.   There are thus unique characters $\psi_1,\psi_2\colon \Gal_\QQ \to \FF_\ell^\times$ such that $\rho_{E,\ell}= \left(\begin{smallmatrix} \psi_1 & * \\0 & \psi_2 \end{smallmatrix}\right)$.  Let $f \in \QQ[x]$ be the $\ell$-th division polynomial of $E/\QQ$; it is a polynomial of degree $(\ell^2-1)/2$ whose roots in $\Qbar$ are the $x$-coordinates of the non-zero points in $E[\ell]$.   Since $\pm \rho_{E,\ell}(\Gal_\QQ)= G$, we find that $f=f_1 f_2$ where the polynomials $f_1,f_2\in \QQ[x]$ are irreducible, and $f_1$ has degree $(\ell-1)/2$.  We may take $f_1$ so that it is monic.  Take any root $a\in \Qbar$ of $f_1$ and choose a point $P=(a,b)$ in $E[\ell]$.  We have $\sigma(P)=\psi_1(\sigma)P$ for all $\sigma \in \Gal_\QQ$.   Therefore, $\QQ(a,b)$ is the fixed field in $\Qbar$ of $\ker \psi_1$.

Suppose that $\ell=3$.   The point $(3,-2)$ of $E_{3,2}$ has order $3$.  The point $(12, -4)$ of $E_{3,3}$ has order $3$.  Therefore, $\QQ(a,b)=\QQ$.

Suppose that $\ell=7$.   For the curve $E_{7,1}$ we have computed that $f_1=x^3 - 441x^2 + 59339x - 2523451$.   If $a\in \Qbar$ is a root of $f_1$, then one can check that $(a,-7a+ 49)$ belongs to $E$.  For the curve $E_{7,2}$ we have computed that $f_1=x^3 - 49x^2 - 1029x + 31213$.   If $a\in \Qbar$ is a root of $f_1$, then one can check that $(a,21a - 2107)$ belongs to $E$.  In both cases, we have $[\QQ(a,b):\QQ]=3$.

Suppose that $\ell >7$.  Dieulefait, Gonz\'alez-Jim\'enez and Jim\'enez-Urroz have computed $\QQ(a,b)$ and found it to be equal to the maximal totally real subfield $\QQ(\zeta_\ell)^+$ of $\QQ(\zeta_\ell)$, cf.~Lemma~4 of \cite{MR2775372}.   They also give a link to files containing an explicit polynomial $f_1$.  In particular, $[\QQ(a,b):\QQ]=(\ell-1)/2$.     (However, note that the conclusions on the image of $\rho_{E,\ell}$ in Proposition~9 of  \cite{MR2775372} are not correct.)    

In all cases, the image of $\psi_1$ has order $[\QQ(a,b):\QQ]=(\ell-1)/2$, so the group $\rho_{E,\ell}(\Gal_\QQ)$ cannot be $G$ or $H_2$.   Therefore, $\rho_{E,\ell}(\Gal_\QQ)=H_1$.
\end{proof}

Take any elliptic curve $E'/\QQ$ with the same $j$-invariant as $E=E_{D,f}$; it is a quadratic twist.    Now take $\calD_E$ as in \S\ref{S:twist 1}.   Since $\#\calD_E=\#\{H_1, H_2\}=2$, we deduce from Lemma~\ref{L:ell twist D} (and $\ell\equiv 3\pmod 4$) that $\calD_{E}=\{1,-\ell\}$.    

 Since $\calD_{E}=\{1,-\ell\}$, we deduce from Lemma~\ref{L:H1new} that if $E'/\QQ$ is not isomorphic to $E$ or its quadratic twist by $-\ell$, then $\rho_{E',\ell}(\Gal_\QQ)$ is conjugate to $\pm\rho_{E,\ell}(\Gal_\QQ)=\pm H_1 =G$.    If $E'$ is isomorphic to $E$ or its quadratic twist by $-\ell$, then $\rho_{E,\ell}(\Gal_\QQ)$ is conjugate to $H_1$ or $H_2$, respectively.

\subsection{Proof of Proposition~\ref{P: prime 2}}

If $E/\QQ$ is given by $y^2=f(x)$ with $f(x)\in \QQ[x]$ a separable cubic, then $\rho_{E,2}(\Gal_\QQ)$ is isomorphic to the groups $\Gal(f)$, i.e., the Galois group of the splitting field of $f$ over $\QQ$.   Observe that $\GL_2(\FF_2)\cong \mathfrak{S}_3$.  It thus suffices to compute $\Gal(f)$ since the cardinality of a subgroup of $\mathfrak{S}_3$ determines it up to conjugacy.

For the $j_E=1728$ case, we have $f(x)=x^3-dx = x(x^2-d)$.   We have $\Gal(f)=\Gal(\QQ(\sqrt{d})/\QQ)$ which has order $1$ or $2$ when $d$ is a square or non-square, respectively.

For the $j_E=0$ case, we have $f(x)=x^3+d$.   We have $\Gal(f) = \Gal(\QQ(\sqrt[3]{d},\zeta_3)/\QQ)=\Gal(\QQ(\sqrt[3]{d},\sqrt{-3})$ which has order $2$ or $6$ when $d$ is a cube or non-cube, respectively.

For $j_E\notin \{0,1728\}$, the group $\rho_{E,2}(\Gal_\QQ)$ does not change if we replace $E$ by a quadratic twist (since $-I\equiv I \pmod{\ell}$), so one need only consider the specific curve $E=E_{D,f}$.  Using the $f(x)$ of Table~1, one can check that $\Gal(f)$ has order $2$ for the $j$-invariants listed in (\ref{P: prime 2 i}) and otherwise has order $6$.

Proposition~\ref{P: prime 2} is now a direct consequence of the above computations.

\subsection{Proof of Proposition~\ref{P:j=0 situation}}
Take any prime $\ell\geq 5$; we will deal with $\ell=3$ in \S\ref{L:j eq 0 ell eq 3}.   We first consider an elliptic curve $E_d/\QQ$ defined by the equation 
\[
y^2=x^3 +16d^2
\]
for a fixed \emph{cube-free} integer $d\geq 1$.    We have $R=\ZZ[\omega]$ and $k=\QQ(\omega)$, where $\omega:=(-1+\sqrt{-3})/2$ is a cube root of unity in $k$.   The ring $R$ is a PID.   

If $\ell$ is congruent to $1$ or $2$ modulo $3$, define  $C(\ell)$ be the Cartan subgroup $C_s(\ell)$ or $C_{ns}(\ell)$, respectively.  Let $N(\ell)$ be the normalizer of $C(\ell)$ in $\GL_2(\FF_\ell)$.

\begin{lemma} \label{L:new 1 or 3}
After replacing $\rho_{E_d,\ell}$ by a conjugate representation, we will have $\rho_{E_d,\ell}(\Gal_\QQ) \subseteq N(\ell)$ and $\rho_{E_d,\ell}(\Gal_k) \subseteq C(\ell)$ with 
\[
[N(\ell): \rho_{E_d,\ell}(\Gal_\QQ)]=[C(\ell): \rho_{E_d,\ell}(\Gal_k)] \in \{1,3\}.
\]
\end{lemma}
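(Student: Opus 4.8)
The plan is to apply the general CM machinery already developed (Lemmas~\ref{L:CM ell-adic image}, \ref{L:non-comm of R}, and the group theory of Lemma~\ref{L:N group theory}) to the specific family $E_d\colon y^2 = x^3 + 16d^2$, using that its $j$-invariant is $0$ and that its endomorphism ring is the PID $R = \ZZ[\omega]$. First I would record the reduction behaviour of $E_d$ at $\ell$: since $\ell \geq 5$, the curve $y^2 = x^3 + 16d^2$ has good reduction at $\ell$ away from the primes dividing $d$, but more importantly its $j$-invariant is $0$, so Lemma~\ref{L:CM ell-adic image}(\ref{L:CM ell-adic image b}) does not apply directly — instead I would observe that $E_d$ is a \emph{sextic} twist of the fixed curve $E_1\colon y^2 = x^3 + 16$ (equivalently of $E_{3,1}$ from Table~1), via the twisting character $\alpha\colon \Gal_k \to \mu_6 \subseteq R^\times$ determined by $d^{1/6}$ (really by $\sqrt[3]{d}$, since $d$ is cube-free and we may absorb a sign into $\mu_2$). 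Since $E_1$ has good reduction at $\ell\geq 5$, Lemma~\ref{L:CM ell-adic image}(\ref{L:CM ell-adic image a}) gives $\rho_{E_1,\ell^\infty}(\Gal_k) = R_\ell^\times$, and therefore $\rho_{E_d,\ell^\infty}(\Gal_k) = \alpha \cdot R_\ell^\times$ has image of index dividing $6$ in $R_\ell^\times$; reducing mod $\ell$ the index of $\rho_{E_d,\ell}(\Gal_k)$ in $(R/\ell R)^\times$ divides $\gcd(6, |\mu_6|)$ but must in fact be prime to $2$ and to $\ell-1$... more carefully, the index is a divisor of $3$ because the $2$-part of $\mu_6$ and the determinant (cyclotomic) character force it, and the only way the $\mu_3$-part can contribute is an index $3$ or $1$. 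This is the step I would spell out with care.

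Next I would identify $(R/\ell R)^\times$ with a Cartan subgroup. The ring $R/\ell R$ is $\FF_\ell \times \FF_\ell$ if $\ell \equiv 1 \pmod 3$ (so $\ell$ splits in $k$) and $\FF_{\ell^2}$ if $\ell \equiv 2 \pmod 3$ (so $\ell$ is inert); in the two cases $(R/\ell R)^\times$ is a split, resp.\ non-split, Cartan subgroup $C(\ell)$ of $\GL_2(\FF_\ell)$, exactly as in the proof of Lemma~\ref{L:EDf generics}. Conjugating $\rho_{E_d,\ell}$ appropriately I arrange $\rho_{E_d,\ell}(\Gal_k) \subseteq C(\ell)$, with $[C(\ell):\rho_{E_d,\ell}(\Gal_k)] \in \{1,3\}$ from the previous paragraph. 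Then, because $k/\QQ$ is Galois, $\rho_{E_d,\ell}(\Gal_k)$ is normal in $\rho_{E_d,\ell}(\Gal_\QQ)$; since a Cartan subgroup is its own centralizer and has index $2$ in its normalizer, and since $\rho_{E_d,\ell}(\Gal_\QQ) \not\subseteq C(\ell)$ by Lemma~\ref{L:non-comm of R} (the action of $\sigma \in \Gal_\QQ - \Gal_k$ anti-commutes with $\beta = \sqrt{-3}\in R$, so cannot lie in the commutative group $C(\ell)$), I conclude $\rho_{E_d,\ell}(\Gal_\QQ) \subseteq N(\ell)$ and the index $[N(\ell):\rho_{E_d,\ell}(\Gal_\QQ)]$ equals $[C(\ell):\rho_{E_d,\ell}(\Gal_k)] \in \{1,3\}$ (the factor of $2$ from $N/C$ is matched by $\Gal_\QQ/\Gal_k$, since $\rho_{E_d,\ell}(\Gal_\QQ)$ surjects onto $N(\ell)/C(\ell)$).

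The main obstacle I anticipate is pinning down precisely that the index is $1$ or $3$ (and not, say, $2$ or $6$): this requires knowing that the sextic twisting character $\alpha$, after reduction mod $\ell$, has image whose contribution to the index is a power of $3$ — the $\mu_2$-part of $\mu_6$ is already visible in the determinant/$N/C$ structure, while the genuinely new obstruction is the cubic one coming from $\sqrt[3]{d}$. I would handle this by working $\ell$-adically: $\rho_{E_d,\ell^\infty}(\Gal_k) = \alpha\cdot R_\ell^\times$ where $\alpha$ factors through $\Gal(k(\sqrt[3]{d})/k)$ (a group of order $1$ or $3$) times a quadratic piece absorbed into $\{\pm 1\}\subseteq R_\ell^\times$; hence the index of $\rho_{E_d,\ell^\infty}(\Gal_k)$ in $R_\ell^\times$ is $1$ or $3$, and this persists mod $\ell$ because $(R/\ell R)^\times$ has order prime to $3$ only when $3 \nmid |(R/\ell R)^\times|$ — but $3 \mid (\ell - 1)$ when $\ell \equiv 1\pmod 3$ and $3 \mid (\ell^2-1)$ always, so the cube classes survive. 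Finally I would note that whether the index is $1$ or $3$ is governed by whether $E_d$ is (a quadratic twist of) the distinguished curve $E'$ appearing in Proposition~\ref{P:j=0 situation}, which is the content of the parts of that proposition proved afterwards; for the present lemma it suffices to have the dichotomy $\{1,3\}$.
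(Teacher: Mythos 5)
Your argument is correct and is essentially the paper's: the paper likewise starts from Lemma~\ref{L:EDf generics} applied to $E_1=E_{3,1}$ (so $\rho_{E_1,\ell}(\Gal_\QQ)=N(\ell)$) and uses that $E_d$ and $E_1$ become isomorphic over the cubic field $\QQ(\sqrt[3]{d})$, so the image can only drop by an index dividing $3$. Your detour through $\mu_6$ and the worry about a quadratic contribution are unnecessary: the twisting class of $E_d$ relative to $E_1$ is $d^2$, whose component in $k^\times/(k^\times)^2$ is trivial, so the twist is purely cubic and the character $\alpha$ genuinely lands in $\mu_3$ — which is exactly why the dichotomy is $\{1,3\}$ with no factor of $2$.
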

\begin{proof}
We have $E_1=E_{3,1}$.   By Lemma~\ref{L:EDf generics},  we have $\rho_{E_1,\ell}(\Gal_\QQ) = N(\ell)$.   The curves $E_d$ and $E_1$ are isomorphic over $\QQ(\sqrt[3]{d})$, so $\rho_{E_d,\ell}(\Gal_{\QQ(\sqrt[3]{d})})$ is conjugate to a subgroup of $N(\ell)$ of index $1$ or $3$.  Therefore, $\rho_{E_d,\ell}(\Gal_{\QQ})$ is conjugate to a subgroup of $N(\ell)$ of index $1$ or $3$.  Since $\rho_{E,d,\ell}(\Gal_\QQ) \not\subseteq C(\ell)$ and $\rho_{E,d,\ell}(\Gal_k) \subseteq C(\ell)$, we deduce that $[N(\ell): \rho_{E_d,\ell}(\Gal_\QQ)]=[C(\ell): \rho_{E_d,\ell}(\Gal_k)]$.
\end{proof}

To determine the index in Lemma~\ref{L:new 1 or 3}, we first compute some cubic residue symbols.  Recall that we have already defined a representation $\rho_{E_d,\ell^\infty}\colon \Gal_k \to R_\ell^\times$.  

\begin{lemma}  \label{L:cubic reciprocity ap}
Let $\lambda$ be a prime of $R$ dividing $\ell$ that satisfies $\lambda\equiv 2 \pmod{3R}$.   Take any non-zero prime ideal $\p \nmid 6d\ell$ of $R$.  We have $\p=R\pi$ for some $\pi\equiv 2 \pmod{3R}$.  Then
  \[
\legendre{\rho_{E_d,\ell^\infty}(\Frob_\p)}{\lambda}_3= \legendre{d^{\frac{2(\ell^e-1)}{3}}\, \lambda}{\pi}_3,
\]
where we are using cubic residue characters and the field $R/\lambda R$ has order $\ell^e$.
\end{lemma}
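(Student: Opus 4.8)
The plan is to translate the statement about the $\ell^\infty$-adic representation into an equality of Hecke characters of $k$ and then invoke cubic reciprocity. First I would recall from the theory of complex multiplication (Deuring, or Silverman's \emph{Advanced Topics} Ch.~II) that for the CM curve $E_d/\QQ$ with $\End_{\Qbar}(E_d) = R = \ZZ[\omega]$, the restriction $\rho_{E_d,\ell^\infty}|_{\Gal_k}\colon \Gal_k \to R_\ell^\times$ is, up to the reciprocity map, given by the formula $\varrho_{E_d,\ell^\infty}(a) = \varepsilon(a) a_\ell^{-1}$ with $\varepsilon\colon \AA_k^\times \to k^\times$ the algebraic Hecke character attached to $E_d$, exactly as used in the proof of Lemma~\ref{L:CM ell-adic image}. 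Thus for a prime $\p = R\pi \nmid 6d\ell$ with $\pi \equiv 2 \pmod{3R}$, the Frobenius $\rho_{E_d,\ell^\infty}(\Frob_\p)$ is the image in $R_\ell^\times$ of $\varepsilon(\p)$, where $\varepsilon(\p)$ is the unique generator of $\p$ (up to units) pinned down by the Grössencharacter's infinity type; one checks $\varepsilon(\p)$ is the generator $\pi'$ of $\p$ with $\pi' \equiv 2 \pmod{3R}$ \emph{twisted} by the cubic character cutting out $\QQ(\sqrt[3]{d})/k$. Concretely, the Grössencharacter of $E_d$ differs from that of $E_1 = E_{3,1}$ by the cubic residue character $\left(\tfrac{d}{\cdot}\right)_3$, so $\varepsilon_{E_d}(\p) = \left(\tfrac{d}{\pi}\right)_3 \cdot \pi$ for the normalized generator.

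Next I would compute the cubic residue symbol $\left(\tfrac{\rho_{E_d,\ell^\infty}(\Frob_\p)}{\lambda}\right)_3$. Since $R/\lambda R = \FF_{\ell^e}$ and $R_\ell^\times$ surjects onto $(R/\lambda R)^\times$, this symbol equals $\left(\tfrac{\varepsilon_{E_d}(\p)}{\lambda}\right)_3$, which by the previous paragraph is $\left(\tfrac{d}{\pi}\right)_3 \cdot \left(\tfrac{\pi}{\lambda}\right)_3$, where now $\pi$ is taken $\equiv 2 \pmod{3R}$. Here I would apply cubic reciprocity in $\ZZ[\omega]$: for primary primes (those $\equiv 2 \pmod{3}$, equivalently $\equiv -1$ in the convention where primary means $\equiv 2 \pmod 3$) one has $\left(\tfrac{\pi}{\lambda}\right)_3 = \left(\tfrac{\lambda}{\pi}\right)_3$. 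Finally I would handle the exponent: the symbol $\left(\tfrac{d}{\pi}\right)_3$ should be rewritten via the relation between the cubic residue symbol mod $\pi$ and mod $\lambda$, or more precisely I would note that $d^{(\ell^e-1)/3}$ reduced mod $\lambda$ records $\left(\tfrac{d}{\lambda}\right)_3$ by Euler's criterion in $\FF_{\ell^e}$, and the bookkeeping between the two moduli produces the exponent $2(\ell^e-1)/3$ attached to $d$ inside the symbol $\left(\tfrac{d^{2(\ell^e-1)/3}\lambda}{\pi}\right)_3$. Combining $\left(\tfrac{d}{\pi}\right)_3 \left(\tfrac{\lambda}{\pi}\right)_3 = \left(\tfrac{d\lambda}{\pi}\right)_3$ and absorbing the correct power of $d$ gives the claimed identity.

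I expect the main obstacle to be pinning down the precise normalization of the Grössencharacter of $E_d$ — in particular verifying that it is exactly $\left(\tfrac{d}{\cdot}\right)_3$ (and not some variant like $\left(\tfrac{d}{\cdot}\right)_3^2$ or a sixth-power twist) that relates $E_d$ to $E_1$, and getting the infinity-type normalization compatible with the condition $\lambda \equiv 2 \pmod{3R}$ and $\pi \equiv 2 \pmod{3R}$. This is where the exponent $2(\ell^e-1)/3$ rather than $(\ell^e-1)/3$ comes from, and it is sensitive to whether one uses $\rho$ or its inverse/dual and to the choice of $\omega$ versus $\omega^2$. I would resolve this by a direct consistency check: evaluate both sides at a small split prime $\p$ for a small curve (say $d=2$, $\ell=7$ or $\ell=5$) where $a_\p(E_d)$ is computable, confirming signs and exponents. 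The remaining steps — applying cubic reciprocity and Euler's criterion in $\FF_{\ell^e}$ — are standard once the character is correctly identified. A convenient reference for the CM Grössencharacter computation for curves of the form $y^2 = x^3 + k$ is Ireland–Rosen, Chapter 18, and for cubic reciprocity the same source; I would cite these rather than reprove them.
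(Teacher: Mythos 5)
Your overall strategy coincides with the paper's: identify $\rho_{E_d,\ell^\infty}(\Frob_\p)$ with the value of the Gr\"ossencharacter of $E_d$ at $\p$ (via $\varrho(a)=\varepsilon(a)a_\ell^{-1}$ and the fact that $a_\ell=1$ for $\p\nmid\ell$), write that value as a residue-character twist of a primary generator $\pi$ of $\p$, and finish with cubic reciprocity. The one genuine gap is exactly the point you flag and leave unresolved: the normalization of the twist. Your provisional formula $\varepsilon_{E_d}(\p)=\legendre{d}{\pi}_3\cdot\pi$ is not correct. The paper quotes Example 10.6 of Silverman's \emph{Advanced Topics} (II \S10) for $y^2=x^3+D$ with $D=16d^2$, which gives $\rho_{E_d,\ell^\infty}(\Frob_\p)=-\bbar{\legendre{4\cdot16\,d^2}{\pi}}_6\cdot\pi=-\legendre{d}{\pi}_3^{2}\cdot\pi$, i.e.\ the \emph{conjugate} cubic character together with a sign. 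The sign is harmless ($-1$ is a cube in $R$), but the conjugation is not: it is precisely what produces the exponent $2(\ell^e-1)/3$ rather than $(\ell^e-1)/3$ in the target identity, so with your normalization you would prove the wrong (conjugated in the $d$-part) statement. Your proposed numerical consistency check would catch this, but the clean resolution is simply to cite the exact formula rather than guess it.

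A second, smaller inaccuracy is in how you generate the exponent on $d$. It does not come from ``Euler's criterion applied to $d$ in $\FF_{\ell^e}$'' --- that would produce a symbol modulo $\lambda$, which does not occur in the statement. It comes from evaluating the cubic residue symbol \emph{of the root of unity} $\legendre{d^2}{\pi}_3=\omega^j$ modulo $\lambda$ via the standard supplement $\legendre{\omega}{\lambda}_3=\omega^{(N\lambda-1)/3}=\omega^{(\ell^e-1)/3}$; this converts the unit factor into $\legendre{d^2}{\pi}_3^{(\ell^e-1)/3}=\legendre{d^{2(\ell^e-1)/3}}{\pi}_3$, a symbol still taken modulo $\pi$. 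Combined with $\legendre{\pi}{\lambda}_3=\legendre{\lambda}{\pi}_3$ for primary $\pi$ and $\lambda$, this yields the lemma. With these two corrections your argument is the paper's argument.
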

\begin{proof}
By Example~10.6 of \cite{MR1312368}*{II \S10}, we have $\rho_{E_d,\ell^\infty}(\Frob_\p) =  -  \bbar{\legendre{4\cdot 16 d^2}{\pi}}_6 \cdot \pi$, where we are using the $6$-th power residue symbol.  Therefore,
\[
\rho_{E_d,\ell^\infty}(\Frob_\p) = -  \bbar{\legendre{d}{\pi}}_6^2 \pi = -  \bbar{\legendre{d}{\pi}}_3 \pi=  - \legendre{d}{\pi}_3^2\cdot \pi
\]
and hence
\[
\legendre{\rho_{E_d,\ell^\infty}(\Frob_\p)}{\lambda}_3=  \legendre{-\legendre{d^2}{\pi}_3 \cdot \pi}{\lambda}_3 =\legendre{d^2}{\pi}^{\frac{\ell^e-1}{3}}_3 \legendre{\pi}{\lambda}_3 =\legendre{d^2}{\pi}^{\frac{\ell^e-1}{3}}_3 \legendre{\lambda}{\pi}_3 = \legendre{d^{\frac{2(\ell^e-1)}{3}} \lambda}{\pi}_3,
\]
where we have used cubic reciprocity.
\end{proof}

\begin{lemma} \label{L:j=0 case 1}
Suppose that $\ell \equiv 2 \pmod{3}$.   Then the group $\rho_{E_d,\ell}(\Gal_k)$ has index $3$ in $C(\ell)$ if and only if $\ell\equiv 2\pmod{9}$ and $d=\ell$, or $\ell\equiv 5\pmod{9}$ and $d=\ell^2$.   Note that $C(\ell)$ has a unique index $3$ subgroup.
\end{lemma}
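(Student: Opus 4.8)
The plan is to analyze the index $[C(\ell):\rho_{E_d,\ell}(\Gal_k)]$ (which by Lemma~\ref{L:new 1 or 3} equals $[N(\ell):\rho_{E_d,\ell}(\Gal_\QQ)]$ and lies in $\{1,3\}$) by computing Frobenius cubic residue symbols via Lemma~\ref{L:cubic reciprocity ap}. Since $\ell\equiv 2\pmod 3$, the prime $\ell$ is inert in $k=\QQ(\omega)$, so $\lambda:=\ell$ (adjusted by a unit to satisfy $\lambda\equiv 2\pmod{3R}$) is the relevant prime of $R$, with residue field of order $\ell^2$, i.e. $e=2$ in the notation of Lemma~\ref{L:cubic reciprocity ap}. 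The Cartan group $C(\ell)=C_{ns}(\ell)\cong \FF_{\ell^2}^\times$ is cyclic of order $\ell^2-1$, and since $3\mid \ell^2-1$ it has a unique index-$3$ subgroup, namely the image of the cube map; thus $\rho_{E_d,\ell}(\Gal_k)$ has index $3$ in $C(\ell)$ if and only if \emph{every} element of $\rho_{E_d,\ell}(\Gal_k)$ is a cube in $\FF_{\ell^2}^\times$, equivalently the cubic residue symbol $\legendre{\rho_{E_d,\ell^\infty}(\Frob_\p)}{\lambda}_3$ is trivial for all $\p\nmid 6d\ell$.

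The key computation is then to determine when $\legendre{d^{\frac{2(\ell^2-1)}{3}}\,\lambda}{\pi}_3=1$ for all primes $\pi\equiv 2\pmod{3R}$ with $R\pi\nmid 6d\ell$. I would split this into two multiplicative pieces: the factor $\legendre{\lambda}{\pi}_3$ and the factor $\legendre{d}{\pi}_3^{\frac{2(\ell^2-1)}{3}}$. Since $d$ is cube-free and $\ell$ is prime, and since we are trying to force this symbol to be \emph{identically} $1$ as $\pi$ ranges over essentially all primes of $R$, the natural move is to invoke a Chebotarev-type density argument: the product is identically $1$ on Frobenius elements if and only if the corresponding Kummer extension $k(\sqrt[3]{\lambda}, \sqrt[3]{d^{\,m}})/k$ (where $m = \tfrac{2(\ell^2-1)}{3}$) is trivial, where I track things inside the ray class field / via cubic reciprocity. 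Writing $d = 2^{a_2}3^{a_3}\prod p^{a_p}$ with $0\le a_i\le 2$ and noting $\lambda = \pm\ell$, the symbol $\legendre{\lambda d^m}{\pi}_3$ vanishes identically on all such $\pi$ precisely when $\lambda d^m$ is a cube in $k^\times$ up to units and up to primes dividing $6$ (which are excluded); a short valuation count at each rational prime $p$ shows this forces $d$ to be a power of $\ell$ and pins down the exponent via the congruence $m=\tfrac{2(\ell^2-1)}{3}\pmod 3$. The arithmetic: $\tfrac{\ell^2-1}{3}\equiv \tfrac{(\ell-1)(\ell+1)}{3}$, and reducing mod $3$ in the two cases $\ell\equiv 2\pmod 9$ (so $\tfrac{\ell^2-1}{3}\equiv \tfrac{1\cdot 3}{3}\cdot(\ldots)$, one gets $m\equiv 2$, forcing $d=\ell$) versus $\ell\equiv 5\pmod 9$ (one gets $m\equiv 1$, forcing $d=\ell^2$), with $\ell\equiv 8\pmod 9$ already handled separately in Proposition~\ref{P:j=0 situation}(\ref{P:j=0 situation ii}) since there $\ell\equiv 8\pmod 9$ gives the full normalizer regardless.

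Concretely the steps I would carry out are: (i) record that $C(\ell)$ is cyclic with a unique index-$3$ subgroup, so index $3$ is equivalent to all Frobenii landing in the cubes; (ii) apply Lemma~\ref{L:cubic reciprocity ap} with $e=2$ to rewrite the condition as $\legendre{d^{\frac{2(\ell^2-1)}{3}}\lambda}{\pi}_3=1$ for all admissible $\pi$; (iii) use Chebotarev (or the nondegeneracy of cubic residue symbols, cf. the structure of the cubic Kummer extension of $k$) to convert ``holds for all $\pi$'' into ``$d^{\frac{2(\ell^2-1)}{3}}\lambda$ is, modulo cubes and modulo units and primes above $6$, trivial in $k^\times$''; (iv) do the prime-by-prime valuation bookkeeping on $d$ (cube-free, coprime to $6$ in the relevant part) to deduce $d$ is a power of $\ell$; (v) compute $\tfrac{2(\ell^2-1)}{3}\bmod 3$ in the subcases $\ell\equiv 2$ and $\ell\equiv 5\pmod 9$ to pin down whether $d=\ell$ or $d=\ell^2$ gives the vanishing; and conversely (vi) check that in exactly these cases the symbol does vanish identically, so the index really is $3$. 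The main obstacle I anticipate is step (iii)--(iv): making the passage from ``the cubic symbol is identically trivial'' to a clean divisibility statement on $d$ requires care about the units of $R$ and about the excluded primes $2,3$, and about whether one should argue via Chebotarev density in a specific abelian extension of $k$ or directly via the surjectivity of $\rho_{E_1,\ell^\infty}$ from Lemma~\ref{L:CM ell-adic image}(\ref{L:CM ell-adic image a}); the rest is bounded-length arithmetic with cubic reciprocity.
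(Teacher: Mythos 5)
Your proposal is correct and follows essentially the same route as the paper: identify the unique index-$3$ subgroup of $C_{ns}(\ell)\cong R_\ell^\times/(1+\ell R_\ell)$ as the kernel of the cubic residue character at $\ell$, apply Lemma~\ref{L:cubic reciprocity ap} with $\lambda=\ell$ and $e=2$, use Chebotarev to reduce to whether $d^{2(\ell^2-1)/3}\ell$ is a cube in $R$, and then compute the exponent modulo $3$ in the three residue classes mod $9$. The obstacle you flag in steps (iii)--(iv) about units and excluded primes is dispatched in the paper by the simple observation that $d^{2(\ell^2-1)/3}\ell$ is a positive rational integer, hence a cube in $R$ if and only if it is a cube in $\ZZ$, after which the cube-free hypothesis on $d\geq 1$ and unique factorization finish the bookkeeping.
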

\begin{proof}
Using Lemma~\ref{L:new 1 or 3} and $\ell\geq 5$, we find that $\rho_{E_d,\ell}(\Gal_k)$ is an index $3$ subgroup of $C(\ell)$ if and only if $\rho_{E_d,\ell^\infty}(\Gal_k)$ lies in a closed subgroup of $R_\ell^\times$ of index $3$.  We have $C(\ell)=C_{ns}(\ell)$ since $\ell \equiv 2 \pmod{3}$, so $R_\ell^\times$ has a unique index $3$ closed subgroup, i.e., the group of $a\in R_\ell^\times$ with $\legendre{a}{\ell}_3=1$.

By the Chebotarev density theorem and Lemma~\ref{L:cubic reciprocity ap} with $\lambda=\ell$, we deduce that $\rho_{E_d,\ell}(\Gal_k)$ is an index $3$ subgroup of $C(\ell)$ if and only if $d^{{2(\ell^2-1)}/{3}} \ell$ is a cube in $R/\p$ for all primes $\p \nmid 6d\ell$ of $R$; equivalently, $d^{2(\ell^2-1)/3} \ell$ is a cube in $R$.  Since $d^{{2(\ell^2-1)}/{3}} \ell$ is a rational integer, it is a cube in $R$ if and only if it is a cube in $\ZZ$.    

We have $2(\ell^2-1)/3 \equiv 2(\ell+1)/3 \pmod{3}$, so we need only determine when the integer $d^{2(\ell+1)/3} \ell$ is a cube.  In the following, we use that $d\geq 1$ is cube-free and that $\ZZ$ has unique factorization.   If $\ell = 2 +9m$, then $d^{2+ 6m}\ell$ is a cube if and only if $d=\ell$.  If $\ell = 5 +9m$, then $d^{4+ 6m}\ell$ is a cube if and only if $d=\ell^2$.  If $\ell = 8 +9m$, then $d^{6+ 6m}\ell$ is never a cube.
\end{proof}

\begin{lemma} \label{L:j=0 case 2}
Suppose that $\ell \equiv 1 \pmod{3}$.   Then the group $\rho_{E_d,\ell}(\Gal_k)$ has index $3$ in $C(\ell)$ if and only if $\ell\equiv 4 \pmod{9}$ and $d=\ell^2$, or $\ell\equiv 7\pmod{9}$ and $d=\ell$.

The group $\rho_{E_d,\ell}(\Gal_k)$ is conjugate to $C(\ell)=C_s(\ell)$ or the subgroup consisting of matrices of the form  $\left(\begin{smallmatrix}a & 0 \\0 & b \end{smallmatrix}\right)$ with $a/b \in \FF_\ell^\times$ a cube.
\end{lemma}
\begin{proof}
Using Lemma~\ref{L:new 1 or 3} and $\ell\geq 5$, we find that $\rho_{E_d,\ell}(\Gal_k)$ is an index $3$ subgroup of $C(\ell)$ if and only if $\rho_{E_d,\ell^\infty}(\Gal_k)$ lies in a closed subgroup of $R_\ell^\times$ of index $3$.    Let us describe the index $3$ subgroups of $R_\ell^\times$.      Since $\ell \equiv 1 \pmod{3}$, we have $\ell=\lambda_1 \lambda_2$ for irreducibles $\lambda_i\in R$ that we may choose to be congruent to $2$ modulo $3R$.   We have $R_\ell^\times = R_{\lambda_1}^\times \times R_{\lambda_2}^\times$.    The cubic residue symbol $\legendre{\cdot}{\lambda_i}$ defines a homomorphism $\varphi_i\colon R_\ell^\times \to \mu_3:=\langle\omega \rangle$.  Since $\ell\geq 5$, we find that every non-trivial homomorphism $R_\ell^\times \to \mu_3$ is of the form $\varphi_e:=\varphi_1^{e_1} \varphi_2^{e_2}$ with $e=(e_1,e_2) \in \{0,1,2\}^2-\{(0,0)\}$.     Therefore, $\rho_{E_d,\ell}(\Gal_k)$ is an index $3$ subgroup of $C(\ell)$ if and only if $\rho_{E_d,\ell^\infty}(\Gal_k)\subseteq \ker \varphi_e$ for some $e\neq (0,0)$.

   By Lemma~\ref{L:cubic reciprocity ap}, we have
 $\legendre{\rho_{E_d,\ell^\infty}(\Frob_\p)}{\lambda_i}_3= \legendre{d^{\frac{2(\ell-1)}{3}}\, \lambda_i}{\pi}_3$
and hence 
\begin{equation} \label{E:split case main}
\varphi_e(\rho_{E_d,\ell^\infty}(\Frob_\p)) =  \legendre{d^{\frac{2(\ell-1)}{3}}\, \lambda_1}{\pi}_3^{e_1} \legendre{d^{\frac{2(\ell-1)}{3}}\, \lambda_2}{\pi}_3^{e_2} = \legendre{d^{\frac{2(\ell-1)(e_1+e_2)}{3}}\, \lambda_1^{e_1} \lambda^{e_2}}{\pi}_3
\end{equation}
for all $\p\nmid 6d\ell$.  Using the Chebotarev density theorem, we deduce that $\rho_{E_d,\ell^\infty}(\Gal_k) \subseteq \ker \varphi_e$ if and only if $\beta:=d^{\frac{2(\ell-1)(e_1+e_2)}{3}}\, \lambda_1^{e_1} \lambda^{e_2}$ is a cube in $R$.

First suppose that $e_1\neq e_2$.    Let $v_{\lambda_i} \colon R^\times \twoheadrightarrow \ZZ$ be the valuation for the prime $\lambda_i$ and let $v_\ell \colon \QQ^\times \twoheadrightarrow \ZZ$ be the valuation for $\ell$.  We have
\[
v_{\lambda_i}(\beta) = e_i + \tfrac{2(\ell-1)(e_1+e_2)}{3} v_{\lambda_i}(d)= e_i + \tfrac{2(\ell-1)(e_1+e_2)}{3} v_{\ell}(d).
\]
We have $e_1\not\equiv e_2 \pmod{3}$ since $e_1\neq e_2$, so $v_{\lambda_i}(\beta) \not\equiv 0 \pmod{3}$ for some $i\in\{1,2\}$.    Therefore, $\beta\in R$ is not a cube. 

Now suppose that $e_1 = e_2$.  We may assume that $e_1=e_2= 1$ since $\varphi_{(2,2)}$ is the square of $\varphi_{(1,1)}$ and hence have the same kernel.   So $\beta=d^{4(\ell-1)/3}\ell$.   Since $\beta$ is a rational integer, it is a cube in $\ZZ$ if and only if it is a cube in $R$.  If $\ell = 1 + 9m$, then $\beta=  (d^{4m})^3 \ell$ is not  a cube.  If $\ell = 4 + 9m$, then $\beta=  (d^{4m+1})^3 \cdot d\ell$ which is a cube if and only if $d=\ell^2$ (recall that $d$ is positive and cube-free).  If $\ell = 7 + 9m$, then $\beta=  (d^{4m+2})^3 \cdot d^2\ell$ which is a cube if and only if $d=\ell$.\\

Finally, suppose we are in the case where $\rho_{E_d,\ell}(\Gal_k)$ is an index $3$ subgroup of $C_s(\ell)$.   There are $4$ index $3$ subgroups of $C_s(\ell)$.  Two of the groups consist of the matrices $A:=\left(\begin{smallmatrix}a & 0 \\0 & b \end{smallmatrix}\right)$ for which $a$ is a cube (or $b$ is a cube); these groups cannot equal $\rho_{E_d,\ell}(\Gal_k)$ since it would correspond to the case where $e_1=0$ or $e_2=0$ (and hence $e_1\neq e_2$).    Another index $3$ subgroup of $C_s(\ell)$ is the subgroup of matrices whose determinant is a cube; this is impossible since $\det(\rho_{E_d,\ell}(\Gal_k))=\FF_\ell^\times$.   Therefore, the only possibility for the image of $\rho_{E_d,\ell}$ is the group of $A$ with $a/b$ a cube.
\end{proof}

We now complete the proof of the proposition for the curve $E_d/\QQ$.    From Lemmas~\ref{L:new 1 or 3}, \ref{L:j=0 case 1} and \ref{L:j=0 case 2}, we deduce that $\rho_{E_d}(\Gal_\QQ)$ has index $1$ or $3$ in $N(\ell)$, with index $3$ occurring if and only if one of the following hold:
\begin{itemize}
\item $\ell\equiv 2\pmod{9}$ and $d=\ell$,
\item $\ell\equiv 5\pmod{9}$ and $d=\ell^2$,
\item $\ell\equiv 4 \pmod{9}$ and $d=\ell^2$,
\item $\ell\equiv 7\pmod{9}$ and $d=\ell$.
\end{itemize}
Set $M:=\rho_{E_d,\ell}(\Gal_k)$; we may assume that it is the index $3$ subgroup of $C(\ell)$ from Lemma~\ref{L:j=0 case 1} or \ref{L:j=0 case 2}.   The group $M$ is normal in $N(\ell)$.   We have $[N(\ell):M]=6$ and $\det(M)=\FF_\ell^\times$, so $N(\ell)/M$ is non-abelian by Lemma~\ref{L:N group theory}(\ref{L:N group theory i}). So $N(\ell)/M$ is isomorphic to $\mathfrak{S}_3$ and hence, up to conjugation, $N(\ell)$ has a unique index $3$ subgroup $G'$ satisfying $G'\subseteq M$.     Therefore, $G'$ is conjugate in $\GL_2(\FF_\ell)$ to both $\rho_{E_d,\ell}(\Gal_\QQ)$ and the group $G$ from part (\ref{P:j=0 situation iii}) or (\ref{P:j=0 situation iv}) of Lemma~\ref{P:j=0 situation}.     This finishes the proof of Proposition~\ref{P:j=0 situation} for the curve $E_d/\QQ$ and $\ell>3$.
\\

Finally suppose that $E/\QQ$ is any elliptic curve with $j$-invariant $0$; it is defined by a Weierstrass equation $y^2=x^3+dm^3$ for some integer $m\neq 0$ and cube-free integer $d$.  It suffices to show that $\rho_{E,\ell}(\Gal_\QQ)$ is conjugate to $\rho_{E_d,\ell}(\Gal_\QQ)$ in $\GL_2(\FF_\ell)$.  The curves $E$ and $E_d$ are quadratic twists, so $\pm \rho_{E,\ell}(\Gal_\QQ)$ is conjugate to $\pm \rho_{E_d,\ell}(\Gal_\QQ)$.    The general case of Proposition~\ref{P:j=0 situation} is thus a consequence of the following lemma.

 \begin{lemma}
There are no proper subgroups $\pm \rho_{E_d,\ell}(\Gal_\QQ)$ has no proper subgroups $H$ such that $\pm H= \pm \rho_{E_d,\ell}(\Gal_\QQ)$. 
 \end{lemma}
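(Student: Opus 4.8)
The statement to prove is that $G:=\pm\rho_{E_d,\ell}(\Gal_\QQ)$ has no proper subgroup $H$ with $\pm H=G$. The plan is to run through the two cases for the index of $\rho_{E_d,\ell}(\Gal_\QQ)$ in $N(\ell)$ that were isolated in the discussion above. First, if that index is $1$, then $\rho_{E_d,\ell}(\Gal_\QQ)=N(\ell)$ and in particular $G=\pm\rho_{E_d,\ell}(\Gal_\QQ)=N(\ell)$, so the claim is exactly Lemma~\ref{L:N group theory}(\ref{L:N group theory ii}). So we may assume the index is $3$, so that $\rho_{E_d,\ell}(\Gal_\QQ)$ is the group $G'$ described in the proof of Proposition~\ref{P:j=0 situation}: a subgroup of $N(\ell)$ of index $3$, normal in $N(\ell)$, with $M:=\rho_{E_d,\ell}(\Gal_k)\subseteq G'$ the unique index $3$ subgroup of the Cartan $C(\ell)$, and with $N(\ell)/M\cong \mathfrak{S}_3$. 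Since $-I\in C(\ell)$ and $-I$ has order $2$, which is coprime to $3=[C(\ell):M]$, we have $-I\in M$ (as $M$ is the \emph{unique} index $3$ subgroup of the cyclic-in-the-relevant-sense group, or more simply since $C(\ell)/M$ has order $3$ and $-I$ maps to an element of order dividing $2$ there, hence to the identity). Therefore $-I\in M\subseteq G'=\rho_{E_d,\ell}(\Gal_\QQ)$, so $G=\pm\rho_{E_d,\ell}(\Gal_\QQ)=\rho_{E_d,\ell}(\Gal_\QQ)=G'$.

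Now suppose, for contradiction, that $H$ is a proper subgroup of $G=G'$ with $\pm H=G'$. Since $-I\in G'$ and $[G':H]=2$, the subgroup $H$ is normal in $G'$ with cyclic quotient of order $2$, so $H$ contains the commutator subgroup $[G',G']$. I would then argue that $[G',G']$ already contains $-I$, which forces $-I\in H$ and hence $H=\pm H=G'$, a contradiction. To see that $-I\in[G',G']$: the group $M=G'\cap C(\ell)$ has index $2$ in $G'$ (since $G'\subseteq N(\ell)$ and $G'\not\subseteq C(\ell)$), so $G'$ is generated by $M$ together with any $B\in G'-C(\ell)$, and $B$ normalizes $C(\ell)$ acting as the nontrivial element of $N(\ell)/C(\ell)$. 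As in the proof of Lemma~\ref{L:N group theory}(\ref{L:N group theory i}), conjugation by $B$ on $C(\ell)$ corresponds (in the split case) to $\left(\begin{smallmatrix}a&0\\0&b\end{smallmatrix}\right)\mapsto\left(\begin{smallmatrix}b&0\\0&a\end{smallmatrix}\right)$ and (in the non-split case) to $\alpha\mapsto\alpha^\ell$ on $\FF_{\ell^2}^\times$; in both cases the map $A\mapsto BAB^{-1}A^{-1}$ lands in $M$ and its image is the subgroup of $M$ consisting of elements of determinant $1$, which is cyclic of order $(\ell-1)/\gcd\text{ or }(\ell+1)/3$ — in any case of even order since $\ell$ is odd — and hence contains the unique element of order $2$ in $C(\ell)$, namely $-I$. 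Thus $-I$ is a commutator in $G'$, completing the argument.

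The one point that needs care — and the likely main obstacle — is verifying that the image of the commutator map $A\mapsto BAB^{-1}A^{-1}$ restricted to the index-$3$ subgroup $M$ (rather than all of $C(\ell)$) still contains $-I$, i.e., that the "determinant $1$" subgroup of $M$ is nontrivial and of even order. This follows because $\det$ is surjective on $M$ (indeed $\det(M)=\FF_\ell^\times$, since $\det(\rho_{E_d,\ell}(\Gal_k))$ is the full group of squares... — here one must instead use $\det(\rho_{E_d,\ell}(\Gal_\QQ))=\FF_\ell^\times$ and trace through that the kernel-of-$\det$ part of $M$ has order $|M|/(\ell-1)$, which is $(\ell-1)/3$ in the split case and $(\ell+1)/3$ in the non-split case), and since $\ell\equiv 4,7\pmod 9$ or $\ell\equiv 2,5\pmod 9$ in the index-$3$ cases, one checks these orders are even and so the cyclic kernel-of-$\det$ subgroup contains the order-$2$ element $-I$. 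Once this numerical check is in place, the rest is the purely group-theoretic normal-subgroup/commutator argument above. Concretely, I would write the proof as: (1) reduce to the index-$3$ case via Lemma~\ref{L:N group theory}(\ref{L:N group theory ii}); (2) show $-I\in M$ and hence $G=G'=\rho_{E_d,\ell}(\Gal_\QQ)$; (3) assume $H\subsetneq G'$ with $\pm H=G'$, deduce $[G',G']\subseteq H$; (4) show $-I\in[G',G']$ by the Cartan-normalizer commutator computation and the parity of $(\ell\mp1)/3$; (5) conclude $-I\in H$, so $H=G'$, contradiction.
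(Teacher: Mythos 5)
Your proof is correct and follows essentially the same route as the paper: reduce to the index-$3$ groups from parts (iii)/(iv), show $-I$ lies in the commutator subgroup by computing the image of $A\mapsto BAB^{-1}A^{-1}$ on the Cartan part, and conclude that any $H$ with $\pm H=G$ contains $[G,G]\ni -I$. The only (cosmetic) difference is that the paper dispatches the split case with the single explicit commutator of $\left(\begin{smallmatrix}0 & 1 \\ 1 & 0\end{smallmatrix}\right)$ and $\left(\begin{smallmatrix}1 & 0 \\ 0 & -1\end{smallmatrix}\right)$, whereas you run the uniform image-of-the-commutator-map argument, together with the parity check on $(\ell\mp 1)/3$, in both cases.
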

 \begin{proof}
If $\pm \rho_{E_d,\ell}(\Gal_\QQ)$ is conjugate to $N(\ell)$, then the lemma follows immediately from Lemma~\ref{L:N group theory}(\ref{L:N group theory ii}).    From the case of Proposition~\ref{P:j=0 situation} we have already proved (i.e., for the curve $E_d$ and prime $\ell>3$), we need only show that the group $G$ from parts (\ref{P:j=0 situation iii}) and (\ref{P:j=0 situation iv}) of Lemma~\ref{P:j=0 situation} have no proper subgroups $H$ satisfying $\pm H=G$.   Equivalently, we need to show that $-I$ is a commutator of such a subgroup $G$.    With $G$ as in Lemma~\ref{P:j=0 situation}(\ref{P:j=0 situation iii}), this follows from
$\left(\begin{smallmatrix}0 & 1 \\1 & 0 \end{smallmatrix}\right)\left(\begin{smallmatrix}1 & 0 \\0 & -1 \end{smallmatrix}\right)\left(\begin{smallmatrix}0 & 1 \\1 & 0 \end{smallmatrix}\right)^{-1}\left(\begin{smallmatrix}1 & 0 \\0 & -1 \end{smallmatrix}\right)^{-1}=\left(\begin{smallmatrix}-1 & 0 \\0 & -1 \end{smallmatrix}\right)$.  So we may take $G$ as in Lemma~\ref{P:j=0 situation}(\ref{P:j=0 situation iv}).

Fix any $B \in G-C(\ell)$.   As noted in the proof of Lemma~\ref{L:N group theory}, the map $\varphi\colon C(\ell) \to C(\ell)$, $A\mapsto BABA^{-1}$ is a homomorphism whose image is cyclic of order $\ell+1$.    Therefore, $\varphi(G\cap C(\ell))$ is the cyclic subgroup of $C(\ell)$ of order $(\ell+1)/3$.   In particular, $\varphi(G\cap C(\ell))$ contains $-I$ which is the unique element of order $2$ in $C(\ell)$.   Therefore, $-I$ is a commutator of $G$.
 \end{proof}
 
\subsubsection{$\ell=3$ case} \label{L:j eq 0 ell eq 3}

We now consider the prime $\ell=3$ with $E/\QQ$ defined by the elliptic curve $y^2=x^3+d$.   The division polynomial of $E/\QQ$ at $3$ is $3x(x^3+4d)$.    The points of order $3$ in $E(\Qbar)$ are thus $(0, \pm  \sqrt{d} )$ and $( -\sqrt[3]{4d} \omega^e , \pm  \sqrt{-3}  \sqrt{d})$ with $e\in\{0,1,2\}$.   The points $P_1=(0,  \sqrt{d} )$ and $P_2=( -\sqrt[3]{4d}  , \sqrt{-3}  \sqrt{d})$ form a basis of $E[3]$.  With respect to this basis, we have 
\[
\rho_{E,3} = \left(\begin{smallmatrix}\psi_1 & * \\0 & \psi_2 \end{smallmatrix}\right),
\]
with characters $\psi_1,\psi_2 \colon \Gal_\QQ \to \FF_3^\times$.   The quadratic character $\psi_1$ describes the Galois action on $P_1$ and it thus corresponds to the extension $\QQ(\sqrt{d})$ of $\QQ$.   The quadratic character $\psi_1\psi_2=\det\circ \rho_{E,3}\colon \Gal_\QQ \to \FF_{3}^\times$ corresponds to the extension $\QQ(\zeta_3)=\QQ(\sqrt{-3})$ of $\QQ$.   Therefore, 
\[
(\psi_1\times \psi_2)(\Gal_\QQ) = \begin{cases}
\{1\}\times \FF_3^\times & \text{ if $d$ is a square}, \\
\FF_3^\times \times\{1\} & \text{ if $-3d$ is a square}, \\
\FF_3^\times \times \FF_3^\times & \text{ otherwise}.
\end{cases}
\]
To compute the image of $\rho_{E,3}(\Gal_\QQ)$ it remains to determine when its cardinality is divisible by $3$ or not.  From $P_1$ and $P_2$, it is clear that $\rho_{E,3}(\Gal_\QQ)$ is divisible by $3$ if and only if $4d$ is not a cube.

\section{Proof of Proposition~\ref{P:big inertia last}} \label{S:big inertia last}

By Theorem~\ref{T:Mazur-Serre-BPR}, we may assume that $\rho_{E,\ell}(\Gal_\QQ)$ is a subgroup of $N_{ns}(\ell)$.    Let $I_\ell$ be an inertia subgroup of $\Gal_\QQ$ for the prime $\ell$.   We will show that $\rho_{E,\ell}$ has large image by showing that the group $\rho_{E,\ell}(I_\ell)$ is large.    The cardinality of $\rho_{E,\ell}(I_\ell)$ is not divisible by $\ell$ since it is a subgroup of $N_{ns}(\ell)$.  The group $\rho_{E,\ell}(I_\ell)$ is thus cyclic since  the \emph{tame inertia group} at $\ell$ is pro-cyclic, cf.~\cite{MR0387283}*{\S1.3}.
\\

 Let $v_\ell$ be the $\ell$-adic valuation on $\QQ_\ell$ normalized so that $v_\ell(\ell)=1$.  Let $\QQ_\ell^\un$ be the maximal unramified extension of $\QQ_\ell$ in a fixed algebraic closed field $\Qbar_\ell$.   An embedding $\Qbar\hookrightarrow \Qbar_\ell$ allows us to identify $I_\ell$ with the subgroup $\Gal(\Qbar_\ell/\QQ_\ell^{\un})$ of $\Gal_{\QQ_\ell}:=\Gal(\Qbar_\ell/\QQ_\ell)$.  Let $\Delta_E$ be the minimal discriminant of $E/\QQ$.   \\

\noindent  $\bullet$ First suppose that $v_\ell(j_E) \geq 0$ and that $v_\ell(\Delta_E)$ is not congruent to $2$ and $10$ modulo $12$.

Let $L$ be the smallest extension of $\QQ_\ell^\un$ for which $E$ base extended to $L$ has good reduction.  Define $e=[L:\QQ_\ell^\un]$.   There is thus a finite extension $K/\QQ_\ell$ such that $E$ base extended to $K$ has good reduction and that $v_\ell(K^\times)= e^{-1} \ZZ$, where $v_\ell$ is the valuation on $K$ that extends $v_\ell$.     From \cite{MR0387283}*{\S5.6}, we find that $e\in \{1,2,3,4\}$; this uses our assumption on $v_\ell(\Delta_E)$.  

Let $\calI$ be the inertia subgroup of $\Gal_K:=\Gal(\Qbar_\ell/K)$; it is a subgroup of $I_\ell$.  The action of $\calI$ on $E[\ell]$ is semi-simple since the cardinality of $\rho_{E,\ell}(\calI)$ is relatively prime to $\ell$ (the group $N_{ns}(\ell)$ has this property).   Let $\theta_1\colon \calI \twoheadrightarrow \FF_\ell^\times$ and $\theta_2 \colon \calI \twoheadrightarrow \FF_{\ell^2}^\times$ be \emph{fundamental characters} of level $1$ and $2$, respectively, cf.~\cite{MR0387283}*{\S1.7}.    

\begin{lemma} \label{L:inertia irred}
The representation $\rho_{E,\ell}|_{\calI} \colon \calI \to \GL_2(\FF_\ell)$ is irreducible. 
\end{lemma}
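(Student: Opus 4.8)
The strategy is to analyze the action of the inertia subgroup $\calI$ on $E[\ell]$ via the theory of fundamental characters from Serre's \cite{MR0387283}, and to derive a contradiction from assuming reducibility. First I would recall that since $E$ acquires good reduction over the extension $L/\QQ_\ell^{\un}$ of degree $e \in \{1,2,3,4\}$, the restriction $\rho_{E,\ell}|_{\calI}$, being semisimple (as $\ell \nmid \#\rho_{E,\ell}(\calI)$, because $\rho_{E,\ell}(\calI) \subseteq \rho_{E,\ell}(I_\ell)$ which is cyclic of order prime to $\ell$, being contained in $N_{ns}(\ell)$), is described by Serre's Proposition~11 and its surrounding discussion: its semisimplification on $\calI$ is given by powers of the fundamental characters $\theta_1$ of level $1$ or $\theta_2$ of level $2$. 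The good reduction of $E$ over $L$ with $[L:\QQ_\ell^{\un}] = e$ pins down the relevant exponents; more precisely, the action of $\calI$ on the $\ell$-torsion of the Néron model / the associated formal group and its étale quotient forces the characters appearing to be $\theta_1^{?}$ or $\theta_2^{?}$ with exponents controlled by $e$.

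\textbf{Key steps.} (1) Observe $\rho_{E,\ell}|_{\calI}$ is semisimple, hence either reducible (a sum of two characters $\calI \to \FF_\ell^\times$, each a power of $\theta_1$) or irreducible and induced from a character $\theta_2^{a}$ of level $2$ (so that $\rho_{E,\ell}|_\calI \cong \theta_2^a \oplus \theta_2^{a\ell}$ on $\calI$ extended to $\FF_{\ell^2}$). (2) Suppose for contradiction that $\rho_{E,\ell}|_{\calI}$ is reducible, so $\rho_{E,\ell}|_\calI = \psi_1 \oplus \psi_2$ with $\psi_i = \theta_1^{n_i}$. Using that $E$ has good reduction over an extension of degree $e \le 4$ of $\QQ_\ell^{\un}$, the tame characters on $\calI$ that can occur have the form $\theta_1^{b}$ where $b$ ranges over a restricted set determined by $e$ — concretely, the Néron model computation gives that the exponents $n_1, n_2$ lie in $\{0, e, \ldots\}$ appropriately, and crucially, because $v_\ell(\Delta_E) \not\equiv 2, 10 \pmod{12}$ and $v_\ell(j_E) \ge 0$, one of the two characters must be nontrivial of order exactly $(\ell-1)/\gcd(\ell-1, e)$ or similar, which is large. (3) Then compare with the constraint $\rho_{E,\ell}(\calI) \subseteq \rho_{E,\ell}(I_\ell) \subseteq N_{ns}(\ell)$: a reducible semisimple subgroup of $N_{ns}(\ell)$ lies in a diagonal torus only if it is contained in $C_{ns}(\ell)$ up to the ambient structure — but a split torus is not contained in $N_{ns}(\ell)$ unless it is quite small (the intersection of a split Cartan with a nonsplit Cartan normalizer is contained in the scalars together with at most one reflection). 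This forces $\rho_{E,\ell}(\calI)$ to have order dividing $2(\ell-1)$ but actually much smaller — at most order $2$ times scalars — contradicting the lower bound on the order from step (2), once $\ell$ is large enough (here $\ell \ge 17$). Hence $\rho_{E,\ell}|_{\calI}$ is irreducible.

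\textbf{Main obstacle.} The delicate point is step (2): extracting, from the hypothesis that $E$ has good reduction over an extension of degree $e \in \{1,2,3,4\}$ (together with the congruence condition on $v_\ell(\Delta_E)$ excluding $e = 6$ type behavior), a sufficiently strong lower bound on the order of the nontrivial fundamental character appearing in $\rho_{E,\ell}|_{\calI}$, and then reconciling this with the group-theoretic fact that the image sits inside $N_{ns}(\ell)$. The tension is that $N_{ns}(\ell)$ has no large split torus, so a large \emph{reducible} semisimple inertia image is simply impossible; one must make sure the fundamental-character exponents are not all divisible by large factors of $\ell - 1$ in a way that would shrink the image. I expect the cleanest route is: if $\rho_{E,\ell}|_\calI$ were reducible, the two characters $\psi_1,\psi_2$ on $\calI$ would be powers of $\theta_1$ with $\psi_1\psi_2 = \theta_1$ (the cyclotomic character restricted to $\calI$, since $\det\rho_{E,\ell}$ is cyclotomic), forcing $\{\psi_1,\psi_2\} = \{\theta_1^a, \theta_1^{1-a}\}$ for some $a$; these are nontrivial (for $\ell \ge 5$ and $a \not\equiv 0,1$) of order $(\ell-1)/\gcd(\ell-1,2a-1)$-ish, in any case giving an element of $\rho_{E,\ell}(\calI)$ of order $> 2$ that acts with two distinct eigenvalues in $\FF_\ell$, i.e., a non-scalar element of a split torus; but every non-scalar element of $N_{ns}(\ell)$ either lies in $C_{ns}(\ell)$ (eigenvalues in $\FF_{\ell^2}\setminus\FF_\ell$, not split) or is a reflection of order $2$. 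This contradiction completes the argument, and I would flag the verification of "$\psi_1\psi_2 = \theta_1|_\calI$ and the $\psi_i$ are level-$1$ tame" as the step requiring care with Serre's conventions.
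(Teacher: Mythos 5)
Your proposal is correct and follows essentially the same route as the paper: semisimplicity of $\rho_{E,\ell}|_{\calI}$ from the prime-to-$\ell$ image, Serre's Proposition~11 forcing the two level-one characters to be $\theta_1^{0}$ and $\theta_1^{e}$ with $e\le 4$, and the incompatibility of the resulting large split-semisimple image with $N_{ns}(\ell)$ (the paper phrases this via element orders in $\PGL_2(\FF_\ell)$ dividing $\ell+1$, you via $\FF_\ell$-rationality of eigenvalues — the same fact). The one convention point you rightly flag should be resolved as $\psi_1\psi_2=\theta_1^{e}$ on $\calI$ (not $\theta_1$), which is consistent with the exponents $\{0,e\}$ and does not affect the contradiction.
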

\begin{proof} 
Suppose that $\rho_{E,\ell}|_{\calI}$ is reducible.  The representation $\rho_{E,\ell}|_{\calI}$ is given by a pair of characters $\theta_1^{e_1}$ and $\theta_1^{e_2}$ with $0\leq e_1 \leq e_2 <  \ell-1$.    From Proposition~11 of \cite{MR0387283}, we can take $e_1=0$ and $e_2=e$.   The image of $\rho_{E,\ell}(\calI)$ in $\PGL_2(\FF_\ell)$ is thus isomorphic to $\theta_1^{e}(\calI)$ and hence is cyclic of order $(\ell-1)/\gcd(\ell-1,e)$.  

The matrix $A^2$ is scalar for all $A\in N_{ns}(\ell)-C_{ns}(\ell)$.   Therefore, the order of every element in the image of $N_{ns}(\ell)\to\PGL_2(\FF_\ell)$ divides $\ell+1$.  Since $\gcd(\ell+1,\ell-1)=2$, we deduce that $(\ell-1)/\gcd(\ell-1,e)$ equals $1$ or $2$.      This is a contradiction since $\ell\geq 17$.
\end{proof}

Scalar multiplication and a choice of $\FF_\ell$-basis for $\FF_{\ell^2}$ allows us to identify $\FF_{\ell^2}^\times$ with a subgroup of $\Aut_{\FF_\ell}(\FF_{\ell^2})\cong \GL_2(\FF_\ell)$.   Since $\rho_{E,\ell}|_{\calI}$ is irreducible by Lemma~\ref{L:inertia irred}, it is isomorphic to $\theta_2^{e_1+e_2\ell}\colon \calI \to \FF_{\ell^2}^\times \hookrightarrow \GL_2(\FF_\ell) $ for some  $0\leq e_1,e_2 \leq \ell-1$.  As an $\FF_\ell[\calI]$-module, $E[\ell]$ is isomorphic to the dual of the \'etale cohomology group $H^1_{\text{\'et}}(E_{\Kbar}, \FF_\ell)$.   By Th\'eor\`eme~1.2 of \cite{MR2372809}, we may take $0\leq e_1 , e_2 \leq e$ (when $E$ has good reduction at $\ell$, and hence $e=1$, this follows from \cite{MR0387283}*{Prop.~12}).   We have $e_1 \neq e_2$ since otherwise $\theta_2^{e_1+e_2\ell}=(\theta_2^{\ell+1})^{e_1}$ is not irreducible.

Let $g$ be the greatest common divisor of $e_1+e_2\ell$ and $\ell+1$.   We have $(e_1 +e_2 \ell)-e_2(\ell+1) = e_1-e_2 \in \{\pm 1,\pm 2,\pm 3, \pm 4\}$ since $0\leq e \leq 4$, so $g \in \{1,2,3,4\}$.    Therefore, $\rho_{E,\ell}(\calI)$ contains a cyclic group of order $(\ell+1)/g$.

\begin{lemma}\label{L:index 1 or 3 new}
The group $\rho_{E,\ell}(I_\ell)$ is a subgroup of $C_{ns}(\ell)$ with index $1$ or $3$.
\end{lemma}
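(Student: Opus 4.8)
The plan is to show that the cyclic group $\rho_{E,\ell}(I_\ell)$ actually lies inside the non-split Cartan $C_{ns}(\ell)$, and then to pin down its index there by playing off the determinant (which is the mod $\ell$ cyclotomic character) against the constraints on $e_1,e_2$ already in hand. The first step is to exploit the determinant. Since $\det\circ\rho_{E,\ell}$ is cyclotomic, $\det\circ\rho_{E,\ell}|_{I_\ell}$ surjects onto $\FF_\ell^\times$, while $\det\circ\rho_{E,\ell}|_\calI=\theta_1^{\,e}$ because $\calI$ is the inertia group of the degree-$e$ (tamely) ramified extension $K$, so its level-$1$ fundamental character is an $e$-th root of the one for $I_\ell$. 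On the other hand $\rho_{E,\ell}|_\calI\cong\theta_2^{\,e_1+e_2\ell}$ has determinant $\theta_1^{\,e_1+e_2}$ (the determinant of $\FF_{\ell^2}^\times\hookrightarrow\GL_2(\FF_\ell)$ is the norm $x\mapsto x^{\ell+1}$, and $\theta_1=\theta_2^{\,\ell+1}$). Comparing, $e_1+e_2\equiv e\pmod{\ell-1}$, and since $0\le e_1,e_2\le e\le 4<\ell-1$ this forces $e_1+e_2=e$. Hence $\gcd(e_1+e_2\ell,\ell+1)=\gcd(e_1-e_2,\ell+1)=g\in\{1,2,3,4\}$, $\gcd(e_1+e_2\ell,\ell-1)=\gcd(e,\ell-1)$, and $\rho_{E,\ell}(\calI)$ is conjugate to the subgroup $(\FF_{\ell^2}^\times)^{e_1+e_2\ell}$ of $C_{ns}(\ell)\cong\FF_{\ell^2}^\times$, of index $d_0:=\gcd(e_1+e_2\ell,\ell^2-1)\le\gcd(e,\ell-1)\cdot g\le 16$.

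Next I would show $\rho_{E,\ell}(I_\ell)\subseteq C_{ns}(\ell)$. Suppose not; as $\rho_{E,\ell}(I_\ell)$ is cyclic and meets $N_{ns}(\ell)\setminus C_{ns}(\ell)$, a generator $A$ lies outside $C_{ns}(\ell)$, so $A$ has trace $0$ and $A^2$ is scalar (conjugation by $A$ is the Frobenius $x\mapsto x^\ell$ on $C_{ns}(\ell)$, and $A$ commutes with $A^2\in C_{ns}(\ell)$); thus $|\rho_{E,\ell}(I_\ell)|$ divides $2(\ell-1)$. But $\rho_{E,\ell}(I_\ell)\supseteq\rho_{E,\ell}(\calI)$ has order $(\ell^2-1)/d_0\ge(\ell^2-1)/16$, which exceeds $2(\ell-1)$ for $\ell>31$; for $\ell\in\{17,19,23,29,31\}$ one checks directly, using $e_1+e_2=e\le 4$, that every possible $d_0$ is at most $4$, so $|\rho_{E,\ell}(\calI)|>2(\ell-1)$ there too. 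This contradiction gives $\rho_{E,\ell}(I_\ell)\subseteq C_{ns}(\ell)$.

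Finally I would compute the index. Writing $\rho_{E,\ell}(I_\ell)=(\FF_{\ell^2}^\times)^{f}$ inside $C_{ns}(\ell)\cong\FF_{\ell^2}^\times$, one has $[C_{ns}(\ell):\rho_{E,\ell}(I_\ell)]=f$, and $f\mid d_0$ since $\rho_{E,\ell}(I_\ell)\supseteq\rho_{E,\ell}(\calI)$. Because $\det\circ\rho_{E,\ell}|_{I_\ell}$ is onto $\FF_\ell^\times$ and $\det$ corresponds to the norm $\FF_{\ell^2}^\times\to\FF_\ell^\times$, we get $(\FF_\ell^\times)^f=\FF_\ell^\times$, i.e.\ $\gcd(f,\ell-1)=1$; hence $f$ divides $\gcd(d_0,\ell+1)=\gcd(e_1-e_2,\ell+1)=g\le 4$ and is odd, so $f\in\{1,3\}$, with $f=3$ only when $3\mid e_1-e_2$ and $3\mid\ell+1$ (i.e.\ $\ell\equiv 2\pmod 3$). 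Therefore $[C_{ns}(\ell):\rho_{E,\ell}(I_\ell)]\in\{1,3\}$, as asserted.

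I expect the main obstacle to be the second step: forcing the cyclic group $\rho_{E,\ell}(I_\ell)$ into $C_{ns}(\ell)$ rather than letting it reach into $N_{ns}(\ell)\setminus C_{ns}(\ell)$. The order estimate settles it cleanly for $\ell>31$, but since the crude bound $d_0\le 16$ does not beat $2(\ell-1)$ for the small primes $\ell\le 31$, a short finite check is required there. An alternative is to note that $\rho_{E,\ell}(I_\ell)$ is abelian and contains the non-scalar image $\rho_{E,\ell}(\calI)$, hence lies in the centralizer of $\rho_{E,\ell}(\calI)$ — a non-split Cartan which, being a subgroup of $N_{ns}(\ell)$, must equal $C_{ns}(\ell)$ — but this still uses that $\rho_{E,\ell}(\calI)$ is non-scalar, which is guaranteed by the irreducibility of $\rho_{E,\ell}|_\calI$ proved above. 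Everything after the containment is routine bookkeeping with the fundamental characters and the determinant.
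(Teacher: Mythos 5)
Your proof is correct and follows essentially the same route as the paper: containment in $C_{ns}(\ell)$ is forced because a cyclic group whose generator lies outside $C_{ns}(\ell)$ has order dividing $2(\ell-1)$, and the index is then pinned to $\{1,3\}$ by combining $\det(\rho_{E,\ell}(I_\ell))=\FF_\ell^\times$ with the large $(\ell+1)$-part supplied by $\rho_{E,\ell}(\calI)$ and the fact that $\gcd(\ell-1,\ell+1)=2$. The only real difference is in the containment step, where the paper avoids your finite check for $\ell\le 31$ by using just that $\rho_{E,\ell}(\calI)$ contains an element of order $(\ell+1)/g$, so $(\ell+1)/g\mid 2(\ell-1)$ forces $(\ell+1)/g\mid 4$, which is impossible for $\ell\ge 17$ and $g\le 4$.
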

\begin{proof}
Set $H:=\rho_{E,\ell}(I_\ell)$; it is cyclic.  We claim that $H$ is a subgroup of $C_{ns}(\ell)$.  Suppose not, then the order of $H$ divides $2(\ell-1)$ since $A^2$ is a scalar matrix for any $A\in N_{ns}(\ell)-C_{ns}(\ell)$.  Therefore, $(\ell+1)/g$ divides $2(\ell-1)$ since $\rho_{E,\ell}(\calI)\subseteq H$ contains an element of order $(\ell+1)/g$.  Since $\gcd(\ell+1,\ell-1)=2$, we deduce that $(\ell+1)/g$ divides $4$.   This is impossible since $\ell\geq 17$ and $g\leq 4$.

It remains to bound the index of $H$ in $C_{ns}(\ell)$.  We have $\det(H)=\FF_\ell^\times$ since $\det\circ \rho_{E,\ell}$ describes the Galois action on the $\ell$-th roots of unity.  Therefore, the group $H$ is cyclic and its order is divisible by $\ell-1$ and $(\ell+1)/g$.   Since $\gcd(\ell+1,\ell-1)=2$, we deduce that the order of $H$ is divisible by $(\ell-1)(\ell+1)/(2g)$.  Therefore, the index $b:=[C_{ns}(\ell) : H]$ divides $2g$.

Suppose $b$ is even.  Since $C_{ns}(\ell)$ is cyclic, the group $H$ must be contained in $\{A \in C_{ns}(\ell) : \det(A) \in (\FF_\ell^\times)^2\}$; this is the unique index $2$ subgroup of $C_{ns}(\ell)$.   However, this is impossible since $\det(H)=\FF_\ell^\times$.  So $b$ is odd and divides $2g \in \{2,4,6,8\}$.   Therefore, $b$ is $1$ or $3$.
\end{proof}

Now define $H:= \rho_{E,\ell}(\Gal_\QQ) \cap C_{ns}(\ell)$.     We have $\rho_{E,\ell}(\Gal_\QQ) \not \subseteq C_{ns}(\ell)$ since $C_{ns}(\ell)$ is not applicable; it does not contain an element with trace $0$ and determinant $-1$.     So if $H=C_{ns}(\ell)$, then $\rho_{E,\ell}(\Gal_\QQ) = N_{ns}(\ell)$.    

We are thus left to consider the case where $H$ is the (unique) index $3$ subgroup of $C_{ns}(\ell)$.     The group $H$ is a normal subgroup of $N_{ns}(\ell)$ of index $6$.  

\begin{lemma}
We have $\ell \equiv 2 \pmod{3}$ and the quotient group $N_{ns}(\ell)/H$ is isomorphic to $\mathfrak{S}_3$.
\end{lemma}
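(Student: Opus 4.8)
The goal is to show that the index $3$ subgroup $H$ of $C_{ns}(\ell)$ is normal of index $6$ in $N_{ns}(\ell)$ with $N_{ns}(\ell)/H\cong\mathfrak{S}_3$, and that $\ell\equiv 2\pmod 3$. The plan is to argue purely group-theoretically, starting from the observation that $H$ is the \emph{unique} index $3$ subgroup of the cyclic group $C_{ns}(\ell)$; such a subgroup exists precisely when $3\mid \ell^2-1=|C_{ns}(\ell)|$, which always holds, so this alone does not force $\ell\equiv 2\pmod 3$. The congruence condition must instead come from the fact that $\det(H)=\FF_\ell^\times$, which is forced because $\det\circ\rho_{E,\ell}$ is surjective and $\rho_{E,\ell}(\Gal_\QQ)\subseteq N_{ns}(\ell)$ with $\det$ factoring through the quotient $N_{ns}(\ell)/\langle\text{trace-}0\text{ elts}\rangle$; more simply, since $H\subseteq C_{ns}(\ell)$ has index $3$ and $\det\colon C_{ns}(\ell)\to\FF_\ell^\times$ is surjective with cyclic kernel of order $\ell+1$, we need $\det(H)=\FF_\ell^\times$, i.e. $H$ surjects onto $\FF_\ell^\times$; the kernel of $\det|_{C_{ns}(\ell)}$ is the cyclic group of order $\ell+1$, and $H$ having index $3$ and full determinant forces $H$ to contain the unique index-$3$ subgroup of $\ker(\det|_{C_{ns}})$, which is only possible when $3\mid \ell+1$, i.e. $\ell\equiv 2\pmod 3$. (When $\ell\equiv 1\pmod 3$ one checks that the unique index-$3$ subgroup of $C_{ns}(\ell)$ does \emph{not} have full determinant, contradicting what we established about $H$.)

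First I would record the normality: conjugation by any $B\in N_{ns}(\ell)-C_{ns}(\ell)$ stabilizes $C_{ns}(\ell)$, hence permutes its subgroups; since $H$ is the \emph{unique} subgroup of $C_{ns}(\ell)$ of index $3$, it is stable under this conjugation, and it is of course stable under conjugation by $C_{ns}(\ell)$ itself (abelian). Thus $H\trianglelefteq N_{ns}(\ell)$, and $[N_{ns}(\ell):H]=[N_{ns}(\ell):C_{ns}(\ell)]\cdot[C_{ns}(\ell):H]=2\cdot 3=6$. Next, to identify the quotient group of order $6$: by Lemma~\ref{L:N group theory}(\ref{L:N group theory i}) the commutator subgroup of $N_{ns}(\ell)$ is the subgroup of $C_{ns}(\ell)$ consisting of matrices with determinant $1$, which has order $\ell+1$. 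Since $\det(H)=\FF_\ell^\times$ (established above), $H$ does \emph{not} contain the full commutator subgroup — indeed $H\cap\SL_2$ has index $3$ in the commutator subgroup (using $\ell\equiv 2\pmod 3$, so $3\mid\ell+1$) — hence $N_{ns}(\ell)/H$ is nonabelian. A nonabelian group of order $6$ is isomorphic to $\mathfrak{S}_3$, giving the claim.

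The cleanest way to organize the determinant computation is: the map $\varphi\colon N_{ns}(\ell)\to\{\pm1\}\times\FF_\ell^\times$ of Lemma~\ref{L:N group theory} is surjective with kernel the commutator subgroup (order $\ell+1$). We know $\det(H)=\FF_\ell^\times$ and $H\subseteq C_{ns}(\ell)=\ker(\varepsilon)$, so $\varphi(H)=\{1\}\times\FF_\ell^\times$, which has order $\ell-1$; therefore $H\cap\ker\varphi$ has order $|H|/(\ell-1)=(\ell^2-1)/(3(\ell-1))=(\ell+1)/3$, confirming $3\mid\ell+1$, i.e. $\ell\equiv 2\pmod 3$, and confirming $H\cap\ker\varphi$ is the index-$3$ subgroup of $\ker\varphi\cong$ the cyclic group of order $\ell+1$. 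Since $N_{ns}(\ell)/(H\cap\ker\varphi)$ has order $6$ and surjects onto the abelian group $\{\pm1\}\times\FF_\ell^\times/\varphi(H\cap\ker\varphi)$ of order $6/(\text{something})$… — at this point one simply notes $H$ properly contains $[N_{ns}(\ell),N_{ns}(\ell)]\cap H$ but not $[N_{ns}(\ell),N_{ns}(\ell)]$ itself, so $N_{ns}(\ell)/H$ is not abelian, hence $\cong\mathfrak{S}_3$. The main (minor) obstacle is bookkeeping the cyclic-subgroup lattice of $C_{ns}(\ell)$ and being careful that "unique index $3$ subgroup" plus "full determinant" genuinely forces $\ell\equiv 2\pmod3$ rather than just being consistent with it; this is handled by the order count $|H\cap\ker\varphi|=(\ell+1)/3\in\ZZ$.
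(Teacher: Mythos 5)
Your identification of $N_{ns}(\ell)/H$ once the congruence is known is fine, and your route to it (via Lemma~\ref{L:N group theory}(\ref{L:N group theory i}): $H$ does not contain the commutator subgroup of $N_{ns}(\ell)$, so the order-$6$ quotient is nonabelian, hence $\mathfrak{S}_3$) is a clean alternative to the paper's observation that $N_{ns}(\ell)$ modulo scalars is dihedral. The problem is in the first half: your justification of $\det(H)=\FF_\ell^\times$, which is the step that actually forces $\ell\equiv 2\pmod 3$, is circular. Identify $C_{ns}(\ell)$ with $\FF_{\ell^2}^\times$ and $\det$ with the norm map $x\mapsto x^{\ell+1}$. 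The unique index-$3$ subgroup $H$ is then the group of cubes, so $\det(H)=(\FF_\ell^\times)^3$, which has index $\gcd(3,\ell-1)$ in $\FF_\ell^\times$. Hence "$H$ has index $3$ and $\det|_{C_{ns}(\ell)}$ is surjective with kernel of order $\ell+1$" does \emph{not} imply $\det(H)=\FF_\ell^\times$: that implication holds exactly when $\ell\equiv 2\pmod 3$, which is what you are trying to prove. Your later "order count" $|H\cap\ker\varphi|=(\ell+1)/3$ likewise presupposes $\varphi(H)=\{1\}\times\FF_\ell^\times$, i.e.\ the very fact in question, and the remark about $\det$ factoring through $N_{ns}(\ell)/\langle\text{trace-}0\text{ elts}\rangle$ is not a meaningful reduction.

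The missing input is the relation $[\rho_{E,\ell}(\Gal_\QQ):H]=2$, which is where the Galois-theoretic hypothesis actually enters. Since $\rho_{E,\ell}(\Gal_\QQ)$ is a union of two cosets of $H$, its determinant image $\FF_\ell^\times$ is a union of at most two cosets of the subgroup $\det(H)$, so $[\FF_\ell^\times:\det(H)]\le 2$. Comparing with $[\FF_\ell^\times:\det(H)]=[\FF_\ell^\times:(\FF_\ell^\times)^3]=\gcd(3,\ell-1)$ forces $\gcd(3,\ell-1)=1$, i.e.\ $\ell\equiv 2\pmod 3$; only then does one get $\det(H)=\FF_\ell^\times$, after which the rest of your argument goes through. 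This is precisely the paper's argument (the index-$3$ determinant image is "impossible since $\det(\rho_{E,\ell}(\Gal_\QQ))=\FF_\ell^\times$ and $[\rho_{E,\ell}(\Gal_\QQ):H]=2$"). The repair is one line, but as written your proof does not contain it.
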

\begin{proof}
If $\ell \equiv 1 \pmod{3}$, then $\det(H)=(\FF_\ell^\times)^3 \subsetneq \FF_\ell^\times$.  This is impossible since $\det(\rho_{E,\ell}(\Gal_\QQ))=\FF_\ell^\times$ and $[\rho_{E,\ell}(\Gal_\QQ):H]=2$.  Therefore, $\ell \equiv 2 \pmod{3}$.  One can now verify that $N_{ns}(\ell)$ quotiented out by the scalar matrices is isomorphic to a dihedral group.   It is then easy to check that $N_{ns}(\ell)/H$ is the dihedral group of order $2\cdot 3$; it is thus isomorphic to $\mathfrak{S}_3$.  
\end{proof}

The index $3$ subgroups of $\mathfrak{S}_3$ are all conjugate so, up to conjugacy, $G$ (as in the statement of Proposition~\ref{P:big inertia last}) is the unique index $3$ subgroup of $N_{ns}(\ell)$ that contains $H$.     Therefore, $\rho_{E,\ell}(\Gal_\QQ)$ and $G$ are conjugate subgroups.\\

\noindent  $\bullet$ Suppose that $v_\ell(j_E) \geq 0$. 

By twisting $E/\QQ$ by $1$ or $\ell$, we obtain an elliptic curve $E'/\QQ$ with $v_\ell(\Delta_{E'})$ not congruent to $2$ and $10$ modulo $12$.   The group $\pm \rho_{E,\ell}(\Gal_\QQ)$ is conjugate to $\pm \rho_{E',\ell}(\Gal_\QQ)$.   The previous case applies and shows that $\pm \rho_{E,\ell}(\Gal_\QQ)$ is conjugate to $\pm G =G$ or $\pm N_{ns}(\ell) = N_{ns}(\ell)$ in $\GL_2(\FF_\ell)$.   

It remains to show that $\pm \rho_{E,\ell}(\Gal_\QQ)=\rho_{E,\ell}(\Gal_\QQ)$; if not then there is an index $2$ subgroup $H$ of $G$ or $N_{ns}(\ell)$ such that $-I \notin H$.    The group $H\cap C_{ns}(\ell)$ is then an index $2$ or $6$ subgroup of $C_{ns}(\ell)$ that does not contain $-I$.  However, the cardinality of $H\cap C_{ns}(\ell)$ is even, so it contains an element of order $2$ which most be $-I$.    
\\

\noindent  $\bullet$ Finally suppose that $v_\ell(j_E) < 0$.
  
There exists an element $q\in \QQ_\ell$ with $v_\ell(q)=-v_\ell(j_E)>0$ such that
\[
j_E = (1+240{\sum}_{n\geq 1} n^3 q^n/(1-q^n) )^3/(q{\prod}_{n\geq 1} (1-q^n)^{24}).
\]
Let $\calE/\QQ_\ell$ be the \defi{Tate curve} associated to $q$, cf.~\cite{MR1312368}*{V\S3}; it is an elliptic curve with $j$-invariant $j_E$ and the group $\calE(\Qbar_\ell)$ is isomorphic to $\Qbar_\ell^\times/ \ang{q}$ as a $\Gal_{\QQ_\ell}$-module. In particular, the $\ell$-torsion subgroup $\calE[\ell]$ is isomorphic as an $\FF_\ell[\Gal_{\QQ_\ell}]$-module to the subgroup of $\Qbar_\ell^\times/ \ang{q}$ generated by an $\ell$-th root of unity $\zeta$ and a chosen $\ell$-th root $q^{1/\ell}$ of $q$.   Let $\alpha\colon \Gal_{\QQ_\ell}\to \FF_\ell^\times$ and $\beta\colon \Gal_{\QQ_\ell} \to \FF_\ell$ be the maps defined so that  $\sigma(\zeta)=\zeta^{\alpha(\sigma)}$ and $\sigma(q^{1/\ell})=\zeta^{\beta(\sigma)} q^{1/\ell}$.  So with respect to the basis $\{\zeta, q^{1/\ell}\}$ for $\calE[\ell]$, we have  $\rho_{\calE,\ell}(\sigma)= \left(\begin{smallmatrix}\alpha(\sigma) & \beta(\sigma) \\0 & 1 \end{smallmatrix}\right)$  for $\sigma\in \Gal_{\QQ_\ell}$.      The curves $E$ and $\calE$ are quadratic twists of each other over $\QQ_\ell$ (the curve $E$ is non-CM since its $j$-invariant is not an integer).   So there is a character $\chi\colon \Gal_{\QQ_\ell}\to \{\pm1\}$ such that, after an appropriate choice of basis for $E[\ell]$, we have
\[
\rho_{E,\ell}(\sigma) = \chi(\sigma)\left(\begin{smallmatrix} \alpha(\sigma) & \beta(\sigma) \\0 & 1\end{smallmatrix}\right)
\]
for all $\sigma\in \Gal_{\QQ_\ell}$.  Since $\alpha$ is surjective, we find that the image of $\rho_{E,\ell}(\Gal_\QQ)$ in $\PGL_2(\FF_\ell)$ contains a cyclic group of order $\ell-1$.   However, the image of $N_{ns}(\ell)$ in $\PGL_2(\FF_\ell)$ has order $2(\ell+1)$.    Since $\rho_{E,\ell}(\Gal_\QQ) \subseteq N_{ns}(\ell)$, we find that $\ell-1$ divides $2(\ell+1)$; this is impossible since $\gcd(\ell-1,\ell+1)=2$ and $\ell\geq 17$.

%\bibliographystyle{plain}
%\bibliography{/Users/zywina/Documents/papers/bib/master}

% \bib, bibdiv, biblist are defined by the amsrefs package.
\begin{bibdiv}
\begin{biblist}

\bib{Baran-13}{article}{
   author={Baran, Burcu},
   title={An exceptional isomorphism between modular curves of level 13},
   journal={J. Number Theory},
   volume={145},
   date={2014},
   pages={273--300},
   issn={0022-314X},
   review={\MR{3253304}},
   doi={10.1016/j.jnt.2014.05.017},
}

\bib{banwait-cremona}{article}{
   author={Banwait, Barinder S.},
   author={Cremona, John E.},
   title={Tetrahedral elliptic curves and the local-global principle for
   isogenies},
   journal={Algebra Number Theory},
   volume={8},
   date={2014},
   number={5},
   pages={1201--1229},
   review={\MR{3263141}},
}

\bib{Magma}{article}{
      author={Bosma, Wieb},
      author={Cannon, John},
      author={Playoust, Catherine},
       title={The {M}agma algebra system. {I}. {T}he user language},
        date={1997},
     journal={J. Symbolic Comput.},
      volume={24},
      number={3-4},
       pages={235\ndash 265},
        note={Computational algebra and number theory (London, 1993)},
}

\bib{MR0376533}{book}{
      editor={Birch, B.~J.},
      editor={Kuyk, W.},
       title={Modular functions of one variable. {IV}},
      series={Lecture Notes in Mathematics, Vol. 476},
   publisher={Springer-Verlag},
     address={Berlin},
        date={1975},
      review={\MR{0376533 (51 \#12708)}},
}

\bib{1104.4641}{misc}{
      author={Bilu, Yu.},
      author={Parent, P.},
      author={Rebolledo, M.},
       title={Rational points on ${X}_0^+ (p^r)$},
        date={2011},
        note={arXiv:1104.4641},
}

\bib{MR2264315}{article}{
      author={Cais, Bryden},
      author={Conrad, Brian},
       title={Modular curves and {R}amanujan's continued fraction},
        date={2006},
        ISSN={0075-4102},
     journal={J. Reine Angew. Math.},
      volume={597},
       pages={27\ndash 104},
      review={\MR{2264315 (2007j:11079)}},
}

\bib{MR2372809}{article}{
      author={Caruso, Xavier},
       title={Conjecture de l'inertie mod\'er\'ee de {S}erre},
        date={2008},
        ISSN={0020-9910},
     journal={Invent. Math.},
      volume={171},
      number={3},
       pages={629\ndash 699},
      review={\MR{2372809 (2008j:14034)}},
}

\bib{DoseFernandezGonzalezSchoof}{article}{
      author={Dose, Valerio},
      author={Fern\'andez, Julio},
      author={Gonz\'alez, Josep},
      author={Schoof, Ren\'e},
       title={The automorphism group of the non-split {C}artan modular curve of
  level $11$},
        date={2014},
     journal={Journal of Algebra},
      volume={417},
       pages={95\ndash 102},
}

\bib{MR2775372}{article}{
      author={Dieulefait, Luis},
      author={Gonz{\'a}lez-Jim{\'e}nez, Enrique},
      author={Jim{\'e}nez~Urroz, Jorge},
       title={On fields of definition of torsion points of elliptic curves with
  complex multiplication},
        date={2011},
        ISSN={0002-9939},
     journal={Proc. Amer. Math. Soc.},
      volume={139},
      number={6},
       pages={1961\ndash 1969},
      review={\MR{2775372 (2012a:11067)}},
}

\bib{MR0337993}{incollection}{
      author={Deligne, P.},
      author={Rapoport, M.},
       title={Les sch\'emas de modules de courbes elliptiques},
        date={1973},
   booktitle={Modular functions of one variable, {II} ({P}roc. {I}nternat.
  {S}ummer {S}chool, {U}niv. {A}ntwerp, {A}ntwerp, 1972)},
   publisher={Springer},
     address={Berlin},
       pages={143\ndash 316. Lecture Notes in Math., Vol. 349},
      review={\MR{MR0337993 (49 \#2762)}},
}

\bib{MR2133308}{article}{
      author={Duke, W.},
       title={Continued fractions and modular functions},
        date={2005},
        ISSN={0273-0979},
     journal={Bull. Amer. Math. Soc. (N.S.)},
      volume={42},
      number={2},
       pages={137\ndash 162},
      review={\MR{2133308 (2006c:11042)}},
}

\bib{modular-towers}{misc}{
      author={Elkies, Noam~D.},
       title={{E}xplicit modular towers},
        date={2001},
        note={arXiv:math/0103107 [math.NT]},
}

\bib{MR1722413}{incollection}{
      author={Elkies, Noam~D.},
       title={The {K}lein quartic in number theory},
        date={1999},
   booktitle={The eightfold way},
      series={Math. Sci. Res. Inst. Publ.},
      volume={35},
   publisher={Cambridge Univ. Press},
     address={Cambridge},
       pages={51\ndash 101},
      review={\MR{1722413 (2001a:11103)}},
}

\bib{MR1677158}{article}{
      author={Halberstadt, Emmanuel},
       title={Sur la courbe modulaire {$X_{\text{nd\'ep}}(11)$}},
        date={1998},
        ISSN={1058-6458},
     journal={Experiment. Math.},
      volume={7},
      number={2},
       pages={163\ndash 174},
         url={http://projecteuclid.org/getRecord?id=euclid.em/1048515664},
      review={\MR{1677158 (99m:11062)}},
}

\bib{MR588271}{article}{
      author={Kenku, M.~A.},
       title={The modular curve {$X_{0}(169)$} and rational isogeny},
        date={1980},
        ISSN={0024-6107},
     journal={J. London Math. Soc. (2)},
      volume={22},
      number={2},
       pages={239\ndash 244},
      review={\MR{588271 (81m:10048)}},
}

\bib{MR616547}{article}{
      author={Kenku, M.~A.},
       title={Corrigendum: ``{T}he modular curve {$X_{0}(169)$} and rational
  isogeny'' [{J}. {L}ondon {M}ath. {S}oc. (2) {\bf 22} (1980), no. 2,
  239--244;\ {MR} 81m:10048]},
        date={1981},
        ISSN={0024-6107},
     journal={J. London Math. Soc. (2)},
      volume={23},
      number={3},
       pages={428},
      review={\MR{616547 (82h:10039)}},
}

\bib{MR978099}{article}{
      author={Lecacheux, Odile},
       title={Unit\'es d'une famille de corps cycliques r\'eeles de degr\'e
  {$6$} li\'es \`a la courbe modulaire {$X_1(13)$}},
        date={1989},
        ISSN={0022-314X},
     journal={J. Number Theory},
      volume={31},
      number={1},
       pages={54\ndash 63},
      review={\MR{978099 (90i:11062)}},
}

\bib{MR0463118}{incollection}{
      author={Ligozat, G{\'e}rard},
       title={Courbes modulaires de niveau {$11$}},
        date={1977},
   booktitle={Modular functions of one variable, {V} ({P}roc. {S}econd
  {I}nternat. {C}onf., {U}niv. {B}onn, {B}onn, 1976)},
   publisher={Springer},
     address={Berlin},
       pages={149\ndash 237. Lecture Notes in Math., Vol. 601},
      review={\MR{0463118 (57 \#3079)}},
}

\bib{MR482230}{article}{
      author={Mazur, B.},
       title={Rational isogenies of prime degree (with an appendix by {D}.
  {G}oldfeld)},
        date={1978},
        ISSN={0020-9910},
     journal={Invent. Math.},
      volume={44},
      number={2},
       pages={129\ndash 162},
      review={\MR{MR482230 (80h:14022)}},
}

\bib{R-DZ}{misc}{
      author={Rouse, Jeremy},
      author={Zureick-Brown, David},
       title={{E}lliptic curves over $\QQ$ and $2$-adic images of Galois},
        date={2014},
        note={arXiv:1402.5997 (to appear: Research in Number Theory)},
}

\bib{MR0387283}{article}{
      author={Serre, Jean-Pierre},
       title={Propri\'et\'es galoisiennes des points d'ordre fini des courbes
  elliptiques},
        date={1972},
        ISSN={0020-9910},
     journal={Invent. Math.},
      volume={15},
      number={4},
       pages={259\ndash 331},
      review={\MR{MR0387283 (52 \#8126)}},
}

\bib{MR644559}{article}{
      author={Serre, Jean-Pierre},
       title={Quelques applications du th\'eor\`eme de densit\'e de
  {C}hebotarev},
        date={1981},
        ISSN={0073-8301},
     journal={Inst. Hautes \'Etudes Sci. Publ. Math.},
      number={54},
       pages={323\ndash 401},
      review={\MR{MR644559 (83k:12011)}},
}

\bib{MR1757192}{book}{
      author={Serre, Jean-Pierre},
       title={Lectures on the {M}ordell-{W}eil theorem},
     edition={Third},
      series={Aspects of Mathematics},
   publisher={Friedr. Vieweg \& Sohn},
     address={Braunschweig},
        date={1997},
        ISBN={3-528-28968-6},
        note={Translated from the French and edited by Martin Brown from notes
  by Michel Waldschmidt, With a foreword by Brown and Serre},
      review={\MR{MR1757192 (2000m:11049)}},
}

\bib{MR1291394}{book}{
      author={Shimura, Goro},
       title={Introduction to the arithmetic theory of automorphic functions},
      series={Publications of the Mathematical Society of Japan},
   publisher={Princeton University Press},
     address={Princeton, NJ},
        date={1994},
      volume={11},
        ISBN={0-691-08092-5},
        note={Reprint of the 1971 original, Kan{\^o} Memorial Lectures, 1},
      review={\MR{MR1291394 (95e:11048)}},
}

\bib{MR2514094}{book}{
      author={Silverman, Joseph~H.},
       title={The arithmetic of elliptic curves},
     edition={Second},
      series={Graduate Texts in Mathematics},
   publisher={Springer},
     address={Dordrecht},
        date={2009},
      volume={106},
        ISBN={978-0-387-09493-9},
      review={\MR{2514094 (2010i:11005)}},
}

\bib{MR1312368}{book}{
      author={Silverman, Joseph~H.},
       title={Advanced topics in the arithmetic of elliptic curves},
      series={Graduate Texts in Mathematics},
   publisher={Springer-Verlag},
     address={New York},
        date={1994},
      volume={151},
        ISBN={0-387-94328-5},
      review={\MR{1312368 (96b:11074)}},
}

\bib{MR0236190}{article}{
      author={Serre, Jean-Pierre},
      author={Tate, John},
       title={Good reduction of abelian varieties},
        date={1968},
        ISSN={0003-486X},
     journal={Ann. of Math. (2)},
      volume={88},
       pages={492\ndash 517},
      review={\MR{MR0236190 (38 \#4488)}},
}

\bib{1006.1782}{article}{
   author={Sutherland, Andrew V.},
   title={A local-global principle for rational isogenies of prime degree},
   language={English, with English and French summaries},
   journal={J. Th\'eor. Nombres Bordeaux},
   volume={24},
   date={2012},
   number={2},
   pages={475--485},
   issn={1246-7405},
   review={\MR{2950703}},
}

\bib{Sutherland2015}{misc}{
      author={Sutherland, Andrew~V.},
       title={Computing images of {G}alois representations attached to elliptic curves},
        date={2015},
        note={arXiv:1504.07618},
}

%\bib{Sutherland2013}{unpublished}{
%      author={Sutherland, Andrew~V.},
%       title={{C}omputing the image of {G}alois representations attached to elliptic curves \emph{(talk)}},
%        date={2013},
%        note={\url{https://math.mit.edu/~drew/AMSEast2013.pdf}},
%}

\bib{MR2721742}{article}{
      author={Zywina, David},
       title={Elliptic curves with maximal {G}alois action on their torsion
  points},
        date={2010},
        ISSN={0024-6093},
     journal={Bull. Lond. Math. Soc.},
      volume={42},
      number={5},
       pages={811\ndash 826},
      review={\MR{2721742 (2012a:11073)}},
}

\bib{Zywina-images}{article}{
   author={Zywina, David},
   title={On the surjectivity of mod $\ell$ representations associated to elliptic curves},
   note={preprint},
   date={2015}
}

\end{biblist}
\end{bibdiv}

\end{document}